\renewcommand{\leq}{\leqslant}
\renewcommand{\geq}{\geqslant}
\newtheorem{thm}{Theorem}[section]
\newtheorem{conj}[thm]{Conjecture}
\newtheorem{lma}[thm]{Lemma}
\newtheorem{prop}[thm]{Proposition}
\theoremstyle{definition}
\newtheorem{defn}[thm]{Definition}
\newtheorem{rmk}[thm]{Remark}
\newtheorem{claim}[thm]{Claim}
\numberwithin{equation}{section}
\def\D{\mathbb{D}}
\def\K{K\"ahler }
\def\KE{K\"ahler-Einstein }
\def\KR{K\"ahler-Ricci }
\def\Ric{\text{\rm Ric}}
\def\ddbar{i\partial\bar\partial}
\newcommand{\de}{\partial}
\newcommand{\db}{\overline{\partial}}
\def\tr{\operatorname{tr}}
\newcommand{\vp}{\varphi}
\newcommand{\ov}[1]{\overline{#1}}
\newcommand{\ti}[1]{\tilde{#1}}
\renewcommand{\diamond}{\diamondsuit}
\def\e{\varepsilon}
\def\ve{\varepsilon}
\def\a{{\alpha}}
\def\b{{\beta}}
\def\R{\mathbb{R}}
\def\C{\mathbb{C}}
\def\P{\mathbb{P}}
\newcommand{\DD}{\mathfrak{D}}
\begin{document}

\title[Collapsing immortal K\"ahler-Ricci flows]{Collapsing immortal K\"ahler-Ricci flows}

\author{Hans-Joachim Hein}
\address[Hans-Joachim Hein]{Mathematisches Institut, Universit\"at M\"unster, 48149 M\"unster, Germany
}
\email{hhein@uni-muenster.de}

\author{Man-Chun Lee}
\address[Man-Chun Lee]{Department of Mathematics, The Chinese University of Hong Kong, Shatin, N.T., Hong Kong
}
\email{mclee@math.cuhk.edu.hk}

\author{Valentino Tosatti}
\address[Valentino Tosatti]{Courant Institute of Mathematical Sciences, New York University, New York, NY 10012, USA
}
\email{tosatti@cims.nyu.edu}

\date{\today}

\begin{abstract} We consider the K\"ahler-Ricci flow on compact K\"ahler manifolds with semiample canonical bundle and intermediate Kodaira dimension, and show that the flow collapses to a canonical metric on the base of the Iitaka fibration in the locally smooth topology and with bounded Ricci curvature away from the singular fibers. This follows from an asymptotic expansion for the evolving metrics, in the spirit of recent work of the first and third-named authors on collapsing Calabi-Yau metrics, and proves two conjectures of Song and Tian.
\end{abstract}

\maketitle

\markboth{Collapsing immortal K\"ahler-Ricci flows}{Hans-Joachim Hein, Man-Chun Lee and Valentino Tosatti}

\setcounter{tocdepth}{3}
\tableofcontents

\section{Introduction}
\subsection{Background and motivation}
Let $(X,\omega_0)$ be a compact \K manifold, and let $\omega^\bullet(t),t\in[0,T),$ be a family of K\"ahler metrics on $X$ which solve the K\"ahler-Ricci flow
\begin{equation}\label{kkrf}
\partial_t\omega^\bullet (t)=-\Ric(\omega^\bullet (t))-\omega^\bullet(t), \quad \omega^\bullet (0)=\omega_0,
\end{equation}
for some $0<T\leq+\infty$. In this paper we are interested in the case when $\omega^\bullet(t)$ is an {\em immortal solution}, i.e. when $T=+\infty$. Thanks to a result of Tian-Zhang \cite{TiZ} (see also \cite{Ts}), we know that the solution $\omega^\bullet(t)$ is immortal if and only if the canonical bundle $K_X$ is nef, which means that $c_1(K_X)$ lies in the closure of the cone of K\"ahler classes in $H^{1,1}(X,\mathbb{R})$. This condition does not depend on $\omega_0$, and manifolds with $K_X$ nef are also known as smooth minimal models.

The Abundance Conjecture in birational geometry, and its natural extension to K\"ahler manifolds, predicts that if the canonical bundle of a compact K\"ahler manifold is nef, then it must be semiample, which means that $K_X^p$ is base-point-free for some $p\geq 1$. This conjecture is known when $\dim X\leq 3$ by \cite{CHP,DO,DO2}.

Throughout the rest of the paper we will assume that $K_X$ is semiample. It is then known (see e.g. \cite[Theorem 2.1.27]{Laz}) that global sections of $K_X^p$ for $p\geq 1$ sufficiently divisible define a surjective holomorphic map $f:X\to B\subset\mathbb{CP}^N$ (the Iitaka fibration of $X$) with connected fibers onto a normal projective variety $B$ (known as the canonical model of $X$), of dimension $m$ equal to the Kodaira dimension of $X$ (in particular, we have $0\leq m\leq\dim X$). The smooth fibers of $f$ are then Calabi-Yau manifolds, of dimension $n:=\dim X-m$, which are pairwise diffeomorphic but in general are not pairwise biholomorphic.

In the two extreme cases when $m=0$ or $m=\dim X$, the behavior of the flow is completely understood thanks to the work of many people (see e.g. the recent survey \cite{To2} and references therein), so we will furthermore assume from now on that $0<m<\dim X$, which is known as ``intermediate Kodaira dimension''. Thus we have $\dim X=m+n$, and both the fibers and the base of $f$ are positive-dimensional.

The simplest examples of this setup arise when $m=n=1$, where $X$ is a minimal properly elliptic surface, $B$ is a compact Riemann surface, and $f:X\to B$ is an elliptic fibration. In this case, the behavior of the K\"ahler-Ricci flow \eqref{kkrf} was first studied by Song-Tian \cite{ST}, who shortly afterwards also considered the case of general $m,n$ in \cite{ST2}. A major difficulty in this setting is that the total volume of $(X,\omega^\bullet(t))$ is easily seen to converge to zero as $t\to+\infty$, and this ``collapsing'' behavior makes it extremely hard to analyze the flow. As we will now explain, in \cite{ST,ST2} it was shown that the metrics $\omega^\bullet(t)$ collapse in the weak topology to the pullback of a canonical metric on $B$, and our main goal is to obtain higher order regularity and a uniform Ricci curvature bound for $\omega^\bullet(t)$ (away from the singular fibers of $f$) and thus prove two conjectures of Song-Tian.

When $X$ is projective, the condition that $K_X$ be nef means that $X$ is a smooth minimal model. The connection between the Minimal Model Program in birational geometry and the behavior of the K\"ahler-Ricci flow was first discovered independently by Cascini-La Nave \cite{CaL} and Song-Tian \cite{ST}, and remains an area of current research. These works outlined a conjectural picture for the behavior of the K\"ahler-Ricci flow on any projective (or more generally compact K\"ahler) manifold. When $K_X$ is not nef, singularities must develop in finite time, and the flow should implement the corresponding birational contractions or collapse the fibers of a Mori fiber space. On the other hand, the case when $K_X$ is nef (so the manifold is a smooth minimal model) is the topic of our paper.

\subsection{Setup}\label{set}
We now describe our setup in more detail. As mentioned above, we have a compact K\"ahler manifold  $(X^{m+n},\omega_0)$ with semiample canonical bundle and intermediate Kodaira dimension $m$ (so $m,n>0$), and $\omega^\bullet(t)$ denotes the immortal solution of the K\"ahler-Ricci flow \eqref{kkrf}. Let $f:X\to B$ be the Iitaka fibration of $X$, and let $S\subset X$ be the preimage of the union of the set of singular values of $f$ and the singular set of $B$. Thus, by construction, $f:X\backslash S\to B\backslash f(S)$ is a proper holomorphic submersion with $n$-dimensional connected Calabi-Yau fibers $X_{z}=f^{-1}(z), z\in B\backslash f(S)$. By Ehresmann's Lemma (and the connectedness of $B\backslash f(S)$), the fibers $X_z$ are pairwise diffeomorphic, but in general their complex structure varies with $z$, and this variation can be encoded in a smooth semipositive Weil-Petersson form $\omega_{\rm WP}\geq 0$ on $B\backslash f(S)$, defined in \cite{ST} (see also \cite[\S 5.6]{To}).

By \cite{ST2}, there exists a smooth \K metric $\omega_{\mathrm{can}}$ on $B\backslash f(S)$ satisfying the twisted \KE equation
\begin{equation}
\Ric(\omega_{\mathrm{can}})=-\omega_{\mathrm{can}}+\omega_{\rm WP}.
\end{equation}
The pullback of $\omega_{\rm can}$ to $X\backslash S$ will also be denoted by the same symbol, for convenience. The metric $\omega_{\rm can}$ extends to a closed positive current on $B$, and in \cite{ST,ST2} it is shown that as $t\to+\infty$ we have
\begin{equation}\label{golo}
\omega^\bullet(t)\to \omega_{\rm can},
\end{equation}
weakly as currents on $X$ as well as in the  $C^{0}_{\rm loc}(X\backslash S)$ topology of K\"ahler potentials. Motivated by this, in \cite[p.612]{ST}, \cite[p.306]{ST2}, \cite[Conjecture 4.5.7]{Ti}, \cite[p.258]{Ti2}, Song-Tian posed the following:

\begin{conj}\label{c1}
Let $(X,\omega_0)$ be a compact K\"ahler manifold with $K_X$ semiample and intermediate Kodaira dimension $0<m<\dim X$, and let $\omega^\bullet(t)$ solve \eqref{kkrf}. Then the convergence \eqref{golo} happens in the locally smooth topology as tensors on $X\backslash S$.
\end{conj}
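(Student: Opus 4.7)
\emph{Proof proposal.} The plan is to recast the normalized K\"ahler-Ricci flow as a parabolic complex Monge-Amp\`ere equation on a fixed background and then upgrade the $C^0_{\rm loc}(X\setminus S)$ convergence of potentials from \cite{ST,ST2} to locally smooth convergence of tensors, via uniform higher order estimates measured against a suitable collapsing reference metric, in the spirit of the recent Calabi-Yau collapsing work of the first and third authors. Concretely, fix a K\"ahler form $\omega_B$ on a smooth neighborhood of $B\setminus f(S)$ in $\mathbb{CP}^N$ and set $\chi:=f^*\omega_B$, which represents $c_1(K_X)$; then $[\omega^\bullet(t)]=[e^{-t}\omega_0+(1-e^{-t})\chi]$, and writing $\omega^\bullet(t)=e^{-t}\omega_0+(1-e^{-t})\chi+\ddbar\vp(t)$ reduces \eqref{kkrf} to a scalar parabolic Monge-Amp\`ere equation for $\vp(t)$. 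The works \cite{ST,ST2} already provide uniform bounds on $\vp(t)$ and $\partial_t\vp(t)$ together with locally uniform convergence of $\vp(t)$ on $X\setminus S$ to a potential of $\omega_{\mathrm{can}}$.

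Next I would work on a compact set $K\Subset X\setminus S$ contained in $f^{-1}(U)$ for some simply connected $U\subset B\setminus f(S)$ over which $f$ is holomorphically trivializable (passing to a finite \'etale cover if needed), and construct a semiflat reference form $\omega_{\rm SF}$ on $f^{-1}(U)$ whose restriction to each fiber $X_z$ is the Ricci-flat Calabi-Yau representative of $[\omega_0|_{X_z}]$, varying smoothly in $z$. The natural collapsing reference is then $\omega^{\sharp}(t):=\omega_{\mathrm{can}}+e^{-t}\omega_{\rm SF}$, and the first main goal is a uniform two-sided equivalence $C^{-1}\omega^{\sharp}(t)\leq \omega^\bullet(t)\leq C\omega^{\sharp}(t)$ on $K$, obtained by a parabolic Chern-Lu/Aubin-Yau computation on $\tr_{\omega^\bullet(t)}\omega^{\sharp}(t)$ (and on the reverse trace), exploiting the fiberwise Ricci-flatness of $\omega_{\rm SF}$ and the horizontal curvature bound coming from $\omega_{\mathrm{can}}$.

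Granted this, I would construct inductively, for each $k\geq 0$, approximate potentials $\vp_k(t)$ assembled from fiberwise Ricci-flat Calabi-Yau corrections on each $X_z$ (via Yau's theorem and its smooth dependence on parameters over $B\setminus f(S)$) together with base corrections obtained by inverting the linearization of the twisted \KE equation $\Ric(\omega_{\mathrm{can}})=-\omega_{\mathrm{can}}+\omega_{\rm WP}$, so that $\vp_k(t)$ solves the parabolic Monge-Amp\`ere equation modulo an error decaying like $e^{-(k+1)t}$ in locally uniform $C^\infty$. The residual $\vp(t)-\vp_k(t)$ then satisfies a parabolic linear equation whose coefficients are controlled by the $C^2$-equivalence above, and standard parabolic Schauder bootstrapping combined with the known $C^0_{\rm loc}$-smallness yields locally uniform $C^\ell$-smallness for every $\ell$, giving the conclusion. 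The principal obstacle I anticipate is the collapsing $C^2$-estimate itself: Aubin-Yau produces terms involving the bisectional curvature of $\omega^{\sharp}(t)$ which are a priori not uniformly bounded in $t$ because of the $e^{-t}$ scaling in the fiber directions, so a careful choice of the semiflat $\omega_{\rm SF}$ (subtracting off the leading horizontal-vertical curvature, in the Greene-Shapere-Vafa-Yau spirit) together with a delicate cutoff and absorption argument using the $C^0$ bounds on $\vp$ and $\partial_t\vp$ will be required to close the maximum principle.
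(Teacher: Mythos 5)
Your high-level plan — reduce to the parabolic complex Monge-Amp\`ere equation, compare against a collapsing semi-flat reference $\omega^\sharp(t)=\omega_{\rm can}+e^{-t}\omega_{\rm SF}$, and upgrade the known $C^0$ convergence to $C^\infty_{\rm loc}$ — matches the paper's starting point, and the two-sided equivalence $C^{-1}\omega^\sharp(t)\le\omega^\bullet(t)\le C\omega^\sharp(t)$ is precisely the known estimate of Fong--Zhang cited in the paper, so that step is fine and does not actually need re-proving. However, there is a genuine gap in the step where you propose to construct approximate potentials $\vp_k(t)$ solving the equation modulo $O(e^{-(k+1)t})$ and then conclude by ``standard parabolic Schauder bootstrapping.'' That bootstrapping step fails. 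Since the fiber directions of $\omega^\sharp(t)$ shrink like $e^{-t}$, the ellipticity of the linearized operator degenerates in those directions, and Schauder estimates with respect to any fixed background metric produce constants that blow up as $t\to\infty$; the paper explicitly notes that the local analogue of such higher-order estimates is \emph{false}.

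The more subtle and decisive point is that even if one works with shrinking parabolic H\"older norms adapted to $\omega^\sharp(t)$ (in which Schauder constants are uniform, via the connection $\D$ that acts fiberwise as the Levi-Civita connection of the fiber metric), the desired uniform bounds on $\ddbar\vp$ in shrinking $C^{2j+\alpha,j+\alpha/2}$ norms are in general \emph{false} for $j\ge 1$ whenever the complex structure of the fibers varies with $z$ and the fibers are not tori. This obstruction is exactly what forces the paper's decomposition of $\ddbar\vp(t)$ into finitely many ``obstruction'' pieces $\gamma_1(t),\dots,\gamma_j(t)$ (which are only bounded in \emph{fixed} norms, not shrinking ones) together with a remainder $\eta_j(t)$ that \emph{is} bounded in shrinking norms — a decomposition built from carefully selected obstruction functions, fiberwise $L^2$ projections, and an approximate Green operator, and established via a multi-case blowup/contradiction argument (the Selection Theorem and Theorem~\ref{thm:decomposition}). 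Your inductive scheme of fiberwise Calabi--Yau plus base corrections does not address where these obstruction contributions come from or how to absorb them, so the residual $\vp-\vp_k$ would not satisfy a linear equation to which uniform parabolic Schauder applies. In short: the conceptual outline is right, but the heart of the matter — the failure of naive shrinking-norm Schauder bounds and the obstruction-theoretic machinery needed to circumvent it — is missing from the proposal, and without it the argument does not close.
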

Explicitly, Conjecture \ref{c1} asks to show that given any $K\Subset X\backslash S$ and $k\in\mathbb{N}$ we have
\begin{equation}
\|\omega^\bullet(t)-\omega_{\rm can}\|_{C^k(K,g_0)}\to 0.
\end{equation}
There have been a number of partial results towards Conjecture \ref{c1}, often using techniques that were first developed for a family of elliptic PDEs that describe the collapsing of families of Ricci-flat K\"ahler metrics on a Calabi-Yau manifold with a fibration structure, and which share some of the features of \eqref{kkrf}, see e.g. the survey \cite{To3}. Indeed, Fong-Zhang \cite{FZ} adapted work of the third-named author \cite{To4} to prove that \eqref{golo} holds in the $C^{1,\alpha}_{\rm loc}(X\backslash S)$ topology of K\"ahler potentials ($\alpha<1$), and the works \cite{FZ,HT,TZ} proved Conjecture \ref{c1} when the smooth fibers of $f$ are tori or finite quotients of tori (see also \cite{Gi} and \cite[\S 5.14]{To}), using and improving a method of Gross-Tosatti-Zhang \cite{GTZ}. Later, Tosatti-Weinkove-Yang proved that \eqref{golo} holds in $C^0_{\rm loc}(X\backslash S)$, and this was improved to $C^\alpha_{\rm loc}(X\backslash S),\alpha<1$ by Chu-Lee \cite{CL} adapting the techniques of Hein-Tosatti \cite{HT2}, which also allowed Fong-Lee \cite{FL} to prove Conjecture \ref{c1} when all smooth fibers are pairwise biholomorphic.

In a later work \cite{ST3}, Song-Tian proved that the scalar curvature of $\omega^\bullet(t)$ remains uniformly bounded on $X$, independent of $t\geq 0$. They then conjectured a similar statement for the Ricci curvature, away from the singular fibers of $f$ (see \cite[Conjecture 4.7]{Ti2}):

\begin{conj}\label{c2}
Let $(X,\omega_0)$ be a compact K\"ahler manifold with $K_X$ semiample and intermediate Kodaira dimension $0<m<\dim X$, and let $\omega^\bullet(t)$ solve \eqref{kkrf}. Then the Ricci curvature of $\omega^\bullet(t)$ remains uniformly bounded on compact subsets of $X\backslash S$, independent of $t$.
\end{conj}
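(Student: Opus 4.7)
The approach is to derive Conjecture~\ref{c2} from a refined version of Conjecture~\ref{c1}, namely a scale-invariant asymptotic expansion for $\omega^\bullet(t)$ on compact subsets of $X\setminus S$ in the spirit of the Hein-Tosatti expansion for collapsing Ricci-flat K\"ahler metrics on Calabi-Yau fibrations. The key observation is that the flow equation \eqref{kkrf} rewrites as
\begin{equation*}
\Ric(\omega^\bullet(t))=-\omega^\bullet(t)-\partial_t\omega^\bullet(t),
\end{equation*}
reducing the Ricci bound to uniform tensor bounds on $\omega^\bullet(t)$ and $\partial_t\omega^\bullet(t)$ over each compact $K\Subset X\setminus S$. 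The first bound would be furnished by the smooth convergence of Conjecture~\ref{c1} (which one proves in parallel), so the heart of the matter is the second.

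To control $\partial_t\omega^\bullet(t)$, reduce \eqref{kkrf} to a parabolic Monge-Amp\`ere equation $\partial_t\varphi=\log\bigl((\hat\omega_t+\ddbar\varphi)^{m+n}/\Omega\bigr)-\varphi$ for a natural reference family $\hat\omega_t$ of closed $(1,1)$-forms in the classes $[\omega^\bullet(t)]$ and a smooth adapted volume form $\Omega$ on $X\setminus S$. Then $\partial_t\omega^\bullet=\partial_t\hat\omega_t+\ddbar\dot\varphi$, with $\partial_t\hat\omega_t$ smoothly bounded, so the task reduces to bounding the Hessian $\ddbar\dot\varphi$ as a $(1,1)$-tensor in $C^0(K,g_0)$. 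The Song-Tian scalar curvature estimate bounds its trace $\tr_{\omega^\bullet}\ddbar\dot\varphi$ uniformly; but because $\omega^\bullet$ collapses in the fiber directions, this trace bound alone is much weaker than the full tensor bound required. To promote it, fix $p\in X\setminus S$ and adapted coordinates $(y,z)$ with $f(y,z)=y$, and introduce rescaled fiber coordinates $w=e^{t/2}z$ on a ball of fixed size; in the $(y,w)$-frame the fiber part of $\omega^\bullet(t)$ is of order one rather than of order $e^{-t}$, and the refined expansion alluded to above asserts that $\omega^\bullet(t)$ is uniformly smoothly equivalent, in every $C^k$ norm, to a smooth non-collapsed model metric consisting of the pullback of $\omega_{\rm can}$ from the base plus a Ricci-flat Calabi-Yau metric on the model fiber in the class of the semi-flat form. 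Differentiating the Monge-Amp\`ere equation in $t$ shows that $\dot\varphi$ satisfies a linear parabolic equation with uniformly elliptic coefficients and bounded right-hand side in this rescaled picture, so standard local parabolic Schauder estimates yield $C^{2,\alpha}$ bounds on $\dot\varphi$ in the rescaled coordinates. Translating back to the $(y,z)$-frame gives the desired uniform bounds on every component of $\ddbar\dot\varphi$ in $g_0$.

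The main obstacle is establishing the rescaled smooth equivalence itself, i.e.\ the scale-invariant asymptotic expansion for $\omega^\bullet(t)$; it must simultaneously accommodate the nontrivial variation of complex structure in the Calabi-Yau fibers (encoded by the Weil-Petersson form in the base equation) and the parabolic, rather than elliptic, character of the problem, so that the techniques of \cite{HT2} must be upgraded with a parabolic regularity theory at the collapsing scale. Once this rescaled expansion is in place, both Song-Tian conjectures fall out of the same construction: Conjecture~\ref{c1} follows from its $C^0$ part, and Conjecture~\ref{c2} follows by combining it with the flow equation as above to bound $\Ric(\omega^\bullet(t))$ on each $K\Subset X\setminus S$ uniformly in $t$.
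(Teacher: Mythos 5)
Your reduction $\Ric(\omega^\bullet)=-\omega^\bullet-\partial_t\omega^\bullet$ and the idea of working in a blown-up frame where the fibers have unit size match the paper's strategy at the highest level, but the central premise is false and is precisely the phenomenon the paper is built around. You assert that the rescaled asymptotic expansion gives $\omega^\bullet(t)$ uniform $C^k$ bounds, for every $k$, against a non-collapsed model metric. The paper shows this fails starting at the shrinking $C^{2+\alpha,1+\alpha/2}$ level (it is true only at order $j=0$); this is exactly the obstruction discovered in \cite{HT3} in the elliptic setting, caused by the nontrivial variation of the fiberwise Calabi-Yau metrics $g_{Y,z}$ and their non-flatness. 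The actual expansion in Theorem \ref{thm:decomposition} decomposes $\omega^\bullet-\omega^\natural=\gamma_0+\gamma_{1,k}+\cdots+\gamma_{j,k}+\eta_{j,k}$, where only the remainder $\eta_{j,k}$ has shrinking $C^{2j+\alpha,j+\alpha/2}$ bounds, while the obstruction terms $\gamma_{i,k}$ for $i\geq 1$ are bounded only in fixed norms. So "uniform smooth equivalence in the rescaled frame" is not something one can use.

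Because the premise fails, the proposed parabolic Schauder argument for $\dot\varphi$ does not directly produce the required bound, and in any case the paper proves the Ricci bound by a genuinely different route. It plugs the $j=1$ expansion into $\Ric(\omega^\bullet)=-\omega_{\rm can}-\ddbar(\varphi+\dot\varphi)$ (equivalent to your formula), differentiates the decomposition in $t$, and bounds each piece $|\partial_t\gamma_0|_{g^\natural}$, $|\dot\gamma_{1,k}|_{g^\natural}$, $|\partial_t\eta_{1,k}|_{g^\natural}$ separately. The nontrivial step is $|\dot\gamma_{1,k}|_{g^\natural}=o(1)$, which is Proposition \ref{worse}. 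That in turn requires the improved decay $|A_{1,p,k}|\leq Ce^{-2t}$ in \eqref{qfilimi}, obtained by plugging the expansion into the Monge-Amp\`ere equation, taking fiber averages, and using the new Proposition \ref{trew} $|\varphi-\underline\varphi|=o(e^{-t})$, $|\dot\varphi-\underline{\dot\varphi}|=o(e^{-t})$. None of these facts come out of a Schauder estimate; they rely on the precise algebraic structure of the $\gamma_{1,k}$ term (built from the approximate Green operator) and on the equation itself. Your proposal treats the expansion as a black box delivering uniform rescaled $C^k$ control, whereas the expansion delivers much less in shrinking norms, and the Ricci bound is squeezed out of exactly that gap.

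One small slip: to make the fiber metric $\sim e^{-t}g_Y$ of unit size you should set $w=e^{-t/2}z$, not $w=e^{t/2}z$ (your rescaling makes the fiber metric $\sim e^{-2t}$, which is worse).
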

This is only known when the smooth fibers of $f$ are tori, or finite quotients of tori \cite{FL} (hence in particular it holds on minimal properly elliptic surfaces), or when the smooth fibers are pairwise biholomorphic \cite{CL}. It is known that in general the conjectural Ricci bound cannot be improved to a full Riemann curvature bound (on compact subsets of $X\backslash S$): by \cite{TZ}, this holds if and only if the smooth fibers are tori or finite quotients.

It is well known that the K\"ahler-Ricci flow \eqref{kkrf} reduces to a scalar PDE, of parabolic complex Monge-Amp\`ere type, for a family of evolving K\"ahler potentials. Following \cite{ST2}, we construct a closed real $(1,1)$-form $\omega_F$ on $X\backslash S$, which is of the form $\omega_F=\omega_0+\ddbar\rho$, such that for every $z\in B\backslash f(S)$ we have that $\omega_F|_{X_z}$ is the unique Ricci-flat K\"ahler metric on $X_z$ cohomologous to $\omega_0|_{X_z}$. While $\omega_F$ is not semipositive definite in general (see \cite{CGPT} for a counterexample), given any compact set $K\Subset X\backslash S$ we can find $t_0$ such that for all $t\geq t_0$,
\begin{equation}
\omega^\natural(t):=(1-e^{-t})\omega_{\mathrm{can}}+e^{-t}\omega_F,
\end{equation}
is a K\"ahler metric on $K$, with fibers of size $\approx e^{-t/2}$ and base of size $\approx 1$. On $X\backslash S$ we can then write $\omega^\bullet(t)=\omega^\natural(t)+\ddbar\vp(t)$, where the potentials $\vp(t)$ satisfy
\begin{equation}\label{ma}
\left\{
                \begin{aligned}
                  &\frac{\de}{\de t}\vp(t)=\log\frac{e^{nt}(\omega^\natural(t)+\ddbar\vp(t))^n}{\binom{m+n}{n}\omega_{\rm can}^m\wedge\omega_F^n}-\vp(t),\\
                  &\vp(0)=-\rho,\\
                  &\omega^\bullet(t)=\omega^\natural(t)+\ddbar\vp(t)>0,
                \end{aligned}
              \right.
\end{equation}
for $t\geq 0$, see e.g. \cite[\S 5.7]{To} and \cite[\S 3.1]{TWY}.
Then, since we know the weak convergence in \eqref{golo}, Conjecture \ref{c1} is equivalent to the a priori estimates
\begin{equation}\label{desi1}
\|\omega^\bullet(t)\|_{C^k(K,g_0)}\leq C_{K,k},
\end{equation}
for all $k\in\mathbb{N}$ and all $t\geq 0$. Furthermore, since $\vp(t)$ is uniformly bounded in $L^\infty(X)$ by \cite{ST2} (which uses \cite{DPa, EGZ2}, see also \cite{GPT} for a new proof), these estimates are also equivalent to
\begin{equation}\label{desi2}
\|\vp(t)\|_{C^k(K,g_0)}\leq C_{K,k},
\end{equation}
for all $k\in\mathbb{N}$ and all $t\geq 0$.

\subsection{Main result}
The main result of this paper gives a complete solution of Conjectures \ref{c1} and \ref{c2}:
\begin{thm}\label{mainthm}
Conjectures \ref{c1} and \ref{c2} are true.
\end{thm}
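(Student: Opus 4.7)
The aim is to construct, on any compact set $K\Subset X\backslash S$, an asymptotic expansion for the evolving potential $\vp(t)$ to arbitrarily high order in $e^{-t/2}$, so as to upgrade the known $C^\alpha_{\rm loc}$ convergence in \eqref{golo} to the smooth convergence required by \eqref{desi2} and hence Conjecture \ref{c1}. Once uniform $C^k$ bounds on $\omega^\bullet(t)$ are in hand, Conjecture \ref{c2} follows from rewriting the flow equation \eqref{kkrf} as $\Ric(\omega^\bullet(t))=-\omega^\bullet(t)-\de_t\omega^\bullet(t)$ and combining the already-available uniform bound on $\de_t\vp(t)$ with two-derivative control on $\vp(t)$.

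\textbf{Asymptotic expansion.} I would work in local holomorphic coordinates $(z,y)$ adapted to the fibration $f$, with $z$ on the base and $y$ along the fiber, and rescale the fiber variables by $e^{t/2}$ to obtain unit-size Calabi-Yau fibers at leading order. The ansatz takes the form
\begin{equation}
\vp(t)\,\sim\,\sum_{k\geq 0}e^{-kt/2}\,\psi_k(z,y,t),
\end{equation}
where each $\psi_k$ is determined by inserting the expansion into \eqref{ma} and matching coefficients of $e^{-kt/2}$. The leading balance relies on the Ricci-flatness of $\omega_F|_{X_z}$ and on the twisted \KE equation $\Ric(\omega_{\rm can})=-\omega_{\rm can}+\omega_{\rm WP}$; each subsequent $\psi_k$ is then forced to satisfy a linear equation of the schematic form $\Delta_{F,z}\psi_k=h_k$ on the fiber $X_z$, whose solvability imposes a condition on the fiberwise average of $h_k$ that in turn determines base corrections and is ultimately compatible with the defining equations of $\omega_{\rm can}$ and $\omega_{\rm WP}$. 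Uniform-in-$z$ bounds on the Green operator of the family of Ricci-flat fiber metrics, together with the Hein-Tosatti-type estimates developed in \cite{HT,HT2,CL}, should yield uniform $C^\infty$ bounds on each $\psi_k$ in the rescaled picture.

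\textbf{Closing the estimate, and the main obstacle.} With an approximate solution $\vp^\sharp_K(t)=\sum_{k=0}^{K}e^{-kt/2}\psi_k$ in hand, the error $u(t):=\vp(t)-\vp^\sharp_K(t)$ satisfies a parabolic complex Monge-Amp\`ere equation whose forcing is $O(e^{-(K+1)t/2})$ in the rescaled coordinates. A parabolic maximum principle argument should give $\|u(t)\|_{L^\infty}=O(e^{-\alpha t})$ for some $\alpha=\alpha(K)>0$, and linearizing around $\vp^\sharp_K$ (so that the linearized operator is uniformly parabolic in the rescaled frame) lets parabolic Schauder theory promote this to $\|u(t)\|_{C^{k,\alpha}}\to 0$ in the rescaled picture for any prescribed $k$, provided $K=K(k)$ is taken large. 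Unscaling then yields \eqref{desi2} for all $k$, establishing both conjectures. The hard part will be the construction of the $\psi_k$: one must produce the corrections with uniform bounds when the complex structure of $X_z$ genuinely varies with $z$ (going beyond the isotrivial or torus cases of \cite{FZ,HT,FL,TZ,CL}) and when $z$ may approach $f(S)$, where $\omega_F|_{X_z}$ and the associated Green operators degenerate. Controlling this degeneration uniformly, keeping track of the time derivatives required to iterate the expansion, and absorbing the curvature contributions of the Weil-Petersson form $\omega_{\rm WP}$ is where the main analytic effort will sit.
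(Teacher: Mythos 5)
Your overall strategy — a fiber-rescaled asymptotic expansion of $\vp(t)$ in powers of $e^{-t/2}$, closed by parabolic Schauder estimates — is the right spirit and is what the paper does. But there are two genuine gaps.

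First, the ansatz $\vp(t)\sim\sum_k e^{-kt/2}\psi_k$ obtained by matching coefficients and solving $\Delta_{F,z}\psi_k=h_k$ fiber by fiber cannot be carried out when the complex structure of the fibers genuinely varies with $z\in B$. This is not merely a question of uniformity of the family of fiberwise Green operators: the point is that $\ddbar\vp(t)$ is \emph{not} bounded in the natural shrinking norms $C^{2j+\alpha,j+\alpha/2}(g(t))$ once $j\geq 1$, so no a priori decomposition into fiberwise-solvable pieces decaying in clean powers of $e^{-t/2}$ exists. The paper handles this by constructing a finite collection of ``obstruction functions'' $G_{i,p,k}$ via the iterative Selection Theorem \ref{thm:Selection}, and defining the correction terms $\gamma_{i,k}$ directly from fiberwise $L^2$ projections of the actual solution onto the span of these functions — the $\gamma_{i,k}$ are not prescribed approximate solutions but pieces of the true one. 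Only the remainder $\eta_{j,k}$ satisfies shrinking bounds, while the $\gamma_{i,k}$ satisfy fixed-metric bounds; and all of these estimates are established not by a maximum-principle comparison with an approximate solution, but by a contradiction/blowup argument in Section \ref{sekt} (split into Cases 1--3 and subcases A--C), with the Selection Theorem used exactly to control the modes that would otherwise make the blowup quantity diverge. Your proposal correctly identifies this as the obstacle, but does not contain the idea that resolves it.

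Second, your reduction of Conjecture \ref{c2} to uniform $C^k(g_X)$ bounds on $\omega^\bullet(t)$ via $\Ric(\omega^\bullet)=-\omega^\bullet-\de_t\omega^\bullet$ is incorrect. From \eqref{ma2} one gets $\Ric(\omega^\bullet)=-\omega_{\rm can}-\ddbar(\vp+\dot\vp)$, and while a fixed-metric bound $|\ddbar(\vp+\dot\vp)|_{g_X}\leq C$ follows from $C^k$ estimates, the fiber metric shrinks like $e^{-t}$, so a fiber-fiber $(1,1)$-component of size $O(1)$ in $g_X$ has size $O(e^{t})$ in $g^\bullet(t)$; Conjecture \ref{c2} therefore requires the $\mathbf{ff}$ part of $\ddbar(\vp+\dot\vp)$ to decay at rate $e^{-t}$. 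This is established in the paper only by plugging the $j=1$ expansion into \eqref{cocco2} and proving the refined decay estimates in Propositions \ref{trew} and \ref{worse} (e.g.\ $|\vp-\underline{\vp}|=o(e^{-t})$, $|A_{1,p,k}|\leq Ce^{-2t}$, and the resulting control of $\gamma_{1,k}$), which uses the specific algebraic structure of the terms in the expansion, not just their $C^k$ bounds. So Conjecture \ref{c2} is a substantial additional argument, not a corollary of Conjecture \ref{c1}.
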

In fact, in both conjectures we prove much more precise statements. The higher order estimates for $\omega^\bullet(t)$ are derived as consequences of a very detailed asymptotic expansion for $\omega^\bullet(t)$, which is in the same spirit as the expansion recently obtained in \cite{HT3} by Hein-Tosatti for collapsing Ricci-flat metrics on Calabi-Yau manifolds. As for the Ricci curvature bound, we show that on $X\backslash S$ we have
\begin{equation}
\Ric(\omega^\bullet(t))=-\omega_{\rm can}+{\rm Err},
\end{equation}
where on any fixed compact subset of $X\backslash S$ we have $|{\rm Err}|_{g^\bullet(t)}\to 0$, as $t\to+\infty$. Thus, in a strong sense, the Ricci curvature of the evolving metrics $\omega^\bullet(t)$ is asymptotic to $-\omega_{\rm can}$. Furthermore,  our bound on the Ricci curvature (and on all of the pieces of the asymptotic expansion of the metric) is an {\em a priori} bound: it only depends on the uniform constants in Lemma \ref{lma:earlier work}, which are due to \cite{FZ,ST3,TWY}. $\qquad$\\

The starting point of our analysis, which was proved in \cite{FZ} by adapting \cite{To4} in the elliptic setting, is the following estimate: given $K\Subset X\backslash S$, there is $C>0$ such that on $K$ we have
\begin{equation}\label{equiv}
C^{-1}\omega^\natural(t)\leq \omega^\bullet(t)\leq C\omega^\natural(t),
\end{equation}
for all $t\geq t_0$. In other words, $\omega^\bullet(t)$ is shrinking in the fiber directions, and remains of bounded size in the base directions. Since the linearized operator of \eqref{ma} is the time-dependent heat operator of $\omega^\bullet(t)$, we see from \eqref{equiv} that the ellipticity is degenerating in the fiber directions as $t\to+\infty$, and so there is no clear way to approach the a priori estimates \eqref{desi2}. Indeed, the local analog of such estimates are false, see the discussion in \cite{HT2} in the elliptic case.

However it turns out that we can work locally on the base (but using crucially that the fibers are compact without boundary), and since $f$ is differentiably a locally trivial fiber bundle over $B\backslash f(S)$, we may without loss assume that our base $B$ is now simply the Euclidean unit ball in $\mathbb{C}^m$, and $f:B\times Y\to B$ is just the projection onto the first factor, where $Y$ is a closed manifold and $B\times Y$ is equipped with a complex structure $J$ (not necessarily a product) such that $f$ is $(J,J_{\mathbb{C}^m})$ holomorphic. The fibers $\{z\}\times Y, z\in B,$ are then compact $n$-dimensional Calabi-Yau manifolds diffeomorphic to $Y$. Under this trivialization, the Ricci-flat K\"ahler metric $\omega_F|_{X_z}$ defines a Riemannian metric $g_{Y,z}$ on $\{z\}\times Y$, which we extend trivially to $B\times Y$, and use these to define a family of shrinking Riemannian product metrics
\begin{equation}
g_z(t)=g_{\mathbb{C}^m}+e^{-t}g_{Y,z},
\end{equation}
on $B\times Y$, which are uniformly equivalent to $\omega^\natural(t)$ and hence to $\omega^\bullet(t)$. We will also denote by $g(t):=g_0(t)$ the shrinking product metrics with $z$ equal to the origin in $B$.

\subsection{Overview of the proof}
As in \cite{HT2,HT3}, the first attempt to overcome the issue of degenerating ellipticity is to try to prove much more, namely try to prove uniform bounds for $\vp(t)$ or $\omega^\bullet(t)$ in the {\em shrinking} norms $C^k(K,g(t))$, since $g^\bullet(t)$ is uniformly equivalent to $g(t)$. This however cannot be proved in general, since we know from \cite{TWY} that $e^t\omega^\bullet(t)|_{X_z}$ converge smoothly to $\omega_F|_{X_z}$, and since $g_{Y,z}$ and $g_{Y,z'}$ are not in general parallel with respect to each other, the shrinking $C^k$ norms of $g_{z}(t)$ and $g_{z'}(t)$ are not uniformly equivalent as $t\to+\infty$. To address this issue, the first and third-named authors defined in \cite{HT3} a connection $\D$ on $B\times Y$ which on each fiber $\{z\}\times Y$ acts like the Levi-Civita connection of $g_z(t)$, and using its parallel transport operator they defined new shrinking $C^{k,\alpha}$ norms, $0<\alpha<1$. We will consider the natural parabolic extension of these norms to space-time derivatives in Section \ref{s1} below. Since parabolic H\"older seminorms behave differently according to the parity of $k$, we will only work with $k=2j$ even (cf. Remark \ref{remm}).

The hope would then be to show that $\omega^\bullet(t)-\omega^\natural(t)=\ddbar\vp(t)$ is uniformly bounded in these shrinking $C^{2j+\alpha,j+\a/2}$ norms. This turns out to be true when $j=0$, but false starting from $j=1$. This phenomenon, which was discovered in \cite{HT3} in the elliptic setting, manifests itself only when the complex structure $J$ is not a product and the fibers are not tori or quotients. In a nutshell, the variation of complex structures, and the non-flatness of $g_{z}(t)$, destroy these desired shrinking norm bounds. However, with much work, we are able to construct a collection of ``obstruction functions'' on $B\times Y$ (up to shrinking $B$), and decompose the solution $\ddbar\vp(t)$ into a sum of finitely many terms $\gamma_1(t),\dots,\gamma_j(t)$ (constructed roughly speaking using the fiberwise $L^2$ projections of $\Delta^{g^\natural(t)}\vp(t)$ onto the space of obstructions), and a remainder $\eta_j(t)$. We then show via a contradiction and blowup argument that the remainder $\eta_j(t)$ {\em is} bounded in the shrinking $C^{2j+\alpha,j+\a/2}$ norm, while the terms $\gamma_1(t),\dots,\gamma_j(t)$ are not, but they satisfy strong enough estimates which guarantee that they are bounded in the $C^{2j+\alpha,j+\a/2}$ norm of a {\em fixed} metric $\omega_X$ on $X$. As mentioned earlier, the higher order estimates on all these pieces depend only on the constant in the $C^0$ estimate \eqref{equiv}, and on the other constants that appear in Lemma \ref{lma:earlier work} (including the uniform bound on the scalar curvature of $\omega^\bullet(t)$ from \cite{ST3}), and thus ultimately they depend only on the geometry of $X$ and on the initial metric $\omega_0$.

This procedure is the iterated by replacing $j$ with $j+1$, and new obstruction functions are constructed by measuring the failure of the remainder $\eta_j(t)$ to be bounded in the shrinking $C^{2(j+1)+\alpha,j+1+\a/2}$ norm. This way, we can split $\eta_j(t)=\gamma_{j+1}(t)+\eta_{j+1}(t)$, and obtain the next order in the expansion. As in \cite{HT3}, there is an extra technical difficulty, which arises from the fact that the terms $\gamma_j(t)$ are constructed by plugging in $\eta_{j-1}(t)$ and the obstruction functions into an approximate elliptic Green operator, which has an extra parameter $k\in\mathbb{N}$ that measures the quality of the approximation. Thus, all the terms in the expansion also end up depending on $k$, which is large and chosen a priori, and the procedure works for $j\leq k$.

The resulting asymptotic expansion of $\omega^\bullet(t)$ is described in detail in Theorem \ref{thm:decomposition} below, which is the main technical result of the paper. It is the parabolic analog of \cite[Theorem 4.1]{HT3}, and its proof follows the same overall method via blowup and contradiction, but there are some new key difficulties. First, as mentioned earlier, the (shrinking) parabolic H\"older norms that we use are better-behaved when the order of derivatives is even, which compels us to use $C^{2j+\alpha,j+\a/2}$ norms instead of $C^{j+\alpha,(j+\a)/2}$ (see e.g. Lemma \ref{lma:jet-interpolation} and Remark \ref{remm}).
More importantly, since the approximate Green operator that we use in this paper is the same as in \cite{HT3}, it provides and approximate parametrix for the Laplacian of $\omega^\natural(t)$ (in a rough sense) but not for the heat operator (it seems far from clear that a similar strategy could be implemented with an approximate heat kernel construction). Because of this, to obtain a contradiction at the end of the blowup argument (which is divided into 3 cases, with the last case itself divided into 3 subcases A, B and C), we now have to deal with new terms that come from taking time derivatives of the solution, which are not taken care of by construction, unlike \cite{HT3}. To make matters worse, in the blowup argument the evolving K\"ahler potential has $L^\infty$ norm that is blowing up, so it cannot be passed to a limit to obtain a contradiction. Dealing with these issues requires substantial work.

Another new difficulty, compared to \cite{HT3}, is that the case $j=0$ (i.e. where we prove $C^{\a,\a/2}$ estimates) does not behave in the same way as the cases $j\geq 1$, because the parabolic complex Monge-Amp\`ere equation also involves $\vp(t)$ without derivatives landing on it, unlike the elliptic complex Monge-Amp\`ere equation where only $\ddbar\vp$ enters. To deal with this issue, we employ a different blowup quantity for $j=0$, which is closer in spirit to our earlier works \cite{HT2, FL}. As a result, different ideas will be required to close the blowup argument, according to whether $j=0$ or $j\geq 1$. Furthermore, when $j\geq 1$ we are forced to add one new term to the main blowup quantity (when compared to \cite{HT3}), to gain better control on the fiber average of the K\"ahler potential and its time derivative, and we later have to show that this new term can be dealt with in the blowup argument. Next, in subcase A, dealing with these terms forces us to refine the Selection Theorem \ref{thm:Selection} where the obstruction functions are chosen, and when $j=0$ we need a whole new argument. In subcase B, we employ an energy argument inspired by \cite[Claim 3.2]{FL}, and in subcase C a different energy argument has to be applied fiber by fiber.

Once the asymptotic expansion is established, the smooth convergence of Conjecture \ref{c1} follows easily. On the other hand, proving the Ricci curvature bound for $\omega^\bullet(t)$ in Conjecture \ref{c2} requires substantial work, by plugging in the expansion with $j=1$ into the formula for the Ricci curvature (as two derivatives of the logarithm of the volume form), and use our explicit a priori estimates for the terms of the expansion to deduce boundedness of Ricci. Here again we encounter a new difficulty compared to \cite{HT3}, which arises from the fact that one of the estimates in Theorem \ref{thm:decomposition} is weaker than the corresponding one in the elliptic setting, because of the fact that we can only work with even order norms. During the course of the proof of the Ricci bound we also prove a fact of independent interest in Proposition \ref{trew}, by showing that $\vp+\dot{\vp}$ minus its fiberwise average decays to zero (away from the singular fibers) faster than $e^{-t}$ (see \eqref{suxx2}). This improves on earlier work of Fong-Zhang \cite[p.110]{FZ} (see also \cite[Lemma 5.13]{To}) and Tosatti-Weinkove-Yang \cite[Lemma 3.1 (iv)]{TWY}.

\begin{rmk}\label{glue}
We conjecture that the Ricci curvature of $\omega^\bullet(t)$ remains uniformly bounded also near the singular fibers of $f$. One could imagine settling this for some minimal elliptic surfaces by developing a parabolic version of the Gross-Wilson gluing result in \cite{GW} (thanks to J. Lott for this suggestion), and for some Lefschetz fibered $3$-folds by developing a parabolic version of Li's gluing result in \cite{Li}.
\end{rmk}

\begin{rmk}\label{spiro}
It is natural to ask whether we really need to assume that our compact Kähler manifold $X$ with $K_X$ nef satisfies the Abundance Conjecture (thanks to S. Karigiannis and J. Cheng for raising this point).  The reader can verify that the results in \cite{FZ,ST2,ST3,TWY} on which we rely, as well as our main theorems, are also valid under the a priori weaker assumption that $c_1(K_X)$ is a \emph{semiample $(1,1)$-class} \cite[Def.3.4]{To5}: there is a surjective holomorphic map $f:X\to B$ with connected fibers onto a normal compact K\"ahler analytic space $B$ such that $c_1(K_X)=f^*[\omega]$ for some K\"ahler class $[\omega]$ on $B$. However, a very recent result of Das-Hacon \cite[Theorem 4.4]{DH}, which was prompted by our questions to C. Hacon as well as the related \cite[Question 3.5]{To5}, shows that under this hypothesis $K_X$ is already semiample, and it is elementary to deduce from this that $f$ is the Iitaka fibration of $K_X$. We thank also M. P\u{a}un for discussions about this point.
\end{rmk}

\subsection{Organization of the paper} In section \ref{s1} we introduce our parabolic shrinking norms and seminorms and prove an interpolation inequality, the crucial Proposition \ref{prop:braindead}, and a Schauder estimate. Section \ref{s2} contains the proof of the Selection Theorem \ref{thm:Selection} where the obstruction functions are selected. Section \ref{sekt} is the main part of the paper, and is where the asymptotic expansion is proved in Theorem \ref{thm:decomposition}. Lastly, in section \ref{s4} we give the proof of our main Theorem \ref{mainthm}.

\subsection*{Acknowledgements} Part of this work was carried out during visits of the second-named author to the Courant Institute of Mathematical Sciences, and of the third-named author to the Center of Mathematical Sciences and Applications at Harvard University, which we would like to thank for the hospitality.  We thank J. Cheng, C. Hacon, J. Lott, S. Karigiannis, and M. P\u{a}un for discussions about Remarks \ref{glue} and \ref{spiro}, and the referee for helpful comments on a previous version. The first-named author was partially supported by the German Research Foundation (DFG) under Germany's Excellence Strategy EXC 2044-390685587 ``Mathematics M\"unster: Dynamics-Geometry-Structure" and by the CRC 1442 ``Geometry: Deformations and Rigidity'' of the DFG. The second-named author was partially supported by Hong Kong RGC grant (Early Career Scheme) of Hong Kong No. 24304222, No. 14300623,  and a NSFC grant No. 12222122. The third-named author was partially supported by NSF grants DMS-2231783 and DMS-2404599.

\section{Parabolic H\"older norms and interpolation}\label{s1}
The setup where we are working in was described in the Introduction.
\subsection{$\D$-derivatives}
Recall that our main goal is to establish higher order estimates for the metrics $\omega^\bullet(t)$ on $B\times Y$ which evolve by the normalized K\"ahler-Ricci flow \eqref{kkrf}. We know from Lemma \ref{lma:earlier work} (i) below that $\omega^\bullet(t)$ is uniformly equivalent to $\omega^\natural(t)=(1-e^{-t})\omega_{\rm can}+e^{-t}\omega_F$, which is shrinking in the fiber directions as $t\to+\infty$. As mentioned above in the overview of proof, the fiberwise Ricci-flat metrics $g_{Y,z}$ are in general quite different from each other as $z\in B$ varies, and this forces us to define a new connection $\D$ which along each fiber $\{z\}\times Y$ acts like the Levi-Civita connection of $g_z(t)=g_{\C^m}+e^{-t}g_{Y,z}$. This is what was achieved by the first and third-named authors in \cite[\S 2.1]{HT3}, and we now recall their construction.

\begin{defn}
For $z\in B\subset\mathbb{C}^m$, we let $\nabla^z$ be the Levi-Civita connection of the product metric $g_{z}(t)=g_{\mathbb{C}^m}+e^{-t} g_{Y,z}$ on $B\times Y$, which is independent of $t\geq 0$. Let $\D$ be the connection on the tangent bundle of $B\times Y$ and on all of its tensor bundles defined by
\begin{equation}
(\D \eta)(x)= (\nabla^{\mathrm{pr}_{B}(x)} \eta)(x),
\end{equation}
for all tensors $\eta$ on $B\times Y$ and $x\in B\times Y$.
\end{defn}

For the detailed discussion of the properties of $\D$, we refer readers to \cite[\S 2.1]{HT3}.  Given a curve $\gamma$ in $B\times Y$ which contains the points $a,b$, we let $\P^\gamma_{ab}$ denote the $\D$-parallel transport from $a$ to $b$ along the $\gamma$. A curve $\gamma$ is called a $\mathbb{P}$-geodesic if $\dot{\gamma}$ is $\D$-parallel along $\gamma$. Two examples of $\P$-geodesics are horizontal paths $(z(t),y_0)$ where $z(t)$ is an affine segment in $\C^m$, and vertical paths $(z_0,y(t))$ where $y(t)$ is a $g_{Y,z_0}$-geodesic in $\{z_0\}\times Y$. These are the only $\P$-geodesics that we will use in the paper, as every two points in $B\times Y$ can be connected by concatenating two of these $\P$-geodesics, where the vertical one is minimal. We may also write $\P_{ab}$ instead of $\P^\gamma_{ab}$ if the $\P$-geodesic $\gamma$ joining $a$ and $b$ is not emphasized.

\subsection{$\mathfrak{D}$-derivatives} $\D$-derivatives that we just defined are spatial derivatives. It will be very convenient to use a similar shorthand notation when we also allow time derivatives. Thus, given a time-dependent contravariant tensor $\eta$ and $k\in\mathbb{N}$, we define
\begin{equation}
\mathfrak{D}^k\eta:=\sum_{p+2q=k}\D^p \partial_t^q \eta,
\end{equation}
which is a sum of tensors of different types.
We will also use the notation
\begin{equation}
\mathfrak{D}^k_{\mathbf{bt}}\eta:=\sum_{p+2q=k}\D^p_{\mathbf{b}\cdots\mathbf{b}} \partial_t^q \eta,
\end{equation}
when we only take spatial base derivatives, as well as time derivatives. Observe also that if $g$ is any Riemannian product metric on $B\times Y$ then we have the pointwise equality
\begin{equation}
|\DD^k\eta|^2_g=\sum_{p+2q=k}|\D^p \partial_t^q \eta|^2_g,
\end{equation}
which we will use implicitly many times.

In our setting, $\{g_{Y,z}\}_{z\in B},$ is a smooth family of Riemannian metrics on $Y$, so (up to shrinking $B$ slightly) we can find $\Lambda>1$ so that
\begin{equation}\label{equiv-product-ref}
\left\{
\begin{array}{ll}
\Lambda^{-1} g_{Y,0}\leq g_{Y,z}\leq \Lambda g_{Y,0},\\
\Lambda^{-\frac{1}{2}} \leq \mathrm{inj}(Y,g_{Y,z}) \leq \mathrm{diam}(Y,g_{Y,z})\leq \Lambda^{\frac{1}{2}}.
\end{array}
\right.
\end{equation}
In particular, the norm measured with respect to $g_{Y,0}$ is uniformly comparable to that of $g_{Y,z}$ for $z\in B$.

\subsection{H\"older seminorms}We now use the connection $\D$ to define a parabolic H\"older norm on $B\times Y\times [0,+\infty)$.
For $p=(z,y)\in B\times Y,t\geq 0, 0<R\leq \sqrt{t}$ and (shrinking) product metrics $g_\zeta(\tau)=g_{\mathbb{C}^m}+e^{-\tau} g_{Y,\zeta}$, we define the parabolic domain
\begin{equation}
Q_{g_\zeta(\tau),R}(p,t)= B_{\mathbb{C}^m}(z,R)\times  B_{e^{-\tau}g_{Y,\zeta}}(y,R)\times [t-R^2,t].
\end{equation}
The parabolic domain with respect to any other product metric is defined analogously. We will very often simply take $\zeta=0\in B$.

\begin{defn}
For any $0<\a<1$, $R>0$, $p\in B\times Y$,  $t\geq 0$ and smooth tensor field $\eta$ on $B\times Y\times [t-R^2,t]$, given a product metric $g$ (such as $g=g_z(\tau)$ for some $z\in B$ and $\tau\geq 0$), we define
\begin{equation}\label{kzk}
[\eta]_{\a,\a/2, Q_{g,R}(p,t),g}=\sup \left\{ \frac{|\eta(x,s)-\P_{x'x}\eta(x',s')|_g}{(d^g(x,x')+|s-s'|^{\frac{1}{2}})^\a}\right\},
\end{equation}
where the supremum is taken among all $(x,s)$ and $(x',s')$ in $Q_{g,R}(p,t)$ in which $x$ and $x'$ are either horizontally or vertically joined by a $\P$-geodesic.
\end{defn}

In case when we use $g=g_z(\tau)$ and $\tau$ is allowed to go to $+\infty$, we will refer to these as shrinking parabolic H\"older seminorms. Nevertheless for each fixed $R>0$, we will have
\begin{equation}
B_{\mathbb{C}^m}(z,R)\times  B_{e^{-\tau}g_{Y,z}}(y,R)=B_{\mathbb{C}^m}(z,R)\times  Y,
\end{equation}
for all $\tau>\tau_0(R,Y)$. This will be the setting where the parabolic H\"older seminorm are applied in the whole paper.  In this case, we will simply denote it by
\begin{equation}
Q_R(z,t)=B_{\mathbb{C}^m}(z,R)\times Y \times [t-R^2,t],
\end{equation}
when the metric $g$ and the shrinking rate $\tau$ play no role.

Lastly, as in \cite[(4.101)]{HT3}, it will also be useful to consider (shrinking) parabolic H\"older seminorms $[\eta]_{\a,\a/2,{\rm base}, Q_{g,R}(p,t),g}$ which are defined as in \eqref{kzk} but where the supremum is taken only among $(x,s)$ and $(x',s')$ in $Q_{g,R}(p,t)$ such that $x$ and $x'$ are horizontally joined by a $\P$-geodesic.

\subsection{Parabolic interpolation}

We need an interpolation inequality between the highest order (i.e. $C^{k+\a,(k+\a)/2}$) and the lowest order (i.e. $L^\infty$) norms of a tensor.  In the parabolic framework, it will be more convenient to interpolate with the top \textit{even} order (cf. Remark \ref{remm}). This can be viewed as a parabolic version of \cite[Proposition 2.8]{HT3}, and as in there it is crucial that the constants in the interpolation inequality are independent of the shrinking size parameter $\tau\geq 0$.

\begin{prop}\label{prop:interpolation}
For any $k\in \mathbb{N}_{>0}$ and $\a\in (0,1)$, there exists $C_k=C_k(\a,\Lambda)>0$ (where $\Lambda$ is given in \eqref{equiv-product-ref}) such that the following holds.  Let $\eta$ be a smooth contravariant $p$-tensor on $B\times Y$.  Then for all $(x_0, t_0)\in B\times Y\times \mathbb{R}$, $ 0<\rho<R$ and $\tau\geq 0$ such that $ Q_{g_0(\tau),R}(x_0,t_0)\Subset  B\times Y\times \mathbb{R}$, we have
\begin{equation}\label{interpolation-Linfty}
\begin{split}
\sum_{j=1}^{2k}(R-\rho)^{j} \| \mathfrak{D}^j \eta \|_{\infty, Q_{g_0(\tau),\rho}(x_0,t_0),g_0(\tau)}&\leq C_k\Big((R-\rho)^{2k+\a}[\mathfrak{D}^{2k} \eta] _{\a,\a/2, Q_{g_0(\tau),R}(x_0,t_0),g_0(\tau)}\\
&\quad \;\;+\| \eta \|_{\infty, Q_{g_0(\tau),R}(x_0,t_0),g_0(\tau)}\Big).
\end{split}
\end{equation}
Moreover  for any $j\in \mathbb{N}$ and $\b\in (0,1)$ with $j+\b<2k+\a$, we have
\begin{equation}\label{interpolation-Holder}
\begin{split}
(R-\rho)^{j+\b} [\mathfrak{D}^j \eta ]_{\b,\b/2, Q_{g_0(\tau),\rho}(x_0,t_0),g_0(\tau)}&\leq C_k\Big((R-\rho)^{2k+\a}[\mathfrak{D}^{2k} \eta] _{\a,\a/2, Q_{g_0(\tau),R}(x_0,t_0),g_0(\tau)}\\
&\quad \;\;+\| \eta \|_{\infty, Q_{g_0(\tau),R}(x_0,t_0),g_0(\tau)}\Big).
\end{split}
\end{equation}
\end{prop}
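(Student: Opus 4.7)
The strategy is to exploit the product structure of $g_0(\tau)=g_{\mathbb{C}^m}+e^{-\tau}g_{Y,0}$ and the uniform geometry of the fixed metric $(Y,g_{Y,0})$ to reduce the asserted interpolation to a classical, $\tau$-independent parabolic Landau interpolation on a standard product domain. Since the fiber component $g_{Y,z}$ is evaluated at $z=0$, the connection $\D$ on $B\times Y$ equipped with $g_0(\tau)$ is the product of the flat Euclidean connection on $B$ and the Levi-Civita connection of $g_{Y,0}$ on $Y$ (which is independent of $\tau$ by conformal invariance of the Levi-Civita connection under constant rescalings). Consequently $\D$-parallel transport along horizontal or vertical $\P$-geodesics agrees with ordinary parallel transport, and the seminorm \eqref{kzk} decouples into its base and fiber pieces.

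\textbf{Step 1 (Change of scale via dimensional analysis).} For large $\tau$ the fiber factor $B_{e^{-\tau}g_{Y,0}}(y_0,R)$ exhausts $Y$ (whose $g_{Y,0}$-diameter is at most $\Lambda^{1/2}$ by \eqref{equiv-product-ref}), so the parabolic cube equals $B_{\mathbb{C}^m}(z_0,R)\times Y\times[t_0-R^2,t_0]$. On this fixed domain, the key observation is that for any $(p,q)$-tensor $\eta$ and any $j\in\mathbb{N}$, the $g_0(\tau)$-norm of $\D^p\partial_t^q\eta$ differs from its $(g_{\mathbb{C}^m}+g_{Y,0})$-norm only by a power of $e^{\tau/2}$ determined solely by the number of fiber indices. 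I would use these identities to convert both sides of \eqref{interpolation-Linfty} and \eqref{interpolation-Holder} into inequalities expressed in the fixed metric $g_{\mathbb{C}^m}+g_{Y,0}$, verifying that all powers of $e^{\tau/2}$ cancel so that the estimate to be proven becomes purely $\tau$-independent. A covering argument handles the fact that on the base side the parabolic cube has time length $R^2$ matching the base radius $R$ but might be too short relative to the full $g_0(\tau)$-geometry; one simply iterates the classical bound over finitely many subcubes.

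\textbf{Step 2 (Standard parabolic Landau interpolation on the fixed domain).} On the fixed product parabolic cube with the fixed metric $g_{\mathbb{C}^m}+g_{Y,0}$ (where $(Y,g_{Y,0})$ has uniformly bounded geometry on scale $\Lambda^{1/2}$), the inequality reduces to the classical parabolic Landau interpolation. I would prove it by induction on $k$, using as the main ingredient the one-step bound
\[
(R-\rho)\|\DD\eta\|_{\infty,Q_\rho}\leq \varepsilon(R-\rho)^2\|\DD^2\eta\|_{\infty,Q_R}+C\varepsilon^{-1}\|\eta\|_{\infty,Q_R},
\]
which follows from the fundamental theorem of calculus along the two families of $\P$-geodesics (horizontal affine segments in $B$ and radial geodesics in $(Y,g_{Y,0})$) together with the parabolic Taylor expansion used to handle $\partial_t\eta$ (which carries parabolic weight $2$). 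Iterating this bound and combining with standard interpolation between adjacent H\"older exponents yields both \eqref{interpolation-Linfty} and \eqref{interpolation-Holder}. The parity restriction to top even order $2k$ enters naturally because one $\partial_t$ counts as two spatial orders in parabolic scaling, so Landau's recursion only closes cleanly at even top order (cf.\ Remark~\ref{remm}).

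\textbf{Main obstacle.} The genuine work lies in Step 1, not Step 2: one must verify meticulously that for every mixed piece $\D^p\partial_t^q\eta$ of $\DD^k\eta$, the factors of $e^{\tau/2}$ arising from conversion of the $g_0(\tau)$-norm into the $g_{Y,0}$-norm combine correctly with the rescaling of derivatives so that the final constant is \emph{genuinely} independent of $\tau$. A secondary subtlety is that the H\"older seminorm \eqref{kzk} only tests pairs joined by a single horizontal or vertical $\P$-geodesic, never a concatenation; this matches exactly the output of the iterative Landau argument, which proceeds one direction at a time, but it must be respected carefully when assembling the one-dimensional bounds into the full parabolic statement.
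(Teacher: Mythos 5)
Your Step~1 is the crux, and it does not go through as stated. You claim that for each mixed piece $\D^p\partial_t^q\eta$ the $g_0(\tau)$-norm differs from the fixed-metric norm by a single power of $e^{\tau/2}$, and that these powers cancel between the two sides of \eqref{interpolation-Linfty}. Neither claim is correct as written. First, $\D^p\eta$ mixes base and fiber covariant indices, so its $g_0(\tau)$-norm is a sum of component norms carrying \emph{different} powers of $e^{\tau/2}$; there is no single scaling factor to extract. Second, even after decomposing by direction, the powers do not cancel: a pure fiber derivative $\D_{\mathbf{f}}^b\eta$ contributes $e^{b\tau/2}$ on the left, while the H\"older seminorm of $\D_{\mathbf{f}}^{2k}\eta$ on the right contributes $e^{(2k+\alpha)\tau/2}$ for vertically joined points and $e^{k\tau}$ for horizontally joined ones, so the prospective ``fixed'' inequality still has uncontrolled $\tau$-dependence. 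More fundamentally, there is no change of variables that rescales away $\tau$ on a general closed manifold $Y$: the $g_0(\tau)$-diameter of the fiber is $\sim e^{-\tau/2}$, which is intrinsically incomparable to the base scale $R-\rho$, and this anisotropy is exactly the content of the uniformity in $\tau$ that the proposition asserts.

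The paper sidesteps this entirely. For the spatial part it freezes $t$ and quotes the \emph{elliptic} collapsing interpolation \cite[Proposition~2.8]{HT3} (whose $\tau$-independence was established there by a direct argument, not by rescaling); this is essentially the non-trivial input your Step~1 would need to reprove from scratch. For the purely temporal derivatives the paper runs a mean value argument in $t$, which is genuinely $\tau$-independent because time derivatives never see the fiber metric. For the H\"older estimate \eqref{interpolation-Holder} it then splits into cases by the size of $d$ and treats the odd-order case $p+2q=2k-1$ with a separate MVT trick. Your Step~2 captures the spirit of the temporal and H\"older parts, but the resolution of the collapsing-geometry difficulty is where the proof lives, and your reduction to a ``purely $\tau$-independent'' problem does not supply it.
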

\begin{proof}
We first show \eqref{interpolation-Linfty}.  Fix a pair of $(p,q)$ such that $0<j=p+2q\leq 2k$, and assume first that $p>0$. Since $d^{g_0(\tau)}(x,x_0)<\rho$ for $(x,t)\in Q_{g_0(\tau),\rho}(x_0,t_0)$, we can treat $\partial_t^q \eta|_{(x,t)}$ as a smooth tensor on $B_{\mathbb{C}^m}(z_0,\rho)\times Y$ by freezing $t$ so that \cite[Proposition 2.8]{HT3} applies to conclude
\begin{equation}\label{spatial-interpolation}
\begin{split}
&\quad  (R-\rho)^{p}\| \D^p \partial_t^q \eta \|_{\infty, Q_{g_0(\tau),\rho}(x_0,t_0),g_0(\tau)}\\
&\leq C \left((R-\rho)^{2k-2q+\a} [\mathfrak{D}^{2k} \eta] _{\a,\a/2, Q_{g_0(\tau),R}(x_0,t_0),g_0(\tau)} +\| \partial_t^q\eta \|_{\infty, Q_{g_0(\tau),R}(x_0,t_0),g_0(\tau)} \right).
\end{split}
\end{equation}
Thus, it remains to show the interpolation on time derivatives, i.e. we assume in the rest that $p=0$, so $q>0$. For each $(x,t)\in Q_{g_0(\tau),\rho}(x_0,t_0)$,  fix $s=(R-\rho)^2>0$ so that $(x,t-s)\in  Q_{g_0(\tau),R}(x_0,t_0)$. Then there exists $t_s\in [t-s,t]$ so that
\begin{equation}
\frac{1}{s}\left(\partial_t^{q-1}\eta(x,t)-\partial_t^{q-1}\eta(x,t-s)\right)=\partial_t^{q} \eta(x,t_s),
\end{equation}
which allows us to estimate
\begin{equation}
\begin{split}
|\partial_t^q\eta (x,t)|&\leq \left|\partial_t^q\eta (x,t)-\frac1s\left(\partial_t^{q-1}\eta(x,t)-\partial_t^{q-1}\eta(x,t-s) \right) \right|\\
&\quad +\frac1s \left|\partial_t^{q-1}\eta(x,t)-\partial_t^{q-1}\eta(x,t-s) \right|\\
&\leq  \left|\partial_t^{q}\eta (x,t)-\partial_t^{q} \eta(x,t_s) \right|+ \frac2s \|\partial_t^{q-1} \eta \|_{\infty, Q_{g_0(\tau),R}(x_0,t_0),g_0(\tau)}.
\end{split}
\end{equation}
If $q=k$,  we arrive at
\begin{equation*}
\begin{split}
\|  \partial_t^k \eta \|_{\infty, Q_{g_0(\tau),\rho}(x_0,t_0),g_0(\tau)}&\leq
 (R-\rho)^{\a}[\partial_t^k \eta]_{\a,\a/2, Q_{g_0(\tau),R}(x_0,t_0),g_0(\tau)} \\
 &\quad + \frac2{(R-\rho)^2} \|\partial_t^{k-1} \eta \|_{\infty, Q_{g_0(\tau),R}(x_0,t_0),g_0(\tau)}.
 \end{split}
\end{equation*}
Otherwise, $q<k$ and  we have
\begin{equation*}
\begin{split}
\|  \partial_t^q \eta \|_{\infty, Q_{g_0(\tau),\rho}(x_0,t_0),g_0(\tau)}&\leq
 (R-\rho)^{2}\|\partial_t^{q+1} \eta\|_{\infty, Q_{g_0(\tau),R}(x_0,t_0),g_0(\tau)} \\
 &\quad +2(R-\rho)^{-2} \|\partial_t^{q-1} \eta \|_{\infty, Q_{g_0(\tau),R}(x_0,t_0),g_0(\tau)}.
 \end{split}
\end{equation*}
Applying this dichotomy inductively, with suitable replacements of $\rho$ and $R$ at each step, we conclude that there exists $C>0$ so that for each $1\leq q\leq k$,
\begin{equation}
\begin{split}
&\quad  (R-\rho)^{2q}\|  \partial_t^q \eta \|_{\infty, Q_{g_0(\tau),\rho}(x_0,t_0),g_0(\tau)}\\
&\leq C \left((R-\rho)^{2k+\a} [\partial_t^k \eta] _{\a,\a/2, Q_{g_0(\tau),R}(x_0,t_0),g_0(\tau)} +\|\eta \|_{\infty, Q_{g_0(\tau),R}(x_0,t_0),g_0(\tau)} \right).
\end{split}
\end{equation}
By combining this with \eqref{spatial-interpolation},  we see that \eqref{interpolation-Linfty} follows.

It remains to prove \eqref{interpolation-Holder}. Fix $(x,t),(x',s)\in Q_{g_0(\tau),\rho}(x_0,t_0)$ such that $x$ and $x'$ are joined either horizontally or vertically by a $\P$-geodesic.  Denote $d=d^{g_0(\tau)}(x,x')+|t-s|^{\frac{1}{2}}$. We want to estimate $|\mathfrak{D}^j \eta(x,t)-\P_{x'x} \mathfrak{D}^j\eta(x',s)|_{g_0(\tau)}$.  Fix a pair of $(p,q)$ such that $0<p+2q=j\leq 2k$ and $\b\in (0,1)$ with $j+\beta<2k+\alpha$.

If $d\geq \frac1{4\Lambda}  (R-\rho)$ where $\Lambda$ is the constant in \eqref{equiv-product-ref}, then using the triangle inequality and the boundedness of the operator norm of $\P$ from \cite[\S 2.1.1]{HT3},  we deduce that
\begin{equation}
\begin{split}
\frac{|\D^p\partial_t^q \eta(x,t)-\P_{x'x} \D^p\partial_t^q\eta(x',s)|_{g_0(\tau)}}{\left(d^{g_0(\tau)}(x,x')+|t-s|^{\frac{1}{2}} \right)^\b}&\leq \frac{C}{(R-\rho)^\b} \| \mathfrak{D}^j \eta \|_{\infty, Q_{g_0(\tau),\rho}(x_0,t_0),g_0(\tau)},
\end{split}
\end{equation}
so that the conclusion follows from \eqref{interpolation-Linfty}.

If $d<\frac1{4\Lambda}  (R-\rho)$,  $j=2k$ and $\b<\a$, then
\begin{equation}
\begin{split}
 \frac{|\D^p\partial_t^q \eta(x,t)-\P_{x'x} \D^p\partial_t^q\eta(x',s)|_{g_0(\tau)}}{\left(d^{g_0(\tau)}(x,x')+|t-s|^{\frac{1}{2}} \right)^\b}&=\frac{|\D^p\partial_t^q \eta(x,t)-\P_{x'x} \D^p\partial_t^q\eta(x',s)|_{g_0(\tau)}}{\left(d^{g_0(\tau)}(x,x')+|t-s|^{\frac{1}{2}} \right)^\a} d^{\a-\b}\\
&\leq  (R-\rho)^{\a-\b}[\mathfrak{D}^{2k} \eta] _{\a,\a/2, Q_{g_0(\tau),R}(x_0,t_0),g_0(\tau)},
\end{split}
\end{equation}
which is acceptable.
It remains to consider the case when $d<\frac1{4\Lambda}  (R-\rho)$ and $j=p+2q<2k$.  Here, using again the boundedness of $\P$, we can estimate
\begin{equation}\label{zlz}
\begin{split}
&\quad |\D^p\partial_t^q \eta(x,t)-\P_{x'x} \D^p\partial_t^q\eta(x',s)|_{g_0(\tau)}\\
&\leq  |\D^p\partial_t^q \eta(x,t)-\P_{x'x} \D^p\partial_t^q\eta(x',t)|_{g_0(\tau)}+|\P_{x'x} \D^p\partial_t^q\eta(x',t)-\P_{x'x} \D^p\partial_t^q\eta(x',s)|_{g_0(\tau)}\\
&\leq  Cd^\b [\D^p\partial_t^q \eta(t)]_{\b, B_{\mathbb{C}^m}(z,d)\times  B_{g_{Y,0}}(y,d),g_0(\tau)}
+C| \D^p\partial_t^q\eta(x,t)-\D^p\partial_t^q\eta(x,s)|_{g_0(\tau)},
\end{split}
\end{equation}
where the first term is the spatial H\"older seminorm of the tensor $\partial_t^q\eta(t)$ with $t$ frozen, and $x=(z,y)\in \mathbb{C}^m\times Y$.  Applying \cite[Lemma 2.5]{HT3} to the first term in the last line of \eqref{zlz} gives
\begin{equation}\label{zlz1}\begin{split}
 [\D^p\partial_t^q \eta(t)]_{\b, B_{\mathbb{C}^m}(z,d)\times  B_{g_{Y,0}}(y,d),g_0(\tau)}
 &\leq C d^{1-\b} \| \D^{p+1} \partial_t^q\eta(t)\|_{\infty, B_{\mathbb{C}^m}(z,d)\times  B_{g_{Y,0}}(y,d),g_0(\tau)}\\
 &\leq C d^{1-\b} \| \mathfrak{D}^{j+1} \eta\|_{\infty, Q_{g_0(\tau),R}(x_0,t_0),g_0(\tau)}.
\end{split}
\end{equation}
As for the second term in the last line of \eqref{zlz}, assume first that $j+2=p+2(q+1)\leq 2k$. In this case we can argue similarly by estimating the difference in term of time derivatives
\begin{equation}\label{zlz2}
| \D^p\partial_t^q\eta(x,t)-\D^p\partial_t^q\eta(x,s)|_{g_0(\tau)}\leq  d^2  \| \mathfrak{D}^{j+2} \eta\|_{\infty, Q_{g_0(\tau),R}(x_0,t_0),g_0(\tau)}.
\end{equation}
Hence, under the assumption that $j+2=p+2(q+1)\leq 2k$, we can combine \eqref{zlz}, \eqref{zlz1} and \eqref{zlz2} to get
\begin{equation}
\begin{split}
&\quad  \frac{|\D^p\partial_t^q \eta(x,t)-\P_{x'x} \D^p\partial_t^q\eta(x',s)|_{g_0(\tau)}}{\left(d^{g_0(\tau)}(x,x')+|t-s|^{\frac{1}{2}} \right)^\b}\\
&\leq  C \left(d^{1-\b} \| \mathfrak{D}^{j+1} \eta\|_{\infty, Q_{g_0(\tau),R}(x_0,t_0),g_0(\tau)}+ d^{2-\b}\| \mathfrak{D}^{j+2} \eta\|_{\infty, Q_{g_0(\tau),R}(x_0,t_0),g_0(\tau)} \right).
\end{split}
\end{equation}
The conclusion then follows by combining with \eqref{interpolation-Linfty} since $d\leq \frac1{4\Lambda}(R-\rho)$.

It remains to consider the case when $p+2q<2k<p+2q+2$, i.e. $p+2q=2k-1$. This implies that $p$ is odd, hence $p\geq 1$.  Let $v$ run over a $g_0(\tau)$-orthonormal basis of tangent vectors which are either horizontal or vertical. Let $\gamma(u)$ be the unique $\P$-geodesic with $\gamma(0)=x$ and $\dot\gamma(0)=v$ with $u\in (0,R-\rho)$.  Denote $\sigma(u,\cdot )=\P^{-1}_{\gamma(0),\gamma(u)}\D^{p-1}\partial_t^{q} \eta(\gamma(u),\cdot )$ so that $\D_v \D^{p-1}\partial_t^{q}\eta(x,\cdot )=\partial_u|_{u=0}\sigma(u,\cdot )$.
By the mean value theorem, there exists $\theta\in [0,1]$ such that
\begin{equation}
\begin{split}
&\quad | \D_v \D^{p-1}\partial_t^{q}\eta(x,t)-\D_v\D^{p-1}\partial_t^{q}\eta(x,s)|_{g_0(\tau)}\\
&\leq    \left| \D_v \D^{p-1}\partial_t^{q}\eta(x,t)-\frac1d( \sigma(d,t)- \sigma(0,t))\right|_{g_0(\tau)}\\
&\quad +\left| \D_v \D^{p-1}\partial_t^{q}\eta(x,s)-\frac1d( \sigma(d,s)- \sigma(0,s))\right|_{g_0(\tau)}\\
&\quad +\frac1d \left| ( \sigma(d,t)- \sigma(d,s))-( \sigma(0,t)- \sigma(0,s))\right|_{g_0(\tau)}\\
&\leq |\sigma'(0,t)-\sigma'(\theta d,t)|+ |\sigma'(0,s)-\sigma'(\theta d,s)|
\\
&\quad +\frac1d \left|\sigma(d,t)- \sigma(d,s)\right|_{g_0(\tau)}+\frac1d \left|\sigma(0,t)- \sigma(0,s)\right|_{g_0(\tau)}\\
&\leq C d \|\D^2_v \D^{p-1}\partial_t^{q} \eta\|_{\infty, Q_{g_0(\tau),R}(x_0,t_0),g_0(\tau)}+ C d \| \D^{p-1}\partial_t^{q+1} \eta\|_{\infty, Q_{g_0(\tau),R}(x_0,t_0),g_0(\tau)}\\
&\leq C d\|\mathfrak{D}^{2k} \eta\| _{\infty, Q_{g_0(\tau),R}(x_0,t_0),g_0(\tau)},
\end{split}
\end{equation}
where $\sigma'$ denotes the $u$-derivative. Since $v$ is arbitrary, we conclude that
\begin{equation}
\frac{|\D^p\partial_t^{q} \eta(x,t)-\P_{x'x} \D^{p}\partial_t^{q}\eta(x',s)|_{g_0(\tau)}}{\left(d^{g_0(\tau)}(x,x')+|t-s|^{\frac{1}{2}} \right)^\b}
\leq  C d^{1-\b}\|\mathfrak{D}^{2k} \eta\| _{\infty, Q_{g_0(\tau),R}(x_0,t_0),g_0(\tau)}.
\end{equation}
The result then follows using \eqref{interpolation-Linfty} again. This completes the proof.
\end{proof}

We end this subsection by showing that a function on $\mathbb{R}^n\times (-\infty,0]$ with bounded $(2k+\a)$ parabolic H\"older seminorm and vanishing parabolic $2k$ jet at $(x,t)=(0,0)$ will be bounded in $C^{2k+\a,k+\a/2}_{\rm loc}$.
\begin{lma}\label{lma:jet-interpolation}
Let $u$ be a smooth function on $\mathbb{R}^n\times (-\infty,0]$ such that
\begin{equation}
[\mathfrak{D}^{2k}u]_{\a,\a/2,B_{\mathbb{R}^n}(R)\times [-R^2,0]}\leq \Lambda_0,
\end{equation}
for some $R,\Lambda_0>0,k\in \mathbb{N}$ and $\mathfrak{D}^{\ell}u|_{(0,0)}=0$ for all $0\leq \ell\leq 2k$. Then for all $0<r\leq R$ and $0\leq m\leq 2k$, there exists $C_0(n,m)>0$ such that
\begin{equation}\label{kzk1} \|\mathfrak{D}^{m}u\|_{\infty,B_{\mathbb{R}^n}(r)\times [-r^2,0]}\leq C_0\Lambda_0  r^{2k+\a-m}.\end{equation}
Moreover, for all $\b\in (0,\a)$ there exists $C_1(n,\Lambda_0,\b)>0$ such that
\begin{equation}\label{kzk2} [\mathfrak{D}^{m}u]_{\b,\b/2,B_{\mathbb{R}^n}(r)\times [-r^2,0]}\leq C_1\Lambda_0  r^{2k+\a-m-\b}.\end{equation}
\end{lma}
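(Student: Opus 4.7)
The plan is to prove \eqref{kzk1} first, and then derive \eqref{kzk2} from it by a standard rescaling together with the interpolation inequality \eqref{interpolation-Holder} from Proposition \ref{prop:interpolation} (which applies verbatim to the Euclidean setting since the proof only uses that the metric is a Riemannian product). Inside \eqref{kzk1}, the heart of the matter is the case $m=0$: once we control $\|u\|_{\infty, B_{\mathbb{R}^n}(r)\times[-r^2,0]}\leq C\Lambda_0 r^{2k+\alpha}$, the bounds for $1\leq m\leq 2k$ follow at once from \eqref{interpolation-Linfty} by choosing $\rho=r$ and $R=2r$ (or a suitable variant), estimating the right-hand side by the top-order seminorm $\leq \Lambda_0$ and by $\|u\|_\infty\leq C\Lambda_0(2r)^{2k+\alpha}$ on the slightly larger cylinder.

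For the $m=0$ case, I would rely on a parabolic Taylor expansion with H\"older remainder at the origin. Since $\mathfrak{D}^\ell u(0,0)=0$ for $0\leq\ell\leq 2k$, the parabolic Taylor polynomial $P_{2k}(x,t)=\sum_{|\beta|+2j\leq 2k}\frac{x^\beta t^j}{\beta!j!}\partial^\beta\partial_t^j u(0,0)$ vanishes identically, and the residual $u(x,t)=u(x,t)-P_{2k}(x,t)$ is estimated by the top-order parabolic H\"older seminorm, giving $|u(x,t)|\leq C(k)\Lambda_0(|x|+\sqrt{|t|})^{2k+\alpha}$ on $B_{\mathbb{R}^n}(R)\times[-R^2,0]$. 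Restricting to $B_{\mathbb{R}^n}(r)\times[-r^2,0]$ yields the desired bound with a constant depending only on $n$ and $k$.

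To finish \eqref{kzk2}, I would rescale: set $\tilde u(y,s)=u(ry,r^2 s)$ on $B_{\mathbb{R}^n}(1)\times[-1,0]$. The parabolic scaling gives $[\mathfrak{D}^{2k}\tilde u]_{\alpha,\alpha/2,B_1\times[-1,0]}=r^{2k+\alpha}[\mathfrak{D}^{2k}u]_{\alpha,\alpha/2,B_r\times[-r^2,0]}\leq\Lambda_0 r^{2k+\alpha}$, while $\mathfrak{D}^\ell\tilde u(0,0)=0$ for $\ell\leq 2k$ and $\|\tilde u\|_\infty\leq C\Lambda_0 r^{2k+\alpha}$ by \eqref{kzk1}. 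Applying \eqref{interpolation-Holder} to $\tilde u$ on $B_1\times[-1,0]$ with exponent $j=m$ and $\beta$ bounds $[\mathfrak{D}^m\tilde u]_{\beta,\beta/2,B_{1/2}\times[-1/4,0]}\leq C\Lambda_0 r^{2k+\alpha}$; converting back via $[\mathfrak{D}^m\tilde u]_{\beta,\beta/2}=r^{m+\beta}[\mathfrak{D}^m u]_{\beta,\beta/2}$ and covering $B_r\times[-r^2,0]$ by finitely many translates of $B_{r/2}\times[-r^2/4,0]$ produces \eqref{kzk2} with a constant depending on $n$, $k$, $\alpha$, $\beta$.

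The main obstacle is the parabolic Taylor remainder in the $m=0$ step, because the parabolic orders of $\nabla$ and $\partial_t$ differ, so a naive induction on $N$ in $[\mathfrak{D}^N v]_{\alpha,\alpha/2}$ breaks for odd $N$ (time derivatives of orders beyond $2k$ appear and are not controlled by the hypothesis). The cleanest way around this is a rescaling/contradiction argument: if \eqref{kzk1} failed for $m=0$, one could normalize $u_j$ by its $L^\infty$ norm on $B_{r_j}\times[-r_j^2,0]$ divided by $\Lambda_0 r_j^{2k+\alpha}$ and parabolically dilate by $r_j$; the rescaled sequence $\tilde u_j$ satisfies $\|\tilde u_j\|_\infty=1$, $[\mathfrak{D}^{2k}\tilde u_j]_{\alpha,\alpha/2}\to 0$, and vanishing $2k$-jet at the origin. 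Using Proposition \ref{prop:interpolation} to get uniform $C^{2k,k}$ bounds one step at a time, Arzel\`a--Ascoli yields a nonzero $C^{2k+\alpha',k+\alpha'/2}_{\rm loc}$ limit $\tilde u_\infty$ with $\mathfrak{D}^{2k}\tilde u_\infty\equiv 0$ and vanishing $2k$-jet at the origin, which forces $\tilde u_\infty\equiv 0$, a contradiction. This bypasses the direct manipulation of mixed-order Taylor remainders.
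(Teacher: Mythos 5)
Your route is genuinely different from the paper's: you want to prove the $m=0$ case by a rescaling/Arzel\`a--Ascoli contradiction and then propagate to $m\geq 1$ by interpolation and rescaling, whereas the paper proceeds by induction on $k$, applying the inductive hypothesis to $v=\partial_t u$ and $v=\D^2 u$ to control $\DD^{m+2}u$ for $0\leq m\leq 2k$, and then bounding $u$ and $\D u$ directly by mean value/finite-difference arguments that only invoke the already-established second-order bounds. You correctly identify the central obstacle --- a na\"ive parabolic Taylor remainder requires control of $\D^p\partial_t^{q+1}u$, which has parabolic order $2k+1$ when $p+2q=2k-1$ and so lies outside the hypothesis. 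The paper's induction on $k$ is precisely designed to sidestep this: the last two orders $m=0,1$ are handled by difference-quotient identities that only ever touch $\DD^2 u$ and $\partial_t u$, not anything of order $2k+1$.

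The contradiction argument as written, however, has a genuine gap in the claim that the Arzel\`a--Ascoli limit $\tilde u_\infty$ is nonzero. You normalize $\|\tilde u_j\|_{\infty,B_1\times[-1,0]}=1$ and have $[\DD^{2k}\tilde u_j]_{\a,\a/2}\to 0$, but Proposition \ref{prop:interpolation} only produces uniform $C^{2k,k}$ bounds on strictly smaller cylinders $B_\rho\times[-\rho^2,0]$, $\rho<1$, because the interpolation needs a margin $R-\rho>0$. The resulting convergence is only locally uniform in the interior, so the point where $|\tilde u_j|$ attains the value $1$ may escape to $\partial(B_1\times[-1,0])$ and the limit could vanish identically on every compact subset, yielding no contradiction. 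There is also a quieter version of the same problem: to even invoke interpolation on $B_\sigma\times[-\sigma^2,0]$ for $\sigma>1$ you would need $\|\tilde u_j\|_\infty$ controlled there, which after rescaling is $\|u_j\|_{\infty, B_{\sigma r_j}\times[-\sigma^2 r_j^2,0]}/\|u_j\|_{\infty,B_{r_j}\times[-r_j^2,0]}$ --- exactly the ratio the Lemma is supposed to control, so the argument is circular unless you make a smarter choice of $r_j$ (e.g.\ a maximizer of $r\mapsto\|u_j\|_{\infty,B_r\times[-r^2,0]}/r^{2k+\a}$). Even that patch only fixes the escape issue when $r_j\to 0$, so the rescaled domains genuinely grow; the borderline case $r_j\to R$ remains untreated. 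The paper's inductive scheme avoids every one of these issues because it never passes to a limit at all.
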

\begin{rmk}\label{remm}
This Lemma is false as stated if we replace $2k$ with an odd integer, and this is the main reason why in our main Theorem \ref{thm:decomposition} we will restrict to even order derivatives. The simplest counterexample is the function $u(x,t)=t$ in $\mathbb{R}\times\mathbb{R}$, which satisfies
$u(0,0)=0,$ $\DD u|_{(0,0)}=0$ and $[\DD u]_{\a,\a/2,\mathbb{R}\times\mathbb{R}}=0$ but \eqref{kzk1} fails for $m=0$.

To fix this, one has to redefine the parabolic H\"older seminorms of odd order by adding an additional term, see \cite[p.46]{Lie}. If one was to do this, then the statement of Lemma \ref{lma:jet-interpolation} would also hold when $2k$ is replaced by an odd integer. However, the additional term that one would need to add would not be compatible with our blowup arguments in section \ref{sekt}, especially with the ``non-escaping property'' in Section \ref{nonesc}.
\end{rmk}

\begin{proof}
Write $Q_r=B_{\mathbb{R}^n}(r)\times [-r^2,0]$ for notational convenience. We only prove the bound for $\|\mathfrak{D}^m u\|_{\infty}$ in \eqref{kzk1}, since the bound for H\"older seminorm in \eqref{kzk2} is similar.

By considering $\tilde u(x,t)=\Lambda_0 ^{-1}R^{-2k-\a}u(Rx,R^2t)$ for $(x,t)\in B_{\mathbb{R}^n}(1)\times [-1,0]$, we can assume $\Lambda_0=1=R=1$ and $0<r\leq 1$.
We prove the result by induction on $k$.  In case $k=0$, the jet assumption is equivalent to $u(0,0)=0$ and hence for all $0<r\leq 1$,
\begin{equation}
\|u\|_{\infty,Q_r}\leq r^\a,
\end{equation}
so that the conclusion holds.

Next we consider the induction step, so we assume that the conclusion holds for all $0\leq \ell\leq k$, and prove it for $k+1\geq 1$. Given a smooth function $u$ with $[\mathfrak{D}^{2k+2}u]_{\a,\a/2,Q_1}\leq 1$, given any $0\leq m\leq 2k$, every derivative $\DD^{m+2}u$ can be written as $\DD^m v$ where $v=\partial_t u$ or $v=\D^2 u$ (evaluated at some pair of tangent vectors). The function $v$ satisfies $[\mathfrak{D}^{2k}v]_{\a,\a/2,Q_1}\leq 1$ and $\mathfrak{D}^{\ell}v|_{(0,0)}=0$ for all $0\leq \ell \leq 2k$.  The induction hypothesis then implies $\|\mathfrak{D}^{m+2} u\|_{\infty,Q_r}\leq C_{k} r^{2k+\a-m}$ for all $0\leq m\leq 2k$ and $0<r\leq 1$. It remains to extend it to $m=-1,-2$, i.e. to bound $u$ and $\D u$. Let $(x,t)\in Q_r$ and fix a unit vector $e_1$, and estimate
\begin{equation}
\begin{split}
|\D_1 u(x,t)|&\leq \left|\D_1 u(x,t)-\frac{u(x+re_1,t)-u(x,t)}{r}\right|\\
&\quad +\left|\frac{u(x+re_1,t)-u(x,t)}{r}-\frac{u(x+re_1,0)-u(x,0)}{r}\right|\\
&\quad +\left|\frac{u(x+re_1,0)-u(x,0)}{r}-\D_1u(x,0)\right|\\
&\quad  +\left|\D_1u(x,0)-\D_1u(0,0)\right|\\
&=\mathbf{I}+\mathbf{II}+\mathbf{III}+\mathbf{IV},
\end{split}
\end{equation}
and we bound each of the numbered terms as follows. By the mean value theorem, there exists $\theta\in [0,1]$ such that
\begin{equation}
\mathbf{I}=\left|\D_1 u(x,t)-\D_1 u(x+\theta r e_1,t)\right|,
\end{equation}
and using \cite[Lemma 2.5]{HT3} we can bound this by $r\|\mathfrak{D}^2 u\|_{\infty,Q_r}\leq C r^{2k+\a+1}.$ For the second term, the mean value theorem again shows that there is $\theta'\in [0,1]$ so that
\begin{equation}
\begin{split}
\mathbf{II}&=\frac{|t|}r \left|\partial_t u(x+re_1,\theta' t)- \partial_t u(x,\theta' t)\right|\leq r\|\de_t u\|_{\infty, Q_r}\leq r\|\mathfrak{D}^2 u\|_{\infty,Q_r}\leq Cr^{2k+\a+1}.
\end{split}
\end{equation}
For the third term, using the mean value theorem and \cite[Lemma 2.5]{HT3}, we can find $\theta''\in [0,1]$ so that
\begin{equation}
\begin{split}
\mathbf{III}&=\left|\D_1 u(x+r\theta'' e_1,0)-\D_1u(x,0)\right|\leq  r\|\D^2 u\|_{\infty,Q_r}\leq r\|\DD^2 u\|_{\infty, Q_r}\leq Cr^{2k+\a+1},
\end{split}
\end{equation}
and for the fourth term we again use \cite[Lemma 2.5]{HT3} to bound
\begin{equation}
\mathbf{IV}\leq r\|\D^2 u\|_{\infty,Q_r}\leq r\|\DD^2 u\|_{\infty, Q_r}\leq Cr^{2k+\a+1},
\end{equation}
and putting these all together proves that $|\D_1 u(x,t)|\leq Cr^{2k+\a+1}$, and hence  $\|\mathfrak{D} u\|_{\infty,Q_r}\leq C_{k} r^{2k+\a+1}$ since $e_1$ is arbitrary. The upper bound for $|u|$ is now straightforward using the bounds on $\D u$ and $\partial_t u$. This completes the proof of the inductive step.
\end{proof}

\subsection{Bounds on H\"older seminorms imply decay}
In this section we establish a generalization of \cite[Theorem 2.11]{HT3} to our setting.
Recall that at each point $x=(z,y)\in B\times  Y$, $\omega_{F,z}$ is the unique K\"ahler-Ricci flat metric on each fiber $X_z$ which is cohomologous to $\omega_0|_{X_z}$. We can assume that $\int_{\{z\}\times Y} \omega_{F,z}^n=1$ for all $z\in  B$. For any function $f$ in space-time $B\times  Y\times \mathbb{R}$, we will use $\underline{f}(z,t)$ to denote its fiberwise average:
\begin{equation}
\underline{f}(z,t)=\int_{\{z\}\times Y} f(z,\cdot,t) \,\omega_{F,z}^n.
\end{equation}
The following result will be crucial for us:
\begin{prop}\label{prop:braindead}
Suppose $g=g_{\mathbb{C}^m}+\delta^2 g_{Y,0}$ is a metric on $B\times Y$ where $0<\delta\leq 1$ is arbitrary. For any $k\in \mathbb{N}, \a\in (0,1)$ and $0<\rho<R<1$ with $\rho\geq \Lambda \delta$, there exists $C(k,\a,\rho,R)>0$ such that for all smooth function $\varphi$ on $B\times Y\times \mathbb{R}$ with $\underline{\varphi}=0$, $x_0=(0,y_0), t_0\in \mathbb{R}$ and for all $0\leq j\leq 2k$, we have
\begin{equation}\label{BD-Linfty}
\|\mathfrak{D}^j \varphi\|_{\infty,Q_{g,\rho}(x_0,t_0),g}\leq C \delta^{2k+\a-j}[ \mathfrak{D}^{2k} \varphi]_{\a,\a/2,Q_{g,R}(x_0,t_0),g}.
\end{equation}
Moreover for all $\b\in (0,1)$ such that $j+\b<2k+\a$,
\begin{equation}\label{BD-holder}
[ \mathfrak{D}^{j} \varphi]_{\b,\b/2,Q_{g,\rho}(x_0,t_0),g}\leq C \delta^{2k+\a-j-\b}[ \mathfrak{D}^{2k} \varphi]_{\a,\a/2,Q_{g,R}(x_0,t_0),g}.
\end{equation}
Moreover, the same estimates hold if $\varphi$ is replaced by $\eta=\ddbar\varphi$ where $\underline{\varphi}=0$.
\end{prop}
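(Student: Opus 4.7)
The plan is to reduce everything to the $j=0$ case of \eqref{BD-Linfty} — the bare $L^\infty$ bound $\|\varphi\|_\infty \leq C\delta^{2k+\alpha}[\mathfrak{D}^{2k}\varphi]_{\alpha,\alpha/2}$ — and then obtain all higher order estimates and the H\"older seminorm bounds from it by feeding it into Proposition~\ref{prop:interpolation}. The hypothesis $\rho \geq \Lambda\delta$ enters essentially: it ensures that each parabolic cylinder $Q_{g,\rho}(x_0,t_0)$ contains the entire fiber $\{z\}\times Y$ over every $z$ in its base projection, so any fiberwise maneuver stays inside the cylinder.

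The key step is the $j=0$ bound. Fix $(z,t)$ and set $u = \varphi(z,\cdot,t) \in C^\infty(Y)$, which by hypothesis satisfies $\int_Y u\,\omega_{F,z}^n = 0$. By the mean value theorem there is $y^\star$ with $u(y^\star)=0$, and joining $y$ to $y^\star$ by a minimizing $g_{Y,z}$-geodesic (whose $g$-length is at most $\Lambda^{1/2}\delta$) gives the one-derivative Poincar\'e bound $\|u\|_{L^\infty(Y)} \leq C\|Du\|_{L^\infty(Y,g_{Y,0})}$. To leverage this into a $2k$-th order bound, I combine it with the Gagliardo-Nirenberg-type interpolation on the fixed compact manifold $(Y,g_{Y,0})$,
\[
\|Du\|_\infty \leq C\|u\|_\infty^{1-\theta}\,\big([D^{2k}u]_{\alpha,g_{Y,0}} + \|u\|_\infty\big)^\theta, \qquad \theta = \tfrac{1}{2k+\alpha}.
\]
Separating the two regimes according to whether $\|u\|_\infty$ or $[D^{2k}u]_{\alpha,g_{Y,0}}$ dominates (the case where $\|u\|_\infty$ dominates is ruled out by a compactness argument: any weak limit would be a zero-mean polyharmonic function on a closed manifold, hence zero), I obtain the unit-scale bound $\|u\|_\infty \leq C[D^{2k}u]_{\alpha,g_{Y,0}}$. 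The scaling identities $|D^{2k}u|_g = \delta^{-2k}|D^{2k}u|_{g_{Y,0}}$ and $d^g = \delta\, d^{g_{Y,0}}$ in fiber directions convert this to $\|u\|_\infty \leq C\delta^{2k+\alpha}[D^{2k}u]_{\alpha,g}$, and since the pure spatial seminorm at a frozen time slice is dominated by the parabolic one, I arrive at the desired bound.

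The remaining statements \eqref{BD-Linfty} for $j\geq 1$ and \eqref{BD-holder} then fall out of Proposition~\ref{prop:interpolation} applied between the radii $\rho' = \rho + \min(\delta,(R-\rho)/2)$ and $\rho$, with the $j=0$ bound substituted for $\|\varphi\|_{\infty,Q_{g,\rho'}}$. When $\delta \leq (R-\rho)/2$ the spatial gap equals $\delta$ and every power matches exactly; in the opposite regime $\delta$ is comparable to $R-\rho$, so $\delta^{2k+\alpha-j}$ and $(R-\rho)^{2k+\alpha-j}$ differ only by constants allowed to depend on $\rho, R$. To handle the $\eta = \ddbar\varphi$ version, I would invoke the $\varphi$-result with $k$ replaced by $k+1$, use $\|\mathfrak{D}^j \eta\|_\infty \leq C\|\mathfrak{D}^{j+2}\varphi\|_\infty$, and split $\mathfrak{D}^{2k+2}\varphi$ into pure spatial derivatives (which reassemble into derivatives of $\eta$, hence are controlled by $[\mathfrak{D}^{2k}\eta]_\alpha$) and time derivatives $\partial_t^q \varphi$, which still have vanishing fiberwise average because $\omega_{F,z}^n$ is $t$-independent, so the $\varphi$-result reapplies.

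The principal obstacle is the unit-scale estimate $\|u\|_\infty \leq C[D^{2k}u]_{\alpha,g_{Y,0}}$ on a single fiber, because the single vanishing-mean constraint $\underline{u}=0$ is not inherited by the intermediate derivatives $Du,\ldots,D^{2k-1}u$, so a naive iteration of Poincar\'e-Wirtinger is unavailable. The fix is to close the bootstrap in one step with a Gagliardo-Nirenberg interpolation rather than iterating. A secondary nuisance is that the natural fiber metric is $g_{Y,z}$ rather than $g_{Y,0}$, but the uniform equivalence \eqref{equiv-product-ref} reduces all constants to ones depending only on $(Y,g_{Y,0})$, $k$, $\alpha$, $\rho$, and $R$, which is exactly what is claimed.
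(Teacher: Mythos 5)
Your scalar-$\varphi$ argument tracks the paper's: reduce by interpolation (Proposition \ref{prop:interpolation}) to the $j=0$ $L^\infty$ estimate, then prove that fiberwise at frozen time using the zero-mean constraint. You replace the paper's one-line citation of \cite[Lemma~2.10]{HT3} with Gagliardo--Nirenberg plus a compactness/Liouville step. That route can be made to work, but the Liouville step you gloss over — that a fiberwise zero-mean function with $\nabla^{2k}$ parallel on a closed manifold must vanish — needs an actual argument (from $\nabla^{2k+1}u_\infty=0$ pass to $\Delta^k u_\infty$ constant, hence zero by $\int_Y\Delta f=0$, then iterate ``harmonic on compact $\Rightarrow$ constant''); stated as self-evident it is not quite ``polyharmonic zero-mean implies zero'' since a priori one only knows $\nabla^{2k}u_\infty$ is parallel, not zero. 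The uniformity of the constant in the base point $z$ also requires letting $z$ vary along the compactness sequence, using \eqref{equiv-product-ref}.

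The treatment of $\eta=\ddbar\varphi$ has a genuine gap. You want $\|\DD^j\eta\|_\infty\leq C\|\DD^{j+2}\varphi\|_\infty\leq C\delta^{2k+\alpha-j}[\DD^{2k+2}\varphi]_{\alpha,\alpha/2}$ and then convert $[\DD^{2k+2}\varphi]_{\alpha,\alpha/2}$ into $[\DD^{2k}\eta]_{\alpha,\alpha/2}$; that conversion fails for two reasons. First, $\eta$ determines only the $(1,1)$-part of the spatial Hessian of $\varphi$, so $\D^{2k+2}\varphi$ — which also carries the $(2,0)$ and $(0,2)$ parts — is not controlled by $\D^{2k}\eta$: a nonzero pluriharmonic $\varphi$ pulled back from the base (which does satisfy $\underline{\varphi}=0$ fiberwise only if $\varphi\equiv 0$, but one can take $\varphi$ pluriharmonic with small nonzero fiberwise mean subtracted, or more directly note that a zero-mean $\varphi$ can still have nonzero $(2,0)$-Hessian with $\eta$ tiny) is enough to see the inequality cannot hold with a uniform constant. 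Second, and decisively, $\DD^{2k+2}\varphi$ contains the pure time derivative $\partial_t^{k+1}\varphi$, which carries no spatial derivative and bears no relation to $\eta$; reapplying ``the $\varphi$-result'' to $\partial_t^{k+1}\varphi$ only yields bounds in still-higher derivatives of $\varphi$ (never of $\eta$), so the recursion cannot close on $[\DD^{2k}\eta]_{\alpha,\alpha/2}$. The paper sidesteps this entirely: it reduces by interpolation directly to the $j=0$ $L^\infty$ bound $\|\eta\|_\infty\leq C\delta^{2k+\alpha}[\DD^{2k}\eta]_{\alpha,\alpha/2}$, then freezes time and invokes the purely elliptic estimate \cite[(2.61)]{HT3}, which is the statement for $(1,1)$-forms proved in HT3 by a separate fiberwise argument and never passes through a full-Hessian bound on $\varphi$.
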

\begin{rmk}
We require $\rho\geq \Lambda \delta$ here so as to ensure that
\begin{equation}
Q_{g,\rho}(x_0,t_0)=\left(B_{\mathbb{C}^m}(\rho)\times Y\right)\times [t_0-\rho^2,t_0]=Q_\rho(x_0,t_0),
\end{equation}
which is needed in order to apply \cite[Theorem 2.11]{HT3}.
\end{rmk}
\begin{proof}
Suppose we can show that
\begin{equation}\label{burzum}
\|\vp\|_{\infty,Q_{g,\rho}(x_0,t_0),g}\leq C \delta^{2k+\a}[ \mathfrak{D}^{2k} \varphi]_{\a,\a/2,Q_{g,R}(x_0,t_0),g},
\end{equation}
then \eqref{BD-Linfty} and \eqref{BD-holder} would follow from this and the interpolation Proposition \ref{prop:interpolation}, as in \cite[(2.61)--(2.62)]{HT3}. To prove \eqref{burzum}, given
$(x,t)\in Q_{g,\rho}(x_0,t_0)$, write as usual $x=(z,y)$ and freeze the time variable in $\vp(\cdot,t)$. Assuming first that $k\geq 1$, similarly to \cite[(2.81)]{HT3} we can use $\underline{\vp}(t)=0$ to estimate
\begin{equation}\begin{split}
\sup_{\{z\}\times Y}|\vp(t)|&\leq C\sup_{\{z\}\times Y}|\nabla_{\mathbf{f}}\vp(t)|_{\{z\}\times Y}|_{g_{Y,z}}\leq C[\nabla^{2k}_{\mathbf{f\cdots f}}\vp(t)|_{\{z\}\times Y}]_{C^\alpha(\{z\}\times Y, g_{Y,z})}\\
&\leq C\delta^{2k+\alpha}[\D^{2k}_{\mathbf{f\cdots f}}\vp(t)|_{\{z\}\times Y}]_{C^\alpha(\{z\}\times Y,g)}
\leq C\delta^{2k+\alpha}[\D^{2k}\vp(t)]_{C^\alpha(B_R\times Y,g)}\\
&\leq C\delta^{2k+\alpha}[\DD^{2k}\vp]_{\a,\a/2,Q_{g,R}(x_0,t_0),g},
\end{split}\end{equation}
where in the second inequality we used \cite[Lemma 2.10]{HT3}, and \eqref{burzum} follows when $k\geq 1$. Lastly, when $k=0$ the argument is straightforward: using again that $\underline{\vp}(t)=0$ we bound
\begin{equation}\begin{split}
\sup_{\{z\}\times Y}|\vp(t)|&\leq C[\vp(t)|_{\{z\}\times Y}]_{C^\alpha(\{z\}\times Y, g_{Y,z})}\leq C\delta^{\alpha}[\vp(t)|_{\{z\}\times Y}]_{C^\alpha(\{z\}\times Y,g)}\\
&\leq C\delta^{\alpha}[\vp(t)]_{C^\alpha(B_R\times Y,g)}\leq C\delta^{\alpha}[\vp]_{\a,\a/2,Q_{g,R}(x_0,t_0),g},
\end{split}\end{equation}
which completes the proof of \eqref{BD-Linfty} and \eqref{BD-holder} for $\vp$. Lastly, the analogous estimates for $\eta=\ddbar\vp$ follow in a similar fashion, by first using interpolation Proposition \ref{prop:interpolation} to reduce ourselves to proving
\begin{equation}\label{marduk}
\|\eta\|_{\infty,Q_{g,\rho}(x_0,t_0),g}\leq C \delta^{2k+\a}[ \mathfrak{D}^{2k} \eta]_{\a,\a/2,Q_{g,R}(x_0,t_0),g},
\end{equation}
and then proving \eqref{marduk} by freezing the time variable $t$ and applying \cite[(2.61)]{HT3} to $\eta(\cdot, t)$ and get
\begin{equation}
\|\eta(t)\|_{\infty,B_R\times Y,g}\leq C \delta^{2k+\a}[ \D^{2k} \eta]_{C^\alpha(B_R\times Y,g)}\leq C \delta^{2k+\a}[ \mathfrak{D}^{2k} \eta]_{\a,\a/2,Q_{g,R}(x_0,t_0),g},
\end{equation}
which concludes the proof.
\end{proof}

\subsection{Parabolic Schauder estimates}

In the course of the proof of our main Theorem, we also need two parabolic Schauder estimates on cylinders, which will be used when linearizing the \KR flow equation, and which are analogs of \cite[Proposition 2.15]{HT3}.  Let $(z_\ell,y_\ell)\to (z_\infty,y_\infty)$  be a convergent family of points in $B\times Y$. For $\ell\geq 1$, consider the diffeomorphism $\Lambda_\ell: (\check z,\check y)\mapsto (z_\ell+e^{-t_\ell/2}\check z,\check y)$, and let $\check J_\ell$ be the pullback of the complex structure $J$ via $\Lambda_\ell$, which converges to $\check J_\infty=J_{\mathbb{C}^m}+J_{Y,z_\infty}$ locally smoothly. Similarly, we let $\check{\D}_\ell$ denotes the pullback of the connection $\D$ so that $\check{\D}_\ell\to \check{\D}_\infty=\nabla^{\mathbb{C}^m}+\nabla^{g_{Y,z_\infty}}$ locally smoothly in spacetime.  By the translation, we may assume that our new base point is $\check p_\ell=(\check z_\ell,\check y_\ell)=(0,\check y_\ell)\to (0,\check y_\infty)$. We rescale the geometric quantities in a parabolic way centered at $t_\ell$, such as for example $\check g_\ell(\check t)=e^{t_\ell}\Lambda_\ell^* g(t_\ell+e^{-t_\ell}\check t),$ where recall that we denote by $g(t)=g_{\C^m}+e^{-t}g_{Y,0}$, so that $\check{g}_\ell(0)=g_{\C^m}+g_{Y,0}$.

The first Schauder estimate is for scalar functions:

\begin{prop}\label{prop:schauder}
 Let $U\subset \mathbb{C}^m\times Y$ be an open set containing $(0,y_\infty)$.  Let $\check g_\ell, \check{\omega}_\ell^\sharp$ be Riemannian resp. $\check J_\ell$-\K metrics on $U$ that converges locally smoothly to a Riemannian resp. $\check J_\infty$-\K metric $\check{g}_\infty,\check{\omega}_\infty^\sharp$ on $U$.
Then for all $a\in \mathbb{N}_{>0}$, $\a\in (0,1)$ and $R>0$, there exists $\ell_0>0$ and $C>0$ such that  for all $0<\rho<R$, $\ell\geq \ell_0$ and all smooth function $u$ defined on $U\times \mathbb{R}$, we have that
\begin{equation}
\begin{split}
[\mathfrak{D}^{2a}  u]_{\a,\a/2,\check Q_{\check g_\ell(0),\rho}(\check p_\ell,0 ),\check g_\ell(0)}&\leq C[ \mathfrak{D}^{2a-2}(\partial_t-\Delta_{\check \omega_\ell^\sharp})u]_{\a,\a/2,\check Q_{\check g_\ell(0),R}(\check p_\ell,0 ),\check g_\ell(0)}\\
&\quad \quad +C(R-\rho)^{-2a-\a} \|u\|_{\infty,\check Q_{\check g_\ell(0),R}(\check p_\ell,0 )},
\end{split}
\end{equation}
whenever $\check Q_{\check g_\ell(0),R}(\check p_\ell,0 )\subset U\times \mathbb{R}$.
\end{prop}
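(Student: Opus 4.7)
The plan is to prove the estimate by induction on $a$, reducing to the classical interior parabolic Schauder estimate in the base case $a=1$ and using commutator identities plus Proposition \ref{prop:interpolation} for the inductive step.

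For the base case $a=1$, since $\check\omega_\ell^\sharp\to\check\omega_\infty^\sharp$ locally smoothly, for $\ell\geq \ell_0$ the coefficients of $\Delta_{\check\omega_\ell^\sharp}$ in a fixed coordinate chart around $\check p_\infty$ are uniformly bounded in every $C^k$-norm on a fixed neighborhood of $(\check p_\infty,0)$. The classical interior parabolic Schauder estimate (see e.g.\ Lieberman, \emph{Second Order Parabolic Differential Equations}, Ch.~IV) then gives the desired bound in that chart with respect to the standard Euclidean parabolic H\"older seminorm. To transfer this into the $\check\D_\ell$-parallel-transport seminorms appearing in the statement, one uses $\check\D_\ell\to\check\D_\infty=\nabla^{\C^m}+\nabla^{g_{Y,z_\infty}}$ smoothly, so $\P^{\check\D_\ell}$-parallel transport along short $\P$-geodesics in a fixed compact set is uniformly $C^\infty$-close to the identity in the chart. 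Arguing as in \cite[\S 2.1.1]{HT3} then yields a two-sided comparison of the two H\"older seminorm systems on a fixed parabolic cylinder with constants independent of $\ell\geq\ell_0$.

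For the inductive step at order $2a$, I exploit that $\Delta_{\check\omega_\ell^\sharp}$ is $t$-independent, so $v:=\partial_t u$ solves $(\partial_t-\Delta_{\check\omega_\ell^\sharp})v=\partial_t f$, where $f=(\partial_t-\Delta_{\check\omega_\ell^\sharp})u$; the inductive hypothesis applied to $v$ on an intermediate cylinder controls every component of $\mathfrak{D}^{2a}u$ in which at least one time derivative appears. For the remaining purely spatial term $\D^{2a}u$, I apply the base case to each scalar component $w$ of the tensor $\D^{2a-2}u$. Since $(\partial_t-\Delta_{\check\omega_\ell^\sharp})w=\D^{2a-2}f+[\D^{2a-2},\Delta_{\check\omega_\ell^\sharp}]u$ and the commutator is an algebraic expression in $\D^{\leq 2a-1}u$ contracted with derivatives of the curvature of $\check\D_\ell$ (uniformly bounded in $\ell$ by smooth convergence), the base case yields control of $[\D^{2a}u]_{\a,\a/2}$ in terms of $[\D^{2a-2}f]_{\a,\a/2}$ plus H\"older seminorms of lower-order $u$-derivatives on a slightly larger cylinder. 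The latter are absorbed into $\|u\|_\infty$ on the right-hand side via Proposition \ref{prop:interpolation}, applied on a chain of nested cylinders with radii interpolating between $\rho$ and $R$, in the standard Campanato iteration style.

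The main obstacle will be the uniform comparison needed in the base case. The $\P$-geodesics used to define the $\check\D_\ell$-parallel-transport H\"older seminorm are horizontal affine segments in $\C^m$ and vertical $g_{Y,z_\ell}$-geodesics in $Y$, so they vary with $\ell$. I expect this to reduce to showing that for $\ell$ large and $x,x'$ in a fixed chart joined by a short such $\P$-geodesic, the operator $\P^{\check\D_\ell}_{x'x}-\mathrm{Id}$ has operator norm $O(d^{\check g_\ell(0)}(x,x'))$ with constants independent of $\ell$. This should follow from the smooth convergence $\check\D_\ell\to\check\D_\infty$ combined with the bounded-geometry estimates on parallel transport from \cite[\S 2.1.1]{HT3} applied to the smooth limit connection $\check\D_\infty$.
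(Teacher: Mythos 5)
Your plan is broadly workable but takes a different (and more circuitous) route than the paper, and there are two places where the sketch is too optimistic.

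\emph{Different route.} You set up an induction on $a$, with the base case $a=1$ handled by the classical interior parabolic Schauder estimate and the inductive step handled by applying the base case to scalar components of $\D^{2a-2}u$ plus commutator terms and applying the inductive hypothesis to $v=\partial_t u$. The paper avoids this induction entirely: it invokes the classical Euclidean parabolic Schauder estimate \emph{directly at order $2a$} (see \cite[Theorem 4.9]{Lie}), which already controls $[\partial^p\partial_t^q u]_{\a,\a/2}$ for all $p+2q=2a$ in terms of $[\partial^p\partial_t^q f]_{\a,\a/2}$ for $p+2q=2a-2$ plus lower order. Since such a top-order estimate is available in the literature, inducting on $a$ adds considerable bookkeeping (commutators, absorption of $\|\partial_t u\|_\infty$, tensor versus scalar issues when $w$ runs over coordinate components of $\D^{2a-2}u$) without buying anything.

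\emph{Two gaps in the sketch.} First, the norm-conversion step is more delicate than you indicate. You concentrate on showing that $\P^{\check\D_\ell}_{x'x}-\mathrm{Id}$ has operator norm $O(d^{\check g_\ell(0)}(x,x'))$, but that only addresses the parallel transport appearing inside the H\"older seminorm. The harder issue is converting the \emph{derivatives} themselves: $\check\D_\ell^{2a}u-\partial^{2a}u$ is a sum of lower-order Euclidean derivatives of $u$ contracted against derivatives of the (small but nonzero) Christoffel symbols $\check\Gamma_\ell$. When you estimate the $C^{\a,\a/2}$ seminorm of $\mathfrak{D}^{2a}u$ versus $\partial^p\partial_t^q u$ you therefore acquire H\"older seminorms of all lower-order derivatives of $u$; these are not automatically small and must be absorbed by interpolation and an iteration lemma, with the smallness coming from choosing the coordinate patch small and $\ell$ large so that $\check\Gamma_\ell\to 0$ in $C^k$. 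Your appeal to a ``two-sided comparison of the two H\"older seminorm systems'' glosses over exactly this absorption step.

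Second, you work in ``a fixed coordinate chart around $\check p_\infty$.'' Normal coordinates for $\check g_\ell(0)=g_{\C^m}+g_{Y,z_\ell}$ are only available on balls of a fixed small radius $\sigma$ determined by the geometry of $(Y,g_{Y,0})$, whereas the cylinder $\check Q_{\check g_\ell(0),\rho}(\check p_\ell,0)$ may have $\rho$ comparable to, or larger than, $\operatorname{diam}(Y,g_{Y,z_\ell})$ so that its fiber part is all of $Y$. One therefore needs a chaining argument: apply the local Schauder-and-conversion step on small sub-cylinders of radius $\sigma$ covering $\check Q_\rho$, and combine along chains of intermediate points to recover the seminorm bound on the full cylinder. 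Your proposal never addresses this, and the Campanato-style nesting you invoke (cylinders between radii $\rho$ and $R$) is a separate device (for the absorption in the iteration) and does not solve the covering problem.

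With the norm-conversion absorption and the chaining filled in, the inductive argument should go through, but it will be longer than the paper's direct proof.
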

\begin{proof}
We let $\sigma$ be small enough so that with respect to $\check g_\infty=g_{\mathbb{C}^m}+g_{Y,z_\infty}$, the $\check g_\infty$-geodesic ball of size $\sigma$ is geodesically convex and admits a normal coordinate chart centered at any $(z,y)\in \mathbb{C}^m\times Y$. This is possible since $Y$ is compact and $\mathbb{C}^m$ is flat with respect to $\check g_\infty$. Since $\check g_\ell(t)\to g_{\mathbb{C}^m}+g_{Y,z_\infty}$ as $\ell\to+\infty$ and $\check \D_\ell\to \check \D_\infty$ locally  smoothly, for any large $R>0$ there exists $\ell_0$ such that for all $\ell>\ell_0$ and $(\check p,\check t)\in \check Q_{\check g_\ell(0),2R}(\check p_\ell,0)$, $B_{\check g_\ell(0)}(\check p,\sigma)$ is geodesically convex and is compactly contained in a Euclidean ball of radius $2\sigma$. Moreover, we can assume that the \K structure $\check{\omega}^\sharp_\ell, \check J_\ell$ is smoothly close to the product structure $\check{\omega}^\sharp_\infty, \check{J}_\infty$ in the parabolic domain $\check Q_{\check g_\ell(0),2R}(\check p_\ell,0 )$.

The product metric $\check g_\ell(0)$ is uniformly comparable to the Euclidean metric on $\check Q_{\check g_\ell(0),\sigma}(\check p,\check t)$, for all $(\check p,\check t)\in \check Q_{\check g_\ell(0),R}(\check p_\ell,0 )$. By the standard Euclidean parabolic Schauder estimates (see e.g. \cite[Theorem 4.9 and p.84]{Lie}), there exists $C>0$ such that for all $0<\rho_1<\rho_2<\sigma$, and functions $u$ on $\check Q_{\check g_\ell(0),\sigma}(\check p,\check t)$ where $(\check p,\check t)\in \check Q_{\check g_\ell(0),R}(\check p_\ell,0 )$, we have
\begin{equation}\label{eqn:standSchau}
\begin{split}
&\quad \sum_{p+2q=2a}[\partial^{p}\partial_t^qu]_{\a,\a/2,\check Q_{\check g_\ell(0),\rho_1}(\check p,\check t)}+ \sum_{p+2q\leq 2a}(\rho_2-\rho_1)^{-2a-\a+p+2q}\|\partial^{p}\partial_t^qu\|_{\infty,\check Q_{\check g_\ell(0),\rho_1}(\check p,\check t)} \\
&\leq C \sum_{p+2q=2a-2}\left[\partial^{p}\partial_t^q \left(\frac{\de}{\de t}-\Delta_{\check\omega_\ell^\sharp}\right)u\right]_{\a,\a/2,\check Q_{\check g_\ell(0),\rho_2}(\check p,\check t)} +  C(\rho_2-\rho_1)^{-2a-\a}\| u\|_{\infty,\check Q_{\check g_\ell(0),\rho_2}(\check p,\check t)}\\
&\quad +C\sum_{p+2q\leq 2a-2}(\rho_2-\rho_1)^{-2a-\a+p+2q+2}\left|\left|\partial^{p}\partial_t^q \left(\frac{\de}{\de t}-\Delta_{\check\omega_\ell^\sharp}\right)u\right|\right|_{\infty,\check Q_{\check g_\ell(0),\rho_2}(\check p,\check t)},
\end{split}
\end{equation}
where the derivatives, H\"older norms are computed using the standard Euclidean metric and $\Delta_{\check\omega_\ell^\sharp}$ denotes the Laplacian of the metric $\check \omega_\ell^\sharp$.

We first use interpolation to eliminate the
terms in the last line of \eqref{eqn:standSchau}. For $0<\rho_1<\rho_2<\sigma$,  we let $\rho_2'=\frac12 (\rho_1+\rho_2)$ so that \eqref{eqn:standSchau} holds with $\rho_2$ replaced by $\rho_2'$ and $C$ replaced by a slightly larger $C$.  Hence the  standard Euclidean interpolation (or Proposition \ref{prop:interpolation}) yields
\begin{equation}\label{eqn:standSchau-2}
\begin{split}
&\sum_{p+2q\leq 2a-2}(\rho_2-\rho_1)^{-2a-\a+p+2q+2}\left|\left|\partial^{p}\partial_t^q \left(\frac{\de}{\de t}-\Delta_{\check\omega_\ell^\sharp}\right)u\right|\right|_{\infty,\check Q_{\check g_\ell(0),\rho_2'}(\check p,\check t)}\\
&\leq C \sum_{p+2q=2a-2}\left[\partial^{p}\partial_t^q \left(\frac{\de}{\de t}-\Delta_{\check\omega_\ell^\sharp}\right)u\right]_{\a,\a/2,\check Q_{\check g_\ell(0),\rho_2}(\check p,\check t)} +  C(\rho_2-\rho_1)^{-2a-\a+2}\left|\left| \left(\frac{\de}{\de t}-\Delta_{\check\omega_\ell^\sharp}\right)u\right|\right|_{\infty,\check Q_{\check g_\ell(0),\rho_2}(\check p,\check t)},
\end{split}
\end{equation}
where we have used $\rho_2'-\rho_2=\frac12 (\rho_2-\rho_1)$.

We now want to estimate $L^\infty$ norm of $\left(\frac{\de}{\de t}-\Delta_{\check\omega_\ell^\sharp}\right)u$ in terms of the $L^\infty$ norm of $u$ and of the H\"older seminorm of top order derivatives of $u$. To do this, we interpolate again. Let $\e\in (0,1)$ be a small constant to be determined, and given $0<\rho_1<\rho_2<\sigma$, denote $\rho_2''=\rho_1+(1-\e) (\rho_2-\rho_1)$. We consider \eqref{eqn:standSchau-2} with $\rho_2$ replaced by $\rho_2''$. Since $\check \omega_\ell^\sharp$ is uniformly bounded in $C^\infty$, the standard Euclidean interpolation (or Proposition \ref{prop:interpolation}) yields
\begin{equation}\label{spandau}
\begin{split}
&\quad (\rho_2''-\rho_1)^{-2a-\a+2}\left|\left| \left(\frac{\de}{\de t}-\Delta_{\check\omega_\ell^\sharp}\right)u\right|\right|_{\infty,\check Q_{\check g_\ell(0),\rho_2''}(\check p,\check t)}\\
&=\left(\frac{\ve}{1-\ve} \right)^{2a+\a-2}(\rho_2-\rho_2'')^{-2a-\a+2}\left|\left| \left(\frac{\de}{\de t}-\Delta_{\check\omega_\ell^\sharp}\right)u\right|\right|_{\infty,\check Q_{\check g_\ell(0),\rho_2''}(\check p,\check t)}\\
&\leq C\left(\frac{\ve}{1-\e} \right)^{2a+\a-2}\left(\sum_{p+2q=2a}\left[\partial^p\partial_t^q u \right]_{\a,\a/2,\check Q_{\check g_\ell(0),\rho_2}(\check p,\check t)}+(\rho_2-\rho_2'')^{-2a-\a} \|u\|_{\infty,\check Q_{\check g_\ell(0),\rho_2}(\check p,\check t)}\right).
\end{split}
\end{equation}
Therefore, inserting \eqref{eqn:standSchau-2} and \eqref{spandau} into \eqref{eqn:standSchau}, we can choose $\e$ sufficiently small so that
\begin{equation}
\begin{split}
&\quad \sum_{p+2q=2a}[\partial^{p}\partial_t^qu]_{\a,\a/2,\check Q_{\check g_\ell(0),\rho_1}(\check p,\check t)}+ \sum_{p+2q\leq 2a}(\rho_2-\rho_1)^{-2a-\a+p+2q}\|\partial^{p}\partial_t^qu\|_{\infty,\check Q_{\check g_\ell(0),\rho_1}(\check p,\check t)} \\
&\leq C \sum_{p+2q=2a-2}\left[\partial^{p}\partial_t^q \left(\frac{\de}{\de t}-\Delta_{\check\omega_\ell^\sharp}\right)u\right]_{\a,\a/2,\check Q_{\check g_\ell(0),\rho_2}(\check p,\check t)} +  C(\rho_2-\rho_1)^{-2a-\a}\| u\|_{\infty,\check Q_{\check g_\ell(0),\rho_2}(\check p,\check t)}\\
&\quad +\frac12 \sum_{p+2q=2a}[\partial^{p}\partial_t^qu]_{\a,\a/2,\check Q_{\check g_\ell(0),\rho_2}(\check p,\check t)},
\end{split}
\end{equation}
for all $0<\rho_1<\rho_2<\sigma$. We can then apply the iteration lemma in \cite[Lemma 2.9]{HT3} to obtain
\begin{equation}\label{eqn:standSchau-3}
\begin{split}
&\quad \sum_{p+2q=2a}[\partial^{p}\partial_t^qu]_{\a,\a/2,\check Q_{\check g_\ell(0),\rho_1}(\check p,\check t)}+ \sum_{p+2q\leq 2a}(\rho_2-\rho_1)^{-2a-\a+p+2q}\|\partial^{p}\partial_t^qu\|_{\infty,\check Q_{\check g_\ell(0),\rho_1}(\check p,\check t)} \\
&\leq C \sum_{p+2q=2a-2}\left[\partial^{p}\partial_t^q \left(\frac{\de}{\de t}-\Delta_{\check\omega_\ell^\sharp}\right)u\right]_{\a,\a/2,\check Q_{\check g_\ell(0),\rho_2}(\check p,\check t)} +  C(\rho_2-\rho_1)^{-2a-\a}\| u\|_{\infty,\check Q_{\check g_\ell(0),\rho_2}(\check p,\check t)}.
\end{split}
\end{equation}

We now claim that one can interchange the Euclidean derivatives $\partial$ and Euclidean parallel transport $P$ in the definition of H\"older norms with $\check\D_\ell$ and $\check{\mathbb{P}}_\ell$. Although in our definition of H\"older norms we only consider horizontal and vertical $\mathbb{P}$-geodesics while the standard Euclidean H\"older norms consider all possible segments, it is immediate to see from \cite[(2.31)]{HT3} that the difference is harmless.  To compare $\partial$ with $\check \D_\ell$, we note that on each $\check Q_{\check g_\ell(0),\sigma}(\check p,\check t)$ we can write $\partial=\check \D_\ell+\check \Gamma_\ell$ (indeed independent of $\check{t}$) so that $\check \Gamma_\ell\to \check \Gamma_\infty$ where $\check \Gamma_\infty(\check p)=0$ by our choice of normal coordinate centered at $\check p$. In particular, the local smooth convergence implies that $\check \Gamma_\ell\to 0$ in $C^k$ for fixed $k$ uniformly as $\ell\to +\infty$ and $\sigma\to 0$, and then ODE estimates show that $P-\check{\P}_\ell\to 0$ in $C^k$. In particular, switching from $\partial^p$ to $\check \D_\ell^p$ will generate an error of the form (ignoring combinatorial constants):
\begin{equation}\label{eqn:error}
\left(\check \D_\ell^a-\partial^a\right) u=\sum_{i+j_1+\dots+j_q+q=a,q>0} \partial^{i}u \circledast   \partial^{j_1}\check \Gamma_\ell\circledast \dots\circledast  \partial^{j_q}\check \Gamma_\ell.
\end{equation}
Since $0<\rho_1<\rho_2\leq \sigma<1$, and since all terms in \eqref{eqn:error} with $\check{\Gamma}_\ell$ are $o(1)$,  the right hand side in \eqref{eqn:error} can be absorbed in the second term in the left hand side of \eqref{eqn:standSchau}. Thus, going back to \eqref{eqn:standSchau-3}, we can change the $\de,P$ with $\check{D}_\ell,\check{\P}_\ell$, and using again interpolation we obtain
\begin{equation}\label{eqn:standSchau-switch}
\begin{split}
&\quad [\mathfrak{D}^{2a} u]_{\a,\a/2,\check Q_{\check g_\ell(0),\rho_1}(\check p,\check t),\check g_\ell(0)}+ \sum_{b\leq 2a}(\rho_2-\rho_1)^{-2a-\a+b}\|\mathfrak{D}^bu\|_{\infty,\check Q_{\check g_\ell(0),\rho_1}(\check p,\check t),\check g_\ell(0)} \\
&\leq C \sum_{p+2q=2a-2}\left[\partial^{p}\partial_t^q \left(\frac{\de}{\de t}-\Delta_{\check \omega_\ell^\sharp}\right) u\right]_{\a,\a/2,\check Q_{\check g_\ell(0),\rho_2}(\check p,\check t)} +  C(\rho_2-\rho_1)^{-2a-\a}\| u\|_{\infty,\check Q_{\check g_\ell(0),\rho_2}(\check p,\check t)}\\
&\leq  C \left[\mathfrak{D}^{2a-2} (\partial_t-\Delta_{\check \omega_\ell^\sharp}) u\right]_{\a,\a/2,\check Q_{\check g_\ell(0),\rho_2}(\check p,\check t),\check g_\ell(0)} +  C(\rho_2-\rho_1)^{-2a-\a}\| u\|_{\infty,\check Q_{\check g_\ell(0),\rho_2}(\check p,\check t)}\\
&\quad + \frac12 \sum_{b\leq 2a}(\rho_2-\rho_1)^{-2a-\a+b}\|\mathfrak{D}^bu\|_{\infty,\check Q_{\check g_\ell(0),\rho_2}(\check p,\check t),\check g_\ell(0)},
\end{split}
\end{equation}
where the coefficient $\frac12$ in the last term is achieved by choosing $\sigma$ sufficiently small thanks to the local smooth convergence of $\check \Gamma_\ell$.  We fix $\sigma$ from now on.  It now follows from \cite[Lemma 2.9]{HT3} again that we have
\begin{equation}\label{eqn:standSchau-switched}
\begin{split}
&\quad [\mathfrak{D}^{2a} u]_{\a,\a/2,\check Q_{\check g_\ell(0),\rho_1}(\check p,\check t),\check g_\ell(0)}+ \sum_{b\leq 2a}(\rho_2-\rho_1)^{-2a-\a+b}\|\mathfrak{D}^bu\|_{\infty,\check Q_{\check g_\ell(0),\rho}(\check p,\check t),\check g_\ell(0)} \\
&\leq  C \left[\mathfrak{D}^{2a-2} (\partial_t-\Delta_{\check \omega_\ell^\sharp}) u\right]_{\a,\a/2,\check Q_{\check g_\ell(0),\rho_2}(\check p,\check t),\check g_\ell(0)} +  C(\rho_2-\rho_1)^{-2a-\a}\| u\|_{\infty,\check Q_{\check g_\ell(0),\rho_2}(\check p,\check t)},
\end{split}
\end{equation}
for all $0<\rho_1<\rho_2<\sigma$. This in particular shows the desired conclusion for all small $0<\rho_1<\rho_2<\sigma$ with arbitrary center $(\check p,\check t)$ in the compact set $ \check Q_{\check g_\ell(0),R}(\check p,0 )$.

Now we prove the H\"older control on $ \check Q_{\check g_\ell(0),\rho}(\check p_\ell,0 )$ for $0<\rho<R$.  We can assume $R\geq \sigma$. If $\rho<\frac12 \sigma\leq \frac12 R$ so that $\sigma-\rho\geq \frac12\sigma\geq C^{-1}(R-\rho)$, then \eqref{eqn:standSchau-switched} implies
\begin{equation}
\begin{split}
[\mathfrak{D}^{2a} u]_{\a,\a/2,\check Q_{\check g_\ell(0),\rho}(\check p,\check t),\check g_\ell(0)}&\leq C \left[\mathfrak{D}^{2a-2} (\partial_t-\Delta_{\check \omega_\ell^\sharp}) u\right]_{\a,\a/2,\check Q_{\check g_\ell(0),\sigma}(\check p,\check t),\check g_\ell(0)} \\
&\quad +  C(\sigma-\rho)^{-2a-\a}\| u\|_{\infty,\check Q_{\check g_\ell(0),\sigma}(\check p,\check t)}\\
&\leq  C \left[\mathfrak{D}^{2a-2} (\partial_t-\Delta_{\check \omega_\ell^\sharp}) u\right]_{\a,\a/2,\check Q_{\check g_\ell(0),R}(\check p,\check t),\check g_\ell(0)} \\
&\quad +  C(R-\rho)^{-2a-\a}\| u\|_{\infty,\check Q_{\check g_\ell(0),\sigma}(\check p,\check t)}.
\end{split}
\end{equation}

Hence, it remains to consider the case $\rho\geq \frac12\sigma$. Given any  two given points $(\check p,\check t)$ and $(\check q,\check s)$ in  $ \check Q_{\check g_\ell(0),\rho}(\check p_\ell,0 )$ with  $r=d^{\check g_\ell(0)}(\check p,\check q)+|\check t-\check s|^{\frac{1}{2}}$.  We choose a sequence of points $\{(\check p_i,\check t_i) \}_{i=1}^N$ inside $\check Q_{\check g_\ell(0),\rho}(\check p_\ell,0 )$ such that $(\check p_1,\check t_1)=(\check p,\check t)$ and $(\check p_N,\check t_N)=(\check q,\check s)$.  We can choose them in a way so that $r_i=d^{\check g_\ell(0)}(\check p_i,\check p_{i+1})+|\check t_i-\check t_{i+1}|^{\frac{1}{2}}\leq \frac14 \sigma$ is uniformly comparable to $r$ and $N\leq C \sigma^{-2}$.  Here the square comes from the time direction.  Moreover, we can assume $ \check Q_{\check g_\ell(0),\frac14\sigma}(\check p_i,\check t_i)\subset  \check Q_{\check g_\ell(0),\rho}(\check p_\ell,0 ) $ for all $i=2,\dots,N-1$ (i.e.  except  $(\check p,\check t)$ and $(\check q,\check s)$). For each $i=1,\dots,N-1$, we can apply \eqref{eqn:standSchau-switched} again to obtain
\begin{equation}\label{langa}
\begin{split}
&\quad \frac{|\mathfrak{D}^{2a}u(\check p_{i+1},\check t_{i+1})- \check{\mathbb{P}_\ell}\mathfrak{D}^{2a}u(\check p_i,\check t_i)|_{\check g_\ell(0)}}{r_i^\a}\\
&\leq C \left[\mathfrak{D}^{2a-2} (\partial_t-\Delta_{\check \omega_\ell^\sharp}) u\right]_{\a,\a/2,\check Q_{\check g_\ell(0),\frac14\sigma+\min(R-\rho,\frac14\sigma )}(\check p_i,\check t_i),\check g_\ell(0)}\\
&\quad  +  C\min\left(R-\rho,\frac14\sigma \right)^{-2a-\a}\| u\|_{\infty,\check Q_{\check g_\ell(0),\frac14\sigma+\min(R-\rho,\frac14\sigma )}(\check p_i,\check t_i)}\\
&\leq C \left[\mathfrak{D}^{2a-2} (\partial_t-\Delta_{\check \omega_\ell^\sharp}) u\right]_{\a,\a/2,\check Q_{\check g_\ell(0),R}(\check p_i,\check t_i),\check g_\ell(0)}\\
&\quad  +  C(R-\rho)^{-2a-\a}\| u\|_{\infty,\check Q_{\check g_\ell(0),R}(\check p_\ell,0)},
\end{split}
\end{equation}
for all $i=1,\dots,N-1$.  Here we have used the fact that $\sigma\geq C^{-1}R\geq C^{-1}(R-\rho)$. Since
\begin{equation}
 \frac{|\mathfrak{D}^{2a}u(\check p,\check t)- \check{\mathbb{P}_\ell}\mathfrak{D}^{2a}u(\check q,\check s)|_{\check g_\ell(0)}}{r^\a}\leq C\sum_{i=0}^{N-1} \frac{|\mathfrak{D}^{2a}u(\check p_{i+1},\check t_{i+1})- \check{\mathbb{P}_\ell}\mathfrak{D}^{2a}u(\check p_i,\check t_i)|_{\check g_\ell(0)}}{r_i^\a},
\end{equation}
using \eqref{langa} completes the proof after taking supremum over all $(\check p,\check t)$ and $(\check q,\check s)$ in $\check Q_{\check g_\ell(0),\rho}(\check p_\ell,0 )$.

\end{proof}

The second Schauder estimate is for real $(1,1)$-forms:

\begin{prop}\label{prop:schauder2}
 Let $U\subset \mathbb{C}^m\times Y$ be an open set containing $(0,y_\infty)$.  Let $\check g_\ell, \check{\omega}_\ell^\sharp$ be Riemannian resp. $\check J_\ell$-\K metrics on $U$ that converges locally smoothly to a Riemannian resp. $\check J_\infty$-\K metric $\check{g}_\infty,\check{\omega}_\infty^\sharp$ on $U$.
Then for all $a\in \mathbb{N}_{>0}$, $\a\in (0,1)$ and $R>0$, there exists $\ell_0>0$ and $C>0$ such that  for all $0<\rho<R$, $\ell\geq \ell_0$ and all real $\check{J}_\ell$-$(1,1)$ form $\eta$ defined on $U\times \mathbb{R}$, we have that
\begin{equation}\label{JSB}
\begin{split}
[\mathfrak{D}^{2a}  \eta]_{\a,\a/2,\check Q_{\check g_\ell(0),\rho}(\check p_\ell,0 ),\check g_\ell(0)}&\leq C[ \mathfrak{D}^{2a-2}(\partial_t-\Delta_{\check \omega_\ell^\sharp})\eta]_{\a,\a/2,\check Q_{\check g_\ell(0),R}(\check p_\ell,0 ),\check g_\ell(0)}\\
&\quad \quad +C(R-\rho)^{-2a-\a} \|\eta\|_{\infty,\check Q_{\check g_\ell(0),R}(\check p_\ell,0 )},
\end{split}
\end{equation}
whenever $\check Q_{\check g_\ell(0),R}(\check p_\ell,0 )\subset U\times \mathbb{R}$, and where $\Delta_{\check \omega_\ell^\sharp}\eta$ denotes the Hodge Laplacian of $\check \omega_\ell^\sharp$ acting on differential forms.
\end{prop}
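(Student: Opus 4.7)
The plan is to mirror the proof of Proposition \ref{prop:schauder} almost verbatim, working component-by-component in local holomorphic coordinates. The only genuinely new ingredient is the discrepancy between the Hodge Laplacian $\Delta_{\check\omega_\ell^\sharp}$ on real $(1,1)$-forms and the component-wise scalar Laplacian. If in local holomorphic coordinates for $\check J_\ell$ we write $\eta = \sqrt{-1}\,\eta_{j\bar k}\,dz^j\wedge d\bar z^k$, then the K\"ahler identities yield
\begin{equation*}
(\Delta_{\check\omega_\ell^\sharp}\eta)_{j\bar k}=\check g_\ell^{p\bar q}\partial_p\partial_{\bar q}\eta_{j\bar k}+(R_\ell\ast\eta)_{j\bar k},
\end{equation*}
where $R_\ell\ast\eta$ is a zeroth-order expression in $\eta$ whose coefficients are built from the metric and curvature of $\check\omega_\ell^\sharp$. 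Since $\check\omega_\ell^\sharp\to\check\omega_\infty^\sharp$ locally smoothly, these coefficients are uniformly bounded in $C^\infty$ on compact subsets of $U$, and the resulting contributions can be absorbed via Proposition \ref{prop:interpolation} in the same way as the $L^\infty$ term in \eqref{spandau}.

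First, I would fix a geodesic convex radius $\sigma>0$ for the limit metric $\check g_\infty$, small enough to admit normal holomorphic coordinates centered at any point of $\check Q_{\check g_\ell(0),2R}(\check p_\ell,0)$ for $\ell\geq\ell_0$. On parabolic cylinders of size $<\sigma$, the product metric $\check g_\ell(0)$ is uniformly comparable to the Euclidean metric, and the system
\begin{equation*}
(\partial_t-\Delta_{\check\omega_\ell^\sharp})\eta_{j\bar k}=F_{j\bar k}-(R_\ell\ast\eta)_{j\bar k}
\end{equation*}
is a diagonally-principal parabolic system with smooth coefficients, to which the standard Euclidean parabolic Schauder estimate for systems (\cite[Theorem 4.9]{Lie}, applied componentwise) produces an analog of \eqref{eqn:standSchau}.

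Next, I would carry out the two rounds of interpolation precisely as in \eqref{eqn:standSchau-2}--\eqref{spandau}, then switch from Euclidean $\partial,P$ to $\check\D_\ell,\check{\mathbb P}_\ell$ by the normal-coordinates argument in \eqref{eqn:error}: the Christoffel symbols $\check\Gamma_\ell$ vanish at the center and are small in $C^k$ on cylinders of radius $\leq\sigma$, so the commutator contributions can be absorbed by choosing $\sigma$ small enough and invoking \cite[Lemma 2.9]{HT3}. The zeroth-order curvature term $R_\ell\ast\eta$ contributes an extra $[\mathfrak{D}^{2a-2}\eta]_{\alpha,\alpha/2}$ on the right, which is absorbable by Proposition \ref{prop:interpolation} together with the $\|\eta\|_\infty$ term already present. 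The outcome is the analog of \eqref{eqn:standSchau-switched} for $\eta$ on cylinders of radius $<\sigma$.

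Finally, to reach the full radius $\rho<R$, I would repeat the chaining argument at the end of the proof of Proposition \ref{prop:schauder}: cover $\check Q_{\check g_\ell(0),\rho}(\check p_\ell,0)$ by a bounded chain $\{(\check p_i,\check t_i)\}$ with $r_i\leq\sigma/4$ and apply the small-scale estimate around each $(\check p_i,\check t_i)$, then sum the $r_i^\alpha$-normalized $\check{\mathbb P}_\ell$-differences to obtain \eqref{JSB}. The main obstacle I anticipate is bookkeeping in step (i): one must verify that expressing $\Delta_{\check\omega_\ell^\sharp}\eta$ and its $\mathfrak D$-derivatives in local coordinates produces only zeroth-order coupling terms with coefficients whose $C^\infty$ norms are bounded uniformly in $\ell$, so that the absorption in interpolation is legitimate with constants independent of $\ell$. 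This follows from the smooth convergence $\check\omega_\ell^\sharp\to\check\omega_\infty^\sharp$ and $\check\D_\ell\to\check\D_\infty$ together with the Weitzenb\"ock-type expression above.
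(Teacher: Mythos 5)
Your proposal is correct and follows essentially the same architecture as the paper's proof: a local Euclidean parabolic Schauder estimate on $\sigma$-small cylinders, two rounds of interpolation to eliminate the lower-order $L^\infty$ terms, the switch from Euclidean $\partial,P$ to $\check\D_\ell,\check{\mathbb P}_\ell$ via normal coordinates and the iteration lemma \cite[Lemma 2.9]{HT3}, and the final chaining argument. The one genuine difference is the starting point: the paper obtains the Euclidean estimate by citing the parabolic Schauder estimate for uniformly parabolic \emph{systems} from \cite[Chapter 7]{LSU}, applied directly to $(\partial_t-\Delta_{\check\omega_\ell^\sharp})$ acting on the components of $\eta$ (this is legitimate since the Hodge Laplacian has scalar principal symbol), whereas you reduce to the scalar case by a Weitzenb\"ock-type decomposition and treat the coupling terms as a RHS to be absorbed by interpolation. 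Both routes work and deliver constants uniform in $\ell$ by the smooth convergence $\check\omega_\ell^\sharp\to\check\omega_\infty^\sharp$. One small imprecision in your write-up: the identity $(\Delta_{\check\omega_\ell^\sharp}\eta)_{j\bar k}=\check g_\ell^{p\bar q}\partial_p\partial_{\bar q}\eta_{j\bar k}+(R_\ell\ast\eta)_{j\bar k}$ with ordinary coordinate derivatives only holds at the center of normal coordinates; in general holomorphic coordinates there are also first-order terms with Christoffel coefficients. This is harmless because in normal coordinates those coefficients vanish at the center and are $O(\sigma)$ on the cylinder, so they are absorbed by exactly the mechanism you invoke for the $\check\Gamma_\ell$ corrections in \eqref{eqn:error}; but as stated the displayed formula is a Bochner formula only at a point, not on the whole chart. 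Your approach is slightly more elementary (only needs the scalar Schauder estimate) at the cost of extra bookkeeping; citing the systems estimate as the paper does is more economical.
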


\begin{proof}
The proof shares some similarities with the proof of Proposition \ref{prop:schauder}. After the same preliminary remarks as there, we first work in the Euclidean setting, with the Hodge Laplacian $\Delta_{\check \omega_\ell^\sharp}\eta$. We can then apply standard parabolic Schauder estimate to the uniformly parabolic system given by $(\partial_t-\Delta_{\check \omega_\ell^\sharp})$ acting on real $(1,1)$-forms \cite[Chapter 7]{LSU}, which shows that there exists $C>0$ such that for all $0<\rho_1<\rho_2<\sigma$, and $\eta$ lives in $\check Q_{\check g_\ell(0),\sigma}(\check p,\check t)$ where $(\check p,\check t)\in \check Q_{\check g_\ell(0),R}(\check p_\ell,0 )$, we have
\begin{equation}\label{eqn:standSchaux}
\begin{split}
&\quad \sum_{p+2q=2a}[\partial^{p}\partial_t^q\eta]_{\a,\a/2,\check Q_{\check g_\ell(0),\rho_1}(\check p,\check t)}+ \sum_{p+2q\leq 2a}(\rho_2-\rho_1)^{-2a-\a+p+2q}\|\partial^{p}\partial_t^q \eta\|_{\infty,\check Q_{\check g_\ell(0),\rho_1}(\check p,\check t)} \\
&\leq C \sum_{p+2q=2a-2}\left[\partial^{p}\partial_t^q \left(\partial_t-\Delta_{\check\omega_\ell^\sharp} \right) \eta\right]_{\a,\a/2,\check Q_{\check g_\ell(0),\rho_2}(\check p,\check t)} +  C(\rho_2-\rho_1)^{-2a-\a}\| \eta\|_{\infty,\check Q_{\check g_\ell(0),\rho_2}(\check p,\check t)}\\
&\quad +C\sum_{p+2q\leq 2a-2}(\rho_2-\rho_1)^{-2a-\a+p+2q+2}\left|\left|\partial^{p}\partial_t^q\left(\partial_t-\Delta_{\check\omega_\ell^\sharp} \right) \eta\right|\right|_{\infty,\check Q_{\check g_\ell(0),\rho_2}(\check p,\check t)},
\end{split}
\end{equation}
where the derivatives and H\"older norms are computed using standard Euclidean metric.

We first eliminate the terms in the last line of \eqref{eqn:standSchaux} by interpolation: given $0<\rho_1<\rho_2<\sigma$, we let $\rho_2'=\frac{1}{2}(\rho_1+\rho_2)$, so that \eqref{eqn:standSchaux} holds with $\rho_2$ replaced by $\rho_2'$, and so standard interpolation gives
\begin{equation}\label{krkrk}
\begin{split}
&\quad \sum_{p+2q\leq 2a-2}(\rho_2'-\rho_1)^{-2a-\a+p+2q+2}\left|\left|\partial^{p}\partial_t^q\left(\partial_t-\Delta_{\check\omega_\ell^\sharp} \right) \eta\right|\right|_{\infty,\check Q_{\check g_\ell(0),\rho_2'}(\check p,\check t)} \\
&\leq \sum_{p+2q= 2a-2}\left[\partial^{p}\partial_t^q\left(\partial_t-\Delta_{\check\omega_\ell^\sharp} \right) \eta\right]_{\alpha,\alpha/2,\check Q_{\check g_\ell(0),\rho_2}(\check p,\check t)}\\
&\quad  +C(\rho_2-\rho_1)^{-2a-\a+2}\left\|\left(\partial_t-\Delta_{\check\omega_\ell^\sharp} \right) \eta\right\|_{\infty,\check Q_{\check g_\ell(0),\rho_2}(\check p,\check t)},
\end{split}
\end{equation}
using that $\rho'_2-\rho_2=\frac{1}{2}(\rho_2-\rho_1)$.
We now want to estimate the $L^\infty$ norm of $\left(\partial_t-\Delta_{\check\omega_\ell^\sharp} \right) \eta$.  Let $\e\in (0,1)$ be a constant to be determined, and denote by $\rho_2''=\rho_2+\e(\rho_2-\rho_1)$. By interpolation and the $C^\infty$ boundedness of $\check g_\ell^\sharp$, we have
\begin{equation}
\begin{split}
&\quad C(\rho_2-\rho_1)^{-2a-\a+2}\left|\left|\left(\partial_t-\Delta_{\check\omega_\ell^\sharp} \right) \eta\right|\right|_{\infty,\check Q_{\check g_\ell(0),\rho_2}(\check p,\check t)}\\
&=C\e^{2a+\a-2}(\rho_2''-\rho_2)^{-2a-\a+2}\left|\left|\left(\partial_t-\Delta_{\check\omega_\ell^\sharp} \right) \eta\right|\right|_{\infty,\check Q_{\check g_\ell(0),\rho_2}(\check p,\check t)}\\
&\leq C\e^{2a+\a-2}(\rho_2''-\rho_2)^{-2a-\a+2}\sum_{p+q=2}\|\partial^{p}\partial_t^q\eta\|_{\infty,\check Q_{\check g_\ell(0),\rho_2}(\check p,\check t)}\\
&\leq C\e^{2a+\a-2} \sum_{p+2q=2a}[\partial^{p}\partial_t^q\eta]_{\a,\a/2,\check Q_{\check g_\ell(0),\rho_2''}(\check p,\check t)}+ C_\e(\rho_2-\rho_1)^{-2a-\a}\| \eta\|_{\infty,\check Q_{\check g_\ell(0),\rho_2''}(\check p,\check t)}.
\end{split}
\end{equation}
Since $a\geq 1$, we can choose $\e$ small enough so that $C\e^{2a+\a-2}\leq \frac12$. Inserting this in \eqref{krkrk} (replacing $\rho_2''$ by $\rho_2$), and plugging into \eqref{eqn:standSchaux} gives
\begin{equation}
\begin{split}
&\quad \sum_{p+2q=2a}[\partial^{p}\partial_t^q\eta]_{\a,\a/2,\check Q_{\check g_\ell(0),\rho_1}(\check p,\check t)}+ \sum_{p+2q\leq 2a}(\rho_2-\rho_1)^{-2a-\a+p+2q}\|\partial^{p}\partial_t^q\eta\|_{\infty,\check Q_{\check g_\ell(0),\rho_1}(\check p,\check t)} \\
&\leq C \sum_{p+2q=2a-2}\left[\partial^{p}\partial_t^q \left(\partial_t-\Delta_{\check\omega_\ell^\sharp} \right) \eta\right]_{\a,\a/2,\check Q_{\check g_\ell(0),\rho_2}(\check p,\check t)} +  C(\rho_2-\rho_1)^{-2a-\a}\| \eta\|_{\infty,\check Q_{\check g_\ell(0),\rho_2}(\check p,\check t)}\\
&\quad +\frac{1}{2}\sum_{p+2q=2a}[\partial^{p}\partial_t^q\eta]_{\a,\a/2,\check Q_{\check g_\ell(0),\rho_2''}(\check p,\check t)},
\end{split}
\end{equation}
and the iteration lemma in \cite[Lemma 2.9]{HT3} then gives
\begin{equation}\label{eqn:1}
\begin{split}
&\quad \sum_{p+2q=2a}[\partial^{p}\partial_t^q\eta]_{\a,\a/2,\check Q_{\check g_\ell(0),\rho_1}(\check p,\check t)}+ \sum_{p+2q\leq 2a}(\rho_2-\rho_1)^{-2a-\a+p+2q}\|\partial^{p}\partial_t^q\eta\|_{\infty,\check Q_{\check g_\ell(0),\rho_1}(\check p,\check t)} \\
&\leq C \sum_{p+2q=2a-2}\left[\partial^{p}\partial_t^q \left(\partial_t-\Delta_{\check\omega_\ell^\sharp} \right) \eta\right]_{\a,\a/2,\check Q_{\check g_\ell(0),\rho_2}(\check p,\check t)} +  C(\rho_2-\rho_1)^{-2a-\a}\| \eta\|_{\infty,\check Q_{\check g_\ell(0),\rho_2}(\check p,\check t)}.
\end{split}
\end{equation}
This is the direct analog of \eqref{eqn:standSchau-3}. After this, the rest of the proof of \eqref{JSB} proceeds exactly as in the proof of Proposition \ref{prop:schauder}.
\end{proof}

\section{The Selection Theorem}\label{s2}
In our main theorem we will need the analog of the Selection Theorem \cite[Theorem 3.11]{HT3}, adapted to our parabolic setting, and to the specific structure of the parabolic complex Monge-Amp\`ere equation that we are dealing with. As in \cite{HT3}, to state this we will need some preparatory notation. First, we fix two natural numbers $0\leq j\leq k$ and a Euclidean ball $B$ centered at the origin, and as usual we have fixed the fiberwise Calabi-Yau volume forms on the fibers $\{z\}\times Y$. Given $t,k$ and two smooth functions $A\in C^\infty(B,\mathbb{R})$ and $G\in C^\infty(B\times Y,\mathbb{R})$ with fiberwise average zero and fiberwise $L^2$ norm $1$, we constructed in \cite[\S 3.2]{HT3} a function $\mathfrak{G}_{t,k}(A,G)\in C^\infty(B\times Y,\mathbb{R})$ also with fiberwise average zero, which is in some appropriate sense an approximate right inverse of $\Delta^{\omega^\natural_t}$ applied to $AG$, see \cite[Lemma 3.7]{HT3} for a precise statement. We will need the following quasi-explicit formula from \cite[Lemma 3.8]{HT3}: given any $A,G$ as above, there are $t$-independent smooth functions $\Phi_{\iota, r}(G)$ on $B\times Y$ such that for all $t\geq 0$ we have
\begin{equation}\label{plop}
\mathfrak{G}_{t,k}(A,G)=\sum_{\iota=0}^{2k}\sum_{r=\lceil \frac{\iota}{2} \rceil}^{k} e^{-rt}(\Phi_{\iota,r}(G)\circledast \D^\iota A),
\end{equation}
where here and in the rest, $\circledast$ denotes some tensorial contraction, and we have
\begin{equation}\label{qsangennaro2}
\Phi_{0,0}(G)=(\Delta^{\omega_F|_{\{\cdot\}\times Y}})^{-1}G.
\end{equation}

For all $i<j$ we suppose that we have smooth functions $G_{i,p,k}\in C^\infty(B\times Y,\mathbb{R}), 1\leq p\leq N_{i,k}$, which have fiberwise average zero and are fiberwise $L^2$ orthonormal, with $G_{i,p,k}=0$ when $i=0$. The main goal is to find smooth functions $G_{j,p,k}$ which satisfy a certain property that we now describe.

We are also given sequences of real numbers $t_\ell\to+\infty$ and $\delta_\ell>0$ with $\delta_\ell\to 0$ and $\lambda_\ell:=\delta_\ell e^{\frac{t_\ell}{2}}\to+\infty$. Consider the diffeomorphisms
\begin{equation}\label{sigmat}
\Sigma_\ell: B_{e^{\frac{t_\ell}{2}}} \times Y\times [-e^{t_\ell}t_\ell,0] \to B \times Y\times [0,t_\ell], \;\, (z,y,t) = \Sigma_\ell(\check{z},\check{y},\check{t}) =(e^{-\frac{t_\ell}{2}}\check{z},\check{y},t_\ell+e^{-t_\ell}\check{t}),
\end{equation}
where $B_R:=B_{\C^m}(0,R)$, and for any  function $u$ on $B\times Y\times [0,t_\ell]$, we will write $\check{u}_\ell=\Sigma_\ell^*u$, and for a time-dependent $2$-form $\alpha$ (with $t\in [0,t_\ell]$) we will write $\check{\alpha}_\ell=e^{t_\ell}\Sigma_\ell^*\alpha$. In particular, note that $\check{\omega}_{\ell, {\rm can}}=e^{t_\ell}\Sigma_\ell^*\omega_{\rm can}$ is a (time-independent) K\"ahler metric on $B_{e^{\frac{t_\ell}{2}}}$ uniformly equivalent to Euclidean (independent of $\ell$).

We will also need to factor $\Sigma_\ell = \Psi_\ell\circ \Xi_\ell$, where
\begin{equation}\label{xi}
\Xi_\ell: B_{e^{\frac{t_\ell}{2}}} \times Y\times [-e^{t_\ell}t_\ell,0] \to B_{\lambda_\ell} \times Y\times [-\lambda_\ell^2t_\ell,0], \;\, (\hat{z},\hat{y},\hat{t}) = \Xi_\ell(\check{z},\check{y},\check{t}) =(\delta_\ell\check{z},\check{y},\delta_\ell^2\check{t}),
\end{equation}
\begin{equation}
\Psi_\ell:  B_{\lambda_\ell} \times Y\times [-\lambda_\ell^2t_\ell,0] \to B \times Y\times [0,t_\ell], \;\, (z,y,t) = \Psi_\ell(\hat{z},\hat{y},\hat{t}) =(\lambda_\ell^{-1}\hat{z},\hat{y},t_\ell+\lambda_\ell^{-2}\hat{t}),
\end{equation}
and given a function $u$ on $B\times Y\times [0,t_\ell]$ we will write $\hat{u}=\Psi_\ell^*u$, and given a time-dependent $2$-form $\alpha$ we will write $\hat{\alpha}=\lambda_\ell^2\Psi_\ell^*\alpha$. We will also use the notation
\begin{equation}
\hat{Q}_R:=B_R\times Y\times [-R^2,0]\ni (\hat{z},\hat{y},\hat{t}), \quad \check{Q}_R:=B_R\times Y\times [-R^2,0]\ni (\check{z},\check{y},\check{t}),
\end{equation}
so that for example $\Xi_\ell(\check{Q}_R)=\hat{Q}_{R\delta_\ell^{-1}}$.

Let also $\eta^\ddagger_\ell$ be an arbitrary sequence of $(1,1)$-forms on $B\times [0, t_\ell]$ with coefficients (spacetime) polynomials of degree at most $2j$ which satisfy $\hat{\eta}^\ddagger_\ell\to 0$ locally smoothly in spacetime (which implies that $\check{\eta}^\ddagger_\ell\to 0$ locally smoothly as well),
let $B^\sharp_\ell$ be an arbitrary sequence of smooth functions on $B\times [0,t_\ell]$ such that $\hat{B}^\sharp_\ell\to 0$ locally smoothly,
and for $1\leq i\leq j$ let $A^\sharp_{i,p,k}$ be arbitrary (spacetime) polynomials of degree at most $2j$ on $B$ such that $\hat{A}^\sharp_{\ell,i,p,k}=\lambda_\ell^2\Psi_\ell^*A^\sharp_{i,p,k}$ satisfy that there is some $0<\alpha_0<1$ such that given any $R>0$ there is $C>0$ with
\begin{equation}\label{linftyyy0}
\|\mathfrak{D}^\iota \hat{A}_{\ell,i,p,k}^\sharp\|_{\infty,\hat Q_{R},\hat{g}_\ell(0)}\leq C \delta_\ell^2 e^{-\alpha_0\frac{t_\ell}{2}},
\end{equation}
for all $0\leq \iota\leq 2j$, or equivalently that $\check{A}^\sharp_{\ell,i,p,k}=e^{t_\ell}\Sigma_\ell^*A^\sharp_{i,p,k}=\delta_\ell^2\Xi_\ell^*\hat{A}^\sharp_{\ell,i,p,k}$ satisfy
\begin{equation}\label{linftyyy}
\|\mathfrak{D}^\iota \check{A}_{\ell,i,p,k}^\sharp\|_{\infty,\check Q_{R\delta_\ell^{-1}},\check g_\ell(0)}\leq C \delta_\ell^\iota e^{-\alpha_0\frac{t_\ell}{2}}.
\end{equation}
With these, we define for $1\leq i\leq j$
\begin{equation}\label{qdugger}
\gamma^{\sharp}_{t,i,k}=\sum_{p=1}^{N_{i,k}}\ddbar\mathfrak{G}_{t,k}(A^\sharp_{i,p,k},G_{i,p,k}),
\end{equation}
so $\gamma^{\sharp}_{t,j,k}$ depends on how we choose the functions $G_{j,p,k}$. It is proved by the argument in \eqref{dager-estimate} below that our assumption \eqref{linftyyy0} on $\hat{A}^\sharp_{\ell,i,p,k}$ implies that for any $R>0$ there is $C>0$ with
\begin{equation}\label{qcrocifisso4checkhatto}
\|\DD^\iota\hat{\gamma}^{\sharp}_{\ell,i,k}\|_{\infty,\hat{Q}_{R},\hat{g}_\ell(0)}\leq C\delta_\ell^{-\iota}e^{-\alpha_0 \frac{t_\ell}{2}},
\end{equation}
or equivalently
\begin{equation}\label{qcrocifisso4check}
\|\DD^\iota\check{\gamma}^{\sharp}_{\ell,i,k}\|_{\infty,\check Q_{R\delta_\ell^{-1}},\check g_\ell(0)}\leq Ce^{-\alpha_0 \frac{t_\ell}{2}},
\end{equation}
for all $\iota\geq 0$ and all $1\leq i\leq j$ (these can be just taken as assumptions for now). Observe that the constants $C$ in \eqref{qcrocifisso4checkhatto} and \eqref{qcrocifisso4check} depend on the choice of the functions $G_{j,p,k}$ but the exponent $\alpha_0$ does not. We also define
\begin{equation}
\eta^\dagger_t=\sum_{i=1}^j\gamma^{\sharp}_{t,i,k},
\end{equation}
and
\begin{equation}
\omega^\sharp_t=(1-e^{-t})\omega_{\rm can}+e^{-t}\omega_F+\eta^\dagger_t+\eta^\ddagger_t,
\end{equation}
which has the property that $\hat{\omega}^\sharp_\ell$ is a K\"ahler metric on $\hat{Q}_R$ for all $R>0$, and $\ell$ sufficiently large, using that $\hat{\eta}^\ddagger_\ell$ is pulled back from $B$ and goes to zero locally uniformly, and the estimate \eqref{qcrocifisso4checkhatto} with $\iota=0$ for $\hat{\eta}^\dagger_\ell$. Passing to the check picture we obtain
\begin{equation}
\check{\omega}^\sharp_\ell=(1-e^{-t_\ell -e^{-t_\ell}\check{t}})\check{\omega}_{\ell, {\rm can}}+e^{-e^{-t_\ell}\check{t}}\Sigma_\ell^*\omega_F+\check{\eta}^\dagger_\ell+\check{\eta}^\ddagger_\ell.
\end{equation}

The key quantity we are interested in is then
\begin{equation}\label{stronzo}
\delta_\ell^{-2j-\alpha}\left(\log \frac{\binom{m+n}{n}\check\omega_{\ell,\mathrm{can}}^m\wedge (\Sigma_\ell^*\omega_F)^n}{(\check\omega_\ell^\sharp)^{m+n}}
+ \sum_{i=1}^j\sum_{p=1}^{N_{i,k}} \check{\mathfrak{G}}_{\check{t},k}( \de_{\check{t}}{\check{A}}_{\ell,i,p,k}^\sharp+e^{-t_\ell}\check{A}_{\ell,i,p,k}^\sharp,\check G_{\ell,i,p,k})
+\Sigma_\ell^*B^\sharp_\ell
\right),
\end{equation}
which can be compared to the corresponding quantity \cite[(3.47)]{HT3} in the elliptic setting. Observe that by definition we have $\Sigma_\ell^*B^\sharp_\ell=\Xi_\ell^*\hat{B}^\sharp_\ell$.

To clarify, when we will apply the Selection Theorem later, the functions $B^\sharp_\ell$ will be defined by
\begin{equation}
\hat{B}^\sharp_\ell=e^{-\lambda_\ell^{-2}\hat{t}}\de_{\hat{t}}\underline{{\hat{\chi}}_\ell^\sharp}-n\lambda_\ell^{-2}\hat{t},
\end{equation}
where $\de_{\hat{t}}\underline{{\hat{\chi}}_\ell^\sharp}$ is a spacetime polynomial of degree at most $2j$.
The fact that $\hat{B}^\sharp_\ell\to 0$ locally smoothly will follow from \eqref{bdd-phi-sharp-underline}. We will also later define
\begin{equation}
\underline{\check{\chi}}_\ell^\sharp=\delta_\ell^{-2}\Xi_\ell^*(\underline{{\hat{\chi}}_\ell^\sharp}),
\end{equation}
hence we will have
\begin{equation}
\Sigma_\ell^*B^\sharp_\ell=\Xi_\ell^*\hat{B}^\sharp_\ell=e^{-e^{-t_\ell}\check t}\partial_{\check t}\underline{\check\chi_\ell^\sharp}-ne^{-t_\ell}\check{t}.
\end{equation}

Given these preliminaries, the following is then the key result:

\begin{thm}[Selection Theorem]\label{thm:Selection} Suppose we are given $0\leq j\leq k$ and when $j>1$, we are also given smooth function $G_{i,p,k},1\leq i\leq j-1,1\leq p\leq N_{i,k}$, on $B\times Y$ which are fiberwise $L^2$ orthonormal  and have fiberwise average zero. Then there are a concentric ball $B'=B_{\mathbb{C}^m}(0,r)\subset B$ and smooth functions  $G_{j,p,k},1\leq p\leq N_{j,k}$ on $B'\times Y$ (identically zero if $j=0$), with fiberwise average zero so that $G_{i,p,k},1\leq p\leq N_{i,k},1\leq i\leq j,$ are all fiberwise $L^2$ orthonormal with the following property: if $\delta_\ell,t_\ell>0$ are any sequences with $t_\ell\to+\infty,$ $\delta_\ell\to 0$ and $\delta_\ell e^{\frac{t_\ell}{2}}\to +\infty,$ and if $A_{i,p,k}^\sharp,B^\sharp_\ell,\eta_\ell^\dagger,\eta_\ell^\ddagger,\omega_\ell^\sharp$ are as above, and if \eqref{stronzo}
converges locally uniformly on $\mathbb{C}^m\times Y\times (-\infty,0]$ to some limiting function $\mathcal{F}$ as $\ell\to\infty$, then on $\Sigma_\ell^{-1}(B'\times Y\times [-r^2,0])$, we can write \eqref{stronzo} as
\begin{equation}\label{cuccuruzzu}
\delta_\ell^{-2j-\a}\Sigma_\ell^*\left( f_{\ell,0}+\sum_{i=1}^j \sum_{p=1}^{N_{i,k}}f_{\ell,i,p} G_{i,p,k}\right)+o(1),
\end{equation}
where $f_{\ell,0}, f_{\ell,i,p}$ are functions pulled back from $B'\times (-r^2,0]$ such that $\hat{f}_{\ell, 0}=\Psi_\ell^*f_{\ell,0},\hat{f}_{\ell,i,p}=\Psi_\ell^*f_{\ell,i,p}$  converge locally smoothly to zero, and $o(1)$ is a term that converges locally smoothly to zero. Lastly, \eqref{stronzo} converges to $\mathcal{F}$ locally smoothly.
\end{thm}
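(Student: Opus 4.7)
The proof would follow the strategy of the elliptic Selection Theorem in \cite{HT3}, with modifications to handle the time derivatives and the function $B^\sharp_\ell$ that are specific to the parabolic setting. The plan is to Taylor-expand the log volume ratio in \eqref{stronzo} in powers of the perturbation $\check\eta^\dagger_\ell+\check\eta^\ddagger_\ell$, use the quasi-explicit formula \eqref{plop} to rewrite each $\gamma^\sharp_{t,i,k}$ as a finite sum of products of base-polynomials with fiberwise smooth functions, and then choose the $G_{j,p,k}$ to be an $L^2$-orthonormal basis completing $\{G_{i,p,k}\}_{i<j}$ inside the finite-dimensional subspace of fiberwise mean-zero functions that appear at exactly the scale $\delta_\ell^{2j+\alpha}$ in the expansion.

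Concretely, using the smallness estimates \eqref{qcrocifisso4check} and \eqref{linftyyy} together with the assumption that $\hat\eta^\ddagger_\ell\to 0$ locally smoothly and is pulled back from the base, one Taylor-expands the log volume ratio in \eqref{stronzo} to sufficient order, with a remainder that is locally uniformly $o(\delta_\ell^{2j+\alpha})$. Inserting \eqref{plop} into \eqref{qdugger}, every surviving term at the critical scale has the form (base-polynomial)$\times$(fiber-function), where the fiber-functions live in a finite-dimensional subspace $W\subset C^\infty(B\times Y,\R)$ determined by $\{G_{i,p,k}\}_{i<j}$ and the geometry of $\omega_F$. The linearity of $\mathfrak{G}_{t,k}$ in its first argument means that the new term $\partial_{\check t}\check A^\sharp_{\ell,i,p,k}$ contributes terms of the same algebraic shape, while the term $\Sigma_\ell^*B^\sharp_\ell$ is pulled back from the base and hence only contributes to $f_{\ell,0}$.

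Let $W_j$ denote the $L^2$-orthogonal complement in $W$ of the fiberwise span of $\{G_{i,p,k}\}_{i<j}$; this is finite-dimensional, of some dimension $N_{j,k}$. I would choose $G_{j,p,k}$ to be an $L^2$-orthonormal basis of $W_j$, restricted to a concentric ball $B'$ on which the fiberwise $L^2$ inner product varies smoothly with $z$. With this selection, every term in the Taylor expansion at scale $\delta_\ell^{2j+\alpha}$ either lies in $C^\infty(B',\R)$ (and is absorbed into $f_{\ell,0}$) or factors through $G_{i,p,k}$ for some $i\leq j$ (contributing to $f_{\ell,i,p}$), and the remainder is locally smoothly $o(1)$ after multiplication by $\delta_\ell^{-2j-\alpha}$, using \eqref{linftyyy0} and \eqref{qcrocifisso4checkhatto}. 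That $\hat f_{\ell,0}, \hat f_{\ell,i,p}\to 0$ locally smoothly then follows from the decay of $\hat A^\sharp_{\ell,i,p,k}$ and $\hat B^\sharp_\ell$.

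The final claim, that the convergence of \eqref{stronzo} to $\mathcal F$ is in fact locally smooth, is the step I expect to be the main obstacle. The idea is to differentiate the parabolic complex Monge-Amp\`ere equation to see that \eqref{stronzo} satisfies, modulo already-controlled terms, a linear parabolic equation with principal part $\partial_t-\Delta_{\check\omega^\sharp_\ell}$, and then use Proposition \ref{prop:schauder} to bootstrap from the assumed $C^0$ convergence to locally uniform $C^{2k+\alpha,k+\alpha/2}$ estimates for every $k$. Combining with Proposition \ref{prop:braindead} then separates the base and fiber contributions in the shrinking parabolic norms and yields the decomposition \eqref{cuccuruzzu} with the claimed smoothness. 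The delicate point, absent in the elliptic case of \cite{HT3}, is that the time-derivative contributions $\partial_{\check t}\check A^\sharp_{\ell,i,p,k}$ and the base term $\Sigma_\ell^*B^\sharp_\ell$ are not produced by the approximate Green operator and must be tracked separately through the bootstrap, which is where Lemma \ref{lma:jet-interpolation} and the restriction to even orders become essential.
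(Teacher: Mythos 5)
Your sketch captures the right ingredients at a high level—Taylor-expanding the logarithm, using the formula \eqref{plop} to get products of base-polynomials with fiber-functions, absorbing $\Sigma_\ell^*B^\sharp_\ell$ into $f_{\ell,0}$, and observing that the $\mathfrak{G}_{t,k}(\partial_{\check t}\check A^\sharp,\cdot)$ terms are lower order—but there are two genuine gaps relative to the paper's proof.

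First, the selection of $G_{j,p,k}$ in the paper is not a one-shot orthogonal projection onto a fixed finite-dimensional space $W_j$: it is an iterative procedure with an iteration parameter $\kappa$. The subtlety is that after you have chosen the first batch $G^{[1]}_{j,p,k}$ and inserted the resulting $\gamma^{\sharp,[1]}_{j,k}$ into $\check\omega^\sharp$, the log volume ratio changes nonlinearly, and a \emph{new} family of fiber-functions appears at the critical scale $\delta_\ell^{2j+\alpha}$. The key quantitative point (encoded in \eqref{zatttan}--\eqref{ratttan3}) is that each iteration gains a uniform factor $\delta_\ell^{\alpha_0}$ on the error $E^{[\kappa]}_\ell$, and one therefore needs to iterate $\ov{\kappa}=\lceil(2j+\alpha)/\alpha_0\rceil$ times so that $\delta_\ell^{-2j-\alpha}\delta_\ell^{(\ov{\kappa}+1)\alpha_0}\to 0$; only the union of all $G^{[q]}_{j,p,k}$ obtained in these steps constitutes $G_{j,p,k}$. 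Your proposal does not account for this, and a single application of the approximate fiberwise Gram-Schmidt would leave behind a nonlinear residual that is not $o(1)$ after multiplication by $\delta_\ell^{-2j-\alpha}$.

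Second, the approach to the final ``locally smooth convergence'' claim is on the wrong track. The Selection Theorem is an abstract statement about sequences $\eta^\ddagger_\ell, B^\sharp_\ell, A^\sharp_{i,p,k}$; there is no Monge--Amp\`ere equation in play within its statement, so the quantity \eqref{stronzo} is not a solution (or derivative of a solution) of any parabolic PDE one could feed into Proposition~\ref{prop:schauder}. In the paper, the smooth convergence is a purely \emph{structural} consequence of the $(\star)$ decomposition: once one knows that \eqref{stronzo} equals $\delta_\ell^{-2j-\alpha}\Sigma_\ell^*\bigl(f_{\ell,0}+\sum_i f_{\ell,i}h_i\bigr)+o(1)$ with the $f_{\ell,i}$ pulled back from the base and the $h_i$ having fiberwise mean zero, the fiberwise $L^2$ pairings with the (fixed) $h_i$'s and the fiber average extract the individual $\delta_\ell^{-2j-\alpha}\Sigma_\ell^* f_{\ell,i}$ from the uniform limit $\mathcal F$, and these are then shown to converge smoothly (cf.\ the argument of \cite[(3.52)--(3.54)]{HT3} and Remark~3.13 therein). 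No Schauder estimate is needed or usable here; Proposition~\ref{prop:schauder} and Lemma~\ref{lma:jet-interpolation} play their role later, in the blowup arguments of Section~\ref{sekt}, not in the Selection Theorem.
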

\begin{rmk}
The argument follows closely the proof of the Selection Theorem 3.11 in \cite{HT3}, but apart from the obvious change from space to space-time, there are some other differences that we now briefly discuss, which arise from the different structure of the parabolic complex Monge-Amp\`ere equation that we have compared to its elliptic counterpart in \cite{HT3}. The first term in \eqref{stronzo} is reminescent of \cite[(3.47)]{HT3}, but it now has a logarithm. This change will be quite immaterial, since $\log(x)\approx x-1$ for $x\approx 1$. Next, compared to \cite[(3.47)]{HT3}, the quantity in \eqref{stronzo} also contains two more pieces. The last term with $B^\sharp_\ell$ is trivially acceptable since it can be absorbed into $f_{\ell,0}$, while the term involving $\check{\mathfrak{G}}_{\check{t},k}$ will have to be dealt with, and it will turn out to be ``lower order'' compared to the first term in \eqref{stronzo}. Putting these all together will allow us to follow the proof of \cite[Theorem 3.11]{HT3} very closely.
\end{rmk}
\begin{proof}
The proof is by induction on $j$. First, we treat the base case $j=0$. In this case, by definition, there are no obstruction functions and the quantity in \eqref{stronzo} reduces to
\begin{equation}\label{stronzo2}
\delta_\ell^{-\alpha}\left(\log \frac{\binom{m+n}{n}\check\omega_{\ell,\mathrm{can}}^m\wedge (\Sigma_\ell^*\omega_F)^n}{(\check\omega_\ell^\sharp)^{m+n}}
+\Sigma_\ell^*B^\sharp_\ell
\right).
\end{equation}
As in \cite{HT3}, we introduce the notation
\begin{equation}\label{imprecise}
\Sigma_\ell^*\omega_F=(\Sigma_\ell^*\omega_F)_{\mathbf{bb}}+(\Sigma_\ell^*\omega_F)_{\mathbf{bf}}+(\Sigma_\ell^*\omega_F)_{\mathbf{ff}}=:
e^{-t_\ell}\check\omega_{F,\mathbf{bb}}+e^{-\frac{t_\ell}{2}}\check\omega_{F,\mathbf{bf}}+\check\omega_{F,\mathbf{ff}},
\end{equation}
where the functions $\check\omega_{F,\mathbf{bb}},\check\omega_{F,\mathbf{bf}},\check\omega_{F,\mathbf{ff}}$ so defined are uniformly bounded on $B_{e^{\frac{t_\ell}{2}}}\times Y\times [-e^{t_\ell}t_\ell,0]$.
Following \cite[(3.50)]{HT3}, we then compute
\begin{equation}\begin{split}
(\check\omega_\ell^\sharp)^{m+n}&=((1-e^{-t_\ell -e^{-t_\ell}\check{t}})\check{\omega}_{\ell, {\rm can}}+e^{-e^{-t_\ell}\check{t}}\Sigma_\ell^*\omega_F+\check{\eta}^\ddagger_\ell)^{m+n}\\
&=(\check{\omega}_{\ell, {\rm can}}+\check\omega_{F,\mathbf{ff}}+\check{\eta}^\ddagger_\ell)^{m+n}+O(e^{-{t_\ell}})\\
&=\binom{m+n}{n}(\check{\omega}_{\ell, {\rm can}}+\check{\eta}^\ddagger_\ell)^{m}(\Sigma_\ell^*\omega_F)_{\mathbf{ff}}^n+O(e^{-{t_\ell}}),
\end{split}\end{equation}
where the $O(e^{-t_\ell})$ is in the locally smooth topology, and so
\begin{equation}\label{checkko}\begin{split}
\log \frac{(\check\omega_\ell^\sharp)^{m+n}}{\binom{m+n}{n}\check\omega_{\ell,\mathrm{can}}^m\wedge (\Sigma_\ell^*\omega_F)^n}&=\log\frac{(\check{\omega}_{\ell, {\rm can}}+\check{\eta}^\ddagger_\ell)^{m}}{\check\omega_{\ell,\mathrm{can}}^m}+O(e^{-t_\ell})\\
&=\Sigma_\ell^*f_\ell+ O(e^{-{t_\ell}}),
\end{split}\end{equation}
where $f_\ell$ is some sequence of smooth functions pulled back from $B$. Passing to the hat picture, i.e. letting $\hat{f}_\ell=\Psi_\ell^*f_\ell$, our assumption that $\hat{\eta}^\ddagger_\ell\to 0$ locally smoothly implies that $\hat{f}_\ell\to 0$ locally smoothly. It thus follows that
\begin{equation}
\log \frac{\binom{m+n}{n}\check\omega_{\ell,\mathrm{can}}^m\wedge (\Sigma_\ell^*\omega_F)^n}{(\check\omega_\ell^\sharp)^{m+n}}+\Sigma_\ell^*B^\sharp_\ell=\Sigma_\ell^*(B^\sharp_\ell-f_\ell)+O(e^{-{t_\ell}}),
\end{equation}
and recalling that $\hat{f}_\ell\to 0$ and $\hat{B}^\sharp_\ell\to 0$ locally smoothly, as well as $\delta_\ell^{-\alpha}e^{-{t_\ell}}=o(1)$, we see that \eqref{cuccuruzzu} holds. Lastly, since by assumption \eqref{stronzo2} converges locally uniformly to $\mathcal{F}$, the same is true for $\delta_\ell^{-\alpha}\Sigma_\ell^*(B^\sharp_\ell-f_\ell)$, and the same argument as in \cite[(3.52)--(3.54)]{HT3} shows that this convergence is locally smooth. This concludes the proof of Theorem \ref{thm:Selection} in the case $j=0$.

We then treat the inductive step, and assume $j\geq 1$. By assumption, the obstruction functions $G_{i,p,k}$ with $1\leq i<j$ have already been selected on $B'\times Y$ (recall that $B'=B_r$), and we need to select the $G_{j,p,k}$'s. As in \cite{HT3}, this will be done via an iterative procedure, with iteration parameter $\kappa$, initially set at $\kappa=0$, and at each step assuming we have already selected some obstruction functions $G^{[q]}_{j,p,k}, 1\leq q\leq\kappa$ (this being the empty list when $\kappa=0$), and with the iterative step consisting of selecting some new obstruction functions $G^{[\kappa+1]}_{j,p,k}$ to add to these. After this will be achieved, we will then show that if we perform this iterative step $\ov{\kappa}$ times (for some uniform $\ov{\kappa}$) and define the obstruction function $G_{j,p,k}$ by putting together all the $G^{[q]}_{j,p,k}$'s obtained at all iterations $1\leq q\leq\ov{\kappa}+1$, then the desired conclusion \eqref{cuccuruzzu} holds.

To start the proof, we give a couple of definitions following \cite{HT3}.
We will say that a sequence of functions on $\Sigma_\ell^{-1}(B_r\times Y\times (-r^2,0])$ satisfies condition $(\star)$ if it equals
\begin{equation}\label{qstarr}
\delta_\ell^{-2j-\alpha}\Sigma_\ell^*\left(f_{\ell,0}+\sum_{i=1}^N f_{\ell,i}h_i\right) + o(1),
\end{equation}
for some $N\in\mathbb{N}$, where the functions $f_{\ell,0},f_{\ell,i}$ are smooth and pulled back from $B_r\times (-r^2,0]$ and $\hat{f}_{\ell,0}=\Psi_\ell^*f_{\ell,0}, \hat{f}_{\ell,i}=\Psi_\ell^*f_{\ell,i}$ converge locally smoothly to zero, the time-independent functions $h_i$ are smooth on $B_r\times Y$ with fiberwise average zero, and the $o(1)$ is a term that converges locally smoothly to zero. This definition is tailored to our desired conclusion in \eqref{cuccuruzzu}. As in \cite[Remark 3.13]{HT3}, we see that if a sequence of functions satisfies $(\star)$ and converges locally uniformly to some limit, then this convergence is actually smooth.

For $\kappa\geq 0$, given also arbitrary spacetime polynomials $A^{\sharp,[q]}_{j,p,k},1\leq q\leq\kappa,$ of degree at most $2j$ such that $\check{A}^{\sharp,[q]}_{\ell,j,p,k}=e^{t_\ell}\Sigma_\ell^*A^{\sharp,[q]}_{j,p,k}$ satisfy \eqref{linftyyy}, we construct as in \eqref{qdugger}
\begin{equation}\label{qdugger2}
\gamma^{\sharp,[q]}_{t,j,k}=\sum_{p=1}^{N_{j,k}^{[q]}}\ddbar\mathfrak{G}_{t,k}(A^{\sharp,[q]}_{j,p,k},G_{j,p,k}),
\end{equation}
for $1\leq q\leq \kappa$ (setting $\gamma^{\sharp,[0]}_{t,j,k}=0$) and let
\begin{equation}
\check{\omega}^{\sharp,[\kappa]}_\ell=(1-e^{-t_\ell -e^{-t_\ell}\check{t}})\check{\omega}_{\ell, {\rm can}}+e^{-e^{-t_\ell}\check{t}}\Sigma_\ell^*\omega_F+\sum_{r=1}^{j-1}\check{\gamma}^{\sharp}_{\ell,r,k}+
\sum_{q=1}^\kappa\check{\gamma}^{\sharp,[q]}_{\ell,j,k}+\check{\eta}^\ddagger_\ell.
\end{equation}
This is a K\"ahler metric on $B_r\times Y\times (-r^2,0]$, and we can then consider the function
\begin{equation}\label{qkulprit}\begin{split}
\mathcal{B}_\ell^{[\kappa]}:=\delta_\ell^{-2j-\alpha}\bigg(&\log \frac{\binom{m+n}{n}\check\omega_{\ell,\mathrm{can}}^m\wedge (\Sigma_\ell^*\omega_F)^n}{(\check\omega_\ell^{\sharp,[\kappa]})^{m+n}}
+ \sum_{i=1}^{j-1}\sum_{p=1}^{N_{i,k}} \check{\mathfrak{G}}_{\check{t},k}( \de_{\check{t}}{\check{A}}_{\ell,i,p,k}^\sharp+e^{-t_\ell}\check{A}_{\ell,i,p,k}^\sharp,\check G_{\ell,i,p,k})\\
&+\sum_{q=1}^{\kappa}\sum_{p=1}^{N_{j,k}^{[q]}} \check{\mathfrak{G}}_{\check{t},k}( \de_{\check{t}}{\check{A}}^{\sharp,[q]}_{\ell,j,p,k}+e^{-t_\ell}\check{A}^{\sharp,[q]}_{\ell,j,p,k},\check G_{\ell,j,p,k})
\bigg).
\end{split}\end{equation}
The following is the analog of \cite[Lemma 3.14]{HT3}:
\begin{lma}\label{qsolomon}
Suppose either $\kappa=0$ or $\kappa\geq 1$ and we have selected the functions $G_{j,p,k}^{[q]}$ as above for $1\leq q\leq \kappa.$ Then the sequence of functions $\mathcal{B}_\ell^{[\kappa]}$ satisfies $(\star)$. Furthermore, we have
\begin{equation}\label{qwulprit}
\delta_\ell^{2j+\alpha}\mathcal{B}_\ell^{[\kappa]}=O(e^{-\alpha_0\frac{t_\ell}{2}})+o(1)_{\rm from\ base},
\end{equation}
where the term $O(e^{-\alpha_0\frac{t_\ell}{2}})$ is in $L^\infty_{\rm loc}$, while the last term is a function from the base which goes to zero locally smoothly.
\end{lma}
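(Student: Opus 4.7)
The plan is to follow the strategy of the elliptic analog \cite[Lemma 3.14]{HT3} and combine it with the treatment of the $j=0$ case already carried out above, the main new point being the need to match time derivatives in the Monge-Amp\`ere volume form against the extra $\check{\mathfrak{G}}_{\check t,k}(\de_{\check t}\check A+e^{-t_\ell}\check A,\check G)$ terms appearing in $\mathcal{B}_\ell^{[\kappa]}$. First I would expand the ratio of volume forms exactly as in the base case. Writing
\begin{equation}
\check\omega_\ell^{\sharp,[\kappa]}=(1-e^{-t_\ell-e^{-t_\ell}\check t})\check\omega_{\ell,\mathrm{can}}+e^{-e^{-t_\ell}\check t}\Sigma_\ell^*\omega_F+\check\eta^\ddagger_\ell+\sum_{r=1}^{j-1}\check\gamma^\sharp_{\ell,r,k}+\sum_{q=1}^\kappa\check\gamma^{\sharp,[q]}_{\ell,j,k},
\end{equation}
and using the decomposition \eqref{imprecise} of $\Sigma_\ell^*\omega_F$ together with the bounds \eqref{qcrocifisso4checkhatto} on the obstruction pieces, the log term becomes
\begin{equation}
\log\frac{(\check\omega_{\ell,\mathrm{can}}+\check\eta^\ddagger_\ell+\sum_r\check\gamma^\sharp_{\ell,r,k}+\sum_q\check\gamma^{\sharp,[q]}_{\ell,j,k})^m\wedge(\check\omega_{F,\mathbf{ff}}+\ldots)^n}{\binom{m+n}{n}\check\omega_{\ell,\mathrm{can}}^m\wedge\Sigma_\ell^*\omega_F^n}+O(e^{-t_\ell}),
\end{equation}
where the error is locally smooth. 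Next I would Taylor-expand both the base and fiber determinants, using that each $\check\gamma^\sharp$ is $O(e^{-\alpha_0 t_\ell/2})$ in every $\DD^\iota$ norm, so that quadratic and higher products contribute a $O(e^{-\alpha_0 t_\ell})$ remainder, absorbable into $o(1)$ after multiplication by $\delta_\ell^{-2j-\a}$ thanks to $\delta_\ell e^{t_\ell/2}\to\infty$.

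The second step is to identify, term by term, the linear (in $\gamma^\sharp$) contributions. For the $\gamma^\sharp_{\ell,r,k}$ with $r<j$ these produce, via \eqref{plop} and \eqref{qsangennaro2}, combinations of the form $\sum_{\iota,s}e^{-st_\ell}(\Phi_{\iota,s}(G_{r,p,k})\circledast\D^\iota A^\sharp_{r,p,k})$ whose $\ddbar$-trace against $\check\omega_{\ell,\mathrm{can}}^{m-1}$ and against $\check\omega_{F,\mathbf{ff}}^{n-1}$ produces exactly the same structure that appears in the approximate Green identity \cite[Lemma 3.7]{HT3}. The purpose of the extra summands $\check{\mathfrak{G}}_{\check t,k}(\de_{\check t}\check A^\sharp_{\ell,i,p,k}+e^{-t_\ell}\check A^\sharp_{\ell,i,p,k},\check G_{\ell,i,p,k})$ inside $\mathcal{B}_\ell^{[\kappa]}$ is then to cancel the contribution coming from differentiating in $\check t$ the prefactors $e^{-e^{-t_\ell}\check t}$, $(1-e^{-t_\ell-e^{-t_\ell}\check t})$, and the time-dependence of the relative volume form: the point is that $\de_{\check t}$ acting on $\check\gamma^\sharp_{\ell,i,k}$ through its explicit $e^{-rt}$ factors contributes precisely $\check{\mathfrak{G}}_{\check t,k}(\de_{\check t}\check A+e^{-t_\ell}\check A,\check G)$ modulo terms one order lower in $e^{-\alpha_0 t_\ell/2}$, by the same computation as in \eqref{plop}. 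After this cancellation, what remains from the $r<j$ terms is of the form $\sum_p\Sigma_\ell^*(f_{\ell,r,p}G_{r,p,k})+f_{\ell,0}$ with $\hat f_{\ell,r,p},\hat f_{\ell,0}$ converging locally smoothly to zero, which is exactly the structure demanded by $(\star)$. An identical analysis applies to the $q$-sum of $\gamma^{\sharp,[q]}_{\ell,j,k}$ (these contribute functions multiplied by the already-chosen $G_{j,p,k}^{[q]}$, which is admissible).

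The third step is to combine these pieces: the logarithm plus the Green sums equals a base function $f_{\ell,0}$ (coming from $\check\eta^\ddagger_\ell$ and from the part of $\sum_r\check\gamma^\sharp$ whose fiber dependence is absorbed into the base through fiberwise averaging) plus the oscillatory terms $\sum_{i\leq j-1,p}f_{\ell,i,p}G_{i,p,k}+\sum_{q\leq\kappa,p}f_{\ell,j,p}^{[q]}G_{j,p,k}^{[q]}$, plus an $o(1)$ that converges locally smoothly to zero. Multiplying by $\delta_\ell^{-2j-\a}$ then gives $(\star)$. The bound \eqref{qwulprit} is then immediate: each $\check\gamma^\sharp$ contributes $O(e^{-\alpha_0 t_\ell/2})$ in $L^\infty_{\rm loc}$ by \eqref{qcrocifisso4checkhatto}, each Green term $\check{\mathfrak{G}}_{\check t,k}(\de_{\check t}\check A+e^{-t_\ell}\check A,\check G)$ obeys the same bound by \eqref{plop} and \eqref{linftyyy}, the $\log$ of the volume form expands as a base term of size $o(1)$ plus $O(e^{-t_\ell})$ from the $e^{-t_\ell}$ factors in \eqref{imprecise}, and $\Sigma_\ell^*B^\sharp_\ell$ equals a base function tending locally smoothly to zero. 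The main obstacle in carrying this out is bookkeeping: one has to verify carefully that every time derivative produced by differentiating the prefactors in $\check\omega^{\sharp,[\kappa]}_\ell$ is matched by a term inside $\check{\mathfrak{G}}_{\check t,k}(\de_{\check t}\check A+e^{-t_\ell}\check A,\check G)$ up to an error of size $e^{-\alpha_0 t_\ell/2}$ higher order, which is where the explicit form \eqref{plop} of $\mathfrak{G}_{t,k}$ is used in an essential way.
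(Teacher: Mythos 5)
The central claim in your Step 2, that the extra Green-operator summands $\check{\mathfrak{G}}_{\check t,k}(\de_{\check t}\check A^\sharp+e^{-t_\ell}\check A^\sharp,\check G)$ exist to \emph{cancel} time derivatives produced by differentiating the prefactors of $\check\omega_\ell^{\sharp,[\kappa]}$, is not correct, and the ``main obstacle'' you identify (bookkeeping of a time-derivative matching) is not actually present in this lemma. In $\mathcal{B}_\ell^{[\kappa]}$ the logarithm is evaluated as a static ratio of volume forms at each fixed $\check t$; no $\de_{\check t}$ is applied anywhere in the expansion, so there is nothing for the Green terms to cancel against. These Green terms appear in $\mathcal{B}_\ell^{[\kappa]}$ because of where the quantity \eqref{stronzo} comes from in the downstream blowup argument (they arise from the parabolic error $\mathcal{E}_3$), but inside the proof of Lemma \ref{qsolomon} they are simply one more additive piece of the expression and play no cancelling role.

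Because $(\star)$ is closed under addition, the correct and simpler strategy is to show each of the three pieces of $\mathcal{B}_\ell^{[\kappa]}$ satisfies $(\star)$ and the quantitative bound \eqref{qwulprit} independently: (i) the part $\delta_\ell^{-2j-\alpha}\log\frac{(\check\omega_\ell^\square)^{m+n}}{\binom{m+n}{n}\check\omega_{\ell,\mathrm{can}}^m\wedge(\Sigma_\ell^*\omega_F)^n}$, where $\check\omega_\ell^\square$ is $\check\omega_\ell^{\sharp,[\kappa]}$ with all $\check\gamma^\sharp$ stripped out, is handled exactly as in the base case $j=0$ via the expansion of the determinant ratio and Taylor expansion of $\log(1+x)$; (ii) the correction $\delta_\ell^{-2j-\alpha}\log\frac{(\check\omega_\ell^{\sharp,[\kappa]})^{m+n}}{(\check\omega_\ell^\square)^{m+n}}$ is expanded via $\log(1+\sum_i\binom{m+n}{i}\ldots)$ and bounded using \eqref{blobus} and \eqref{qcrocifisso4check}; (iii) the Green sum \eqref{qqulprit2} satisfies $(\star)$ immediately from \eqref{blobus} together with \eqref{linftyyy}, giving the $O(e^{-\alpha_0 t_\ell/2})+o(1)_{\rm from\ base}$ size directly. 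Your Steps 1 and 3 are in the right spirit, and the proposed use of \eqref{imprecise}, \eqref{plop}, \eqref{qsangennaro2} and the bounds on $\check\gamma^\sharp$ is correct, but the argument should be reorganized to drop the phantom cancellation: each piece is estimated on its own, and the lemma follows by summing.
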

\begin{proof}
For ease of notation, define
\begin{equation}
\omega^\square_t=(1-e^{-t})\omega_{\rm can}+e^{-t}\omega_F+\eta^\ddagger_t,
\end{equation}
which have the property that $\hat{\omega}^\square_\ell$ are K\"ahler metrics on $\hat{Q}_R$ for all $R>0$ and $\ell$ large,
and which in the check picture become
\begin{equation}
\check{\omega}^\square_\ell=(1-e^{-t_\ell -e^{-t_\ell}\check{t}})\check{\omega}_{\ell, {\rm can}}+e^{-e^{-t_\ell}\check{t}}\Sigma_\ell^*\omega_F+\check{\eta}^\ddagger_\ell.
\end{equation}
We first consider (in the original undecorated picture)
\begin{equation}\begin{split}
(\omega^\square_t)^{m+n}&=((1-e^{-t})\omega_{\rm can}+e^{-t}\omega_F+\eta^\ddagger_t)^{m+n}\\
&=\binom{m+n}{n}e^{-nt}((1-e^{-t})\omega_{\rm can}+e^{-t}\omega_{F,\mathbf{bb}}+\eta^\ddagger_t)^m\wedge\omega_{F,\mathbf{ff}}^n\\
&+\sum_{q=1}^me^{-(n+q)t}\binom{m+n}{n+q}((1-e^{-t})\omega_{\rm can}+e^{-t}\omega_{F,\mathbf{bb}}+\eta^\ddagger_t)^{m-q}\wedge(\omega_{F,\mathbf{ff}}+\omega_{F,\mathbf{bf}})^{n+q}\\
&=:\binom{m+n}{n}e^{-nt}\left(((1-e^{-t})\omega_{\rm can}+e^{-t}\omega_{F,\mathbf{bb}}+\eta^\ddagger_t)^m\wedge\omega_{F,\mathbf{ff}}^n+e^{-t}\mathcal{D}_t\right),
\end{split}
\end{equation}
and so
\begin{equation}\begin{split}
&\frac{(\omega^\square_t)^{m+n}}{\binom{m+n}{n}\omega_{\rm can}^m\wedge (e^{-t}\omega_F)^n}\\
&=\frac{((1-e^{-t})\omega_{\rm can}+e^{-t}\omega_{F,\mathbf{bb}}+\eta^\ddagger_t)^m\wedge\omega_{F,\mathbf{ff}}^n+e^{-t}\mathcal{D}_t}{\omega_{\rm can}^m\wedge \omega_F^n}\\
&=(1-e^{-t})^m+\sum_{0<p+q\leq m}\frac{m!}{p!q!(m-p-q)!}\frac{\omega_{\rm can}^pe^{-qt}\omega^q_{F,\mathbf{bb}}(\eta_t^\ddagger)^{m-p-q}}{\omega_{\rm can}^m}
+\frac{e^{-t}\mathcal{D}_t}{\omega_{\rm can}^m\wedge \omega_F^n}.
\end{split}\end{equation}
Note that the functions
\begin{equation}\label{qculprit}
\frac{\omega_{\rm can}^{p}e^{-qt}\omega_{F,\mathbf{bb}}^q(\eta^\ddagger_t)^{m-p-q}}{\omega_{\rm can}^m},
\end{equation}
with $q=0$ are $o(1)$ and pulled back from $B$, while when $q>0$ they are not pulled back from $B$, but they are visibly of the form
$f_{t,0}+\sum_{i=1}^N f_{t,i}h_i$ with the same notation as above, where the $\hat{f}_{t,0},\hat{f}_{t,i}$ converge smoothly to zero at least as $O(e^{-qt})$, and the functions $h_i$ have fiberwise average zero and do not depend on the choice of $\eta^\ddagger_t$. An analogous statement holds for
\begin{equation}
\frac{e^{-t}\mathcal{D}_t}{\omega_{\rm can}^m\wedge \omega_F^n},
\end{equation}
and if we take the logarithm, and use the Taylor expansion of $\log(1+x)$, this shows that
\begin{equation}
\log\frac{(\omega^\square_t)^{m+n}}{\binom{m+n}{n}\omega_{\rm can}^m\wedge (e^{-t}\omega_F)^n}=f_{t,0}+\sum_{i=1}^N f_{t,i}h_i+O(e^{-(j+1)t}),
\end{equation}
for some possibly different functions $f_{t,0}, f_{t,i}$, and where the term $O(e^{-(j+1)t})$ is in $C^p_{\rm loc}$ for all $p\geq 0$. Passing to the check picture we have
\begin{equation}
\log\frac{(\check{\omega}^\square_\ell)^{m+n}}{\binom{m+n}{n}\check\omega_{\ell,\mathrm{can}}^m\wedge (\Sigma_\ell^*\omega_F)^n}=-ne^{-t_\ell}\check{t}+\Sigma_\ell^*\log\frac{((1-e^{-t})\omega_{\rm can}+e^{-t}\omega_F+\eta^\ddagger_t)^{m+n}}{\binom{m+n}{n}\omega_{\rm can}^m\wedge (e^{-t}\omega_F)^n},
\end{equation}
and so
\begin{equation}\label{intermedia}
\delta_\ell^{-2j-\alpha}\log\frac{(\check{\omega}^\square_\ell)^{m+n}}{\binom{m+n}{n}\check\omega_{\ell,\mathrm{can}}^m\wedge (\Sigma_\ell^*\omega_F)^n}=\delta_\ell^{-2j-\alpha}\Sigma_\ell^*\left(f_{\ell,0}+\sum_{i=1}^N f_{\ell,i}h_i\right)+O(\delta_\ell^{-2j-\alpha}e^{-(j+1)t_\ell}),
\end{equation}
and $\delta_\ell^{-2j-\alpha}e^{-(j+1)t_\ell}=o(1)$ by assumption, namely the LHS of \eqref{intermedia} satisfies $(\star)$, as well as
\begin{equation}
\log\frac{(\check{\omega}^\square_\ell)^{m+n}}{\binom{m+n}{n}\check\omega_{\ell,\mathrm{can}}^m\wedge (\Sigma_\ell^*\omega_F)^n}=O(e^{-t_\ell})+o(1)_{\rm from\ base}.
\end{equation}

The next step is to show that the quantity
\begin{equation}\label{qqulprit}\begin{split}
\delta_\ell^{-2j-\alpha}\log \frac{(\check\omega_\ell^{\sharp,[\kappa]})^{m+n}}{\binom{m+n}{n}\check\omega_{\ell,\mathrm{can}}^m\wedge (\Sigma_\ell^*\omega_F)^n}
=
\delta_\ell^{-2j-\alpha} \log\frac{(\check{\omega}^\square_\ell)^{m+n}}{\binom{m+n}{n}\check\omega_{\ell,\mathrm{can}}^m\wedge (\Sigma_\ell^*\omega_F)^n}
+\delta_\ell^{-2j-\alpha}\log \frac{(\check\omega_\ell^{\sharp,[\kappa]})^{m+n}}{(\check{\omega}^\square_\ell)^{m+n}}
\end{split}
\end{equation}
also satisfies $(\star)$. We have just discussed the first piece, and the second piece equals
\begin{equation}\label{qwerylong2}\begin{split}
\delta_\ell^{-2j-\alpha}\log \left(1+\sum_{i=1}^{m+n}\binom{m+n}{i}\frac{(\check{\gamma}^\sharp_{\ell,1,k}+\cdots+\check{\gamma}^\sharp_{\ell,j-1,k}+
\check{\gamma}^{\sharp,[1]}_{\ell,j,k}+\cdots+\check{\gamma}^{\sharp,[\kappa]}_{\ell,j,k})^i\wedge(\check{\omega}^\square_\ell)^{m+n-i}}{(\check{\omega}^\square_\ell)^{m+n}}\right),
\end{split}
\end{equation}
and for the terms with $\check{\gamma}^\sharp_{\ell,1,k}+\cdots+\check{\gamma}^\sharp_{\ell,j-1,k}+\check{\gamma}^{\sharp,[1]}_{\ell,j,k}+\cdots+\check{\gamma}^{\sharp,[\kappa]}_{\ell,j,k}$ we recall from \eqref{qdugger} and \eqref{qdugger2} that we have
\begin{equation}
\check{\gamma}^{\sharp}_{\ell,i,k}=\sum_{p=1}^{N_{i,k}}\ddbar\check{\mathfrak{G}}_{\check{t},k}(\check{A}^\sharp_{\ell,i,p,k},\check{G}_{i,p,k}),\quad
\check{\gamma}^{\sharp,[q]}_{\ell,j,k}=\sum_{p=1}^{N_{j,k}^{[q]}}\ddbar\check{\mathfrak{G}}_{\check{t},k}(\check{A}^{\sharp,[q]}_{\ell,j,p,k},\check{G}_{j,p,k}),
\end{equation}
with the bounds \eqref{qcrocifisso4check}, where the approximate Green operator $\check{\mathfrak{G}}_{\check{t},k}$ is given schematically by
\begin{equation}\label{blobus}
\check{\mathfrak{G}}_{\check{t},k}(\check{A},\check{G}) = \sum_{\iota=0}^{j}\sum_{r=\lceil \frac{\iota}{2} \rceil}^{k} e^{-\left(r-\frac{\iota}{2}\right)t_\ell}e^{-re^{-t_\ell}\check{t}}(\check{\Phi}_{\iota,r}(\check{G})\circledast \D^\iota \check{A}).
\end{equation}
by \eqref{plop}. Plugging this into \eqref{qwerylong2} and arguing as we did above reveals that the quantity in \eqref{qqulprit} satisfies $(\star)$ and that furthermore
\begin{equation}
\log \frac{(\check\omega_\ell^{\sharp,[\kappa]})^{m+n}}{\binom{m+n}{n}\check\omega_{\ell,\mathrm{can}}^m\wedge (\Sigma_\ell^*\omega_F)^n}=O(e^{-\alpha_0\frac{t_\ell}{2}})+o(1)_{\rm from\ base}.
\end{equation}
Lastly, to prove that $\mathcal{B}_\ell^{[\kappa]}$ satisfies $(\star)$, it remains to consider the piece
\begin{equation}\label{qqulprit2}
\delta_\ell^{-2j-\alpha}\left(\sum_{i=1}^{j-1}\sum_{p=1}^{N_{i,k}} \check{\mathfrak{G}}_{\check{t},k}( \de_{\check{t}}{\check{A}}_{\ell,i,p,k}^\sharp+e^{-t_\ell}\check{A}_{\ell,i,p,k}^\sharp,\check G_{\ell,i,p,k})
+\sum_{q=1}^{\kappa}\sum_{p=1}^{N_{j,k}^{[q]}} \check{\mathfrak{G}}_{\check{t},k}( \de_{\check{t}}{\check{A}}^{\sharp,[q]}_{\ell,j,p,k}+e^{-t_\ell}\check{A}^{\sharp,[q]}_{\ell,j,p,k},\check G_{\ell,j,p,k})\right).
\end{equation}
The fact that this term satisfies $(\star)$ again follows immediately from \eqref{blobus} together with \eqref{linftyyy}, which also give that
\begin{equation}\begin{split}
&\sum_{i=1}^{j-1}\sum_{p=1}^{N_{i,k}} \check{\mathfrak{G}}_{\check{t},k}( \de_{\check{t}}{\check{A}}_{\ell,i,p,k}^\sharp+e^{-t_\ell}\check{A}_{\ell,i,p,k}^\sharp,\check G_{\ell,i,p,k})
+\sum_{q=1}^{\kappa}\sum_{p=1}^{N_{j,k}^{[q]}} \check{\mathfrak{G}}_{\check{t},k}( \de_{\check{t}}{\check{A}}^{\sharp,[q]}_{\ell,j,p,k}+e^{-t_\ell}\check{A}^{\sharp,[q]}_{\ell,j,p,k},\check G_{\ell,j,p,k})\\
&=
O(e^{-\alpha_0\frac{t_\ell}{2}})+o(1)_{\rm from\ base},
\end{split}\end{equation}
and this completes the proof that $\mathcal{B}_\ell^{[\kappa]}$ satisfies $(\star)$ and that \eqref{qwulprit} holds.
\end{proof}
Now that Lemma \ref{qsolomon} is established, we can start the first step of the iteration, when $\kappa=0$ and we need to select the obstruction functions $G^{[1]}_{j,p,k}$. To do this, we consider $\mathcal{B}_\ell^{[0]}$, which by Lemma \ref{qsolomon} satisfies $(\star)$, and let $\{h_i\}$ be the corresponding functions in its expansion \eqref{qstarr}. Applying the approximate fiberwise Gram-Schmidt \cite[Proposition 3.1]{HT3} to the functions $h_i$ together with the $G_{i,p,k}, 1\leq i<j$, produces our desired list $G^{[1]}_{j,p,k}$ (on $B_r\times Y$, up to shrinking $r$), so that we may assume that the functions $h_i$ in \eqref{qstarr} lie in the fiberwise linear span of the $G_{i,p,k}, 1\leq i<j$ together with the $G^{[1]}_{j,p,k}$. This completes the first step ($\kappa=0$).

Next, we consider a subsequent step $\kappa\geq 1$ of the iteration, so we assume we are given the lists $G_{i,p,k}, 1\leq i<j$ and $G^{[q]}_{j,p,k}, 1\leq q\leq \kappa$, hence we have the function $\mathcal{B}_\ell^{[\kappa]}$ in \eqref{qkulprit}, and we want to construct the obstruction functions $G^{[\kappa+1]}_{j,p,k}$. In order to do this, we must compare $\mathcal{B}_\ell^{[\kappa]}$ and
$\mathcal{B}_\ell^{[\kappa-1]}$. We have
\begin{equation}\label{kebello}\begin{split}
\delta_\ell^{2j+\alpha}(\mathcal{B}_\ell^{[\kappa]}-\mathcal{B}_\ell^{[\kappa-1]})&=-\log\frac{(\check\omega_\ell^{\sharp,[\kappa]})^{m+n}}{(\check\omega_\ell^{\sharp,[\kappa-1]})^{m+n}}
+\sum_{p=1}^{N_{j,k}^{[\kappa]}} \check{\mathfrak{G}}_{\check{t},k}( \de_{\check{t}}{\check{A}}^{\sharp,[\kappa]}_{\ell,j,p,k}+e^{-t_\ell}\check{A}^{\sharp,[\kappa]}_{\ell,j,p,k},\check G_{\ell,j,p,k})\\
&=-\log\left(1+\sum_{i=1}^{m+n}\binom{m+n}{i}\frac{(\check{\gamma}^{\sharp,[\kappa]}_{\ell,j,k})^i\wedge(\check\omega_\ell^{\sharp,[\kappa-1]})^{m+n-i}}{(\check\omega_\ell^{\sharp,[\kappa-1]})^{m+n}}\right)\\
&+\sum_{p=1}^{N_{j,k}^{[\kappa]}} \check{\mathfrak{G}}_{\check{t},k}( \de_{\check{t}}{\check{A}}^{\sharp,[\kappa]}_{\ell,j,p,k}+e^{-t_\ell}\check{A}^{\sharp,[\kappa]}_{\ell,j,p,k},\check G_{\ell,j,p,k}).
\end{split}\end{equation}
As in \cite[(3.75)--(3.76)]{HT3}, for $1\leq i\leq m+n$ we have
\begin{equation}
\frac{(\check{\gamma}^{\sharp,[\kappa]}_{\ell,j,k})^i\wedge(\check\omega_\ell^{\sharp,[\kappa-1]})^{m+n-i}}{(\check\omega_\ell^{\sharp,[\kappa-1]})^{m+n}}=
\frac{(\check{\gamma}^{\sharp,[\kappa]}_{\ell,j,k})^i\wedge(\check{\omega}_{\ell, {\rm can}}+(\Sigma_\ell^*\omega_F)_{\mathbf{ff}})^{m+n-i}}{\binom{m+n}{n}\check{\omega}_{\ell, {\rm can}}^m\wedge(\Sigma_\ell^*\omega_F)^n_{\mathbf{ff}}}
(1+O(e^{-\alpha_0\frac{t_\ell}{2}})+o(1)_{\rm from\ base}),
\end{equation}
where the $O(\cdot), o(\cdot)$ are in the locally smooth topology. Using \eqref{blobus}, we can write (for any $1\leq q\leq\kappa$)
\begin{equation}\label{blob}
\check{\gamma}^{\sharp,[q]}_{\ell,j,k} = \sum_{p=1}^{N^{[q]}_{j,k}}\sum_{\iota=0}^{j}\sum_{r=\lceil \frac{\iota}{2} \rceil}^{k} e^{-\left(r-\frac{\iota}{2}\right) t_\ell}e^{-re^{-t_\ell}\check{t}}i\partial\overline\partial(\check{\Phi}_{\iota,r}(\check{G}^{[q]}_{j,p,k})\circledast \D^\iota \check{A}^{\sharp,[q]}_{\ell,j,p,k}).
\end{equation}
As in \cite[(3.80)]{HT3} we decompose \eqref{blob} schematically as the sum of $6$ pieces
\begin{equation}\label{blobbe}\begin{split}
\check{\gamma}^{\sharp,[q]}_{\ell,j,k}&= \sum_{p=1}^{N^{[q]}_{j,k}}\sum_{\iota=0}^{j}\sum_{r=\lceil \frac{\iota}{2} \rceil}^{k} e^{-\left(r-\frac{\iota}{2}\right) t_\ell}e^{-re^{-t_\ell}\check{t}}\bigg\{(i\partial\overline\partial\check{\Phi}_{\iota,r}(\check{G}^{[q]}_{j,p,k}))_{\mathbf{ff}}\circledast \D^\iota \check{A}^{\sharp,[q]}_{\ell,j,p,k}
+
(i\partial\overline\partial\check{\Phi}_{\iota,r}(\check{G}^{[q]}_{j,p,k}))_{\mathbf{bf}}\circledast \D^\iota \check{A}^{\sharp,[q]}_{\ell,j,p,k}\\
&+
(i\partial\overline\partial\check{\Phi}_{\iota,r}(\check{G}^{[q]}_{j,p,k}))_{\mathbf{bb}}\circledast \D^\iota \check{A}^{\sharp,[q]}_{\ell,j,p,k}
+(i\partial\check{\Phi}_{\iota,r}(\check{G}^{[q]}_{j,p,k}))_{\mathbf{b}}\circledast \db\D^\iota \check{A}^{\sharp,[q]}_{\ell,j,p,k}
+(i\partial\check{\Phi}_{\iota,r}(\check{G}^{[q]}_{j,p,k}))_{\mathbf{f}}\circledast \db\D^\iota\check{A}^{\sharp,[q]}_{\ell,j,p,k}\\
&+\check{\Phi}_{\iota,r}(\check{G}^{[q]}_{j,p,k})\circledast i\de\db\D^\iota \check{A}^{\sharp,[q]}_{\ell,j,p,k}\bigg\}=:\sum_{\iota=0}^{j}\sum_{r=\lceil \frac{\iota}{2} \rceil}^{k} \left({\rm I}^{[q]}_{\iota,r}+\cdots+{\rm VI}^{[q]}_{\iota,r}\right),
\end{split}\end{equation}
(which depend on $\ell,j,k$, but we omit this from the notation for simplicity). Observe here that for all $(\check{z},\check{y},\check{t})\in \check{Q}_{R\delta_\ell^{-1}}$ we have
\begin{equation}\label{goooo2}
-R^2 e^{-t_\ell}\delta_\ell^{-2}\leq e^{-t_\ell}\check{t}\leq 0,
\end{equation}
where, by assumption, $e^{-t_\ell}\delta_\ell^{-2}\to 0$, so the term $e^{-re^{-t_\ell}\check{t}}$ in \eqref{blobbe} is $1+o(1)$. Now, as in \cite[(3.85)--(3.91)]{HT3}, we see that ${\rm I}^{[q]}_{0,0}$ is the dominant term, in the sense that
\begin{equation}\label{zatan}
\|\bullet^{[q]}_{\iota,r}\|_{\infty,\check{Q}_{R\delta_\ell^{-1}},\check{g}_\ell(0)}\leq C\delta_\ell\|{\rm I}^{[q]}_{0,0}\|_{\infty,\check{Q}_{R\delta_\ell^{-1}},\check{g}_\ell(0)},
\end{equation}
whenever $\bullet\neq {\rm I}$ or $(\iota,r)\neq (0,0)$. This together with \eqref{qcrocifisso4check} imply that
\begin{equation}\label{qzulprit}\begin{split}
&\sum_{i=1}^{m+n}\binom{m+n}{i}\frac{(\check{\gamma}^{\sharp,[q]}_{\ell,j,k})^i\wedge(\check\omega_\ell^{\sharp,[q-1]})^{m+n-i}}{(\check\omega_\ell^{\sharp,[q-1]})^{m+n}}\\
&=\frac{(m+n)}{\binom{m+n}{n}}\frac{{\rm I}^{[q]}_{0,0}\wedge(\check{\omega}_{\ell, {\rm can}}+(\Sigma_\ell^*\omega_F)_{\mathbf{ff}})^{m+n-1}}{\check{\omega}_{\ell, {\rm can}}^m\wedge(\Sigma_\ell^*\omega_F)^n_{\mathbf{ff}}}
(1+O(e^{-\alpha_0\frac{t_\ell}{2}})+o(1)_{\rm from\ base})+F^{[q]}_\ell,
\end{split}
\end{equation}
where given any $R>0$ there is a $C>0$ such that for all $\ell$
\begin{equation}\label{qzulprit2}
\|F^{[q]}_\ell\|_{\infty,\check{Q}_{R\delta_\ell^{-1}}}\leq C\delta_\ell^{\alpha_0}\|I^{[q]}_{0,0}\|_{\infty,\check{Q}_{R\delta_\ell^{-1}},\check{g}_\ell(0)}.
\end{equation}
Relation \eqref{qsangennaro2} shows that
\begin{equation}\label{qzulprit3}
\frac{(m+n)}{\binom{m+n}{n}}\frac{{\rm I}^{[q]}_{0,0}\wedge(\check{\omega}_{\ell, {\rm can}}+(\Sigma_\ell^*\omega_F)_{\mathbf{ff}})^{m+n-1}}{\check{\omega}_{\ell, {\rm can}}^m\wedge(\Sigma_\ell^*\omega_F)^n_{\mathbf{ff}}}=\sum_{p=1}^{N^{[q]}_{j,k}}\check{G}^{[q]}_{j,p,k}\check{A}^{\sharp,[q]}_{\ell,j,p,k},
\end{equation}
while from the definition of $I^{[q]}_{0,0}$ and \eqref{linftyyy} we have
\begin{equation}\label{qzulprit4}
\|I^{[q]}_{0,0}\|_{\infty,\check{Q}_{R\delta_\ell^{-1}},\check{g}_\ell(0)}\leq C \sum_{p=1}^{N^{[q]}_{j,k}}\|\check{A}^{\sharp,[q]}_{\ell,j,p,k}\|_{\infty,\check{Q}_{R\delta_\ell^{-1}}}\leq C\delta_\ell^{\alpha_0},
\end{equation}
while the argument in \cite[(3.83)]{HT3} gives the reverse bound
\begin{equation}\label{qzulprit5}
\sum_{p=1}^{N^{[q]}_{j,k}}\|\check{A}^{\sharp,[q]}_{\ell,j,p,k}\|_{\infty,\check{Q}_{R\delta_\ell^{-1}}}\leq C\|I^{[q]}_{0,0}\|_{\infty,\check{Q}_{R\delta_\ell^{-1}},\check{g}_\ell(0)},
\end{equation}
and \eqref{qzulprit}, \eqref{qzulprit2}, \eqref{qzulprit3}, \eqref{qzulprit4} imply in particular that
\begin{equation}\label{qqulprit3}
\left\|\sum_{i=1}^{m+n}\binom{m+n}{i}\frac{(\check{\gamma}^{\sharp,[q]}_{\ell,j,k})^i\wedge(\check\omega_\ell^{\sharp,[q-1]})^{m+n-i}}{(\check\omega_\ell^{\sharp,[q-1]})^{m+n}}\right\|_{\infty,\check{Q}_{R\delta_\ell^{-1}}}\leq  C\|I^{[q]}_{0,0}\|_{\infty,\check{Q}_{R\delta_\ell^{-1}},\check{g}_\ell(0)}\leq C\delta_\ell^{\alpha_0}.
\end{equation}
From \eqref{qzulprit}, \eqref{qzulprit2}, \eqref{qzulprit3}, \eqref{qqulprit3} and the Taylor expansion of $\log(1+x)$ we see that
\begin{equation}\label{qqulprit4}
\log\left(1+\sum_{i=1}^{m+n}\binom{m+n}{i}\frac{(\check{\gamma}^{\sharp,[\kappa]}_{\ell,j,k})^i\wedge(\check\omega_\ell^{\sharp,[\kappa-1]})^{m+n-i}}{(\check\omega_\ell^{\sharp,[\kappa-1]})^{m+n}}\right)=
\left(\sum_{p=1}^{N^{[q]}_{j,k}}\check{G}^{[q]}_{j,p,k}\check{A}^{\sharp,[q]}_{\ell,j,p,k}\right)(1+o(1)_{\rm from\ base})+F^{[q]}_\ell,
\end{equation}
where $F^{[q]}_\ell$ satisfies \eqref{qzulprit2}.

This deals with the term on the second line of \eqref{kebello}. As for the last line, we can use \eqref{blobus} to expand
\begin{equation}\label{zattan}
\check{\mathfrak{G}}_{\check{t},k}( \de_{\check{t}}{\check{A}}^{\sharp,[\kappa]}_{\ell,j,p,k}+e^{-t_\ell}\check{A}^{\sharp,[\kappa]}_{\ell,j,p,k},\check G_{\ell,j,p,k})=
\sum_{\iota=0}^{j}\sum_{r=\lceil \frac{\iota}{2} \rceil}^{k} e^{-\left(r-\frac{\iota}{2}\right) t_\ell}e^{-re^{-t_\ell}\check{t}}(\check{\Phi}_{\iota,r}(\check{G}_{\ell,j,p,k})\circledast \D^\iota (\de_{\check{t}}{\check{A}}^{\sharp,[\kappa]}_{\ell,j,p,k}+e^{-t_\ell}\check{A}^{\sharp,[\kappa]}_{\ell,j,p,k})),
\end{equation}
and applying the obvious parabolic extension of \cite[Lemma 3.10]{HT3} to balls of radius $R\delta_\ell^{-1}$ gives for $\iota\geq 0$,
\begin{equation}
\|\DD^\iota \check{A}^{\sharp,[\kappa]}_{\ell,j,p,k}\|_{\infty,\check{Q}_{R\delta_\ell^{-1}}}\leq C\delta_\ell^{\iota}\| \check{A}^{\sharp,[\kappa]}_{\ell,j,p,k}\|_{\infty,\check{Q}_{R\delta_\ell^{-1}}},
\end{equation}
and so
\begin{equation}\label{goooo}
\|\D^\iota (\de_{\check{t}}{\check{A}}^{\sharp,[\kappa]}_{\ell,j,p,k}+e^{-t_\ell}\check{A}^{\sharp,[\kappa]}_{\ell,j,p,k})\|_{\infty,\check{Q}_{R\delta_\ell^{-1}}}\leq
C\delta_\ell^2 \|\DD^\iota \check{A}^{\sharp,[\kappa]}_{\ell,j,p,k}\|_{\infty,\check{Q}_{R\delta_\ell^{-1}}}\leq C\delta_\ell^{\iota+2}\| \check{A}^{\sharp,[\kappa]}_{\ell,j,p,k}\|_{\infty,\check{Q}_{R\delta_\ell^{-1}}}.
\end{equation}
Inserting \eqref{goooo} into \eqref{zattan}, and using also \eqref{qzulprit5} and the fact that  $e^{-re^{-t_\ell}\check{t}}=1+o(1)$ from \eqref{goooo2}, we see that
\begin{equation}
\left\|\check{\mathfrak{G}}_{\check{t},k}( \de_{\check{t}}{\check{A}}^{\sharp,[\kappa]}_{\ell,j,p,k}+e^{-t_\ell}\check{A}^{\sharp,[\kappa]}_{\ell,j,p,k},\check G_{\ell,j,p,k})\right\|_{\infty,\check{Q}_{R\delta_\ell^{-1}}}\leq C\delta_\ell^2 \|I^{[q]}_{0,0}\|_{\infty,\check{Q}_{R\delta_\ell^{-1}},\check{g}_\ell(0)},
\end{equation}
and combining this with \eqref{kebello} and \eqref{qqulprit4} we finally obtain
\begin{equation}\label{zatttan}
\mathcal{B}_\ell^{[\kappa]}=\mathcal{B}_\ell^{[\kappa-1]}-\delta_\ell^{-2j-\alpha}\left(\sum_{p=1}^{N^{[\kappa]}_{j,k}}\check{G}^{[\kappa]}_{j,p,k}\check{A}^{\sharp,[\kappa]}_{\ell,j,p,k}\right)(1+o(1)_{\rm from\ base})+\delta_\ell^{-2j-\alpha}E^{[\kappa]}_\ell,
\end{equation}
where $E^{[\kappa]}_\ell$ satisfies
\begin{equation}\label{qzulprit2a}
\|E^{[\kappa]}_\ell\|_{\infty,\check{Q}_{R\delta_\ell^{-1}}}\leq C\delta_\ell^{\alpha_0}\|I^{[\kappa]}_{0,0}\|_{\infty,\check{Q}_{R\delta_\ell^{-1}},\check{g}_\ell(0)}.
\end{equation}
Now, from Lemma \ref{qsolomon} we know that both $\mathcal{B}_\ell^{[\kappa]}$ and $\mathcal{B}_\ell^{[\kappa-1]}$ satisfy $(\star)$, and from \eqref{zatttan} we see that so does
$\delta_\ell^{-2j-\alpha}E^{[\kappa]}_\ell$, and so it has an expansion of the form \eqref{qstarr}. As in \cite{HT3}, we apply the approximate fiberwise Gram-Schmidt \cite[Proposition 3.1]{HT3} to the functions $h_i$ together with the $G_{i,p,k}, 1\leq i<j$ and $G^{[q]}_{j,p,k}, 1\leq q\leq\kappa$, produces our desired list $G^{[\kappa+1]}_{j,p,k}$ (on $B_r\times Y$, up to shrinking $r$), so that we may assume that the functions $h_i$ in \eqref{qstarr} lie in the fiberwise linear span of the $G_{i,p,k}, 1\leq i<j$ together with the $G^{[q]}_{j,p,k}, 1\leq q\leq \kappa+1$. This completes the step from $\kappa$ to $\kappa+1$ in our iterative procedure.

Iterating \eqref{zatttan} shows that for every $\kappa\geq 1$ we have
\begin{equation}\label{ratttan}
\mathcal{B}_\ell^{[\kappa]}=\mathcal{B}_\ell^{[0]}-\delta_\ell^{-2j-\alpha}\left(\sum_{q=1}^\kappa\sum_{p=1}^{N^{[q]}_{j,k}}\check{G}^{[q]}_{j,p,k}\check{A}^{\sharp,[q]}_{\ell,j,p,k}\right)
(1+o(1)_{\rm from\ base})+\delta_\ell^{-2j-\alpha}\sum_{q=1}^\kappa E^{[q]}_\ell,
\end{equation}
with
\begin{equation}\label{ratttan2}
\|E^{[q]}_\ell\|_{\infty,\check{Q}_{R\delta_\ell^{-1}}}\leq C\delta_\ell^{\alpha_0}\|I^{[q]}_{0,0}\|_{\infty,\check{Q}_{R\delta_\ell^{-1}},\check{g}_\ell(0)},
\end{equation}
for $1\leq q\leq \kappa$, and also
\begin{equation}\label{ratttan3}
\|\mathcal{B}_\ell^{[0]}\|_{\infty,\check{Q}_{R\delta_\ell^{-1}}}\leq C\delta_\ell^{-2j-\alpha},
\end{equation}
which follows immediately from \eqref{qwulprit}.

We can now repeat the iterative step $\ov{\kappa}:=\lceil\frac{2j+\alpha}{\alpha_0}\rceil$ and then we stop, so the last set of functions which are added to the list are the $G_{j,p,k}^{[\ov{\kappa}+1]}$. Our choice of $\ov{\kappa}$ is made so that
\begin{equation}\label{ratttan4}
\delta_\ell^{-2j-\alpha}\delta_\ell^{(\ov{\kappa}+1)\alpha_0}\to 0.
\end{equation}
The resulting $G_{j,p,k}^{[q]}$ with $1\leq q\leq\ov{\kappa}+1$ are then renamed simply $G_{j,p,k}$. These, together with the $G_{i,p,k}, 1\leq i<j$, are the obstruction functions that we seek. It remains to show that the statement of the Selection Theorem \ref{thm:Selection} holds with this choice of obstruction functions. By definition, the quantity in \eqref{stronzo} equals $\mathcal{B}_\ell^{[\ov{\kappa}+1]}$ (up to the term with $\Sigma_\ell^*B^\sharp_\ell$, which we can ignore since it can be absorbed into $f_{\ell,0}$ in \eqref{cuccuruzzu}). We know that $\mathcal{B}_\ell^{[\ov{\kappa}+1]}$ satisfies $(\star)$ thanks to Lemma \ref{qsolomon}. As mentioned earlier, because of this we know that if it converges locally uniformly then it converges locally smoothly, which is the last claim in the Selection Theorem \ref{thm:Selection}. The last thing to prove is that if $\mathcal{B}_\ell^{[\ov{\kappa}+1]}$ converges locally uniformly, then \eqref{cuccuruzzu} holds, and thanks to \eqref{ratttan} (with $\kappa=\ov{\kappa}+1$) and to our choice of obstruction functions, it suffices to show that $\delta_\ell^{-2j-\alpha}E^{[\ov{\kappa}+1]}_\ell$ is $o(1)$ in the locally smooth topology. Since this term satisfies $(\star)$ (as mentioned earlier), it suffices to show that it is $o(1)$ in the $L^\infty_{\rm loc}$ topology, and this follows from \eqref{ratttan}, \eqref{ratttan2}, \eqref{ratttan3}, \eqref{ratttan4} and our main assumption that $\mathcal{B}_\ell^{[\ov{\kappa}+1]}=O(1)$ in $L^\infty_{\rm loc}$, by using the same iteration argument as \cite[(3.101)--(3.112)]{HT3}. This completes the proof of Theorem \ref{thm:Selection}.
\end{proof}

\section{Asymptotic expansion}\label{sekt}
In this section, we will  prove our main technical result, Theorem \ref{thm:decomposition}, which gives an asymptotic expansion for the metrics $\omega^\bullet(t)$ which evolve under the K\"ahler-Ricci flow \eqref{kkrf}.
Recall that $\omega^\bullet(t)=\omega^\natural(t)+\ddbar\vp(t)$, where the potentials $\vp(t)$ solve the parabolic complex Monge-Amp\`ere equation \eqref{ma}, which we can write as
\begin{equation}\label{ma2}
(\omega^\bullet(t))^{m+n}=(\omega^\natural(t)+\ddbar\vp(t))^{m+n}=e^{\vp(t)+\dot{\vp}(t)-nt}\binom{m+n}{n}\omega_{\rm can}^m\wedge\omega_F^n.
\end{equation}

\subsection{Known estimates}
First, let us recall a few of the known estimates for the \KR flow \eqref{kkrf} and its equivalent formulation \eqref{ma}. There are many other facts that are known about this flow (see e.g. \cite[\S 5]{To} or \cite[\S 7]{To2} for overviews), but the following are the only ones that we will need:
\begin{lma}\label{lma:earlier work}
Assume the setup in Section \ref{set}. Then there exists $C>0$ such that on $B\times Y\times [0,+\infty)$ we have
\begin{enumerate}\setlength{\itemsep}{1mm}
\item[(i)] $C^{-1}\omega^\natural(t)\leq \omega^\bullet(t)\leq C\omega^\natural(t)$,
\item[(ii)]$|\vp(t)|+|\dot\varphi(t)|\to 0$ as $t\to+\infty$,
\item[(iii)]$|\omega^\bullet(t)- \omega_{\mathrm{can}}|_{g^\bullet(t)}\to 0$ as $t\to+\infty,$
\item[(iv)] $|R({g^\bullet(t)})|\leq C$,
\item[(v)] $|\dot{\vp}(t)+\ddot{\vp}(t)|\leq C$,
\item[(vi)] $|\nabla(\vp(t)+\dot{\vp}(t))|_{g^\bullet(t)}\leq C.$
\end{enumerate}
\end{lma}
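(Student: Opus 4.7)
All six assertions are established in prior work; the plan is to assemble them from the appropriate references and briefly indicate the mechanism behind each. The substantive inputs are (i), (iv), and (vi); the remaining items are either immediate or short consequences of these together with the parabolic Monge--Amp\`ere equation \eqref{ma2}.

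For (i), I would invoke the parabolic Schwarz lemma of Fong--Zhang \cite{FZ}, which adapts the third-named author's elliptic argument in \cite{To4} to the K\"ahler--Ricci flow: compute $(\de_t-\Delta_{\omega^\bullet})\log\tr_{\omega^\bullet}\omega^\natural$ using the structure of $\omega^\natural=(1-e^{-t})\omega_{\rm can}+e^{-t}\omega_F$, combine the resulting differential inequality with an auxiliary barrier built from $\vp$ and a local potential for $\omega_F-\omega_0$, and apply the parabolic maximum principle on $X$ to obtain $\tr_{\omega^\bullet}\omega^\natural\le C$; the reverse bound $\tr_{\omega^\natural}\omega^\bullet\le C$ then follows from the Monge--Amp\`ere identity \eqref{ma2} and the uniform $L^\infty$ bound on $\vp+\dot\vp$.

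Given (i), item (ii) follows from Song--Tian: the uniform $L^\infty$ bound $\|\vp(t)\|_{L^\infty}\le C$ is from \cite{ST2} (relying on \cite{DPa,EGZ2}; cf.~also \cite{GPT}), and \cite{ST2} establishes $\vp(t)\to 0$ on compact subsets of $X\setminus S$. Taking the logarithm of \eqref{ma2} and using (i) to estimate $(\omega^\bullet)^{m+n}$ gives $\vp+\dot\vp=O(e^{-t})$ on such compact subsets, so combined with $\vp\to 0$ this yields $\dot\vp\to 0$. Item (iii) is the fiber-collapse statement: by (i) together with $\vp,\dot\vp\to 0$, the base-base part of $\omega^\bullet$ converges to $\omega_{\rm can}$, and the fiberwise smooth convergence $e^t\omega^\bullet|_{X_z}\to\omega_F|_{X_z}$ of \cite{TWY} handles the remaining components of $\omega^\bullet-\omega_{\rm can}$ in the $g^\bullet$-norm on $X\setminus S$.

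Item (iv) is the uniform scalar curvature bound of Song--Tian \cite{ST3}, proved by applying the parabolic maximum principle to a suitable evolving combination of $R(g^\bullet)$, $\dot\vp$, and a trace with respect to $\omega^\natural$, using (i) to control the auxiliary terms. Item (v) is then immediate: differentiating the logarithm of \eqref{ma2}, namely $\dot\vp+\vp+nt=\log\frac{(\omega^\bullet)^{m+n}}{\binom{m+n}{n}\omega_{\rm can}^m\wedge\omega_F^n}$, in $t$ and using $\partial_t\omega^\bullet=-\Ric(\omega^\bullet)-\omega^\bullet$ yields the pointwise identity $\ddot\vp+\dot\vp=-R(g^\bullet)-(m+2n)$, which is bounded by (iv). Finally, (vi) is the Tosatti--Weinkove--Yang gradient estimate \cite[Lemma 3.1]{TWY}, obtained by applying the parabolic maximum principle to a carefully chosen quantity such as $|\nabla(\vp+\dot\vp)|^2_{g^\bullet}$ multiplied by barriers built from $\vp$ and the Ricci potential. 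I expect (iv) and (vi) to be the main technical obstacles, since both require absorbing terms that could a priori blow up in the fiber directions as $t\to\infty$, whereas (i) reduces to a now-standard Schwarz lemma computation and the remaining items follow by algebraic manipulation.
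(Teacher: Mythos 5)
Your outline correctly locates the literature for (i), (iii), and (iv), and your derivations of (iii) and (v) are in the right spirit, but there are three places where the argument as written does not quite go through.

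For (ii), the step ``using (i) to estimate $(\omega^\bullet)^{m+n}$ gives $\vp+\dot\vp=O(e^{-t})$'' is a genuine gap. The metric equivalence (i) is two-sided with a uniform constant $C$, so plugging it into the Monge--Amp\`ere identity only gives $|\vp+\dot\vp|\le C'$, not decay. To get $\vp+\dot\vp\to 0$ one needs the finer collapsing statement that $\omega^\bullet(t)$ is asymptotic to $\omega_{\rm can}+e^{-t}\omega_F$ in volume, which is essentially the content you are trying to establish in (iii). The paper simply quotes \cite[Lemma 3.1]{TWY} for the full statement $\vp,\dot\vp\to 0$, and that is the correct input; deriving $\dot\vp\to 0$ from $\vp\to 0$ plus (i) alone does not work.

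For (v), there is a sign slip: \eqref{ma2} reads $(\omega^\bullet)^{m+n}=e^{\vp+\dot\vp-nt}\binom{m+n}{n}\omega_{\rm can}^m\wedge\omega_F^n$, i.e. $\vp+\dot\vp-nt$, not $+nt$. Differentiating in $t$ and using $\partial_t\omega^\bullet=-\Ric(\omega^\bullet)-\omega^\bullet$ gives $\dot\vp+\ddot\vp=-R(g^\bullet)-m$, matching the paper. Your version $-R-(m+2n)$ comes from the sign error; the boundedness conclusion is unaffected.

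For (vi), the attribution to \cite[Lemma 3.1]{TWY} is off, and more importantly the step from the literature to (vi) is not immediate. The paper uses \cite[Proposition 3.1]{ST3}, which bounds $|\nabla\log\frac{e^{nt}(\omega^\bullet)^{m+n}}{\Omega}|_{g^\bullet}$ for a fixed global smooth volume form $\Omega$ with $-\ddbar\log\Omega$ pulled back from $B$. One must then relate $\vp+\dot\vp=\log\frac{e^{nt}(\omega^\bullet)^{m+n}}{\binom{m+n}{n}\omega_{\rm can}^m\wedge\omega_F^n}$ to this quantity: using $-\ddbar\log(\omega_{\rm can}^m\wedge\omega_F^n)=-\omega_{\rm can}$, the ratio $\binom{m+n}{n}\omega_{\rm can}^m\wedge\omega_F^n/\Omega$ has fiberwise pluriharmonic logarithm, hence equals $e^G$ for a function $G$ pulled back from $B$, and $|\nabla G|_{g^\bullet}\le C$ since $\omega^\bullet\ge c\,\omega_{\rm can}$ on the base directions. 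That intermediate structural fact about the volume-form ratio being a pullback from $B$ is the real content of the step and is missing from your outline.
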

\begin{proof}
Item (i) is proved in \cite{FZ} (and is an adaptation of \cite{To4}, see also \cite{ST} for the case $m=n=1$). Item (ii) is proved in \cite[Lemma 3.1]{TWY}, and item (iii) in \cite[Theorem 1.2]{TWY} (see especially the very end of its proof on p.685).
Item (iv) is the main theorem of \cite{ST3}, and this implies (v) thanks to the relation \cite[p.345]{To}
\begin{equation}
\dot{\vp}(t)+\ddot{\vp}(t)=-R(g^\bullet(t))-m.
\end{equation}
To prove (vi), we use \cite[Proposition 3.1]{ST3} which gives
\begin{equation}\label{allokko}
\left|\nabla\log \frac{e^{nt}(\omega^\bullet(t))^{m+n}}{\Omega}\right|_{g^\bullet(t)}\leq C,
\end{equation}
where $\Omega$ is a smooth positive volume form on $X$ such that $-\ddbar\log\Omega=-\omega_B$ is the pullback of a K\"ahler form on $B$. On the other hand by \cite[Proposition 5.9]{To} we have $-\ddbar\log(\omega_{\rm can}^m\wedge\omega_F^n)=-\omega_{\rm can},$ so on $B\times Y$ we have
\begin{equation}
\ddbar\log \frac{\binom{m+n}{n}\omega_{\rm can}^m\wedge\omega_F^n}{\Omega}=-\omega_B+\omega_{\rm can},
\end{equation}
which is a $(1,1)$-form pulled back from $B$, hence the logarithm of the ratio of these two volume forms restricted to the fibers $\{z\}\times Y$ is pluriharmonic, hence constant. Thus,
\begin{equation}
\binom{m+n}{n}\omega_{\rm can}^m\wedge\omega_F^n=e^G\Omega,
\end{equation}
for some smooth function $G$ on $B$, and so using \eqref{ma2} and \eqref{allokko} we get
\begin{equation}
|\nabla(\vp(t)+\dot{\vp}(t))|_{g^\bullet(t)}=\left|\nabla\left(\log \frac{e^{nt}(\omega^\bullet(t))^{m+n}}{\Omega}-G\right) \right|_{g^\bullet(t)}\leq C+|\nabla G|_{g^\bullet(t)}\leq C,
\end{equation}
as desired.
\end{proof}

\subsection{Statement of the asymptotic expansion}
Given any $j\in \mathbb{N}, 0\leq 2j\leq k$ and $z\in B$,  during the course of the proof of Theorem \ref{thm:decomposition} below we will work by induction on $j$. By applying repeatedly the Selection Theorem \ref{thm:Selection}, and consequently shrinking our ball at each step, we will obtain in particular a collection $G_{i,p,k}, 1\leq i\leq j, 1\leq p\leq N_{i,k}$ of smooth function on $B\times Y$ with fiberwise average zero, which are fiberwise $L^2$ orthonormal. For each such $G_{i,p,k}$ and $t>0$, as in \cite[(3.6)]{HT3} we define $P_{t,i,p,k}=P_{t,G_{i,p,k}}$ where
\begin{equation}\label{defn:P}
P_{t,H}(\a)= n(\mathrm{pr}_{B})_* (H \a \wedge \omega_F^{n-1})+e^{-t} \tr_{\omega_{\mathrm{can}}}(\mathrm{pr}_{B})_*(H \a \wedge \omega_F^n),
\end{equation}
for any $(1,1)$ form $\a$ on $B\times Y$ and $H$ with $\int_{\{z\}\times Y}H\omega_F^n=0$ for all $z\in \mathbb{C}^m$. Throughout the proof, we will fix a reference shrinking product metric $g(t)=g_{\mathbb{C}^m}+e^{-t} g_{Y,z_0}$. It will only be used to measure the norms and distance but not the connection, and thus the exact choice of $z_0$ is unimportant thanks to \eqref{equiv-product-ref} (we will usually take $z_0=0$). We will also need the $t$-dependent approximate Green operator $\mathfrak{G}_{t,k}$ defined in \cite[\S 3.2]{HT3}, to which we refer for its basic properties.

\begin{thm}\label{thm:decomposition} For all $j,k\in \mathbb{N},0\leq 2j\leq k$, $z\in B$, there exists $B'=B_{\mathbb{C}^m}(z,R)\Subset B$ and
functions $G_{i,p,k}, 1\leq i\leq j, 1\leq p\leq N_{i,k}$ as above, such that on $B'\times Y$ we have a decomposition
\begin{equation}\label{expa}
\omega^\bullet(t)=\omega^\natural(t)+\gamma_0(t)+\gamma_{1,k}(t)+\dots+\gamma_{j,k}(t)+\eta_{j,k}(t),
\end{equation}
with the following properties. For all $\a\in (0,1)$ and $r<R$, there is $C>0$ such that for all $t\geq 0$,
\begin{equation}\label{infty1}
\|\mathfrak{D}^\iota \eta_{j,k}\|_{\infty,Q_r(z,t),g(t)}
\leq Ce^{\frac{\iota-2j-\a}{2}t}\,\;\;\text{for all}\;\; 0\leq \iota\leq 2j,\\
\end{equation}
\begin{equation}\label{holder1}
[\mathfrak{D}^{2j}\eta_{j,k}]_{\a,\a/2, Q_r(z,t),g(t)}\leq C,
\end{equation}
where $Q_r(z,t)=\left(B_{\mathbb{C}^m}(z,r)\times Y\right)\times [t-r^2,t]$.
Furthermore, we have
\begin{equation}
\gamma_0(t)=\ddbar \underline{\varphi},\;\;\; \gamma_{i,k}(t)=\sum_{p=1}^{N_{i,k}}\ddbar \mathfrak{G}_{t,k}\left(A_{i,p,k}(t),G_{i,p,k} \right),
\end{equation}
for $1\leq i\leq j$ where $A_{i,p,k}(t)=P_{t,i,p,k}(\eta_{i-1,k}(t))$ are functions from the base, and we have
\begin{equation}\label{infty2}
\|\mathfrak{D}^\iota\gamma_{0}\|_{\infty, Q_r(z,t),g_{\mathbb{C}^m}} = o(1)\,\;\;\text{for all}\;\; 0\leq \iota\leq 2j,
\end{equation}
\begin{equation}\label{holder2}
\quad [\mathfrak{D}^{2j}\gamma_{0}]_{\a,\a/2, Q_r(z,t),g_{\mathbb{C}^m}} \leq C,
\end{equation}
\begin{equation}\label{infty5}
\|\mathfrak{D}^\iota(\de_t\underline{\vp}+\underline{\vp})\|_{\infty, Q_r(z,t),g_{\mathbb{C}^m}} = o(1)\,\;\;\text{for all}\;\; 0\leq \iota\leq 2j,
\end{equation}
\begin{equation}\label{holder5}
\quad [\mathfrak{D}^{2j}(\de_t\underline{\vp}+\underline{\vp})]_{\a,\a/2, Q_r(z,t),g_{\mathbb{C}^m}} \leq C,
\end{equation}
\begin{equation}\label{infty3}
\|\mathfrak{D}^\iota A_{i,p,k}\|_{\infty,Q_r(z,t),g_{\C^m}}\leq Ce^{-(2i+\a)(1-\frac{\iota}{2j+2+\a})\frac{t}2},\;\;\text{for all} \;\; 0\leq \iota\leq 2j+2,1\leq i \leq j,1\leq p\leq N_{i,k},
\end{equation}
\begin{equation}\label{infty4}
\|\mathfrak{D}^{2j+2+\iota} A_{i,p,k}\|_{\infty,Q_r(z,t),g_{\C^m}}\leq Ce^{\left(-\frac{\a(2i+\a)}{\iota+\a}(1-\frac{2j+2}{2j+2+\a})+\frac{\iota^2}{\iota+\a}\right) \frac{t}{2}},\,\;\;\text{for all}\;\; 0\leq \iota\leq 2k,\;\, 1\leq i \leq j,\;\,1\leq p\leq N_{i,k},
\end{equation}
\begin{equation}\label{holder3}
\begin{split}
\sup_{(x,s),(x',s')\in Q_r(z,t)}\sum_{i=1}^{j} \sum_{p=1}^{N_{i,k}}\sum_{\iota = -2}^{2k} &e^{-\iota\frac{t}{2}}\Bigg(\frac{|\mathfrak{D}^{2j+2+\iota} A_{i,p,k}(x,s)-\mathbb{P}_{x'x}(\mathfrak{D}^{2j+2+\iota}A_{i,p,k}(x',s'))|_{g(t)}}{ \left( d^{g(t)}(x,x') +|s-s'|^{\frac{1}{2}} \right)^\alpha}\Bigg)\leq C.
\end{split}
\end{equation}
\end{thm}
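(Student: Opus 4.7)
The plan is to argue by induction on $j$, following the blueprint of \cite[Theorem 4.1]{HT3} but with the extra care demanded by the parabolic setting and by the degree of the parabolic complex Monge--Amp\`ere equation \eqref{ma2}. The base case $j=0$ will be treated by a blowup quantity tailored to \eqref{ma2} (in the spirit of \cite{HT2,FL}), while the inductive step is powered by a contradiction/blowup argument together with the Selection Theorem \ref{thm:Selection}, the approximate Green operator $\mathfrak{G}_{t,k}$, and the parabolic Schauder estimates of Propositions \ref{prop:schauder}--\ref{prop:schauder2}.

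\textbf{Base case.} For $j=0$ one sets $\eta_{0,k}(t)=\omega^\bullet(t)-\omega^\natural(t)-\ddbar\underline{\vp}(t)$, and the assertions \eqref{infty2}--\eqref{holder5} for $\underline{\vp}$ and $\de_t\underline{\vp}+\underline{\vp}$ follow from Lemma \ref{lma:earlier work}(ii),(v),(vi), and an interpolation via Proposition \ref{prop:interpolation}. To prove \eqref{infty1}--\eqref{holder1} for $\eta_{0,k}$ one assumes by contradiction that the $C^{\a,\a/2}$ shrinking seminorm of $\eta_{0,k}(t)$ blows up along a sequence $(x_\ell,t_\ell)\to\infty$. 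Because \eqref{ma2} has $\vp(t)$ appearing undifferentiated, one includes in the blowup quantity both $\ddbar(\vp-\underline{\vp})$ and the subtraction of the fiberwise averages, as in \cite{HT2,FL}. After parabolic rescaling $\Sigma_\ell$ from \eqref{sigmat}, one passes to a limiting real $(1,1)$ form on $\mathbb{C}^m\times Y\times(-\infty,0]$ of controlled growth which is both fiberwise $\ddbar$-closed with vanishing fiber average and satisfies a heat-type equation obtained from the linearization of \eqref{ma2}. The Liouville-type rigidity from Proposition \ref{prop:braindead} combined with a parabolic maximum principle on the base then forces the limit to vanish, contradicting normalization.

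\textbf{Inductive step.} Assume \eqref{expa}--\eqref{holder3} at stage $j$. Apply the Selection Theorem \ref{thm:Selection} to obtain new obstruction functions $G_{j+1,p,k}$, and set
\begin{equation*}
A_{j+1,p,k}(t):=P_{t,j+1,p,k}(\eta_{j,k}(t)),\qquad \gamma_{j+1,k}(t):=\sum_{p=1}^{N_{j+1,k}}\ddbar\mathfrak{G}_{t,k}(A_{j+1,p,k}(t),G_{j+1,p,k}),
\end{equation*}
and $\eta_{j+1,k}(t):=\eta_{j,k}(t)-\gamma_{j+1,k}(t)$. The outputs from the previous stage, combined with the structure \eqref{plop} of $\mathfrak{G}_{t,k}$ and Lemma \ref{lma:earlier work}, give the polynomial-type decay bounds \eqref{infty3}--\eqref{holder3} at level $j+1$; these are straightforward consequences of the definition of $P_{t,j+1,p,k}$ and the inductive hypothesis on $\eta_{j,k}$. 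The crucial claim is the shrinking $C^{2(j+1)+\a,j+1+\a/2}$ bound \eqref{infty1}--\eqref{holder1} for $\eta_{j+1,k}$, proved by contradiction: assume along a sequence $(x_\ell,t_\ell)$ the quantity $\Lambda_\ell:=[\mathfrak{D}^{2(j+1)}\eta_{j+1,k}]_{\a,\a/2,Q_r,g(t_\ell)}\to\infty$ (augmented by an extra term controlling the fiber averages of $\vp$ and $\dot\vp$, as indicated in the introduction). Rescale via $\Sigma_\ell$, set $\delta_\ell:=\Lambda_\ell^{-1/(2(j+1)+\a)}$, and feed the resulting blowup data into the Selection Theorem. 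The algebraic identity \eqref{cuccuruzzu} together with the parabolic Schauder estimate of Proposition \ref{prop:schauder2} (linearizing the logarithm of \eqref{ma2} around $\omega^\natural+\sum\gamma$) yields local uniform $C^{2(j+1)+\a,j+1+\a/2}$ estimates for the rescaled remainder $\check\eta_{\ell,j+1,k}$, so one extracts a nontrivial limit $\check\eta_\infty$.

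\textbf{Main obstacle.} The hard part is closing the contradiction, and it splits into three cases according to the asymptotic behavior of the base parameters in the blowup, with the third case further split into subcases A, B, C as in \cite{HT3}. The new parabolic difficulties are that $\mathfrak{G}_{t,k}$ is only an approximate parametrix for $\Delta_{\omega^\natural}$ and not for the heat operator $\de_t-\Delta$, so the extra term $\de_t\vp$ in \eqref{ma2} generates error pieces involving $\de_t A_{i,p,k}+e^{-t}A_{i,p,k}$ (accounted for in \eqref{stronzo}) that must be shown to vanish in the limit using \eqref{infty3}--\eqref{infty4}; moreover the rescaled potential $\check\vp_\ell$ may have unbounded $L^\infty$ norm and therefore cannot be passed to a limit directly. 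To deal with these, in subcase A one refines the Selection Theorem (already set up with the extra $B^\sharp_\ell$-term and the $\de_{\check t}\check A^\sharp$ contribution in \eqref{stronzo}) so that the limit $\mathcal{F}$ is fiberwise of the form $f_0+\sum f_{i,p}G_{i,p,k}$ pulled from the base, forcing $\check\eta_\infty$ to be of lower order; in subcase B one runs an energy argument in the style of \cite[Claim 3.2]{FL} using the parabolic Bochner formula for $\eta_{j+1,k}$; in subcase C the same energy method has to be carried out fiber by fiber, exploiting compactness of the Calabi--Yau fibers. In each case the contradiction is Proposition \ref{prop:braindead} applied to $\check\eta_\infty$: it forces decay incompatible with the normalization $[\DD^{2(j+1)}\check\eta_\infty]_{\a,\a/2}=1$. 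Finally Lemma \ref{lma:jet-interpolation} converts the contradiction-based seminorm bound into the pointwise decay \eqref{infty1}.
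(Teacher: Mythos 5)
The overall architecture of your plan matches the paper's: induction on $j$, a contradiction/blowup scheme, the Selection Theorem, and a split into cases and subcases A, B, C. But there is a significant structural error in how you propose to propagate the estimates through the induction.

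You write that ``the polynomial-type decay bounds \eqref{infty3}--\eqref{holder3} at level $j+1$ \ldots\ are straightforward consequences of the definition of $P_{t,j+1,p,k}$ and the inductive hypothesis on $\eta_{j,k}$.'' This is not correct and it would leave a real gap. The inductive hypothesis on $\eta_{j,k}$ only yields the crude $L^\infty$ bound $\|A_{j+1,p,k}\|_{\infty}\leq C_\beta e^{-(j+1+\beta/2)t}$ for any $\beta<1$ (as in \eqref{induction-bdd-A}). It does \emph{not} give the weighted H\"older seminorm \eqref{holder3}, nor the higher-derivative $L^\infty$ bounds \eqref{infty3}--\eqref{infty4}, which require controlling $\mathfrak{D}^{2j+2+\iota}A_{i,p,k}$ for $\iota$ up to $2k$. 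In the paper these bounds for \emph{all} $1\leq i\leq j$ are obtained simultaneously by including $\sum_{i,p,\iota}e^{-\iota t/2}$ times the corresponding $C^{\alpha,\alpha/2}$ difference quotients of $\mathfrak{D}^{2j+2+\iota}A_{i,p,k}$ directly in the blowup quantity $\mathcal{H}_j$. The same is true for \eqref{holder2} and \eqref{holder5}: for $j\geq 1$ the $C^{\alpha,\alpha/2}$ seminorms of $\mathfrak{D}^{2j}\ddbar\underline{\vp}$ and $\mathfrak{D}^{2j}(\de_t\underline{\vp}+\underline{\vp})$ cannot simply be read off Lemma \ref{lma:earlier work}; they must also be part of $\mathcal{H}_j$. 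In other words, the inductive step proves a \emph{single} a priori bound for a combined blowup quantity (which then implies all of \eqref{holder1}, \eqref{holder2}, \eqref{holder5}, \eqref{holder3}), and \eqref{infty1}--\eqref{infty5} and \eqref{infty3}--\eqref{infty4} are then deduced via Proposition \ref{prop:interpolation}. If you instead try to establish the $A_{i,p,k}$-bounds first and only then run a blowup for $\eta_{j+1,k}$, the non-escaping estimate (Proposition \ref{prop:non-escaping}) and the closing of the contradiction (especially the non-cancellation estimate in Proposition \ref{prop:noncancellation} and the arguments in subcases A/B/C) collapse, because you lose control on exactly the derivatives of $A_{i,p,k}$ that arise when you commute $\DD^a$ through the fiber-integration formula for $A_{i,p,k}$.

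A smaller but still substantive point: in subcase B the paper does not run a Bochner-type energy argument for $\eta_{j+1,k}$. Instead it first identifies the limiting objects $u_\infty,\eta_\infty$ via Liouville theorems (a parabolic Liouville for ancient heat solutions from \cite{FL} applied to $\mathfrak{D}^{2j}u_\infty$, and \cite[Prop.\ 3.12]{HT2} applied to $\eta_\infty$), and then applies a \emph{weighted $L^2$ energy argument with fiberwise Poincar\'e inequality} to the orthogonal projection of the limiting potential onto $\mathcal{H}^\perp$. Likewise, the final contradiction in each case does not arise from applying Proposition \ref{prop:braindead} to $\check\eta_\infty$; rather, one shows that each term of the rescaled blowup quantity (which equals $1+o(1)$ by normalization) converges to $0$, and that is the contradiction. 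Proposition \ref{prop:braindead} is a tool used to pass from the seminorm bound to $L^\infty$ decay for tensors with vanishing fiber average, not the concluding device.
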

\begin{rmk}
A key difference between Theorem \ref{thm:decomposition} and \cite[Theorem 4.1]{HT3} are the estimates in \eqref{infty5}, \eqref{holder5}. These will be crucial for us in the proof, to deal with the term $\de_t\vp+\vp$ in the complex Monge-Amp\`ere equation \eqref{ma2}. Another difference is that the bounds in \eqref{infty3} are worse than those in \cite[(4.12)]{HT3}, due to the fact that in this paper we can only consider even order H\"older norms.
\end{rmk}

\subsection{Setup of induction scheme}

We start with a given $z\in B$. For any given $k\in \mathbb{N}$, we prove the Theorem by induction on $j$. We treat both the base case and induction case together, although they will have to be considered separately at certain steps of the proof.   Given $k$ with $0\leq 2j\leq k$, if $j>0$ we assume Theorem \ref{thm:decomposition} holds at the $(j-1)$-th step, so there exists $B_{\mathbb{C}^m}(z,r)\subset B$ such that we already have the decomposition of $\omega^\bullet(t)$ at the $(j-1)$-th step satisfying the desired estimates on $B_{\mathbb{C}^m}(z,r)\times Y\times [0,+\infty)$.  We aim to refine the decomposition at the $j$-th step as well as define it for $j=0$.

As mentioned in the Introduction, we can write $\omega^\bullet=\omega^\natural+\ddbar \varphi$. When $j=0$, we take $\gamma_0=\ddbar\underline{\varphi}$ and $\eta_{0,k}=\ddbar (\varphi-\underline{\varphi})$ so that $\omega^\bullet=\omega^\natural+\gamma_0+\eta_{0,k}$. If $j\geq 1$, suppose we already have the decomposition
\begin{equation}
\omega^\bullet=\omega^\natural+\gamma_0+\gamma_{1,k}+\dots+\gamma_{j-1,k}+\eta_{j-1,k},
\end{equation}
on $\left(B_{\mathbb{C}^m}(z,r)\times Y\right)\times [0,+\infty)$. We further decompose $\eta_{j-1,k}$ into $\gamma_{j,k}+\eta_{j,k}$ as follows. When $j>1$, up to shrinking $r>0$ we can assume that we already have selected smooth functions $G_{i,p,k},1\leq i\leq j-1, 1\leq p\leq N_{i,k}$ on $B_{\mathbb{C}^m}(z,r)\times Y$, which are fiberwise $L^2$ orthonormal and have fiberwise average zero. When $j\geq 1$, we then apply the Selection Theorem \ref{thm:Selection} which up to shrinking $r$ further, gives us a list of functions $G_{j,p,k}, 1\leq p\leq N_{j,k}$ on $B_{\mathbb{C}^m}(z,r)\times Y$,  which are fiberwise $L^2$ orthonormal and have fiberwise average zero, so that the conclusion of the Selection Theorem \ref{thm:Selection} holds for the  collection $G_{i,p,k}, 1\leq p\leq N_{i,k},1\leq i\leq j$. With this collection of function, we define
\begin{equation}
A_{j,p,k}(t):=P_{t,j,p,k}(\eta_{j-1,k}(t)),
\end{equation}
where $P$ is given by \eqref{defn:P} and
\begin{equation}
\gamma_{j,k}:=\sum_{p=1}^{N_{j,k}}\ddbar \mathfrak{G}_{t,k}(A_{j,p,k},G_{j,p,k}),
\end{equation}
where $\mathfrak{G}_{t,k}$ is defined in \cite[\S 3.2]{HT3}.
Finally, we define $\eta_{j,k}:=\eta_{j-1,k}-\gamma_{j,k}$ so that \begin{equation}\label{gorgulu}
\omega^\bullet=\omega^\natural+\gamma_0+\gamma_{1,k}+\dots+\gamma_{j,k}+\eta_{j,k},
\end{equation}
on $B_{\mathbb{C}^m}(z,r)\times Y\times [0,+\infty)$. For ease of notation, by scaling and translation of our coordinates, we may assume without loss that we have this decomposition  $B_{\mathbb{C}^m}(z,r)=B_{\mathbb{C}^m}(1)=B$.

\subsubsection{The base case of the induction $j=0$}\label{burzumm}
The base case of the induction, where $j=0$, needs to be treated separately, and although the overall scheme of proof is the same as when $j\geq 1$, there will be some crucial differences.

First of all, let us examine the estimates that we need to prove in order to establish Theorem \ref{thm:decomposition} for $j=0$. The estimates \eqref{infty3}, \eqref{infty4}, \eqref{holder3} are vacuous by definition. By Lemma \ref{lma:earlier work} (iii) we have that $\|\ddbar\vp\|_{\infty, B\times Y\times [t-1,t],g(t)}=o(1)$ as $t\to +\infty$, and the fiber integration argument in \cite[p.436]{To4} then gives $\|\gamma_0\|_{\infty, B\times [t-1,t],g_{\mathbb{C}^m}}=o(1)$ as well, which implies \eqref{infty2}. Similarly, Lemma \ref{lma:earlier work} (ii) implies that $\|\de_t\vp+\vp\|_{\infty, B\times Y\times [t-1,t]}=o(1)$, and taking fiberwise average this easily implies that $\|\de_t\underline{\vp}+\underline{\vp}\|_{\infty, B\times [t-1,t]}=o(1)$ too, which implies \eqref{infty5}.

Next, using the bounds $\|\partial_{t}^2\vp+\partial_{t}\vp\|_{\infty, B\times Y\times [t-1,t]}\leq C$ and $\|\nabla(\partial_{t}\vp+\vp)\|_{\infty, B\times Y\times [t-1,t], g(t)}\leq C$ from Lemma \ref{lma:earlier work} (v), (vi), which together with the $L^\infty$ bound for $\de_t\vp+\vp$  imply the same bounds for the fiber average
\begin{equation}
\|\partial_{t}^2\underline{\vp}+\partial_{t}\underline{\vp}\|_{\infty, B\times [t-1,t]}\leq C,\quad \|\nabla(\partial_{t}\underline{\vp}+\underline{\vp})\|_{\infty, B\times [t-1,t],g_{\C^m}}\leq C,
\end{equation}
we can bound for any $x,x'\in B$ and $t\geq 0$ and $s\in [t-1,t]$,
\begin{equation}
|(\partial_{t}\underline{\vp}+\underline{\vp})(x,t)-(\partial_{t}\underline{\vp}+\underline{\vp})(x',s)|\leq C(|x-x'|+|t-s|)\leq C(|x-x'|+|t-s|^{\frac{1}{2}})^\a,
\end{equation}
which gives
\begin{equation}
[\partial_{t}\underline{\vp}+\underline{\vp}]_{\a,\a/2,B\times [t-1,t],g_{\C^m}}\leq C,
\end{equation}
which implies \eqref{holder5}.

Thus, when $j=0$ it suffices to establish \eqref{infty1}, \eqref{holder1} and \eqref{holder2}. The final claim we will need is that if we suppose we have proved that for all $t\geq 0$ we have
\begin{equation}\label{agogn}
[\ddbar\vp]_{\a,\a/2, Q_r(z,t),g(t)}\leq C,
\end{equation}
where $Q_r(z,t)$ is as in the statement of Theorem \ref{thm:decomposition}, then the estimates \eqref{infty1}, \eqref{holder1} and \eqref{holder2} will all hold. To prove this claim, we use the following ``non-cancellation'' inequality
\begin{equation}\label{noncanzel}
[\ddbar\underline{\vp}]_{\a,\a/2,B_{\C^m}(z,r)\times[t-r^2,t],g_{\C^m}}\leq C[\ddbar\vp]_{\a,\a/2, \textrm{base}, Q_r(z,t),g_X}+C\|\ddbar\vp\|_{\infty, Q_r(z,t),g_X},
\end{equation}
which is straightforward to prove using \cite[(4.215)]{HT3}, except that here there is no stretching involved. Plugging \eqref{agogn} and Lemma \ref{lma:earlier work} (iii) into \eqref{noncanzel} gives
\begin{equation}\label{akogn}
[\ddbar\underline{\vp}]_{\a,\a/2,B_{\C^m}(z,r)\times[t-r^2,t],g_{\C^m}}\leq C,
\end{equation}
which is exactly \eqref{holder2}, and recalling that $\eta_{0,k}=\ddbar\vp-\ddbar\underline{\vp}$, we can use \eqref{agogn}, \eqref{akogn}, the triangle inequality and the boundedness of $\P$ to estimate
\begin{equation}
[\eta_{0,k}]_{\a,\a/2, Q_r(z,t),g(t)}\leq C,
\end{equation}
which proves \eqref{holder1}, and lastly \eqref{infty1}  follows from this and Proposition \ref{prop:braindead}, using that the potential $\vp-\underline{\vp}$ of $\eta_{0,k}$ has fiberwise average zero. This completes the proof of the claim, and shows that in order to establish Theorem \ref{thm:decomposition} for $j=0$ it suffices to prove the single estimate \eqref{agogn}.

\subsubsection{Estimates from induction hypothesis}

Suppose $j\geq 1$ and the conclusion holds at the $(j-1)$-th step.  We first observe that by \cite[(4.16)]{HT3}, the operator $P_{t,j,p,k}$ satisfies
\begin{equation}\label{induction-bdd-A-reason}
\|P_{t,j,p,k}(\a)\|_{\infty,B}\leq Ce^{-t} \|\a\|_{\infty,B\times Y,g(t)},
\end{equation}
for any $(1,1)$ form $\a$ on the total space and $t\geq 0$. In particular, we can put $\a=\eta_{i-1,k}(t)$ for $t\geq 0$ and $1\leq i\leq j$ and use also \eqref{infty1} to see that
\begin{equation}\label{induction-bdd-A}
\|A_{i,p,k}\|_{\infty,B\times [t-1,t]}
\leq C e^{-t}\|\eta_{i-1,k}\|_{\infty,B\times [t-1,t]}\leq C_\b e^{-(i+\b/2)t }.
\end{equation}
for all $\b\in (0,1)$, $1\leq i\leq j$, $1\leq p\leq N_{i,k}$ and $t\geq 0$.

\subsubsection{Reduction to estimating the H\"older seminorms, when $j\geq 1$}
Suppose again that $j\geq 1$ and fix a real number $\a\in (0,1)$.  We first show that  \eqref{infty1}, \eqref{infty2}, \eqref{infty5}, \eqref{infty3} and \eqref{infty4} on $B_{\mathbb{C}^m}(\rho)\times Y\times [0,+\infty)$ (for some $\rho<1$), would follow immediately once we establish the H\"older seminorm bounds \eqref{holder1}, \eqref{holder2}, \eqref{holder5} and \eqref{holder3} on a slightly larger domain.

We first address \eqref{infty1} and \eqref{infty2}. Since the potential of $\eta_{j,k}$ has fiberwise average zero,  \eqref{infty1}  follows directly from Proposition \ref{prop:braindead} and \eqref{holder1}. Next, as in section \ref{burzumm}, we observe that the estimate in Lemma \ref{lma:earlier work} (iii) implies that $\|\gamma_0\|_{\infty, B\times [t-1,t],g_{\mathbb{C}^m}}=o(1)$. Then, \eqref{infty2} follows by interpolating between this and \eqref{holder2} using Proposition \ref{prop:interpolation}.
Similarly, the estimate in Lemma \ref{lma:earlier work} (ii) implies that $\|\de_t\underline{\vp}+\underline{\vp}\|_{\infty, B\times [t-1,t]}=o(1)$, and interpolating between this and \eqref{holder5} via Proposition \ref{prop:interpolation}, we obtain \eqref{infty5}.
The remaining task is to show \eqref{infty3} and \eqref{infty4}.  By \eqref{holder3},  \eqref{induction-bdd-A}, we can interpolate from $Q_\rho(0,t)$ to $Q_R(0,t)$ ($\rho<R<1)$ using Proposition \ref{prop:interpolation}, and get
\begin{equation}
\begin{split}
\sum_{\iota=1}^{2j+2}(R-\rho)^{\iota} \| \mathfrak{D}^\iota A_{i,p,k} \|_{\infty,  Q_\rho(0,t) ,g(t)}&\leq C_k(R-\rho)^{2j+2+\a}+Ce^{-(i+\a/2)t}.
\end{split}
\end{equation}
By choosing
\begin{equation}
R-\rho\approx e^{-\frac{2i+\a}{2j+2+\a}\frac{t}{2}},
\end{equation}
(which is small) we see that for each $1\leq \iota\leq 2j+2$,
\begin{equation}
\| \mathfrak{D}^\iota A_{i,p,k} \|_{\infty,  Q_\rho(0,t),g(t)}\leq Ce^{-(2i+\a)(1-\frac{\iota}{2j+2+\a})\frac{t}2},
\end{equation}
which is \eqref{infty3}.
Finally for \eqref{infty4},  by interpolating \eqref{infty3} (with $\iota=2j+2$) with \eqref{holder3} using Proposition \ref{prop:interpolation}, we obtain
\begin{equation}
(R-\rho)^\iota \|\mathfrak{D}^{2j+2+\iota} A_{i,p,k}\|_{\infty,  Q_\rho(0,t),g(t)}\leq C(R-\rho)^{\iota+\a}e^{\iota \frac{t}{2}}+C e^{-(2i+\a)(1-\frac{\iota}{2j+2+\a})\frac{t}2}.
\end{equation}
By choosing
\begin{equation}
R-\rho\approx  e^{-\left((2i+\a)(1-\frac{\iota}{2j+2+\a})+\iota\right)\frac{t}{2(\iota+\a)}},
\end{equation}
we arrive at
\begin{equation}
\|\mathfrak{D}^{2j+2+\iota} A_{i,p,k}\|_{\infty,  Q_\rho(0,t),g(t)}\leq Ce^{\left(-\frac{\a(2i+\a)}{\iota+\a}(1-\frac{2j+2}{2j+2+\a})+\frac{\iota^2}{\iota+\a}\right) \frac{t}{2}}.
\end{equation}
This shows \eqref{infty4}. Thus, to prove Theorem \ref{thm:decomposition} it suffices to prove \eqref{agogn} when $j=0$, and to prove  \eqref{holder1}, \eqref{holder2}, \eqref{holder5} and \eqref{holder3} when $j\geq 1$.

\subsection{Setup of primary blowup quantity}
To this end, we denote by
\begin{equation}
\psi_{j,k}:=\varphi-\underline{\varphi}-\sum_{i=1}^{j}\sum_{p=1}^{N_{i,k}}\mathfrak{G}_{t,k}\left(A_{i,p,k}, G_{i,p,k} \right),
\end{equation}
which by definition satisfies $\eta_{j,k}=\ddbar\psi_{j,k}$. Of course, when $j=0$ we have by definition $\psi_{0,k}=\varphi-\underline{\varphi}$.
 For $x=(z,y),x'=(z',y')\in B\times Y$ which are either horizontally or vertically joined and $0<t'<t$, we  consider the quantities
\begin{equation}
\mathcal{H}_0(x,x',t,t')=\frac{|\ddbar\vp(x,t)-\P_{x'x}\ddbar\vp(x',t')|_{g(t)}}{ (d^{g(t)}(x,x')+\sqrt{t-t'})^\a},
\end{equation}
and for $j\geq 1$,
\begin{equation}
\begin{split}
\mathcal{H}_{j}(x,x',t,t')&=\frac{|\mathfrak{D}^{2j} \ddbar\underline{\varphi}(x,t)-\P_{x'x}\mathfrak{D}^{2j}\ddbar\underline{\varphi}(x',t')|_{g(t)}}{ (d^{g(t)}(x,x')+\sqrt{t-t'})^\a}\\
&\quad +\frac{|\mathfrak{D}^{2j} (\de_t\underline{\varphi}+\underline{\varphi})(x,t)-\P_{x'x}\mathfrak{D}^{2j}(\de_t\underline{\varphi}+\underline{\varphi})(x',t')|_{g(t)}}{ (d^{g(t)}(x,x')+\sqrt{t-t'})^\a}\\
&\quad +\frac{|\mathfrak{D}^{2j+2}\psi_{j,k}(x,t)-\P_{x'x}\mathfrak{D}^{2j+2} \psi_{j,k}(x',t')|_{g(t)}}{ (d^{g(t)}(x,x')+\sqrt{t-t'})^\a}\\
&\quad +\sum_{i=1}^j \sum_{p=1}^{N_{i,k}}\sum_{\iota = -2}^{2k} e^{-\iota\frac{t}{2}}\Bigg(\frac{|\mathfrak{D}^{2j+2+\iota} A_{i,p,k}(x,t)-\mathbb{P}_{x'x}(\mathfrak{D}^{2j+2+\iota}A_{i,p,k}(x',t'))|_{g(t)}}{ (d^{g(t)}(x,x')+\sqrt{t-t'})^\a}\Bigg).
\end{split}
\end{equation}
as well as
\begin{equation}
\mathcal{D}_{j}(x,x',t)=\sup_{t'\in [t-1,t]}\mathcal{H}_{j}(x,x',t,t'), \quad j\geq 0.
\end{equation}
For each $x=(z,y)\in B\times Y$ and $t\geq 0$, we define the blowup quantity
\begin{equation}
\mu_{j}(x,t)=\left|1-|z| \right|^{2j+\a} \sup \mathcal{D}_j(x,x',t),
\end{equation}
where the sup is taken over all $x'=(z',y')\in B\times Y$ with $|z'-z|<\frac14 \left| |z|-1 \right|$ and $x'$ is either horizontally or vertically joined with $x$. We want to show that there is $C>0$ such that for all $t\geq 0$,
\begin{equation}\label{krkr}
\sup_{B\times Y} \mu_{j}(x,t)\leq C.
\end{equation}
Since $g(t)$ is uniformly comparable to $g(s)$ if $|t-s|<1$,
a bound on $\mu_j$ implies \eqref{agogn} when $j=0$, and implies \eqref{holder1}, \eqref{holder2}, \eqref{holder5} and \eqref{holder3} when $j\geq 1$, and would thus conclude the proof of Theorem \ref{thm:decomposition}

Observe that the quantity $\mathcal{H}_0$ is closer in spirit to the one used in our earlier works \cite[(5.7)]{HT2} and \cite[(3.10)]{CL} (which dealt only with the case $j=0$), rather than the one used in \cite[(4.29)]{HT3} (which dealt with all $j\geq 0$ at once).

We now setup the contradiction argument, so suppose that \eqref{krkr} fails. We can then find a sequence $t_\ell>0$ such that $\sup_{B\times Y\times [0,t_\ell]} \mu_j(x,t)\to +\infty$ as $\ell\to +\infty$.  Since the solution of the flow is smooth on any compact time interval, we must have $t_\ell\to +\infty$. Moreover, there exists $s_\ell\in [0,t_\ell]$ such that $\sup_{B\times Y}\mu_{j}(x,s_\ell)=\sup_{B\times Y\times [0,t_\ell]} \mu_j(x,t)$.  Without loss of generality, we can assume $s_\ell=t_\ell\to +\infty$ and
\begin{equation}
\sup_{B\times Y} \mu_j(x,t_\ell)\to +\infty.
\end{equation}

For each $\ell$, we choose $x_\ell=(z_\ell,y_\ell)\in B\times Y$ such that $\mu_j(x_\ell,t_\ell)=\sup_{B\times Y} \mu_j(x,t_\ell)$. We also define $\lambda_\ell$ by
\begin{equation}\lambda_\ell^{2j+\a}:=\sup_{x'} \mathcal{D}_{j}(x_\ell,x',t_\ell),
\end{equation}
so that
\begin{equation}
\mu_j(x_\ell,t_\ell)= \left| |z_\ell|-1\right|^{2j+\a} \lambda_\ell^{2j+\a}\to +\infty,
\end{equation}
and hence $\lambda_\ell \to+\infty$. Let $x_\ell'\in B\times Y$ be the point realizing $\sup_{x'} \mathcal{D}_j(x_\ell,x',t_\ell)$ and $t_\ell'\in [t_\ell-1,t_\ell]$ realizing $\sup_{t'}\mathcal{H}_j(x_\ell,x_\ell',t_\ell,t')$.  Without loss of generality,  we can also assume $x_\ell\to x_\infty\in \overline{B\times Y}$.

Consider the diffeomorphisms
\begin{equation}
\Psi_\ell:  B_{\lambda_\ell} \times Y\times [-\lambda_\ell^2t_\ell,0] \to B \times Y\times [0,t_\ell], \;\, (z,y,t) = \Psi_\ell(\hat{z},\hat{y},\hat{t}) =(\lambda_\ell^{-1}\hat{z},\hat{y},t_\ell+\lambda_\ell^{-2}\hat{t}).
\end{equation}
Let $\hat{x}_\ell:=(\hat{z}_\ell,\hat{y}_\ell)$, where
\begin{equation}
(\hat{z}_\ell,\hat{y}_\ell,\hat{t}):=\Psi_\ell^{-1}(z_\ell,y_\ell,t),
\end{equation}
so that $\hat{t}_\ell=0, \hat{t}'_\ell=\lambda_\ell^2(t_\ell'-t_\ell)$, and $\hat{t}=\lambda_\ell^2(t-t_\ell)\in [-\lambda_\ell^2 t_\ell,0]$. Given a (time-dependent) contravariant $2$-tensor $\alpha$ (such as $\omega^\bullet(t), g(t)$, etc.), we define $\hat{\alpha}_\ell:=\lambda_\ell^2\Psi_\ell^*\alpha$.
Thus, for example, $\hat \omega_\ell^\bullet(\hat t)=\lambda_\ell^2\Psi_\ell^* \omega^\bullet(t_\ell+\lambda_\ell^{-2}\hat t).$ The pullback complex structure will be denoted by $\hat{J}_\ell$. Given a (time-dependent) scalar function $F$, we will also denote by $\hat{F}_\ell:=\Psi_\ell^*F$, so that for example $\hat{G}_{\ell,i,p,k}=\Psi_\ell^*G_{i,p,k}$. However, for the two functions $A_{i,p,k}$ and $\vp$ we will define instead
\begin{equation}
\hat A_{\ell,i,p,k}(\hat t):=\lambda_\ell^2 \Psi_\ell^* A_{i,p,k}(t_\ell+\lambda_\ell^{-2}\hat t),\quad \hat{\vp}_\ell(\hat{t}):=\lambda_\ell^2\Psi_\ell^*\vp(t_\ell+\lambda_\ell^{-2}\hat t),
\end{equation}
where  $\hat{t}\in [-t_\ell \lambda_\ell^2,0]$. We define also
\begin{equation}
\delta_\ell:=\lambda_\ell e^{-\frac{t_\ell}{2}}.
\end{equation}
Observe that from \eqref{induction-bdd-A} we have that
\begin{equation}\label{hat-A-estimate}
\|\hat A_{\ell,i,p,k}(\hat t)\|_{L^\infty(\hat B_{\lambda_\ell})}\leq C \delta_\ell^2 e^{\frac{-2i+2-\a}{2}t_\ell-\frac{2i+\a}{2}\lambda_\ell^{-2}\hat t},
 \end{equation}
for all $1\leq i\leq j$ and $1\leq p\leq N_{i,k}$ and $-t_\ell \lambda_\ell^2\leq \hat t\leq 0$.
For notational convenience, we will still use $\mathfrak{D}$ and $\P$ to denote their pullbacks via $\Psi_\ell$. In particular,
 $\hat\omega_\ell^\bullet=\hat \omega_\ell^\natural+\ddbar \hat \varphi_\ell$ satisfies the following \KR flow
\begin{equation}
\partial_{\hat t} \hat\omega_\ell^\bullet=-\Ric(\hat\omega_\ell^\bullet)-\lambda_\ell^{-2}\hat\omega_\ell^\bullet,
\end{equation}
and we can equivalently write the complex Monge-Amp\`ere equation \eqref{ma2} as
\begin{equation}\label{mak}
(\hat \omega_\ell^\bullet)^{m+n}= e^{\de_{\hat{t}}{\hat\varphi}_\ell+\lambda_\ell^{-2}\hat\varphi_\ell-n \lambda_\ell^{-2}\hat t} \binom{m+n}{m} \hat \omega_{\ell,\mathrm{can}} ^m \wedge (\delta_\ell^2 \Psi_\ell^* \omega_F)^n,
\end{equation}
where (following the above convention) $\hat\omega_{\ell,\mathrm{can}}=\lambda_\ell^2\Psi_\ell^* \omega_{\mathrm{can}}$. It is then straightforward to see that for all $\ell\geq 0$ we have for $j=0,$
\begin{equation}\label{sup-realized-10}
1=\frac{|\ddbar\hat\vp_\ell(\hat x_\ell,0)-\P_{\hat x_\ell'\hat x_\ell}\ddbar\hat\vp_\ell(\hat x_\ell',\hat t_\ell')|_{\hat g_\ell(0)}}{ (d^{\hat g_\ell(0)}(\hat x_\ell,\hat x_\ell')+|\hat t_\ell'|^{\frac{1}{2}})^\a},
\end{equation}
and for $j\geq 1,$
\begin{equation}\label{sup-realized-1}
\begin{split}
1&=\frac{|\mathfrak{D}^{2j}\ddbar\underline{\hat \varphi_\ell}(\hat x_\ell,0)-\P_{\hat x_\ell'\hat x_\ell}\mathfrak{D}^{2j}\ddbar\underline{\hat\varphi_\ell}(\hat x_\ell',\hat t_\ell')|_{\hat g_\ell(0)}}{ (d^{\hat g_\ell(0)}(\hat x_\ell,\hat x_\ell')+|\hat t_\ell'|^{\frac{1}{2}})^\a}\\
&\quad +\frac{|\mathfrak{D}^{2j} (\partial_{\hat t}\underline{\hat \varphi_\ell}+\lambda_\ell^{-2}\underline{\hat \varphi_\ell} )(\hat x_\ell,0)-\P_{\hat x_\ell'\hat x_\ell}\mathfrak{D}^{2j}(\partial_{\hat t}\underline{\hat \varphi_\ell}+\lambda_\ell^{-2}\underline{\hat \varphi_\ell} )(\hat x_\ell',\hat t_\ell')|_{\hat g_\ell(0)}}{ (d^{\hat g_\ell(0)}(\hat x_\ell,\hat x_\ell')+|\hat t_\ell'|^{\frac{1}{2}})^\a}\\
&\quad +\frac{|\mathfrak{D}^{2j+2} \hat \psi_{\ell,j,k}(\hat x_\ell,0)-\P_{\hat x_\ell'\hat x_\ell}\mathfrak{D}^{2j+2} \hat \psi_{\ell,j,k}(\hat x_\ell',\hat t_\ell')|_{\hat g_\ell(0)}}{ (d^{\hat g_\ell(0)}(\hat x_\ell,\hat x_\ell')+|\hat t_\ell'|^{\frac{1}{2}})^\a}\\
&\quad +\sum_{i=1}^j \sum_{p=1}^{N_{i,k}}\sum_{\iota = -2}^{2k} \delta_\ell^\iota\Bigg(\frac{|\mathfrak{D}^{2j+2+\iota} \hat A_{\ell,i,p,k}(\hat x_\ell,\hat t_\ell)- \P_{\hat x_\ell'\hat x_\ell}(\mathfrak{D}^{2j+2+\iota}\hat A_{\ell,i,p,k}(\hat x_\ell',\hat t_\ell'))|_{\hat g_\ell(0)}}{(d^{\hat g_{\ell}(0)}(\hat x_\ell,\hat x'_\ell) +|\hat t_\ell'|^{\frac{1}{2}})^\a}\Bigg),
\end{split}
\end{equation}
and $\hat{x}'_\ell$ was chosen to maximize the difference quotients in \eqref{sup-realized-10} and \eqref{sup-realized-1} (which we can call $\hat{\mathcal{D}}_j(\hat{x}_\ell,\hat{x}',\hat{t}_\ell)$) among all points
$\hat{x}'=(\hat{z}',\hat{y}')\in B_{\mathbb{C}^m}(\lambda_\ell)\times Y$,
with $|\hat{z}'-\hat{z}_\ell|<\frac14 \left| |\hat{z}_\ell|-\lambda_\ell \right|$ which are horizontally or vertically joined to $\hat{x}_\ell$. Moreover, the points $\hat{x}_\ell$ and $\hat{t}_\ell$ themselves maximize the quantity
\begin{equation}
\left\|\hat{z}|-\lambda_\ell \right|^{2j+\a} \sup_{\substack{\hat{x}'=(\hat{z}',\hat{y}')\ s.t.\ |\hat{z}'-\hat{z}|<\frac{1}{4}\|\hat{z}|-\lambda_\ell|\\\hat{x}'\ \mathrm{and}\ \hat{x}\ \mathrm{horizontally}\ \mathrm{or}\ \mathrm{vertically}\ \mathrm{joined}}} \hat{\mathcal{D}}_j(\hat{x},\hat{x}',\hat{t}),
\end{equation}
among all $\hat{x}=(\hat z,\hat y)\in B_{\mathbb{C}^m}(\lambda_\ell)\times Y$ and $\hat t\in [-t_\ell\lambda_\ell^2,0]$, hence for all such $\hat{x},\hat{x}',\hat{t}$ we have
\begin{equation}\label{kakka}
\hat{ \mathcal{D}}_{j} (\hat x,\hat x',\hat t) \leq \left(\frac{\left| |\hat z_\ell|-\lambda_\ell\right|}{\left| |\hat z|-\lambda_\ell\right|} \right)^{2j+\a}.
\end{equation}
Using $\left| |\hat z_\ell|-\lambda_\ell\right|=\lambda_\ell \left| |z_\ell|-1\right|\to +\infty$ (hence the pointed limit centered at $\hat x_\ell$ will be complete), and
\begin{equation}
\|\hat{z}'_\ell|-\lambda_\ell|\geq\frac{3}{4}\|\hat{z}_\ell|-\lambda_\ell|\to+\infty,
\end{equation}
together with the triangle inequality, we see from \eqref{kakka} that there exists $C>0$ such that for any fixed $R>0$ there exists $\ell_R\in\mathbb{N}$ such that
for all $\ell\geq\ell_R$ and $\hat{z}^\bullet_\ell=\hat{z}_\ell$ or $\hat{z}'_\ell$, we have
\begin{equation}
\sup_{\substack{\hat{x},\hat{x}' \in \hat{B}_{\C^m}(\hat{z}^\bullet_\ell,R)\times Y,\ \hat t\in [-t_\ell\lambda_\ell^2,0]\\\hat{x}'\ \mathrm{and}\ \hat{x}\ \mathrm{horizontally}\ \mathrm{or}\ \mathrm{vertically}\ \mathrm{joined}}} \hat{ \mathcal{D}}_{j} (\hat x,\hat x',\hat t) \leq C,
\end{equation}
where here and in the following, the hat decoration over $B_{\C^m}$ is just to remind the reader that we are in the hat picture.
This in particular implies there exists $C>0$ so that for all fixed $R$, $\hat t\in  [-t_\ell\lambda_\ell^2,0]$ and all sufficiently large $\ell$, we have for $j=0,$
\begin{equation}\label{a-priori Control0}
[\ddbar\hat\vp_\ell]_{\a,\a/2,\hat Q_R(\hat z,\hat t),\hat g_\ell(\hat t)}\leq C,
\end{equation}
and for $j\geq 1,$
\begin{equation}\label{a-priori Control}
\begin{split}
&\quad [\mathfrak{D}^{2j} \ddbar \underline{\hat \varphi_\ell}]_{\a,\a/2,\hat Q_R(\hat z,\hat t),\hat g_\ell(\hat t)}+ [\mathfrak{D}^{2j} (\partial_{\hat t}\underline{\hat \varphi_\ell}+\lambda_\ell^{-2}\underline{\hat \varphi_\ell})]_{\a,\a/2,\hat Q_R(\hat z,\hat t),\hat g_\ell(\hat t)}\\
&\quad + [\mathfrak{D}^{2j+2} \hat \psi_{\ell,j,k} ]_{\a,\a/2,\hat Q_R(\hat z,\hat t),\hat g_\ell(\hat t)}+\sum_{i=1}^j \sum_{p=1}^{N_{i,k}}\sum_{\iota = -2}^{2k} \delta_\ell^\iota\left[\mathfrak{D}^{2j+2+\iota} \hat A_{\ell,i,p,k}\right]_{\a,\a/2,\hat Q_R(\hat z,\hat t),\hat g_\ell(\hat t)}\leq C,
\end{split}
\end{equation}
 where $\hat Q_R(\hat z,\hat t)=\left(B_{\mathbb{C}^m}(\hat z,R)\times Y\right)\times [-R^2+\hat t,\hat t]$ with $\hat z$ being either $\hat z_\ell$ or $\hat z_\ell'$.

When $j=0$ we will need the following ``non-cancellation'' inequality
\begin{equation}\label{noncanzel2}
[\ddbar\underline{\hat{\vp}_\ell}]_{\a,\a/2,\hat Q_R(\hat z,\hat t),\hat g_\ell(\hat t)}\leq C[\ddbar\hat{\vp}_\ell]_{\a,\a/2, \textrm{base},\hat Q_R(\hat z,\hat t),g_X}
+C\left(\frac{R}{\lambda_\ell}\right)^{1-\alpha}\|\ddbar\hat{\vp}_\ell\|_{\infty, \hat Q_R(\hat z,\hat t),g_X},
\end{equation}
which is again straightforward to prove using \cite[(4.215)]{HT3}, and plugging in the H\"older bound in \eqref{a-priori Control0} and the $L^\infty$ bound $\|\ddbar\hat{\vp}_\ell\|_{\infty, \hat Q_R(\hat z,\hat t),\hat{g}_\ell(\hat{t})}=o(1)$ which comes from Lemma \ref{lma:earlier work} (iii), we see that
\begin{equation}
[\ddbar\underline{\hat\vp_\ell}]_{\a,\a/2,\hat Q_R(\hat z,\hat t),\hat g_\ell(\hat t)}\leq C,
\end{equation}
and combining this with \eqref{a-priori Control0}, using that $\hat{\eta}_{0,k}=\ddbar\hat{\psi}_{\ell,0,k}=\ddbar\hat{\vp}_\ell-\ddbar\underline{\hat{\vp}_\ell}$, and using the triangle inequality and the boundedness of $\P$, we get
\begin{equation}\label{a-priori Control00}
[\ddbar\underline{\hat\vp_\ell}]_{\a,\a/2,\hat Q_R(\hat z,\hat t),\hat g_\ell(\hat t)}+[\ddbar\hat{\psi}_{\ell,0,k}]_{\a,\a/2,\hat Q_R(\hat z,\hat t),\hat g_\ell(\hat t)}\leq C.
\end{equation}

After passing to subsequence,  we will split the rest of the proof into three cases, according to the behavior of $\delta_\ell=\lambda_\ell e^{-t_\ell/2}$: {\bf Case 1:} $\delta_\ell\to +\infty$, {\bf Case 2:} $\delta_\ell\to \delta_\infty>0$ and {\bf Case 3:} $\delta_\ell\to 0$.

\subsection{Non-escaping property}\label{nonesc}
In this subsection, we will show that in {\bf Cases 2} and {\bf 3}, the distance between the two blowup points $\hat{x}_\ell$ and $\hat x_\ell'$ will not go to infinity. This proof does not apply to {\bf Case 1} (at least when $j\geq 1$), but we will nevertheless establish the same result in that case in \eqref{eskape} below.
\begin{prop}\label{prop:non-escaping}
Suppose $\delta_\ell\leq C$ for some $C>0$, then there exists $C'>0$ such that for all $\ell>0$,
\begin{equation}\label{eskapenot}
d^{\hat g_\ell(0)}(\hat x_\ell,\hat x_\ell')+|\hat t_\ell'|^{\frac{1}{2}}\leq C'.
\end{equation}
\end{prop}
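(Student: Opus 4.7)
The plan is to argue by contradiction: assume that along a subsequence
\begin{equation*}
d_\ell := d^{\hat{g}_\ell(0)}(\hat{x}_\ell,\hat{x}'_\ell) + |\hat{t}'_\ell|^{1/2} \to \infty.
\end{equation*}
First I would observe that since $\hat{g}_\ell(0) = g_{\mathbb{C}^m} + \delta_\ell^2 g_{Y,0}$ and $\delta_\ell\leq C$ by hypothesis, the $\hat{g}_\ell(0)$-diameter of each fiber $\{\hat{z}\}\times Y$ is bounded by $C\,\mathrm{diam}(Y,g_{Y,0})$. Consequently, when $\hat{x}_\ell$ and $\hat{x}'_\ell$ are vertically joined the spatial distance is automatically bounded, and the blowup must come from the time separation $|\hat{t}'_\ell|^{1/2}$; in the horizontally joined case either contribution can be large a priori.

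The strategy is to show that each difference quotient appearing in \eqref{sup-realized-10} (resp.\ \eqref{sup-realized-1}) is $O(d_\ell^{-\alpha})=o(1)$, which will contradict the fact that their sum equals $1$. By the triangle inequality and the uniform bound on the operator norm of $\mathbb{P}$ from \cite[\S 2.1.1]{HT3}, each such quotient is bounded by $2\|\cdot\|_{L^\infty}/d_\ell^{\,\alpha}$, where the $L^\infty$ norm is taken over fixed-radius parabolic neighborhoods of $(\hat{x}_\ell,0)$ and of $(\hat{x}'_\ell,\hat{t}'_\ell)$. Thus it suffices to bound, uniformly in $\ell$, the tensors $\mathfrak{D}^{2j}\ddbar\underline{\hat\vp_\ell}$, $\mathfrak{D}^{2j}(\partial_{\hat t}\underline{\hat\vp_\ell}+\lambda_\ell^{-2}\underline{\hat\vp_\ell})$, $\mathfrak{D}^{2j+2}\hat\psi_{\ell,j,k}$, and $\delta_\ell^\iota\mathfrak{D}^{2j+2+\iota}\hat A_{\ell,i,p,k}$ in $L^\infty$ on such neighborhoods.

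For the base case $j=0$ this is immediate: Lemma \ref{lma:earlier work}(i) gives $|\ddbar\vp|_{g(t)}=|\omega^\bullet-\omega^\natural|_{g(t)}\leq C$, and since the parabolic rescaling $\hat\vp_\ell=\lambda_\ell^2\Psi_\ell^*\vp$ and $\hat{g}_\ell(0)=\lambda_\ell^2\Psi_\ell^*g(t_\ell)$ is designed precisely so that this norm is preserved, $|\ddbar\hat\vp_\ell|_{\hat{g}_\ell(0)}\leq C$. For the inductive step $j\geq 1$, I would feed the a priori H\"older seminorm bounds \eqref{a-priori Control} together with $L^\infty$ bounds on the base-level quantities into the interpolation inequality (Proposition \ref{prop:interpolation}) on balls of fixed radius. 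The required base-level $L^\infty$ bounds are: $\|\ddbar\underline{\hat\vp_\ell}\|_\infty\leq C$ (by fiber averaging $\ddbar\vp$); $\|\partial_{\hat t}\underline{\hat\vp_\ell}+\lambda_\ell^{-2}\underline{\hat\vp_\ell}\|_\infty\leq C$ (since under the rescaling this equals $\partial_t\underline{\vp}+\underline{\vp}$ at corresponding points, bounded after fiber averaging by Lemma \ref{lma:earlier work}(ii) and (v)); $\|\ddbar\hat\psi_{\ell,j,k}\|_\infty\leq C$ (via the decomposition $\ddbar\psi_{j,k}=\ddbar\vp-\ddbar\underline{\vp}-\sum_{i=1}^{j}\gamma_{i,k}$, controlling $\gamma_{i,k}$ for $i<j$ by the induction hypothesis and $\gamma_{j,k}$ through the explicit formula \eqref{plop} together with the decay bound \eqref{induction-bdd-A}); and $\|\hat A_{\ell,i,p,k}\|_\infty\leq C\delta_\ell^2$ from \eqref{hat-A-estimate}.

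The main obstacle I anticipate is the extra term involving $\partial_{\hat t}\underline{\hat\vp_\ell}+\lambda_\ell^{-2}\underline{\hat\vp_\ell}$ in \eqref{sup-realized-1}, which has no analogue in the elliptic setting of \cite{HT3} and whose $L^\infty$ control forces us to invoke Lemma \ref{lma:earlier work}(ii),(v),(vi) rather than only the $C^0$ estimate, and to track carefully how $\partial_t$ interacts with the parabolic rescaling in both Case 2 ($\delta_\ell\to\delta_\infty>0$) and Case 3 ($\delta_\ell\to 0$). The hypothesis $\delta_\ell\leq C$ cannot be removed: in Case 1 the fiber diameter $\delta_\ell\,\mathrm{diam}(Y,g_{Y,0})$ is unbounded, which is exactly why the vertical reduction in the first paragraph breaks down, and that case is handled separately by the later companion argument.
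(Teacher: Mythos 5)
Your overall structure matches the paper's: bound each numerator of \eqref{sup-realized-10}/\eqref{sup-realized-1} in $L^\infty$ near the two blowup points via Lemma \ref{lma:earlier work} and interpolation against the a priori H\"older bounds \eqref{a-priori Control}, and conclude from the identity that the quantity equals $1$. Your identity $\partial_{\hat t}\hat\vp_\ell+\lambda_\ell^{-2}\hat\vp_\ell=\Psi_\ell^*(\dot\vp+\vp)$ and the base case $j=0$ are both handled correctly.

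The genuine gap is the claim that interpolation can be run ``on balls of fixed radius.'' The a priori bounds \eqref{a-priori Control} give $[\mathfrak{D}^{2j+2+\iota}\hat A_{\ell,i,p,k}]_{\alpha,\alpha/2}\leq C\delta_\ell^{-\iota}$, a H\"older seminorm that \emph{grows} with $\iota$, while the $L^\infty$ bound \eqref{hat-A-estimate} on $\hat A_{\ell,i,p,k}$ itself is strongly decaying. Since Proposition \ref{prop:interpolation} interpolates from an \emph{even} top order, at fixed radius one obtains $\|\mathfrak{D}^{2j+2+\iota}\hat A_{\ell,i,p,k}\|_\infty\leq C\delta_\ell^{-\iota'}$ where $\iota'$ is the least even integer $\geq\iota$; for odd $\iota$ this gives only $\delta_\ell^\iota\|\mathfrak{D}^{2j+2+\iota}\hat A_{\ell,i,p,k}\|_\infty\leq C\delta_\ell^{-1}$, which diverges exactly in Case 3 ($\delta_\ell\to 0$). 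The paper's proof avoids this by choosing $\ell$-dependent shrinking radii in \eqref{lower-A-Linfty} and \eqref{higher-A-Linfty} to balance the two competing terms, and it is there --- to keep the tuned radius admissible --- that the hypothesis $\delta_\ell\leq C$ actually enters, not through the vertical/horizontal reduction sketched in your first paragraph, which plays no role in the argument. Separately, your route to $\mathfrak{D}^{2j+2}\hat\psi_{\ell,j,k}$ via the decomposition $\ddbar\psi_{j,k}=\ddbar\vp-\ddbar\underline\vp-\sum_{i=1}^j\gamma_{i,k}$ is essentially circular: controlling $\gamma_{j,k}$ in $L^\infty$ already requires the very derivative estimates on $A_{j,p,k}$ up to order $2k+2$ that the interpolation above is supposed to produce, and you would still need a fiberwise Poincar\'e or Moser step to convert $\|\ddbar\hat\psi_{\ell,j,k}\|_\infty$ into the scalar norm $\|\hat\psi_{\ell,j,k}\|_\infty$ that Proposition \ref{prop:interpolation} takes as input. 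The paper instead applies Proposition \ref{prop:braindead} directly to $\hat\psi_{\ell,j,k}$ (which has fiberwise average zero) and gets $\|\mathfrak{D}^{2j+2}\hat\psi_{\ell,j,k}\|_\infty\leq C\delta_\ell^\alpha$ from the H\"older bound \eqref{a-priori Control} alone, with no separate $L^\infty$ input.
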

\begin{proof}
First we can easily deal with the case $j=0$. By Lemma \ref{lma:earlier work} (iii) we know that
\begin{equation}\label{kork}
\sup_{B\times Y\times [t-1,t]}|\ddbar\vp|_{g(t)}=o(1),
\end{equation}
which implies
\begin{equation}
\|\ddbar\hat{\vp}_\ell\|_{\infty,\hat Q_\rho(\hat z,\hat t),g_\ell(\hat t)}=o(1),
\end{equation}
as $\ell\to\infty$, for fixed $\hat{z},\hat{t},\rho$, and applying this to $\hat{z}=\hat{z}_\ell$ or $\hat{z}'_\ell$ and $\hat{t}=0$ or $\hat{t}'_\ell$, we see that the numerator on the RHS of
\eqref{sup-realized-10} is going to zero, which gives us the even stronger statement that
\begin{equation}\label{gecko}
d^{\hat g_\ell(0)}(\hat x_\ell,\hat x_\ell')+|\hat t_\ell'|^{\frac{1}{2}}=o(1),
\end{equation}
and for $j=0$ we do not even need the assumption that $\delta_\ell\leq C$.

Next, we assume $j\geq 1$. The argument is a modification of \cite[Proposition 4.5]{HT3}, and the goal is to estimate each of the terms in the blowup quantity in \eqref{sup-realized-1}.  In the following,  we denote  $\hat Q_R(\hat z,\hat t)=\left(B_{\mathbb{C}^m}(\hat z,R)\times Y\right)\times [-R^2+\hat t,\hat t]$ with $\hat z$ being either $\hat z_\ell$ or $\hat z_\ell'$ and $\hat t$ is either $\hat t_\ell=0$ or $\hat t_\ell'$.

We first handle the terms involving $A_{\ell,i,p,k}$. Recall from \eqref{a-priori Control} that for all $-2\leq \iota\leq 2k$ and each given $R>0$,
\begin{equation}\label{kol}
\left[\mathfrak{D}^{2j+2+\iota} \hat A_{\ell,i,p,k}\right]_{\a,\a/2,\hat Q_R(\hat z,\hat t),\hat g_\ell(\hat t)}\leq C\delta_\ell^{-\iota},
\end{equation}
while from \eqref{hat-A-estimate}
\begin{equation}
\|\hat A_{\ell,i,p,k}\|_{\infty,\hat Q_R(\hat z,\hat t)}\leq C \delta_\ell^2 e^{\frac{-2i+2-\a}{2}t_\ell+\frac{2i+\a}{2}\lambda_\ell^{-2}(R^2-\hat t)}\leq C \delta_\ell^2 e^{\frac{-2i+2-\a}{2}t_\ell},
\end{equation}
since $\hat{t}=0$ or $\hat{t}_\ell'$ satisfies $\hat{t}\geq -\lambda_\ell^2$.
By Proposition \ref{prop:interpolation}, for all $1\leq r\leq 2j$,
\begin{equation}
\begin{split}
(R-\rho)^r \|\mathfrak{D}^r\hat A_{\ell,i,p,k}\|_{\infty,\hat Q_\rho(\hat z,\hat t),\hat g_\ell(\hat t)}\leq C(R-\rho)^{2j+\a}\delta_\ell^2+C \delta_\ell^2 e^{\frac{-2i+2-\a}{2}t_\ell}.
\end{split}
\end{equation}

We choose
\begin{equation}
R-\rho\approx \left( e^{\frac{-2i+2-\a}{2}t_\ell}\right)^\frac{1}{2j+\a},
\end{equation}
which is small for any given $R$.
We get for all $0\leq r\leq 2j$ that
\begin{equation}\label{lower-A-Linfty}
\|\mathfrak{D}^r\hat A_{\ell,i,p,k}\|_{\infty,\hat Q_\rho(\hat z,\hat t),\hat g_\ell(\hat t)}\leq  C\delta_\ell^2\left( e^{\frac{-2i+2-\a}{2}t_\ell}\right)^{1-\frac{r}{2j+\a}}=o(1),
\end{equation}
where $\hat t$ is either $\hat t_\ell=0$ or $\hat t_\ell'$.

If we let $r=\iota+2>0$, then for $0<r\leq 2k+2$ we can interpolate again, using \eqref{kol}, and get
\begin{equation}
\begin{split}
(R-\rho)^r \|\mathfrak{D}^{2j+r}\hat A_{\ell,i,p,k}\|_{\infty,\hat Q_\rho(\hat z,\hat t),\hat g_\ell(\hat t)}\leq C(R-\rho)^{r+\a}\delta_\ell^{-r+2}+C\delta_\ell^2\left( e^{\frac{-2i+2-\a}{2}t_\ell}\right)^{\frac{\a}{2j+\a}}.
\end{split}
\end{equation}

By taking
\begin{equation}
R-\rho\approx \left(\delta_\ell^r\left( e^{\frac{-2i+2-\a}{2}t_\ell}\right)^{\frac{\a}{2j+\a}}\right)^\frac{1}{r+\a},
\end{equation}
which is small thanks to our assumption that $\delta_\ell\leq C$, we conclude that for all $0<r\leq 2k+2$,
\begin{equation}\label{higher-A-Linfty}
\delta_\ell^{r-2} \|\mathfrak{D}^{2j+r}\hat A_{\ell,i,p,k}\|_{\infty,\hat Q_\rho(\hat z,\hat t),\hat  g_\ell(\hat t)}\leq C  \left(\delta_\ell^r\left( e^{\frac{-2i+2-\a}{2}t_\ell}\right)^{\frac{\a}{2j+\a}}\right)^\frac{\a}{r+\a} =o(1).
\end{equation}

Applying \eqref{lower-A-Linfty} and \eqref{higher-A-Linfty} to balls centered at $(\hat z_\ell,\hat t_\ell)$ and $(\hat z_\ell',\hat t_\ell')$ (of any radius, e.g. $1$),  together with the boundedness of operator norm of $\mathbb{P}$ from \cite[\S 2.1.1]{HT3}, this gives
\begin{equation}\label{dist:A-contr}
\sum_{i=1}^j \sum_{p=1}^{N_{i,k}}\sum_{\iota = -2}^{2k} \delta_\ell^\iota\Bigg({|\mathfrak{D}^{2j+2+\iota} \hat A_{\ell,i,p,k}(\hat x_\ell,0)- \P_{\hat x_\ell'\hat x_\ell}(\mathfrak{D}^{2j+2+\iota}\hat A_{\ell,i,p,k}(\hat x_\ell',\hat t_\ell'))|_{\hat g_\ell(0)}}\Bigg)=o(1).
\end{equation}

We now treat $\hat \psi_{\ell,j,k}$ which has fiberwise average zero.
By Proposition \ref{prop:braindead} (in case $\delta_\ell$ does not converge to $0$, we choose $\rho$ to be  sufficiently large) and \eqref{a-priori Control},  we have
\begin{equation}
 \|\mathfrak{D}^{2j+2} \hat \psi_{\ell,j,k}\|_{\infty,\hat Q_\rho(\hat z,\hat t),\hat g_\ell(\hat t)}\leq C\delta_\ell^\a.
\end{equation}

Applying this to balls centered at $\hat z_\ell$ and $\hat z_\ell'$ and invoking \cite[\S 2.1.1]{HT3} again, we have
\begin{equation}\label{dist:psi-contr}
\begin{split}
{|\mathfrak{D}^{2j+2} \hat \psi_{\ell,j,k}(\hat x_\ell,0)-\P_{\hat x_\ell'\hat x_\ell}\mathfrak{D}^{2j+2} \hat \psi_{\ell,j,k}(\hat x_\ell',\hat t_\ell')|_{\hat g_\ell(0)}}\leq C\delta_\ell^\a.
\end{split}
\end{equation}

It remains to consider the fiberwise average of the potential,  i.e. $\underline{\hat \varphi_\ell}$.  Recalling \eqref{kork}
and taking fiber average (using the fiber integration argument in \cite[p.436]{To4}) gives in particular
\begin{equation}
\|\ddbar\underline{\vp}\|_{\infty, B\times [t-1,t], g_{\C^m}}=o(1),
\end{equation}
as $t\to+\infty$, which implies
\begin{equation}
\|\ddbar\underline{\hat{\vp}_\ell}\|_{\infty,\hat Q_\rho(\hat z,\hat t),g_\ell(\hat t)}=o(1),
\end{equation}
as $\ell\to\infty$, for fixed $\hat{z},\hat{t},\rho$, and interpolating between this and \eqref{a-priori Control} gives
\begin{equation}
\|\mathfrak{D}^{2j}\ddbar\underline{\hat \varphi_\ell}\|_{\infty, \hat Q_\rho(\hat z,\hat t),g_\ell(\hat t)}=o(1),
\end{equation}
for $\hat{z}=\hat{z}_\ell$ or $\hat{z}'_\ell$ and $\hat{t}=0$ or $\hat{t}'_\ell$. Using again the boundedness of the operator norm of $\P$, this implies that
\begin{equation}\label{dist:varphi-contr}
|\mathfrak{D}^{2j}\ddbar\underline{\hat \varphi_\ell}(\hat x_\ell,0)-\P_{\hat x_\ell'\hat x_\ell}\mathfrak{D}^{2j}\ddbar\underline{\hat\varphi_\ell}(\hat x_\ell',\hat t_\ell')|_{\hat g_\ell(0)}=o(1).
\end{equation}
Similarly, from Lemma \ref{lma:earlier work} (ii) and taking fiber average we know that
\begin{equation}
\sup_{B\times Y\times [t-1,t]}|\de_t\underline{\vp}+\underline{\vp}|=o(1),
\end{equation}
as $t\to+\infty$, which implies
\begin{equation}
\|\de_{\hat{t}}\underline{\hat \varphi_\ell}+\lambda_\ell^{-2}\underline{\hat \varphi_\ell}\|_{\infty,\hat Q_\rho(\hat z,\hat t),g_\ell(\hat t)}=o(1),
\end{equation}
as $\ell\to\infty$, for fixed $\hat{z},\hat{t},\rho$, and again interpolating between this and \eqref{a-priori Control} gives
\begin{equation}
\|\mathfrak{D}^{2j}(\partial_{\hat t}\underline{\hat \varphi_\ell}+\lambda_\ell^{-2}\underline{\hat \varphi_\ell} )\|_{\infty, \hat Q_\rho(\hat z,\hat t),g_\ell(\hat t)}=o(1),
\end{equation}
for $\hat{z}=\hat{z}_\ell$ or $\hat{z}'_\ell$ and $\hat{t}=0$ or $\hat{t}'_\ell$, and hence
\begin{equation}\label{dist:varphi2-contr}
{|\mathfrak{D}^{2j} (\partial_{\hat t}\underline{\hat \varphi_\ell}+\lambda_\ell^{-2}\underline{\hat \varphi_\ell} )(\hat x_\ell,0)-\P_{\hat x_\ell'\hat x_\ell}\mathfrak{D}^{2j}(\partial_{\hat t}\underline{\hat \varphi_\ell}+\lambda_\ell^{-2}\underline{\hat \varphi_\ell} )(\hat x_\ell',\hat t_\ell')|_{\hat g_\ell(0)}}=o(1).
\end{equation}

Combining \eqref{sup-realized-1} with \eqref{dist:A-contr}, \eqref{dist:psi-contr}, \eqref{dist:varphi-contr} and \eqref{dist:varphi2-contr}, we obtain the desired bound \eqref{eskapenot}.
\end{proof}

We are now in position to study the flows obtained as complete pointed limits of
\begin{equation}
\left( \hat B_{\mathbb{C}^m}(\hat z_\ell,\lambda_\ell)\times Y, \hat g_\ell(\hat{t}),\hat x_\ell\right),
\end{equation}
as $\ell\to +\infty$, where as usual $\hat{t}\in [-\lambda_\ell^2t_\ell,0]$. By translation, we can assume $\hat x_\ell=(\hat z_\ell,\hat y_\ell)=(0,\hat y_\ell)\in \mathbb{C}^m\times Y$ and $\hat y_\ell\to \hat y_\infty\in Y$ by compactness of $Y$ after passing to a subsequence.

\subsection{Blowup analysis in Case 1: $\delta_\ell\to +\infty$}

In this case the metrics $\hat{g_\ell}(0)$ are blowing up in the fiber directions, so that their pointed blowup limit (modulo local diffeomorphisms that stretch the fibers) would be $\mathbb{C}^{m+n}$. While this is the approach taken in our earlier works \cite{CL,FL, HT2}, it turns out that we need a different approach instead (at least when $j\geq 1$). So, following \cite{HT3}, we
consider the diffeomorphisms
\begin{equation}
\Xi_\ell: B_{e^{\frac{t_\ell}{2}}} \times Y\times [-e^{t_\ell}t_\ell,0] \to B_{\lambda_\ell} \times Y\times [-\lambda_\ell^2t_\ell,0], \;\, (\hat{z},\hat{y},\hat{t}) = \Xi_\ell(\check{z},\check{y},\check{t}) =(\delta_\ell\check{z},\check{y},\delta_\ell^2\check{t}),
\end{equation}
pull back time-dependent $2$-tensors via $\Xi_\ell$, rescale them by $\delta_\ell^{-2}$ and denote the new tensors with a check, so for example
$\check \omega^\bullet_\ell(\check t)=\delta_\ell^{-2} \Xi_\ell^*\hat \omega_\ell^\bullet(\delta_\ell^2 \check t)$.
We also apply the same  pullback and rescaling procedure to the scalar functions $\hat{A}_{\ell,i,p,k}$ and $\hat{\vp}_\ell$.

In this case, $\check g_\ell(\check{t})$ is locally uniformly Euclidean in space-time and $\check \omega_\ell^\bullet $ satisfies the \KR flow equation:
\begin{equation}\partial_{\check t}\check \omega_\ell^\bullet  = -\Ric(\check \omega_\ell^\bullet )-e^{-t_\ell}\check \omega_\ell^\bullet ,\end{equation}
and the Monge-Amp\`ere equation \eqref{mak} becomes
\begin{equation}\label{mma}
(\check \omega_\ell^\bullet)^{m+n}= e^{\de_{\check{t}}{\check\varphi}_\ell+e^{-t_\ell}\check\varphi_\ell-n e^{-t_\ell}\check t} \binom{m+n}{m} \check \omega_{\ell,\mathrm{can}} ^m \wedge (\Xi_\ell^* \Psi_\ell^* \omega_F)^n.
\end{equation}
Thanks to Lemma \ref{lma:earlier work} (i) we know that $\check \omega_\ell^\bullet (\check t)$ is uniformly equivalent to
\begin{equation}
\check \omega_\ell^\natural(\check t)=(1-e^{-t_\ell-e^{-t_\ell}\check t}) \check \omega_{\ell,\mathrm{can}}+e^{-e^{-t_\ell}\check t}\Xi_\ell^*\Psi_\ell^*\omega_F,
\end{equation}
which in turns is locally uniformly equivalent to the Euclidean metric. The pullback of the complex structure also converges locally uniformly smoothly to the Euclidean product complex structure due to the stretching.  We want to apply the local higher order estimates in \cite[Proposition 2.1]{CL} on $\check Q_1(\check z_\ell,0)$ and $\check Q_1(\check z_\ell',\check t_\ell')$, but we do not know whether $\check{B}_1(\check{z}_\ell),\check{B}_1(\check{z}'_\ell)$ are contained in $B_{e^{\frac{t_\ell}{2}}}$, as we don't have any relation between $\delta_\ell$ and $\|\hat{z}_\ell|-\lambda_\ell|$. However, these are compactly contained in the slightly larger ball
$B_{(1+\sigma)e^{\frac{t_\ell}{2}}}$ for any fixed $\sigma>0$ and all $\ell$ sufficiently large, and we may assume without loss that $\check \omega_\ell^\bullet (\check t)$ is uniformly equivalent to Euclidean on $B_{(1+\sigma)e^{\frac{t_\ell}{2}}}\times Y\times [-(1+\sigma)^2e^{t_\ell}t_\ell,0]$. Thus, the local higher order estimates give us uniform $C^\infty$ estimates for $\check \omega_\ell^\bullet$ on  $\check Q_1(\check z_\ell,0)$ and $\check Q_1(\check z_\ell',\check t_\ell')$. Thus on these sets we have uniform $C^\infty$ bounds for $\ddbar\check{\vp}$, hence on $\ddbar\underline{\check{\vp}}$ (by fiber averaging), hence on $\check{A}_{\ell,1,p,k}$ (from its definition), hence on $\check{\gamma}_{\ell, 1,k}$ (also from its definition), hence on $\check{\eta}_{\ell,1,k}$ (from its definition), and so forth until $\check{A}_{\ell,j,p,k},\check{\gamma}_{\ell,j,k},\check{\eta}_{\ell,j,k}$. From the PDE \eqref{mma} we also get uniform $C^\infty$ bounds for
$\partial_{\check t} \check \varphi_\ell+e^{-t_\ell} \check \varphi_\ell$, and so by fiber averaging also on $\partial_{\check t} \underline{\check \varphi_\ell}+e^{-t_\ell} \underline{\check \varphi_\ell}$. Also, since $\check{\eta}_{\ell,j,k}=\ddbar\psi_{\ell,j,k}$ is locally smoothly bounded, and $\psi_{\ell,j,k}$ has fiberwise average zero, then fiberwise Moser iteration gives us a uniform $L^\infty$ bound for $\psi_{\ell,j,k}$, and elliptic estimates show that $\psi_{\ell,j,k}$ has uniform $C^\infty$ bounds. Putting these all together we get in particular

\begin{equation}
\begin{split}
&\|\mathfrak{D}^{2j} \ddbar \check \varphi_\ell\|_{\infty,\check Q_{1},\check g_\ell(0)}+\|\mathfrak{D}^{2j} \ddbar \underline{\check \varphi_\ell}\|_{\infty,\check Q_{1},\check g_\ell(0)}+\|\mathfrak{D}^{2j} (\partial_{\check t} \underline{\check \varphi_\ell}+e^{-t_\ell} \underline{\check \varphi_\ell}    )\|_{\infty,\check Q_{1},\check g_\ell(0)}\\
&+  \|\mathfrak{D}^{2j+2}  \check \psi_{\ell,j,k}  \|_{\infty, \check Q_{1},\check g_\ell(0)}
+\sum_{i=1}^j \sum_{p=1}^{N_{i,k}}\sum_{\iota = -2}^{2k}  \|\mathfrak{D}^{2j+2+\iota} \check A_{\ell,i,p,k}\|_{\infty, \check Q_{1},\check g_\ell(0)}\leq C,
\end{split}
\end{equation}
\begin{equation}
\begin{split}
&[\mathfrak{D}^{2j} \ddbar \check \varphi_\ell]_{\a,\a/2, \check Q_{1},\check g_\ell(0)}+[\mathfrak{D}^{2j} \ddbar \underline{\check \varphi_\ell}]_{\a,\a/2, \check Q_{1},\check g_\ell(0)}+[\mathfrak{D}^{2j} (\partial_{\check t} \underline{\check \varphi_\ell}+e^{-t_\ell} \underline{\check \varphi_\ell}    ) ]_{\a,\a/2, \check Q_{\delta_\ell},\check g_\ell(0)}\\
&+    [\mathfrak{D}^{2j+2} \check \psi_{\ell,j,k}   ]_{\a,\a/2, \check Q_{1},\check g_\ell(0)}
+\sum_{i=1}^j \sum_{p=1}^{N_{i,k}}\sum_{\iota = -2}^{2k} \left[\mathfrak{D}^{2j+2+\iota} \check A_{\ell,i,p,k}\right]_{\a,\a/2, \check Q_{1},\check g_\ell(0)}\leq C,
\end{split}
\end{equation}
where  $\check Q_{1}$ is either $\check Q_{1}(\check z_\ell,0)$ or $\check Q_{1}(\check z_\ell',\check t_\ell')$.
Transferring these back to the hat picture gives
\begin{equation}\label{case1 Control-sup}
\begin{split}
&\|\mathfrak{D}^{2j} \ddbar \hat \varphi_\ell\|_{\infty,\hat Q_{\delta_\ell},\hat g_\ell(0)}+\|\mathfrak{D}^{2j} \ddbar \underline{\hat \varphi_\ell}\|_{\infty,\hat Q_{\delta_\ell},\hat g_\ell(0)}+\|\mathfrak{D}^{2j} (\partial_{\hat t} \underline{\hat \varphi_\ell}+\lambda_\ell^{-2} \underline{\hat \varphi_\ell}    )\|_{\infty,\hat Q_{\delta_\ell},\hat g_\ell(0)}\\
&+  \|\mathfrak{D}^{2j+2}  \hat \psi_{\ell,j,k}  \|_{\infty, \hat Q_{\delta_\ell},\hat g_\ell(0)}
+\sum_{i=1}^j \sum_{p=1}^{N_{i,k}}\sum_{\iota = -2}^{2k} \delta_\ell^\iota \|\mathfrak{D}^{2j+2+\iota} \hat A_{\ell,i,p,k}\|_{\infty, \hat Q_{\delta_\ell},\hat g_\ell(0)}\leq C\delta_\ell^{-2j},
\end{split}
\end{equation}
\begin{equation}\label{case1 Control}
\begin{split}
&[\mathfrak{D}^{2j} \ddbar \hat \varphi_\ell]_{\a,\a/2, \hat Q_{\delta_\ell},\hat g_\ell(0)}+ [\mathfrak{D}^{2j} \ddbar \underline{\hat \varphi_\ell}]_{\a,\a/2, \hat Q_{\delta_\ell},\hat g_\ell(0)}+[\mathfrak{D}^{2j} (\partial_{\hat t} \underline{\hat \varphi_\ell}+\lambda_\ell^{-2} \underline{\hat \varphi_\ell}    ) ]_{\a,\a/2, \hat Q_{\delta_\ell},\hat g_\ell(0)}\\
&+    [\mathfrak{D}^{2j+2} \hat \psi_{\ell,j,k}   ]_{\a,\a/2, \hat Q_{\delta_\ell},\hat g_\ell(0)}
+\sum_{i=1}^j \sum_{p=1}^{N_{i,k}}\sum_{\iota = -2}^{2k} \delta_\ell^\iota\left[\mathfrak{D}^{2j+2+\iota} \hat A_{\ell,i,p,k}\right]_{\a,\a/2, \hat Q_{\delta_\ell},\hat g_\ell(0)}\leq C\delta_\ell^{-2j-\a},
\end{split}
\end{equation}
where $\hat Q_{\delta_\ell}$ is either $\hat Q_{\delta_\ell}(\hat z_\ell,0)$ or $\hat Q_{\delta_\ell}(\hat z_\ell',\hat t_\ell')$. Using \eqref{case1 Control-sup} and the triangle inequality (and the usual bound on the operator norm of $\mathbb{P}$) we obtain a uniform upper bound for the numerators of \eqref{sup-realized-10} and \eqref{sup-realized-1} and so for $j\geq 1$ we conclude that
\begin{equation}\label{eskape}d^{\hat g_\ell(0)}(\hat x_\ell,\hat x_\ell') +|\hat t_\ell'|^{\frac{1}{2}}\leq C\delta_\ell^{-2j},\end{equation}
so that the two points $(\hat z_\ell,\hat t_\ell)$ and $(\hat z_\ell',\hat t_\ell')$ are approaching each other (we already know this when $j=0$ thanks to \eqref{gecko}). Thus $(\hat x_\ell',\hat t_\ell')\in \hat Q_{\delta_\ell}(\hat z_\ell,0)$ for all $\ell$ large, and so applying \eqref{case1 Control} shows that the quantities in \eqref{sup-realized-10} and \eqref{sup-realized-1}, which both equal $1$, are also bounded above by $C\delta_\ell^{-2j-\a}\to 0$, which is a contradiction.

\subsection{Blowup analysis in Case 2: $\delta_\ell\to \delta_\infty\in (0,+\infty)$}

Without loss of generality, we can assume $\delta_\infty=1$. In this case, the blowup model is $\mathbb{C}^m\times Y$ and
\begin{equation}\hat g_\ell^\natural(\hat t)\to g_{\mathrm{can}}|_{z=0}+g_{Y,z_\infty=0}=g_P,\end{equation}
as $\ell\to +\infty$ in $C^\infty_{\rm loc}(\mathbb{C}^m\times Y\times (-\infty,0])$. Moreover, the complex structure also converges to a product.

As in the {\bf Case 1}, Lemma \ref{lma:earlier work} (i) implies that $\hat \omega_\ell^\bullet(\hat t)$ is locally uniformly equivalent to product metric on $\mathbb{C}^m\times Y$. Moreover, since $\hat\omega_\ell^\bullet$ satisfies the \KR flow equation
\begin{equation}\partial_{\hat t} \hat\omega_\ell^\bullet=-\Ric(\hat\omega_\ell^\bullet) -\lambda_\ell^{-2}\hat\omega_\ell^\bullet,\end{equation}
we can again apply \cite[Proposition 2.1]{CL} on $\hat Q_R(\hat z_\ell,0)$ for $R$ sufficiently large to obtain $C^\infty_{\rm loc}$ regularity of $\hat\omega_\ell^\bullet$. Arguing as in {\bf Case 1} we obtain $C^\infty$ bounds for all the pieces in the decomposition, and using Proposition \ref{prop:non-escaping},  we can assume $(\hat x_\ell',\hat t_\ell')\to (\hat x_\infty',\hat t_\infty')\in (\mathbb{C}^m\times Y)\times (-\infty,0]$. Estimating the $C^\alpha$ difference quotients in \eqref{sup-realized-1} by $C^\beta$ ones for any $\beta>\alpha$, we see that $d^{\hat g_\ell(0)}(\hat x_\ell,\hat x_\ell')+|\hat t_\ell'|^{\frac{1}{2}}\geq C^{-1}$ for all $\ell$, which when $j=0$ is already a contradiction to \eqref{gecko}.

Assuming then that $j\geq 1$, we see that  the limit $(\hat x_\infty,0)$  of $(\hat x_\ell,0)$ is different from $(\hat x_\infty',\hat t_\infty')$.  By the local uniform higher order regularity,  the geometric quantities smoothly subconverge as $\ell\to +\infty$. In particular, the limit $\hat{\omega}^\bullet_\infty(\hat{t})$  is an ancient solution on $\mathbb{C}^m\times Y \times (-\infty,0]$ of the \KR flow
\begin{equation}\label{lkk}\partial_{\hat t} \hat\omega_\infty^\bullet=-\Ric(\hat\omega_\infty^\bullet).\end{equation}
Smooth convergence also implies that \eqref{sup-realized-1} still holds in the limit.

By \eqref{hat-A-estimate}, we have that $\hat A_{\ell,i,p,k}\to 0$ locally uniformly, hence locally smoothly, so its contributions to  \eqref{sup-realized-1} vanish in the limit as  $\ell\to+\infty$.  On the other hand, Lemma \ref{lma:earlier work} (ii),(iii) also implies that the limits of $\ddbar\underline{\hat \varphi_\ell}$ and $\partial_{\hat t}{\hat \varphi_\ell}+\lambda_\ell^{-2}{\hat \varphi_\ell}$ (and hence also of $\partial_{\hat t}\underline{\hat \varphi_\ell}+\lambda_\ell^{-2}\underline{\hat \varphi_\ell}$) vanish so that their contributions in \eqref{sup-realized-1} also vanish in the limit.

We are left with killing the limit of the contribution of $\hat\psi_{\ell,j,k}$.
For this, recall that  from Lemma \ref{lma:earlier work} (iv) we have $|R(g^\bullet(t))|\leq C$, and so $|R(\hat{g}^\bullet(\hat{t}))|\leq C\lambda_\ell^{-2} \to 0$, thus the limiting metrics $\hat{\omega}^\bullet_\infty(\hat{t})$ are scalar-flat, hence Ricci-flat and static (using the well-known evolution equation of the scalar curvature under the \KR flow \eqref{lkk}). Also, since $\hat A_{\ell,i,p,k}\to 0$ locally smoothly, this implies that $\hat{\gamma}_{\ell,i,k}\to 0, 1\leq i\leq j,$ and so $\hat\omega_\infty^\bullet=\omega_P+\hat \eta_{\infty,j,k}$, and in particular $\hat\eta_{\infty,j,k}$ is also static.  The Liouville Theorem from \cite{He} shows that $\nabla^{g_P}\hat\eta_{\infty,j,k}=\nabla^{g_P}\hat\omega_\infty^\bullet\equiv 0$. Thus $\hat{\eta}_{\infty,j,k}=\ddbar\hat{\psi}_{\infty,j,k}$ is parallel, with bounded $g_P$ norm (from Lemma \ref{lma:earlier work} (i)), so by \cite[Proposition 3.12]{HT2} we have $\ddbar\hat{\psi}_{\infty,j,k}=\ddbar p$ for some quadratic polynomial $p$ on $\C^m$. This means that $\hat{\psi}_{\infty,j,k}-p$ is pluriharmonic on $\C^m\times Y$, and hence it is also pulled back from $\C^m$ (since $Y$ is compact). This clearly implies that $\hat{\psi}_{\infty,j,k}$ is pulled back from $\C^m$, and since it also has fiberwise average zero, it must vanish identically. Thus the contribution of $\hat \psi_{\ell,j,k}$ to \eqref{sup-realized-1} also vanish in the limit, and this gives a contradiction to \eqref{sup-realized-1}.

\subsection{Blowup analysis in Case 3: $\delta_\ell\to  0$}
In this case, the blowup model is $\mathbb{C}^m$ which is still collapsed.  This is the most difficult case and will occupy most of the rest of the paper.

By Proposition \ref{prop:non-escaping}, we know that $(\hat x_\ell',\hat t_\ell')$ remains at bounded distance from $(\hat x_\ell,0)$.  The key claim is the following non-colliding estimate, whose proof will take substantial work:
\begin{claim}\label{claim:noncoll}
There exists $\e_0>0$ such that for all $\ell\geq 1$ we have
\begin{equation}\label{kirk}d^{\hat g_\ell(0)}(\hat x_\ell,\hat x_\ell')+|\hat t_\ell'|^{\frac{1}{2}}\geq \e_0>0.\end{equation}
\end{claim}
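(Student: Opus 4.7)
The plan is to argue by contradiction. We may restrict to $j\geq 1$: indeed, for $j=0$ the proof of Proposition \ref{prop:non-escaping} already shows $r_\ell := d^{\hat g_\ell(0)}(\hat x_\ell,\hat x_\ell') + |\hat t'_\ell|^{1/2} = o(1)$, which is incompatible with the conclusion of the claim and is dealt with by the separate $j=0$ argument alluded to in the Overview of Proof. So for $j\geq 1$ suppose $r_\ell \to 0$ along a subsequence. The plan is to show that each of the four summands on the right-hand side of \eqref{sup-realized-1} tends to zero, contradicting that their sum equals $1$.

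The crucial summand is the one involving $\hat\psi_{\ell,j,k}$, which has fiberwise average zero. Applying Proposition \ref{prop:braindead} together with the H\"older seminorm bound \eqref{a-priori Control} yields
\begin{equation*}
\|\mathfrak{D}^{2j+2}\hat\psi_{\ell,j,k}\|_{\infty,\hat Q_R(\hat z,\hat t),\hat g_\ell(\hat t)} \leq C\delta_\ell^\alpha
\end{equation*}
on any fixed-radius parabolic cylinder centered at $\hat x_\ell$ or $\hat x'_\ell$. Combined with the operator-norm bound on $\mathbb{P}$ from \cite[\S 2.1.1]{HT3} and the triangle inequality, the numerator of the $\hat\psi$-summand is at most $C\min(\delta_\ell^\alpha,\, r_\ell^\alpha)$, so its contribution to \eqref{sup-realized-1} is at most $C\min((\delta_\ell/r_\ell)^\alpha,1)$. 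For the $\hat A_{\ell,i,p,k}$-summands we combine \eqref{hat-A-estimate} with \eqref{a-priori Control} and use Proposition \ref{prop:interpolation}, mimicking the derivation of \eqref{lower-A-Linfty}--\eqref{higher-A-Linfty}, to bound $\delta_\ell^{\iota}\|\mathfrak{D}^{2j+2+\iota}\hat A_{\ell,i,p,k}\|_\infty$ by quantities that vanish as $\ell\to\infty$. Finally, for the two base-average summands we use Lemma \ref{lma:earlier work}(ii),(iii), take fiberwise averages as in the proof of Proposition \ref{prop:non-escaping}, and interpolate with \eqref{a-priori Control} via Proposition \ref{prop:interpolation} to conclude that $\|\mathfrak{D}^{2j}\ddbar\underline{\hat\vp_\ell}\|_\infty$ and $\|\mathfrak{D}^{2j}(\partial_{\hat t}\underline{\hat\vp_\ell}+\lambda_\ell^{-2}\underline{\hat\vp_\ell})\|_\infty$ are $o(1)$ on the relevant cylinders.

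To close the contradiction we split into subcases according to the ratio $\delta_\ell/r_\ell$. In the subcase $\delta_\ell/r_\ell\to 0$ the fiber scale is much smaller than the separation of the blowup points, so the $\hat\psi$-term contributes $o(1)$; combining with the bounds on the remaining three summands gives $1 = o(1)$. In the subcase $\delta_\ell/r_\ell \geq c > 0$ we rescale by $\delta_\ell$ instead of $r_\ell$: the rescaled metric converges to a product metric on $\mathbb{C}^m\times Y$, and the local higher-order estimates of \cite[Proposition 2.1]{CL} applied to the rescaled K\"ahler-Ricci flow produce smooth subsequential limits of all the rescaled quantities. Since $r_\ell = O(\delta_\ell)$, the two blowup points collide in the $\delta_\ell$-rescaled picture, so the difference quotient in \eqref{sup-realized-1} is realized as a derivative at a single point in the limit. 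A Liouville-type argument in the spirit of \cite{He} (exactly as at the end of Case 2) forces the limit of $\hat\psi_{\ell,j,k}$ to have parallel $(1,1)$-form $\ddbar\hat\psi_{\infty,j,k}$ with fiberwise average zero, hence zero; the $\hat A$-contributions vanish from \eqref{hat-A-estimate}; and the base averages vanish by Lemma \ref{lma:earlier work}(ii),(iii). This again yields $1=0$, the desired contradiction.

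The main obstacle is the second subcase $\delta_\ell/r_\ell \geq c > 0$: here Proposition \ref{prop:braindead} no longer forces the $\hat\psi$-term to be small and one must instead exploit smooth convergence to the product limit together with the static/parabolic Liouville theorem. A secondary subtlety is the careful balancing of the interpolation exponents for $\hat A_{\ell,i,p,k}$, which saturate precisely at the fiber scale $\delta_\ell$ and motivate the intricate exponents recorded in \eqref{infty3}--\eqref{infty4}; this is also the reason that all four summands of the blowup quantity need to be controlled simultaneously.
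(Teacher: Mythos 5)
Your proposal correctly identifies the key dichotomy governed by the ratio $\ve_\ell := \delta_\ell/d_\ell$ (your $\delta_\ell/r_\ell$), and you are right that the $\hat\psi_{\ell,j,k}$-term is where Proposition \ref{prop:braindead} loses all power once $\ve_\ell$ stays bounded below. However, there are two substantial gaps, both traceable to the absence of the secondary blowup to the tilde picture (rescaling by $d_\ell$, not $\delta_\ell$) with the accompanying $d_\ell^{-2j-\alpha}$-normalization; without that normalization none of the rescaled quantities have nontrivial limits, so there is nothing for a Liouville theorem to bite on, and the difference quotients in \eqref{sup-realized-1} cannot be estimated pointwise.

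In your first subcase ($\ve_\ell\to 0$), the sentence ``combining with the bounds on the remaining three summands gives $1=o(1)$'' is unsupported. Knowing $\|\DD^{2j}\ddbar\underline{\hat\vp_\ell}\|_\infty=o(1)$ and $[\DD^{2j}\ddbar\underline{\hat\vp_\ell}]_{\a,\a/2}\le C$ (and likewise for the $\de_t\underline{\hat\vp_\ell}$ and $\hat A$-terms) only bounds the corresponding quotient by $\min\bigl(o(1)\,r_\ell^{-\alpha},\ C\bigr)$, which is \emph{not} $o(1)$ in general since $r_\ell\to 0$. The paper's Subcase C overcomes this by passing to the tilde picture, extracting limits $u_\infty,\eta_\infty$, a heat-type compatibility relation \eqref{trivv1}--\eqref{triw2}, a parabolic Liouville theorem, and then a fiberwise $L^2$ energy argument (see the computations around \eqref{rattero}) to kill the remaining $\hat A^*$-contribution; none of this is reproduced or replaced in your sketch. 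In your second subcase ($\ve_\ell\geq c$) the proposal breaks down more seriously for $\ve_\ell\to\infty$: after rescaling by $\delta_\ell$ the two blowup points are at distance $r_\ell/\delta_\ell=\ve_\ell^{-1}\to 0$, so they collide in the limit; a difference quotient with denominator $\to 0$ and numerator $\to 0$ has an indeterminate limit, and ``realized as a derivative at a single point'' is precisely where the argument cannot close. This is the paper's Subcase A, which is by far the most technical: it requires the jet subtraction $\underline{\tilde\chi_\ell^\sharp}, \tilde A^\sharp$, the non-cancellation Proposition \ref{prop:noncancellation}, the parabolic Schauder estimates of Propositions \ref{prop:schauder}--\ref{prop:schauder2}, and crucially the Selection Theorem \ref{thm:Selection}. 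A Liouville theorem for parallel $(1,1)$-forms (as in Case 2, where the fiber does not collapse in the first place) simply does not apply. Finally, the opening restriction to $j\geq 1$ is also misguided: Claim \ref{claim:noncoll} must be proved for $j=0$ as well, and indeed the paper devotes a separate sub-argument (e.g.\ the normal-coordinate jet subtraction of $\ti\psi^\sharp_{\ell,0,k}$ inside Subcase A) precisely because the $j=0$ case does not follow from the $j\geq 1$ analysis.
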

First we show how to quickly complete the proof of Theorem \ref{thm:decomposition} assuming Claim \ref{claim:noncoll} holds. When $j=0$ it is clear that \eqref{kirk} is incompatible with \eqref{gecko}, while if $j\geq 1$ then \eqref{sup-realized-1} implies
\begin{equation}
\begin{split}
\e_0^\a &\leq {|\mathfrak{D}^{2j}\ddbar\underline{\hat \varphi_\ell}(\hat x_\ell,0)-\P_{\hat x_\ell'\hat x_\ell}\mathfrak{D}^{2j}\ddbar\underline{\hat\varphi_\ell}(\hat x_\ell',\hat t_\ell')|_{\hat g_\ell(0)}}\\
&\quad +{|\mathfrak{D}^{2j} (\partial_{\hat t}\underline{\hat \varphi_\ell}+\lambda_\ell^{-2}\underline{\hat \varphi_\ell} )(\hat x_\ell,0)-\P_{\hat x_\ell'\hat x_\ell}\mathfrak{D}^{2j}(\partial_{\hat t}\underline{\hat \varphi_\ell}+\lambda_\ell^{-2}\underline{\hat \varphi_\ell} )(\hat x_\ell',\hat t_\ell')|_{\hat g_\ell(0)}}\\
&\quad +{|\mathfrak{D}^{2j+2} \hat \psi_{\ell,j,k}(\hat x_\ell,0)-\P_{\hat x_\ell'\hat x_\ell}\mathfrak{D}^{2j+2} \hat \psi_{\ell,j,k}(\hat x_\ell',\hat t_\ell')|_{\hat g_\ell(0)}}\\
&\quad +\sum_{i=1}^j \sum_{p=1}^{N_{i,k}}\sum_{\iota = -2}^{2k} \delta_\ell^\iota\Bigg({|\mathfrak{D}^{2j+2+\iota} \hat A_{\ell,i,p,k}(\hat x_\ell,\hat t_\ell)- \P_{\hat x_\ell'\hat x_\ell}(\mathfrak{D}^{2j+2+\iota}\hat A_{\ell,i,p,k}(\hat x_\ell',\hat t_\ell'))|_{\hat g_\ell(0)}}\Bigg),
\end{split}
\end{equation}
while the right hand side is of $o(1)$ thanks to \eqref{dist:A-contr}, \eqref{dist:psi-contr}, \eqref{dist:varphi-contr} and \eqref{dist:varphi2-contr} as $\ell\to +\infty$, since $\delta_\ell\to 0$.  This is a contradiction.  Therefore, to complete the proof of Theorem \ref{thm:decomposition}, it remains to prove Claim \ref{claim:noncoll}.

\subsection{Setup of secondary blowup in proving Claim \ref{claim:noncoll}}
For each $\ell\geq 1$, let
\begin{equation}
d_\ell:=d^{\hat g_\ell(0)}(\hat x_\ell,\hat x_\ell')+|\hat t_\ell'|^{\frac{1}{2}}>0.
\end{equation}
If Claim \ref{claim:noncoll} fails to be true, then we may assume that $d_\ell\to 0$ as $\ell\to +\infty$.  Define a new parameter
\begin{equation}
\e_\ell:=d_\ell^{-1}\delta_\ell= d_\ell^{-1}\lambda_\ell e^{-\frac{t_\ell}{2}},
\end{equation}
and consider the diffeomorphism
\begin{equation}
\Theta_\ell:  B_{d_\ell^{-1}\lambda_\ell} \times Y\times [-d_\ell^{-2}\lambda_\ell^2t_\ell,0] \to B_{\lambda_\ell}\times Y\times [-\lambda_\ell^2t_\ell,0], \;\, (\hat{z},\hat{y},\hat{t}) = \Theta_\ell(\ti{z},\ti{y},\ti{t}) =(d_\ell\ti{z},\ti{y},d_\ell^2\ti{t}),
\end{equation}
As usual, we pull back time-dependent $2$-tensors via $\Theta_\ell$, rescale them by $d_\ell^{-2}$ and denote the new tensors with a tilde, so for example
$\ti{\omega}^\bullet_\ell(\ti{t})=d_\ell^{-2} \Theta_\ell^*\hat\omega_\ell^\bullet(d_\ell^2 \ti{t})$.
We also apply the same  pullback and rescaling procedure to the scalar functions $\hat{A}_{\ell,i,p,k}$ and $\hat{\vp}_\ell$, so $\tilde A_{\ell,i,p,k}(\tilde t)=d_\ell^{-2}\Theta_\ell^* \hat A_{\ell,i,p,k}(d_\ell^2\tilde t), \ti{\vp}_\ell(\ti{t})=d_\ell^{-2}\Theta_\ell^*\hat{\vp}_\ell(d_\ell^2\ti{t})$.

The decomposition \eqref{gorgulu} becomes
\begin{equation}\label{gorgulu2}
\ti{\omega}^\bullet_\ell=\ti{\omega}^\natural_\ell+\ti{\gamma}_{\ell,0}+\ti{\gamma}_{\ell,1,k}+\dots+\ti{\gamma}_{\ell,j,k}+\ti{\eta}_{\ell, j,k},
\end{equation}
and the parabolic
complex Monge-Amp\`ere equation \eqref{mak} becomes
\begin{equation}\label{PMA-tilde}
(\ti \omega_\ell^\bullet)^{m+n}= e^{\de_{\ti{t}}{\ti\varphi}_\ell+d_\ell^2\lambda_\ell^{-2}\ti\varphi_\ell-n d_\ell^2\lambda_\ell^{-2}\ti t} \binom{m+n}{m} \ti \omega_{\ell,\mathrm{can}} ^m \wedge (\e_\ell^2 \Theta_\ell^*\Psi_\ell^* \omega_F)^n.
\end{equation}
From \eqref{a-priori Control00} (when $j=0$) and \eqref{a-priori Control} (when $j\geq 1$),  we immediately see that for any fixed $R>0$ and $\tilde t\in [-\lambda_\ell^2d_\ell^{-2}t_\ell,0]$ we have
\begin{equation}\label{a-priori Control-20}
[\ddbar \underline{\tilde \varphi_\ell}]_{\a,\a/2,\tilde Q_{Rd_\ell^{-1}}(\tilde t),\tilde g_\ell(\tilde t)}
+[\ddbar\ti{\psi}_{\ell,0,k}]_{\a,\a/2,\tilde Q_{Rd_\ell^{-1}}(\tilde t),\tilde g_\ell(\tilde t)}\leq Cd_\ell^{\a},
\end{equation}
if $j=0$, and
\begin{equation}\label{a-priori Control-2}
\begin{split}
&\quad [\mathfrak{D}^{2j} \ddbar \underline{\tilde \varphi_\ell}]_{\a,\a/2,\tilde Q_{Rd_\ell^{-1}}(\tilde t),\tilde g_\ell(\tilde t)}+ [\mathfrak{D}^{2j} (\partial_{\tilde t}\underline{\tilde \varphi_\ell}+d_\ell^2\lambda_\ell^{-2}\underline{\tilde \varphi_\ell})]_{\a,\a/2,\tilde Q_{Rd_\ell^{-1}}(\tilde t),\tilde g_\ell(\tilde t)}\\
& + [\mathfrak{D}^{2j+2} \tilde \psi_{\ell,j,k} ]_{\a,\a/2,\tilde Q_{Rd_\ell^{-1}}(\tilde t),\tilde g_\ell(\tilde t)}+\sum_{i=1}^j \sum_{p=1}^{N_{i,k}}\sum_{\iota = -2}^{2k} \e_\ell^\iota\left[\mathfrak{D}^{2j+2+\iota} \tilde A_{\ell,i,p,k}\right]_{\a,\a/2,\tilde Q_{Rd_\ell^{-1}}(\tilde t),\tilde g_\ell(\tilde t)}\leq C d_\ell^{2j+\a},
\end{split}
\end{equation}
when $j\geq 1$, where $\tilde Q_R(\tilde t)=B_{\mathbb{C}^m}(\tilde z_\ell,R)\times Y\times [-R^2+\tilde t,\tilde t]$. Moreover, \eqref{sup-realized-10} and \eqref{sup-realized-1} become respectively
\begin{equation}\label{sup-realized-20}
1=d_\ell^{-\alpha}\frac{|\ddbar \tilde\varphi_\ell(\tilde x_\ell,0)- \P_{\tilde x'_\ell\tilde x_\ell}\ddbar\tilde\varphi_\ell(\tilde x'_\ell,\tilde t_\ell')|_{\tilde g_\ell(0)}}{(d^{\tilde g_\ell(0)}(\tilde x_\ell,\tilde x'_\ell) +|\tilde t_\ell'|^{\frac{1}{2}})^\a},
\end{equation}
for $j=0$ and
\begin{equation}\label{sup-realized-2}
\begin{split}
1&=d_\ell^{-2j-\a}\frac{|\mathfrak{D}^{2j} \ddbar \underline{\tilde\varphi_\ell}(\tilde x_\ell,0)- \P_{\tilde x'_\ell\tilde x_\ell}\mathfrak{D}^{2j}\ddbar \underline{\tilde\varphi_\ell}(\tilde x'_\ell,\tilde t_\ell')|_{\tilde g_\ell(0)}}{(d^{\tilde g_\ell(0)}(\tilde x_\ell,\tilde x'_\ell) +|\tilde t_\ell'|^{\frac{1}{2}})^\a}\\
&+d_\ell^{-2j-\a}\frac{|\mathfrak{D}^{2j}  (\partial_{\tilde t}\underline{\tilde\varphi_\ell}+d_\ell^2\lambda_\ell^{-2} \underline{\tilde\varphi_\ell})(\tilde x_\ell,0)- \P_{\tilde x'_\ell\tilde x_\ell}\mathfrak{D}^{2j}(\partial_{\tilde t}\underline{\tilde\varphi_\ell}+d_\ell^2\lambda_\ell^{-2} \underline{\tilde\varphi_\ell})(\tilde x'_\ell,\tilde t_\ell')|_{\tilde g_\ell(0)}}{(d^{\tilde g_\ell(0)}(\tilde x_\ell,\tilde x'_\ell) +|\tilde t_\ell'|^{\frac{1}{2}})^\a}\\
&+d_\ell^{-2j-\a}\frac{|\mathfrak{D}^{2j+2}  \tilde\psi_{\ell,j,k}(\tilde x_\ell,0)- \P_{\tilde x'_\ell\tilde x_\ell}\mathfrak{D}^{2j+2}  \tilde\psi_{\ell,j,k}(\tilde x'_\ell,\tilde t_\ell')|_{\tilde g_\ell(0)}}{(d^{\tilde g_\ell(0)}(\tilde x_\ell,\tilde x'_\ell) +|\tilde t_\ell'|^{\frac{1}{2}})^\a}\\
&+d_\ell^{-2j-\a}\sum_{i=1}^j \sum_{p=1}^{N_{i,k}}\sum_{\iota = -2}^{2k} \e_\ell^\iota\Bigg(\frac{|\mathfrak{D}^{2j+2+\iota} \tilde A_{\ell,i,p,k}(\tilde x_\ell,0)- \P_{\tilde x_\ell'\tilde x_\ell}(\mathfrak{D}^{2j+2+\iota}\tilde A_{\ell,i,p,k}(\tilde x_\ell',\tilde t_\ell'))|_{\tilde g_\ell(0)}}{(d^{\tilde g_{\ell}(0)}(\tilde x_\ell,\tilde x'_\ell) +|\tilde t_\ell'|^{\frac{1}{2}})^\a}\Bigg),
\end{split}
\end{equation}
for $j\geq 1$, and by definition we also have
\begin{equation}\label{beaner}
d^{\tilde g_\ell(0)}(\tilde x_\ell,\tilde x_\ell')+|\tilde t'_\ell|^{\frac{1}{2}}=1,
\end{equation}
for all $\ell>0$.

For convenience, from now on $\tilde Q_r$ will always denote $B_{\mathbb{C}^m}(0,r)\times Y\times [-r^2,0]$, where recall that we have translated the first blowup point $(\ti{x}_\ell,\ti{t}_\ell)$ in the $\C^m$ directions so that $(\tilde z_\ell,\ti{t}_\ell)=(0,0)\in \mathbb{C}^m\times \mathbb{R}$. As long as $r>1$, the cylinder $\ti{Q}_r$ always contains the other blowup point $(\ti{x}'_\ell,\ti{t}'_\ell)$, because of \eqref{beaner}.  Following \cite[\S 4.8]{HT3}, our goal is to obtain a contradiction by passing to the limit as $\ell\to+\infty$ the various pieces of the decomposition, after scaling them by $d_\ell^{-2j-\a}$. To do this, we need to perform a jet subtraction centered at $(0,0)\in \mathbb{C}^m\times (-\infty,0]$ for functions pulled back from the base. Given $r\in\mathbb{N}$, the parabolic $r$-jet at $(0,0)$ of a function $u$ in $\mathbb{C}^m\times (-\infty,0]$ given by
\begin{equation}
u^\sharp:=\sum_{|p|+2q\leq r} \D^p\partial_{\ti{t}}^q  u |_{(0,0)} \frac{\ti{z}^p}{p!}\frac{{\ti{t}}^q}{q!},
\end{equation}
using standard multiindex notation, where here we treat $\ti{z}$ as real variables. The parabolic degree of such a polynomial is defined by letting the $\ti{z}$ variables have degree $1$ and the ${\ti{t}}$ variable degree $2$. Thus, the degree of $u^\sharp$ is at most $r$. We will also write $u^*:=u-u^\sharp$, whose $r$-jet at $(0,0)$ thus vanishes.

With this notation, we define $\tilde A_{\ell,i,p,k}^\sharp$ as the parabolic $2j$-jet of $\tilde A_{\ell,i,p,k}$ at $(0,0)$, and define $\tilde A_{\ell,i,p,k}^*=\tilde A_{\ell,i,p,k}-\tilde A_{\ell,i,p,k}^\sharp$ so that $\D^{p} \partial_{\ti{t}}^q \tilde A_{\ell,i,p,k}^*|_{(0,0)}=0$ for any $p+2q\leq 2j$.
As for the potential $\underline{\tilde\varphi_\ell}$, since the PDE \eqref{PMA-tilde} and the blowup quantity in \eqref{a-priori Control-2} (for $j\geq 1$) both contain the term $\de_{\ti{t}}{\ti\varphi}_\ell+d_\ell^2\lambda_\ell^{-2}\ti\varphi_\ell$, it will be more convenient to consider instead
\begin{equation}
\tilde\chi_{\ell}:=e^{d_\ell^2\lambda_\ell^{-2}\tilde t} {\tilde\varphi_\ell},
\end{equation}
so that
\begin{equation}
\de_{\ti{t}}\tilde\chi_{\ell}=e^{d_\ell^2\lambda_\ell^{-2}\tilde t}(\de_{\ti{t}}{\ti\varphi}_\ell+d_\ell^2\lambda_\ell^{-2}\ti\varphi_\ell).
\end{equation}
We claim that the fiber average $\underline{\tilde\chi_{\ell}}$ satisfies estimates similar to those satisfied by $ \underline{\tilde\varphi_\ell}$ in \eqref{a-priori Control-20} and \eqref{a-priori Control-2}:
\begin{claim}\label{sub-claim-phi-to-varphi}
For any fixed $R>0$, we have
\begin{align}
d_\ell^{-2j-\a}\left|[\mathfrak{D}^{2j} \partial_{\tilde t}\underline{\tilde \chi_\ell}]_{\a,\a/2,\tilde Q_{Rd_\ell^{-1}},\tilde g_\ell(0)}-
[\mathfrak{D}^{2j}  \left( \partial_{\tilde t}\underline{\tilde\varphi_\ell}+d_\ell^2\lambda_\ell^{-2}\underline{\tilde\varphi_\ell}  \right) ]_{\a,\a/2,\tilde Q_{Rd_\ell^{-1}},\tilde g_\ell(0)}\right|&=o(1),\label{r1}\\
d_\ell^{-2j-\a}\left|[\mathfrak{D}^{2j} \ddbar \underline{ \tilde \chi_\ell}]_{\a,\a/2,\tilde Q_{Rd_\ell^{-1}},\tilde g_\ell(0)}-[\mathfrak{D}^{2j} \ddbar \underline{ \tilde \vp_\ell}]_{\a,\a/2,\tilde Q_{Rd_\ell^{-1}},\tilde g_\ell(0)}\right|&=o(1),\label{r2}
\end{align}
as $\ell\to+\infty$.
\end{claim}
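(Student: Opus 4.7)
The starting point is the direct computation
\begin{equation*}
\ddbar\underline{\tilde\chi_\ell} = e^{d_\ell^2\lambda_\ell^{-2}\tilde t}\,\ddbar\underline{\tilde\varphi_\ell}, \qquad \partial_{\tilde t}\underline{\tilde\chi_\ell} = e^{d_\ell^2\lambda_\ell^{-2}\tilde t}\bigl(\partial_{\tilde t}\underline{\tilde\varphi_\ell} + d_\ell^2\lambda_\ell^{-2}\underline{\tilde\varphi_\ell}\bigr),
\end{equation*}
which is immediate since $\underline{\tilde\chi_\ell}=e^{d_\ell^2\lambda_\ell^{-2}\tilde t}\underline{\tilde\varphi_\ell}$ and the scalar factor $e^{d_\ell^2\lambda_\ell^{-2}\tilde t}$ is purely time-dependent. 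On the cylinder $\tilde Q_{Rd_\ell^{-1}}$ the time variable satisfies $|\tilde t|\leq R^2 d_\ell^{-2}$, so $d_\ell^2\lambda_\ell^{-2}|\tilde t|\leq R^2\lambda_\ell^{-2}\to 0$. Writing $e_\ell := e^{d_\ell^2\lambda_\ell^{-2}\tilde t}-1$, both \eqref{r1} and \eqref{r2} reduce to the single estimate
\begin{equation}\label{plan-singleeq}
[\mathfrak{D}^{2j}(e_\ell\, G_\ell)]_{\a,\a/2,\tilde Q_{Rd_\ell^{-1}},\tilde g_\ell(0)}=o(d_\ell^{2j+\a}),
\end{equation}
where $G_\ell$ stands for $\ddbar\underline{\tilde\varphi_\ell}$ in the case of \eqref{r2} and for $\partial_{\tilde t}\underline{\tilde\varphi_\ell}+d_\ell^2\lambda_\ell^{-2}\underline{\tilde\varphi_\ell}$ in the case of \eqref{r1}.

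Since $e_\ell$ is spatially constant, every mixed derivative $\mathfrak{D}^p e_\ell$ with an odd spatial order vanishes, while for even $p\geq 2$ one has $\mathfrak{D}^p e_\ell=(d_\ell^2\lambda_\ell^{-2})^{p/2}e^{d_\ell^2\lambda_\ell^{-2}\tilde t}$. A direct estimate using the fact that the parabolic diameter of $\tilde Q_{Rd_\ell^{-1}}$ equals $Rd_\ell^{-1}$ yields
\begin{equation*}
\|e_\ell\|_\infty\leq C\lambda_\ell^{-2},\quad \|\mathfrak{D}^p e_\ell\|_\infty\leq C(d_\ell^2\lambda_\ell^{-2})^{p/2}\ (p\geq 2),\quad [\mathfrak{D}^p e_\ell]_{\a,\a/2}\leq Cd_\ell^{p+\a}\lambda_\ell^{-p-2}.
\end{equation*}
On the other factor, $G_\ell$ enjoys the H\"older bound $[\mathfrak{D}^{2j}G_\ell]_{\a,\a/2}\leq Cd_\ell^{2j+\a}$ on $\tilde Q_{Rd_\ell^{-1}}$ from \eqref{a-priori Control-20}--\eqref{a-priori Control-2}, and its $L^\infty$ norm is $o(1)$ on the same cylinder by Lemma \ref{lma:earlier work} (ii)--(iii) after fiber-averaging (and, when $j=0$ and $G_\ell$ involves the time-derivative, by additionally invoking Lemma \ref{lma:earlier work} (v)--(vi) to obtain an initial H\"older bound before interpolating). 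Proposition \ref{prop:interpolation} applied at scale $Rd_\ell^{-1}$ then produces the intermediate bounds $\|\mathfrak{D}^q G_\ell\|_\infty\leq Cd_\ell^q$ for $0\leq q\leq 2j$ and $[\mathfrak{D}^q G_\ell]_{\a,\a/2}\leq Cd_\ell^{q+\a}$ for $0\leq q<2j$, on a slightly smaller subcylinder that still contains both blowup points $(\tilde x_\ell,0)$ and $(\tilde x_\ell',\tilde t_\ell')$.

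Inserting these estimates into the Leibniz product estimate
\begin{equation*}
[\mathfrak{D}^{2j}(e_\ell G_\ell)]_{\a,\a/2}\leq C\sum_{\substack{p+q=2j\\ p\text{ even}}}\bigl(\|\mathfrak{D}^p e_\ell\|_\infty[\mathfrak{D}^q G_\ell]_{\a,\a/2}+[\mathfrak{D}^p e_\ell]_{\a,\a/2}\|\mathfrak{D}^q G_\ell\|_\infty\bigr),
\end{equation*}
one checks case by case that every summand is bounded by $Cd_\ell^{2j+\a}\lambda_\ell^{-2}$: in each product a factor of $e_\ell$ (or one of its derivatives) contributes at least an extra $\lambda_\ell^{-2}$ beyond the baseline $d_\ell^{2j+\a}$. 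This establishes \eqref{plan-singleeq} and hence both \eqref{r1} and \eqref{r2}. The main technical obstacle is the careful bookkeeping of the powers of $d_\ell$ and $\lambda_\ell$ through the Leibniz expansion and the interpolation, but the uniform decay $\|e_\ell\|_\infty=O(\lambda_\ell^{-2})$, which ultimately comes from $d_\ell^2\lambda_\ell^{-2}|\tilde t|=O(\lambda_\ell^{-2})$ on $\tilde Q_{Rd_\ell^{-1}}$, always supplies the missing gain.
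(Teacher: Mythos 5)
Your proof is correct and follows essentially the same route as the paper: write $\underline{\tilde\chi_\ell}$ as the weight $e^{d_\ell^2\lambda_\ell^{-2}\tilde t}$ times the object of interest, expand $\mathfrak{D}^{2j}$ of the product by Leibniz, bound the exponential's derivatives and H\"older quotients (gaining the $\lambda_\ell^{-2}$ factor from $d_\ell^2\lambda_\ell^{-2}|\tilde t|=O(\lambda_\ell^{-2})$ on $\tilde Q_{Rd_\ell^{-1}}$), and use Proposition~\ref{prop:interpolation} on the $G_\ell$ factor to fill in the intermediate $L^\infty$ and H\"older bounds. Your normalization via $e_\ell:=e^{d_\ell^2\lambda_\ell^{-2}\tilde t}-1$, which reduces the two differences of seminorms to a single estimate $[\mathfrak{D}^{2j}(e_\ell G_\ell)]_{\alpha,\alpha/2}=o(d_\ell^{2j+\alpha})$ via the seminorm triangle inequality, is a slightly cleaner way to organize the same computation.

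One small point on presentation. The paper explicitly restricts the $\partial_{\tilde t}$ estimate \eqref{r1} to $j\geq 1$, since the $j=0$ case is never used there (the stronger statements \eqref{wulpo} and \eqref{wulpo2} cover it separately). Your remark about additionally invoking Lemma~\ref{lma:earlier work}~(v)--(vi) when $j=0$ is exactly the content of the paper's proof of \eqref{wulpo}, so this is a correct completion rather than a gap — but you should be aware that in the paper's actual usage this case is never needed.
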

\begin{proof}[Proof of Claim \ref{sub-claim-phi-to-varphi}]
We first treat $\partial_{\tilde t}\underline{\tilde\chi_\ell}$, and since this will only be used when $j\geq 1$, we only prove it here in this case (but see \eqref{wulpo} and \eqref{wulpo2} below for a stronger statement when $j=0$). Observe first that for any $r\in\mathbb{N}$ we have
\begin{equation}
\left\|\de_{\ti{t}}^r \left(e^{d_\ell^2\lambda_\ell^{-2}\tilde t}\right)\right\|_{\infty,\tilde Q_{Rd_\ell^{-1}}}=(1+o(1))(d_\ell\lambda_\ell^{-1})^{2r}.
\end{equation}
Using this, we bound
\begin{equation}
\begin{split}
&\left|[\mathfrak{D}^{2j} \partial_{\tilde t}\underline{\tilde \chi_\ell}]_{\a,\a/2,\tilde Q_{Rd_\ell^{-1}},\tilde g_\ell(0)}-
[\mathfrak{D}^{2j}  \left( \partial_{\tilde t}\underline{\tilde\varphi_\ell}+d_\ell^2\lambda_\ell^{-2}\underline{\tilde\varphi_\ell}  \right) ]_{\a,\a/2,\tilde Q_{Rd_\ell^{-1}},\tilde g_\ell(0)}\right|\\
&\leq \left|e^{d_\ell^2\lambda_\ell^{-2}\tilde t}[\mathfrak{D}^{2j}  \left( \partial_{\tilde t}\underline{\tilde\varphi_\ell}+d_\ell^2\lambda_\ell^{-2}\underline{\tilde\varphi_\ell}  \right) ]_{\a,\a/2,\tilde Q_{Rd_\ell^{-1}},\tilde g_\ell(0)}-
[\mathfrak{D}^{2j}  \left( \partial_{\tilde t}\underline{\tilde\varphi_\ell}+d_\ell^2\lambda_\ell^{-2}\underline{\tilde\varphi_\ell}  \right) ]_{\a,\a/2,\tilde Q_{Rd_\ell^{-1}},\tilde g_\ell(0)}\right|\\
&+\sum_{p+2q=2j}\sum_{r=1}^q \left\|\de_{\ti{t}}^r \left(e^{d_\ell^2\lambda_\ell^{-2}\tilde t}\right)\right\|_{\infty,\tilde Q_{Rd_\ell^{-1}}}  [\D^p\partial_{\tilde t}^{q-r}  \left( \partial_{\tilde t}\underline{\tilde\varphi_\ell}+d_\ell^2\lambda_\ell^{-2}\underline{\tilde\varphi_\ell}  \right) ]_{\a,\a/2,\tilde Q_{Rd_\ell^{-1}},\tilde g_\ell(0)}\\
&+ \sum_{p+2q=2j}\sum_{r=0}^q \left[\de_{\ti{t}}^r \left(e^{d_\ell^2\lambda_\ell^{-2}\tilde t}\right)\right]_{\a,\a/2,\tilde Q_{Rd_\ell^{-1}},\tilde g_\ell(0)}  \|\D^p\partial_{\tilde t}^{q-r}  \left( \partial_{\tilde t}\underline{\tilde\varphi_\ell}+d_\ell^2\lambda_\ell^{-2}\underline{\tilde\varphi_\ell}  \right) \|_{\infty,\tilde Q_{Rd_\ell^{-1}},\tilde g_\ell(0)}\\
&\leq o(1)\left|[\mathfrak{D}^{2j}  \left( \partial_{\tilde t}\underline{\tilde\varphi_\ell}+d_\ell^2\lambda_\ell^{-2}\underline{\tilde\varphi_\ell}  \right) ]_{\a,\a/2,\tilde Q_{Rd_\ell^{-1}},\tilde g_\ell(0)}\right|\\
&+C\sum_{r=1}^j (d_\ell\lambda_\ell^{-1})^{2r} [\mathfrak{D}^{2j-2r}  \left( \partial_{\tilde t}\underline{\tilde\varphi_\ell}+d_\ell^2\lambda_\ell^{-2}\underline{\tilde\varphi_\ell}  \right) ]_{\a,\a/2,\tilde Q_{Rd_\ell^{-1}},\tilde g_\ell(0)}\\
&+ C\sum_{r=0}^j (d_\ell\lambda_\ell^{-1} )^{2r+\a} \|\mathfrak{D}^{2j-2r}  \left( \partial_{\tilde t}\underline{\tilde\varphi_\ell}+d_\ell^2\lambda_\ell^{-2}\underline{\tilde\varphi_\ell}  \right) \|_{\infty,\tilde Q_{Rd_\ell^{-1}},\tilde g_\ell(0)}.
\end{split}
\end{equation}
Thanks to \eqref{a-priori Control-2}, the third line from the bottom is $o(d_\ell^{2j+\a})$. As for the last two lines, we interpolate between \eqref{a-priori Control-2} and the estimate
 $\|  \partial_{\tilde t}\underline{\tilde\varphi_\ell}+d_\ell^2\lambda_\ell^{-2}\underline{\tilde\varphi_\ell} \|_{\infty,\tilde Q_{Rd_\ell^{-1}}}=o(1)$ which comes from Lemma \ref{lma:earlier work} (ii), and we get for each $1\leq \iota\leq 2j$,
\begin{equation}
\begin{split}
&\quad  d_\ell^{-\iota}(R-\rho)^{\iota} \|\mathfrak{D}^{\iota}  \left( \partial_{\tilde t}\underline{\tilde\varphi_\ell}+d_\ell^2\lambda_\ell^{-2}\underline{\tilde\varphi_\ell}  \right) \|_{\infty,\tilde Q_{\rho d_\ell^{-1}},\tilde g_\ell(0)}\leq  C(R-\rho)^{2j+\a} +o(1).
 \end{split}
\end{equation}
By choosing $R-\rho\approx 1$ (which is clearly allowed) and replacing $R$ by a slightly smaller one,  we see that for all $1\leq \iota\leq 2j$,
\begin{equation}
\|\mathfrak{D}^{\iota}  \left( \partial_{\tilde t}\underline{\tilde\varphi_\ell}+d_\ell^2\lambda_\ell^{-2}\underline{\tilde\varphi_\ell}  \right) \|_{\infty,\tilde Q_{R d_\ell^{-1}},\tilde g_\ell(0)}\leq   C d_\ell^{\iota}.
\end{equation}We can then estimate
\begin{equation}\label{phi-to-varphi-mixed-term-1}
\begin{split}
&\quad \sum_{r=0}^j (d_\ell\lambda_\ell^{-1} )^{2r+\a} \|\mathfrak{D}^{2j-2r}  \left( \partial_{\tilde t}\underline{\tilde\varphi_\ell}+d_\ell^2\lambda_\ell^{-2}\underline{\tilde\varphi_\ell}  \right) \|_{\infty,\tilde Q_{Rd_\ell^{-1}},\tilde g_\ell(0)}\\
&\leq C\sum_{r=0}^j (d_\ell\lambda_\ell^{-1} )^{2r+\a}  d_\ell^{2j-2r}=C \lambda_\ell^{-\a} d_\ell^{2j+\a}=o(d_\ell^{2j+\a}),
\end{split}
\end{equation}
\begin{equation}\label{phi-to-varphi-mixed-term-2}
\begin{split}
&\quad \sum_{r=1}^j (d_\ell\lambda_\ell^{-1})^{2r} [\mathfrak{D}^{2j-2r}  \left( \partial_{\tilde t}\underline{\tilde\varphi_\ell}+d_\ell^2\lambda_\ell^{-2}\underline{\tilde\varphi_\ell}  \right) ]_{\a,\a/2,\tilde Q_{Rd_\ell^{-1}},\tilde g_\ell(0)}\\
&\leq C \sum_{r=1}^j (d_\ell\lambda_\ell^{-1})^{2r}  d_\ell^{2j-2r+\a}= C\lambda_\ell^{-2} d_\ell^{2j+\a}=o(d_\ell^{2j+\a}).
\end{split}
\end{equation}
Putting these together proves \eqref{r1}.

Next, we treat $\ddbar\underline{ \tilde\chi_\ell}$ in a similar fashion (allowing now also $j=0$),
\begin{equation}
\begin{split}
&\left|
[\DD^{2j} \ddbar \underline{\tilde\chi_\ell}]_{\a,\a/2,\tilde Q_{Rd_\ell^{-1}},\tilde g_\ell(0)}
-[\DD^{2j}\ddbar\underline{\tilde\vp_\ell}]_{\a,\a/2,\tilde Q_{Rd_\ell^{-1}},\tilde g_\ell(0)}\right|\\
&\leq \left|e^{d_\ell^2\lambda_\ell^{-2}\tilde t}[\DD^{2j} \ddbar \underline{\tilde\vp_\ell}]_{\a,\a/2,\tilde Q_{Rd_\ell^{-1}},\tilde g_\ell(0)}
-[\DD^{2j}\ddbar\underline{\tilde\vp_\ell}]_{\a,\a/2,\tilde Q_{Rd_\ell^{-1}},\tilde g_\ell(0)}\right|\\
&+\sum_{p+2q=2j}\sum_{r=1}^q  \left\|\de_{\ti{t}}^r \left(e^{d_\ell^2\lambda_\ell^{-2}\tilde t}\right)\right\|_{\infty,\tilde Q_{Rd_\ell^{-1}}}  [\D^p\partial_{\tilde t}^{q-r}  \ddbar \underline{\tilde\varphi_\ell}]_{\a,\a/2,\tilde Q_{Rd_\ell^{-1}},\tilde g_\ell(0)}\\
&+\sum_{p+2q=2j}\sum_{r=0}^q \left[\de_{\ti{t}}^r \left(e^{d_\ell^2\lambda_\ell^{-2}\tilde t}\right)\right]_{\a,\a/2,\tilde Q_{Rd_\ell^{-1}},\tilde g_\ell(0)} \|\D^p\partial_{\tilde t}^{q-r}  \ddbar \underline{\tilde\varphi_\ell}\|_{\infty,\tilde Q_{Rd_\ell^{-1}},\tilde g_\ell(0)}\\
&\leq o(1)\left|[\DD^{2j}\ddbar\underline{\tilde\vp_\ell}]_{\a,\a/2,\tilde Q_{Rd_\ell^{-1}},\tilde g_\ell(0)}\right|
+C\sum_{r=1}^j  (d_\ell\lambda_\ell^{-1})^{2r}   [\DD^{2j-2r}  \ddbar \underline{\tilde\varphi_\ell}]_{\a,\a/2,\tilde Q_{Rd_\ell^{-1}},\tilde g_\ell(0)}\\
&+C\sum_{r=0}^j (d_\ell\lambda_\ell^{-1} )^{2r+\a}  \|\DD^{2j-2r}  \ddbar \underline{\tilde\varphi_\ell}\|_{\infty,\tilde Q_{Rd_\ell^{-1}},\tilde g_\ell(0)},
\end{split}
\end{equation}
and then we continue the argument exactly as above, using the bound \eqref{a-priori Control-20} when $j=0$. This proves \eqref{r2}.
\end{proof}

With this modification in mind,  we let $\underline{ \tilde \chi_\ell^\sharp}$ be the (parabolic) jet of $\underline{ \tilde \chi_\ell}=e^{d_\ell^2\lambda_\ell^{-2}\tilde t}\underline{\tilde\varphi_\ell}$ at $(\tilde z,\tilde t)=(0,0)$  of order $2j+2$, and define $\underline{\tilde\chi_\ell^*}:=\underline{ \tilde\chi_\ell}-\underline{ \tilde\chi_\ell^\sharp}$.   Define also
\begin{equation}\label{def:eta-phi-notvarphi}
\tilde\eta_\ell^\ddagger:=e^{-d_\ell^2\lambda_\ell^{-2}\tilde t}\ddbar \underline{\tilde\chi_\ell^\sharp},\quad\quad \tilde\eta_\ell^\diamond:=e^{-d_\ell^2\lambda_\ell^{-2}\tilde t}\ddbar \underline{  \tilde\chi_\ell^*},
\end{equation}
so that by definition we have
\begin{equation}
\ddbar\underline{\ti{\vp}_\ell}=\tilde\eta_\ell^\ddagger+\tilde\eta_\ell^\diamond.
\end{equation}
Recall that for $1\leq i\leq j$ we had defined
\begin{equation}
\tilde \gamma_{\ell,i,k}=\sum_{p=1}^{N_{i,k}} \ddbar \tilde{\mathfrak{G}}_{\ti{t},k}(\tilde A_{\ell,i,p,k},\tilde G_{\ell,i,p,k} ),
\end{equation}
(see also \eqref{Gtilde} below for an explicit formula).
We shall further denote
\begin{equation}
\tilde\eta_\ell^\circ :=\sum_{i=1}^j \sum_{p=1}^{N_{i,k}}\ddbar \tilde{\mathfrak{G}}_{\ti{t},k}(\tilde A_{\ell,i,p,k}^*,\tilde G_{\ell,i,p,k}),\quad \tilde\eta_\ell^\dagger :=\sum_{i=1}^j \sum_{p=1}^{N_{i,k}}\ddbar \tilde{\mathfrak{G}}_{\ti{t},k}(\tilde A_{\ell,i,p,k}^\sharp,\tilde G_{\ell,i,p,k}),
\end{equation}
so that
\begin{equation}\label{decomposition-tilde-KRF}
\begin{split}
\tilde\omega_\ell^\bullet&=\tilde \omega_\ell^\natural +\ddbar \underline{ \tilde\varphi_{\ell}}+\sum_{i=1}^j\sum_{p=1}^{N_{i,k}}\ddbar   \tilde{\mathfrak{G}}_{\ti{t},k}(\tilde A_{\ell,i,p,k},\tilde G_{\ell,i,p,k})+\ddbar \tilde\psi_{\ell,j,k}\\
&=\tilde \omega_\ell^\natural +\tilde\eta_\ell^\dagger+\tilde\eta_\ell^\ddagger+\tilde\eta_\ell^\circ+\tilde\eta_\ell^\diamond+\tilde\eta_{\ell,j,k},
\end{split}
\end{equation}
and we will write $\tilde\omega_\ell^\sharp:=\tilde \omega_\ell^\natural +\tilde\eta_\ell^\dagger+\tilde\eta_\ell^\ddagger$.

Thanks to these jet subtractions, and to Claim \ref{sub-claim-phi-to-varphi}, from \eqref{a-priori Control-20}, \eqref{a-priori Control-2}, \eqref{sup-realized-2} and \eqref{beaner} we obtain
\begin{equation}\label{a-priori Control-30}
[\ddbar\underline{\tilde\chi_\ell^*}]_{\a,\a/2,\tilde Q_{Rd_\ell^{-1}},\tilde g_\ell(0)}+[\ddbar\tilde \psi_{\ell,0,k} ]_{\a,\a/2,\tilde Q_{Rd_\ell^{-1}},\tilde g_\ell(0)} \leq C d_\ell^{\a},
\end{equation}
when $j=0$, and
\begin{equation}\label{a-priori Control-3}
\begin{split}
&\quad [\mathfrak{D}^{2j} \ddbar\underline{\tilde\chi_\ell^*}]_{\a,\a/2,\tilde Q_{Rd_\ell^{-1}},\tilde g_\ell(0)}+ [\mathfrak{D}^{2j} \partial_{\tilde t}\underline{\tilde\chi_\ell^*}]_{\a,\a/2,\tilde Q_{Rd_\ell^{-1}},\tilde g_\ell(0)}\\
& + [\mathfrak{D}^{2j+2}  \tilde \psi_{\ell,j,k} ]_{\a,\a/2,\tilde Q_{Rd_\ell^{-1}},\tilde g_\ell(0)} +\sum_{i=1}^j \sum_{p=1}^{N_{i,k}}\sum_{\iota = -2}^{2k} \e_\ell^\iota\left[\mathfrak{D}^{2j+2+\iota} \tilde A^*_{\ell,i,p,k}\right]_{\a,\a/2,\tilde Q_{Rd_\ell^{-1}},\tilde g_\ell( 0)}\leq C d_\ell^{2j+\a},
\end{split}
\end{equation}
when $j\geq 1$, as well as
\begin{equation}\label{sup-realized-3}
\begin{split}
& (1+o(1))=d_\ell^{-2j-\a}|\mathfrak{D}^{2j} \ddbar \underline{\tilde\chi_\ell^*}(\tilde x_\ell,0)- \P_{\tilde x'_\ell\tilde x_\ell}\mathfrak{D}^{2j} \ddbar \underline{\tilde\chi_\ell^*}(\tilde x'_\ell,\tilde t_\ell')|_{\tilde g_\ell(0)}\\
&+d_\ell^{-2j-\a}|\mathfrak{D}^{2j}  \partial_{\tilde t} \underline{\tilde\chi_\ell^*}(\tilde x_\ell,0)- \P_{\tilde x'_\ell\tilde x_\ell}\mathfrak{D}^{2j} \partial_{\tilde t} \underline{\tilde\chi_\ell^*}(\tilde x'_\ell,\tilde t_\ell')|_{\tilde g_\ell(0)}+d_\ell^{-2j-\a}|\mathfrak{D}^{2j+2}  \tilde \psi_{\ell,j,k}(\tilde x_\ell,0)- \P_{\tilde x'_\ell\tilde x_\ell}\mathfrak{D}^{2j+2} \tilde\psi_{\ell,j,k}(\tilde x'_\ell,\tilde t_\ell')|_{\tilde g_\ell(0)}\\
& +d_\ell^{-2j-\a}\sum_{i=1}^j \sum_{p=1}^{N_{i,k}}\sum_{\iota = -2}^{2k} \e_\ell^\iota\Bigg({|\mathfrak{D}^{2j+2+\iota} \tilde A^*_{\ell,i,p,k}(\tilde x_\ell,0)- \P_{\tilde x_\ell'\tilde x_\ell}(\mathfrak{D}^{2j+2+\iota}\tilde A^*_{\ell,i,p,k}(\tilde x_\ell',\tilde t_\ell'))|_{\tilde g_\ell(0)}}\Bigg),
\end{split}
\end{equation}
while on the other hand \eqref{sup-realized-20} remains the same for $j=0$.

\subsection{Estimates on each component}
Our next task is then to obtain precise estimates on all the pieces of the decomposition \eqref{decomposition-tilde-KRF}, which will allow us to later expand and linearize the Monge-Amp\`ere equation \eqref{PMA-tilde}. In the following, the radii $R$ and $S$ will be any fixed radii, unless otherwise specified. Some of the estimates are analogous to those in \cite[\S 4.9]{HT3}, replacing balls $\ti{B}_r$ by parabolic cylinders $\ti{Q}_r=B_{\mathbb{C}^m}(0,r)\times Y\times [-r^2,0]$. We follow closely the arguments there.

\subsubsection{Estimates for $\tilde\psi_{\ell,j,k}$}
First, we assume that $j=0$. Then from the H\"older seminorm bound for $\ddbar\hat{\psi}_{\ell,0,k}$ in \eqref{a-priori Control00}, together with Proposition \ref{prop:braindead}, we obtain
\begin{equation}\label{jorza}
\| \ddbar\hat\psi_{\ell,0,k} \|_{\infty,\hat{Q}_{R},\hat g_\ell(0)}\leq C \delta_\ell^{\a},
\end{equation}
and since $\hat{\psi}_{\ell,0,k}$ has fiberwise average zero, we can apply fiberwise Moser iteration to this, and get
\begin{equation}\label{jarza}
\|\hat\psi_{\ell,0,k} \|_{\infty,\hat{Q}_{R},\hat g_\ell(0)}\leq C \delta_\ell^{2+\a}.
\end{equation}
and using the bounds $\|\partial_{t}^2\vp+\partial_{t}\vp\|_{\infty, B\times Y\times[t-1,t]}\leq C$ and $\|\D(\partial_{t}\vp+\vp)\|_{\infty,  B\times Y\times[t-1,t],g(t)}\leq C$ from Lemma \ref{lma:earlier work} (v), (vi), we can bound for any $x,x'\in B_{R\lambda_\ell^{-1}}\times Y$ and $t,s\in (t_\ell-\lambda_\ell^{-2}R^2,t_\ell]$,
\begin{equation}
|(\partial_{t}\vp+\vp)(x,t)-(\partial_{t}\vp+\vp)(x',s)|\leq C(d^{g(t)}(x,x')+|t-s|)\leq o(1) (d^{g(t)}(x,x')+|t-s|^{\frac{1}{2}})^\a,
\end{equation}
which gives
\begin{equation}\label{wompo}
[\de_{\hat{t}}\hat{\vp}_\ell+\lambda_\ell^{-2}\hat{\vp}_\ell]_{\a,\a/2,\hat{Q}_R,\hat{g}_\ell(0)}\leq \lambda_\ell^{-\alpha}[\partial_{t}\vp+\vp]_{\a,\a/2,B_{R \lambda_\ell^{-1}}\times Y\times(t_\ell-\lambda_\ell^{-2}R^2,t_\ell],g(t_\ell)}=o(1),
\end{equation}
Repeating the argument with $\underline{\vp}$ gives
\begin{equation}\label{wulpo}
[\de_{\hat{t}}\underline{\hat{\vp}_\ell}+\lambda_\ell^{-2}\underline{\hat{\vp}_\ell}]_{\a,\a/2,\hat{Q}_R,\hat{g}_\ell(0)}=o(1),
\end{equation}
and combining these we see that
\begin{equation}
[\de_{\hat{t}}\hat{\psi}_{\ell,0,k}+\lambda_\ell^{-2}\hat{\psi}_{\ell,0,k}]_{\a,\a/2,\hat{Q}_R,\hat{g}_\ell(0)}=o(1),
\end{equation}
so from this, the bound $[\ddbar\hat{\psi}_{\ell,0,k}]_{\a,\a/2,\hat{Q}_R,\hat{g}_\ell(0)}\leq C$ from \eqref{a-priori Control00}, and the bounds \eqref{jorza} and \eqref{sharp-metric-decay-0} below (on $\hat{\omega}^\sharp_\ell$), we see that
\begin{equation}\label{ljk}
\left[\left(\de_{\hat{t}}-\Delta_{\hat{\omega}^\sharp_\ell}\right)\hat{\psi}_{\ell,0,k}\right]_{\a,\a/2,\hat{Q}_R,\hat{g}_\ell(0)}\leq C\lambda_\ell^{-2}[\hat{\psi}_{\ell,0,k}]_{\a,\a/2,\hat{Q}_R,\hat{g}_\ell(0)}+C.
\end{equation}
We wish to use the Schauder estimates in Proposition \ref{prop:schauder}, and for this we need to pass to the check picture via the diffeomorphism $\Xi_\ell$ in \eqref{xi},
pulling back all geometric quantities and scaling $2$-forms (as well as $\hat{\psi}_{\ell,0,k}$)
by $\delta_\ell^{-2}$. We can then apply Proposition \ref{prop:schauder} to $\check\psi_{\ell,0,k}$ and then transfer the result back to the hat picture. This shows that for every $\rho<R$ (where $R$ is fixed) we have
\begin{equation}\label{kjl}
[\DD^2\hat{\psi}_{\ell,0,k}]_{\a,\a/2,\hat{Q}_\rho,\hat{g}_\ell(0)}\leq C\left[\left(\de_{\hat{t}}-\Delta_{\hat{\omega}^\sharp_\ell}\right)\hat{\psi}_{\ell,0,k}\right]_{\a,\a/2,\hat{Q}_{\frac{R+\rho}{2}},\hat{g}_\ell(0)}
+C(R-\rho)^{-2-\a}\|\hat{\psi}_{\ell,0,k}\|_{\infty, \hat{Q}_R},
\end{equation}
and employing the interpolation inequality in \eqref{interpolation-Holder}, and \eqref{jarza}, \eqref{ljk} we can bound the RHS of \eqref{kjl} by
\begin{equation}\label{jlk}\begin{split}
&C\lambda_\ell^{-2}[\hat{\psi}_{\ell,0,k}]_{\a,\a/2,\hat{Q}_{\frac{R+\rho}{2}},\hat{g}_\ell(0)}+C+C(R-\rho)^{-2-\a}\delta_\ell^{2+\alpha}\\
&\leq C\lambda_\ell^{-2}(R-\rho)^{2}[\DD^2\hat{\psi}_{\ell,0,k}]_{\a,\a/2,\hat{Q}_R,\hat{g}_\ell(0)}+C\lambda_\ell^{-2}(R-\rho)^{-\a}\|\hat{\psi}_{\ell,0,k}\|_{\infty, \hat{Q}_R}\\
&+C+C(R-\rho)^{-2-\a}\delta_\ell^{2+\alpha}\\
&\leq\frac{1}{2}[\DD^2\hat{\psi}_{\ell,0,k}]_{\a,\a/2,\hat{Q}_R,\hat{g}_\ell(0)}+C+C\lambda_\ell^{-2}(R-\rho)^{-\a}\delta_\ell^{2+\alpha}+C(R-\rho)^{-2-\a}\delta_\ell^{2+\alpha},
\end{split}
\end{equation}
and after combining \eqref{kjl} and \eqref{jlk}, we can apply the iteration lemma in \cite[Lemma 2.9]{HT3} and deduce that for every $\rho<R$ we have
\begin{equation}
[\DD^2\hat{\psi}_{\ell,0,k}]_{\a,\a/2,\hat{Q}_\rho,\hat{g}_\ell(0)}\leq C+C\lambda_\ell^{-2}(R-\rho)^{-\a}\delta_\ell^{2+\alpha}+C(R-\rho)^{-2-\a}\delta_\ell^{2+\alpha},
\end{equation}
so in particular for any fixed $R$ we deduce that
\begin{equation}
[\DD^2\hat{\psi}_{\ell,0,k}]_{\a,\a/2,\hat{Q}_R,\hat{g}_\ell(0)}\leq C,
\end{equation}
and finally applying Proposition \ref{prop:braindead}
and translating to the tilde picture, we get
\begin{equation}\label{estimate-psi-tilde-infty-10}
d_\ell^{-\a}\| \mathfrak{D}^{\iota}\tilde \psi_{\ell,j,k} \|_{\infty,\tilde Q_{Rd_\ell^{-1}},\tilde g_\ell(0)}\leq C \e_\ell^{2+\a-\iota},\quad
d_\ell^{-\a}[ \mathfrak{D}^{\iota}\tilde \psi_{\ell,j,k} ]_{\a,\a/2,\tilde Q_{Rd_\ell^{-1}},\tilde g_\ell(0)}\leq C \e_\ell^{2-\iota},
\end{equation}
for any $0\leq \iota\leq 2$.

Next, when $j\geq 1$, from the H\"older seminorm bound for $\DD^{2j+2}\hat{\psi}_{\ell, j,k}$ in \eqref{a-priori Control}, together with Proposition \ref{prop:braindead}, we obtain bounds for the lower-order derivatives of $\hat{\psi}_{\ell,j,k}$, which in the tilde picture translate to
\begin{equation}\label{estimate-psi-tilde-infty-1}
d_\ell^{-2j-\a}\| \mathfrak{D}^{\iota}\tilde \psi_{\ell,j,k} \|_{\infty,\tilde Q_{Rd_\ell^{-1}},\tilde g_\ell(0)}\leq C \e_\ell^{2j+2+\a-\iota},\quad
d_\ell^{-2j-\a}[ \mathfrak{D}^{\iota}\tilde \psi_{\ell,j,k} ]_{\a,\a/2,\tilde Q_{Rd_\ell^{-1}},\tilde g_\ell(0)}\leq C \e_\ell^{2j+2-\iota},
\end{equation}
for any $0\leq \iota\leq 2j+2$. Observe that these are formally the same as \eqref{estimate-psi-tilde-infty-10} when $j=0$.

\subsubsection{Estimates for $\underline{\tilde\chi_\ell}$}
By Lemma \ref{lma:earlier work} (ii),(iii),
\begin{equation}
\left\{
\begin{array}{ll}\label{rr}
\|\ddbar \underline{\tilde\chi_\ell} \|_{\infty, \tilde Q_{Rd_\ell^{-1}},\tilde g_\ell(0)}= \|e^{d_\ell^2\lambda_\ell^{-2}\tilde t}\ddbar \underline{\tilde\varphi_\ell} \|_{\infty, \tilde Q_{Rd_\ell^{-1}},\tilde g_\ell(0)}= o(1),\\[2mm]
\|\partial_{\tilde t}\underline{\tilde\chi_\ell} \|_{\infty, \tilde Q_{Rd_\ell^{-1}},\tilde g_\ell(0)}= \|e^{d_\ell^2\lambda_\ell^{-2}\tilde t} (\partial_{\tilde t} \underline{\tilde\varphi_\ell}+d_\ell^2\lambda_\ell^{-2} \underline{\tilde\varphi_\ell}) \|_{\infty, \tilde Q_{Rd_\ell^{-1}},\tilde g_\ell(0)}= o(1).
\end{array}
\right.
\end{equation}
When $j=0$ we have
\begin{equation}\label{wulpo2}\begin{split}
d_\ell^{-\a}[\partial_{\tilde t}\underline{\tilde\chi_\ell}]_{\a,\a/2,\tilde Q_{Rd_\ell^{-1}},\tilde g_\ell(0)}&=d_\ell^{-\a}\left[e^{d_\ell^2\lambda_\ell^{-2}\tilde t} (\partial_{\tilde t} \underline{\tilde\varphi_\ell}+d_\ell^2\lambda_\ell^{-2} \underline{\tilde\varphi_\ell})\right]_{\a,\a/2,\tilde Q_{Rd_\ell^{-1}},\tilde g_\ell(0)}\\
&\leq d_\ell^{-\a}\left[e^{d_\ell^2\lambda_\ell^{-2}\tilde t} \right]_{\a,\a/2,\tilde Q_{Rd_\ell^{-1}},\tilde g_\ell(0)}\|\partial_{\tilde t} \underline{\tilde\varphi_\ell}+d_\ell^2\lambda_\ell^{-2} \underline{\tilde\varphi_\ell}\|_{\infty, \tilde Q_{Rd_\ell^{-1}},\tilde g_\ell(0)}\\
&+d_\ell^{-\a}\left\|e^{d_\ell^2\lambda_\ell^{-2}\tilde t} \right\|_{\infty,\tilde Q_{Rd_\ell^{-1}},\tilde g_\ell(0)}[\partial_{\tilde t} \underline{\tilde\varphi_\ell}+d_\ell^2\lambda_\ell^{-2} \underline{\tilde\varphi_\ell}]_{\a,\a/2, \tilde Q_{Rd_\ell^{-1}},\tilde g_\ell(0)}\\
&\leq o(1) Cd_\ell^{-\a}(d_\ell \lambda_\ell^{-1})^\a+o(1)=o(1),
\end{split}\end{equation}
where we used $d_\ell^{-\a}[\partial_{\tilde t} \underline{\tilde\varphi_\ell}+d_\ell^2\lambda_\ell^{-2} \underline{\tilde\varphi_\ell}]_{\a,\a/2, \tilde Q_{Rd_\ell^{-1}},\tilde g_\ell(0)}=o(1)$, which follows from \eqref{wulpo}. Similarly,
\begin{equation}\begin{split}
d_\ell^{-\a}[\ddbar\underline{\tilde\chi_\ell}]_{\a,\a/2,\tilde Q_{Rd_\ell^{-1}},\tilde g_\ell(0)}&=d_\ell^{-\a}\left[e^{d_\ell^2\lambda_\ell^{-2}\tilde t} \ddbar \underline{\tilde\varphi_\ell}\right]_{\a,\a/2,\tilde Q_{Rd_\ell^{-1}},\tilde g_\ell(0)}\\
&\leq d_\ell^{-\a}\left[e^{d_\ell^2\lambda_\ell^{-2}\tilde t} \right]_{\a,\a/2,\tilde Q_{Rd_\ell^{-1}},\tilde g_\ell(0)}\|\ddbar \underline{\tilde\varphi_\ell}\|_{\infty, \tilde Q_{Rd_\ell^{-1}},\tilde g_\ell(0)}\\
&+d_\ell^{-\a}\left\|e^{d_\ell^2\lambda_\ell^{-2}\tilde t} \right\|_{\infty,\tilde Q_{Rd_\ell^{-1}},\tilde g_\ell(0)}[\ddbar \underline{\tilde\varphi_\ell}]_{\a,\a/2, \tilde Q_{Rd_\ell^{-1}},\tilde g_\ell(0)}\\
&\leq o(1) Cd_\ell^{-\a}(d_\ell \lambda_\ell^{-1})^\a+C\leq C,
\end{split}\end{equation}
where we used $d_\ell^{-\a}[\ddbar\underline{\ti{\vp}_\ell}]_{\a,\a/2, \tilde Q_{Rd_\ell^{-1}},\tilde g_\ell(0)}\leq C$, which follows from \eqref{a-priori Control-20}. Thus, when $j=0$, we  have
\begin{equation}\label{rrr0}
 [\ddbar \underline{\tilde\chi_\ell}]_{\a,\a/2,\tilde Q_{Rd_\ell^{-1}},\tilde g_\ell(0)}\leq Cd_\ell^{\a},\quad [\partial_{\tilde t} \underline{\tilde\chi_\ell}]_{\a,\a/2,\tilde Q_{Rd_\ell^{-1}},\tilde g_\ell(0)}=o(d_\ell^{\a}).
\end{equation}
On the other hand, when $j\geq 1$, from \eqref{a-priori Control-2}, \eqref{r1} and \eqref{r2} we obtain the analogous seminorm bounds
\begin{equation}\label{rrr}
 [\mathfrak{D}^{2j} \ddbar \underline{\tilde\chi_\ell}]_{\a,\a/2,\tilde Q_{Rd_\ell^{-1}},\tilde g_\ell(0)}\leq Cd_\ell^{2j+\a},\quad [\mathfrak{D}^{2j} \partial_{\tilde t} \underline{\tilde\chi_\ell}]_{\a,\a/2,\tilde Q_{Rd_\ell^{-1}},\tilde g_\ell(0)}\leq Cd_\ell^{2j+\a},
\end{equation}
and we can interpolate between these and \eqref{rr} to conclude  that
\begin{equation}\label{rough-bdd-phi}
\left\{
\begin{array}{ll}
d_\ell^{-\iota} \|\mathfrak{D}^\iota \ddbar \underline{\tilde\chi_\ell}\|_{\infty, \tilde Q_{Rd_\ell^{-1}},\tilde g_\ell(0)}=o(1),\quad 0\leq \iota\leq 2j,\\[2mm]
d_\ell^{-\iota} \|\mathfrak{D}^\iota \partial_{\tilde t} \underline{\tilde\chi_\ell}\|_{\infty, \tilde Q_{Rd_\ell^{-1}},\tilde g_\ell(0)}=o(1),\quad 0\leq \iota\leq 2j,\\[2mm]
d_\ell^{-\iota-\a} [\mathfrak{D}^\iota \ddbar \underline{\tilde\chi_\ell}]_{\a,\a/2, \tilde Q_{Rd_\ell^{-1}},\tilde g_\ell(0)}=o(1),\quad  0\leq \iota<2j, \\[2mm]
d_\ell^{-\iota-\a} [\mathfrak{D}^\iota \partial_{\tilde t} \underline{\tilde\chi_\ell}]_{\a,\a/2, \tilde Q_{Rd_\ell^{-1}},\tilde g_\ell(0)}=o(1),\quad 0\leq \iota<2j,
\end{array}
\right.
\end{equation}
We now treat $\tilde\chi_\ell^*$. From \eqref{rrr0}, \eqref{rrr} and the definition of $\tilde{\chi}_\ell^*$ we see that for all $j\geq 0$,
\begin{equation}\label{bakry}
 [\mathfrak{D}^{2j} \ddbar \underline{\tilde\chi_\ell^*}]_{\a,\a/2,\tilde Q_{Rd_\ell^{-1}},\tilde g_\ell(0)}\leq Cd_\ell^{2j+\a},\quad
[\mathfrak{D}^{2j} \partial_{\tilde t} \underline{\tilde\chi_\ell^*}]_{\a,\a/2,\tilde Q_{Rd_\ell^{-1}},\tilde g_\ell(0)}\leq Cd_\ell^{2j+\a}.
\end{equation}
Using these and Lemma \ref{lma:jet-interpolation}, we get
\begin{equation}\label{bdd-phi-star-underline}
\left\{
\begin{array}{ll}
d_\ell^{-2j-\a} \|\mathfrak{D}^\iota \ddbar \underline{\tilde\chi_\ell^*}\|_{\infty, \tilde Q_{S},\tilde g_\ell(0)}\leq CS^{2j+\a-\iota},\\[2mm]
d_\ell^{-2j-\a} \|\mathfrak{D}^\iota \partial_{\tilde t} \underline{\tilde\chi_\ell^*}\|_{\infty, \tilde Q_{S},\tilde g_\ell(0)}\leq CS^{2j+\a-\iota} ,\\[2mm]
d_\ell^{-2j-\a} [\mathfrak{D}^\iota \ddbar \underline{\tilde\chi_\ell^*}]_{\a,\a/2, \tilde Q_{S},\tilde g_\ell(0)} \leq CS^{2j-\iota},\\ [2mm]
d_\ell^{-2j-\a} [\mathfrak{D}^\iota \partial_{\tilde t} \underline{\tilde\chi_\ell^*}]_{\a,\a/2, \tilde Q_{S},\tilde g_\ell(0)} \leq CS^{2j-\iota},
\end{array}
\right.
\end{equation}
for all $0\leq \iota\leq 2j$ and $0<S\leq Rd_\ell^{-1}$.

We will also need a bound for the $L^\infty$ norm of derivatives of $d_\ell^{-2j-\a}\underline{\tilde\chi_\ell^*}$ of order up to $2j+2$, which in general may blow up, but which will nevertheless be useful later. To start, from Lemma \ref{lma:earlier work} (ii) we have $\|\underline{\hat{\chi}_\ell}\|_{\infty, \hat{Q}_R}=o(1)\lambda_\ell^2,$ while from \eqref{bakry} we see in particular that
\begin{equation}
\left[\DD^{2j}\left(\de_{\hat{t}}-\Delta_{\omega_{\C^m}}\right)\underline{\hat{\chi}_\ell}\right]_{\a,\a/2,\hat{Q}_R,g_{\C^m}}\leq C.
\end{equation}
Standard Euclidean Schauder estimates then imply that
\begin{equation}
\left[\DD^{2j+2}\underline{\hat{\chi}_\ell}\right]_{\a,\a/2,\hat{Q}_R,g_{\C^m}}\leq C+o(1)\lambda_\ell^2,
\end{equation}
and since we may assume without loss that $o(1)\lambda_\ell^2\geq C$, passing to the tilde picture we get
\begin{equation}
d_\ell^{-2j-\a}\left[\DD^{2j+2}\underline{\ti{\chi}^*_\ell}\right]_{\a,\a/2,\ti{Q}_{Rd_\ell^{-1}},g_{\C^m}}
=d_\ell^{-2j-\a}\left[\DD^{2j+2}\underline{\ti{\chi}_\ell}\right]_{\a,\a/2,\ti{Q}_{Rd_\ell^{-1}},g_{\C^m}}\leq o(1)\lambda_\ell^2,
\end{equation}
and we can then apply Lemma \ref{lma:jet-interpolation} to $\ti{Q}_R$ and get
\begin{equation}\label{ainf}
d_\ell^{-2j-\a}\|\DD^\iota\underline{\ti{\chi}^*_\ell}\|_{\infty,\ti{Q}_R,g_{\C^m}}= o(1)\lambda_\ell^2,
\end{equation}
for $0\leq \iota\leq 2j+2$.

On the other hand, since $\underline{\ti{\chi}^\sharp_\ell}$ is a jet of $\underline{\ti{\chi}_\ell}$, it inherits from \eqref{rr} and \eqref{rough-bdd-phi} the bounds
\begin{equation}\label{bdd-phi-sharp-underline}
\left\{
\begin{array}{ll}
d_\ell^{-\iota} \|\mathfrak{D}^\iota \ddbar \underline{\tilde\chi_\ell^\sharp}\|_{\infty, \tilde Q_{Rd_\ell^{-1}},\tilde g_\ell(0)}=o(1) ,\\ [2mm]
d_\ell^{-\iota} \|\mathfrak{D}^\iota \partial_{\tilde t}  \underline{\tilde\chi_\ell^\sharp}\|_{\infty, \tilde Q_{Rd_\ell^{-1}},\tilde g_\ell(0)}=o(1)  ,\\[2mm]
d_\ell^{-\iota-\a} [\mathfrak{D}^\iota \ddbar  \underline{\tilde\chi_\ell^\sharp}]_{\a,\a/2, \tilde Q_{Rd_\ell^{-1}},\tilde g_\ell(0)} =o(1) ,\\[2mm]
d_\ell^{-\iota-\a} [\mathfrak{D}^\iota \partial_{\tilde t}  \underline{\tilde\chi_\ell^\sharp}]_{\a,\a/2, \tilde Q_{Rd_\ell^{-1}},\tilde g_\ell(0)} =o(1),
\end{array}
\right.
\end{equation}
for all $0\leq \iota\leq 2j$ and $R$ fixed. Moreover, since $\underline{\tilde\chi_\ell^\sharp}$ is a polynomial of degree at most $2j+2$, it vanishes when differentiated more than $2j+2$ times.

Furthermore, recalling the definitions of $\ti{\eta}^\ddagger_\ell$ and $\ti{\eta}^\diamond_\ell$ in \eqref{def:eta-phi-notvarphi}, from \eqref{bdd-phi-star-underline} and \eqref{bdd-phi-sharp-underline} we quickly deduce
\begin{equation}\label{gorgo}
d_\ell^{-2j-\a}\|\mathfrak{D}^\iota\ti{\eta}^\diamond_\ell\|_{\infty, \tilde Q_{S},\tilde g_\ell(0)}\leq CS^{2j+\a-\iota},\quad
d_\ell^{-2j-\a}[\mathfrak{D}^\iota\ti{\eta}^\diamond_\ell]_{\a,\a/2, \tilde Q_{S},\tilde g_\ell(0)}\leq CS^{2j-\iota},
\end{equation}
for $0\leq\iota\leq 2j, 0<S\leq Rd_\ell^{-1}$, and
\begin{equation}\label{gorgol}
d_\ell^{-\iota}\|\mathfrak{D}^\iota\ti{\eta}^\ddagger_\ell\|_{\infty, \tilde Q_{Rd_\ell^{-1}},\tilde g_\ell(0)}=o(1),\quad
d_\ell^{-\iota-\a}[\mathfrak{D}^\iota\ti{\eta}^\ddagger_\ell]_{\a,\a/2, \tilde Q_{Rd_\ell^{-1}},\tilde g_\ell(0)}=o(1),
\end{equation}
also for $0\leq\iota\leq 2j,$ $R$ fixed (and derivatives of $\ti{\eta}^\ddagger_\ell$ of order higher than $2j$ vanish, of course).

\subsubsection{Estimates for $\tilde A_{\ell,i,p,k}^*$}

From \eqref{a-priori Control-3}, we have
\begin{equation}
[\mathfrak{D}^{2j} \hat A^*_{\ell,i,p,k}]_{\a,\a/2,\hat Q_{R},\hat g_\ell( 0)} \leq C \delta_\ell^2,
\end{equation}
and hence Lemma \ref{lma:jet-interpolation} implies
\begin{equation}\label{hat-A-star-0}
\|\mathfrak{D}^{\iota} \hat A^*_{\ell,i,p,k}\|_{\infty,\hat Q_{R},\hat g_\ell( 0)} \leq C\delta_\ell^2  R^{2j+\a-\iota},\quad
[\mathfrak{D}^{\iota} \hat A^*_{\ell,i,p,k}]_{\a,\a/2,\hat Q_{R},\hat g_\ell( 0)} \leq C\delta_\ell^2 R^{2j-\iota},
\end{equation}
for all $0\leq \iota\leq 2j$ and fixed $R>0$.  Taking $R=S d_\ell$ and transferring to the tilde picture, we have
\begin{equation}\label{A-star-0}
d_\ell^{-\iota} \|\mathfrak{D}^{\iota} \tilde A^*_{\ell,i,p,k}\|_{\infty,\tilde Q_{S},\tilde g_\ell( 0)} \leq C\ve_\ell^2  (d_\ell S)^{2j+\a-\iota},\quad
d_\ell^{-\iota-\a}[\mathfrak{D}^{\iota} \tilde A^*_{\ell,i,p,k}]_{\a,\a/2,\tilde Q_{S},\tilde g_\ell( 0)} \leq C\ve_\ell^2 (d_\ell S)^{2j-\iota},
\end{equation}
for all $0\leq \iota\leq 2j$ and $S\leq R d_\ell^{-1}$.

For higher order derivatives of $\tilde A_{\ell,i,p,k}^*$, we  use interpolation. Since $\tilde A^\sharp_{\ell,i,p,k}$ is a parabolic polynomial of degree at most $2j$, \eqref{a-priori Control-3} implies
\begin{equation}\label{hat-A-star}
\left[\mathfrak{D}^{2j+\iota} \hat A^*_{\ell,i,p,k}\right]_{\a,\a/2,\hat Q_{R},\hat g_\ell( 0)} \leq C \delta_\ell^{2-\iota},
\end{equation}
for all $0\leq \iota\leq 2k+2$. For any $1\leq \iota\leq 2k+2$, Proposition \ref{prop:interpolation} in the hat picture gives for $0<\rho_1<\rho_2\leq R$,
\begin{equation}
\begin{split}
&\quad (\rho_2-\rho_1)^\iota  \|\mathfrak{D}^{2j+\iota} \hat  A^*_{\ell,i,p,k}\|_{\infty,\hat Q_{\rho_1},\hat g_\ell( 0)}\\
&\leq C\left((\rho_2-\rho_1)^{\iota+\a}  [\mathfrak{D}^{2j+\iota} \hat A^*_{\ell,i,p,k}]_{\a,\a/2,\hat Q_{\rho_2},\hat g_\ell( 0)}+\|\mathfrak{D}^{2j} \hat A^*_{\ell,i,p,k}\|_{\infty,\hat Q_{\rho_2},\hat g_\ell( 0)} \right)\\
&\leq C(\rho_2-\rho_1)^{\iota+\a}  \delta_\ell^{2-\iota}+C\delta_\ell^2 \rho_2^{\a}.
\end{split}
\end{equation}
We can follow closely the choice of $\rho_i$ in the interpolation in \cite[\S 4.9.3]{HT3} (with $j$ replaced by $2j$ here) to conclude that
\begin{equation}\label{A-star--0}
\|\mathfrak{D}^{2j+\iota} \hat A^*_{\ell,i,p,k}\|_{\infty,\hat Q_{S\delta_\ell},\hat g_\ell(0)}\leq C_S \delta_\ell^{2+\a-\iota}.
\end{equation}
This will play an important role in case $\e_\ell\geq C^{-1}$.  For later purposes, we will need to estimate the dependence of $C_S$ on $S$ as $S\to +\infty$. In fact, $C_S$ in \cite[(4.127)]{HT3} is given by $C(AS)^\frac{\a^2}{\iota+\a}$ where $A>1$ is given as a function of $S>1$ by solving the equation $(A-1)^{-1}A^\frac{\a}{\iota+\a}= S^\frac{\iota}{\iota+\a}$.  Since $\frac{\a}{\iota+\a}<1$ for $\iota\geq 1$, $A$ stays bounded as $S\to +\infty$.  Hence we can estimate $C_S$ from above by
\begin{equation}\label{CS-bdd}
C_S\leq C(AS)^\frac{\a^2}{\iota+\a}\leq CS^\a,
\end{equation}
for $S>1$. Therefore, in the tilde picture we obtain
\begin{equation}\label{A-star-1}
\|\mathfrak{D}^{2j+\iota} \tilde A^*_{\ell,i,p,k}\|_{\infty,\tilde Q_{S\e_\ell},\tilde g_\ell(0)}\leq CS^\a\e_\ell^{2-\iota+\a} d_\ell^{2j+\a},
\end{equation}
where $1\leq \iota\leq 2k+2$ and $S>1$ fixed (which is the analog of \cite[(4.129)]{HT3}).

Similarly, in the case when $\e_\ell\to 0$,  following the derivation of \cite[(4.132)]{HT3} we obtain
\begin{equation}\label{A-star-2-hat}
\|\mathfrak{D}^{2j+\iota} \hat A^*_{\ell,i,p,k}\|_{\infty,\hat Q_{S d_\ell},\hat g_\ell(0)}\leq C\delta_\ell^{2-\iota}d_\ell^\a \e_\ell^\frac{\iota\a}{\iota+\a}S^\frac{\a^2}{\iota+\a},
\end{equation}
which in the tilde picture becomes
\begin{equation}\label{A-star-2}
\|\mathfrak{D}^{2j+\iota} \tilde A^*_{\ell,i,p,k}\|_{\infty,\tilde Q_{S},\tilde g_\ell(0)}\leq C \e_\ell^{2-\iota} (S^\a \e_\ell^\iota)^\frac{\a}{\iota+\a} d_\ell^{2j+\a},
\end{equation}
for $1\leq \iota\leq 2k+2$ and $S>1$ fixed.

\subsubsection{Estimates for $\tilde A_{\ell,i,p,k}^\sharp$}

By \eqref{lower-A-Linfty}, we have for all $0\leq \iota\leq 2j$ that
\begin{equation}
\|\mathfrak{D}^\iota\hat A_{\ell,i,p,k}\|_{\infty,\hat Q_{R},\hat g_\ell(0)}\leq  C\delta_\ell^2\left( e^{\frac{-2i+2-\a}{2}t_\ell}\right)^{1-\frac{\iota}{2j+\a}},
\end{equation}
for all given $R>0$ (with $C$ independent of $R>0$).  Since $\hat A_{\ell,i,p,k}^\sharp$ is the $2j$-jet of $\hat A_{\ell,i,p,k}$ at $(0,0)$, we see that all the coefficients of the polynomial $\hat A_{\ell,i,p,k}^\sharp$ are bounded by $C\delta_\ell^2 e^{\frac{-2i+2-\a}{2}\cdot
\frac{\a }{2j+\a}t_\ell}$, and so
\begin{equation}\label{tilde-A-sharp}
\left\{
\begin{array}{ll}
\|\mathfrak{D}^\iota \hat A_{\ell,i,p,k}^\sharp\|_{\infty,\hat Q_{S},\hat g_\ell(0)}\leq C \max( 1, S^{2j-\iota}) \delta_\ell^2 e^{\frac{-2i+2-\a}{2}\cdot
\frac{\a }{2j+\a}t_\ell},\\[2mm]
\, [\mathfrak{D}^\iota \hat A_{\ell,i,p,k}^\sharp]_{\b,\b/2,\hat Q_{S},\hat g_\ell(0)}\leq C \max( 1, S^{2j-\iota-\b})\delta_\ell^2 e^{\frac{-2i+2-\a}{2}\cdot
\frac{\a }{2j+\a}t_\ell},
\end{array}
\right.
\end{equation}
for all $S>0$, $0\leq \iota\leq 2j$ and $0<\b<1$. Transferring to the tilde picture yields
\begin{equation}
\left\{
\begin{array}{ll}
d_\ell^{-\iota+2}\|\mathfrak{D}^\iota \tilde A_{\ell,i,p,k}^\sharp\|_{\infty,\tilde Q_{S},\tilde g_\ell(0)}\leq C \max( 1, (Sd_\ell)^{2j-\iota}) \delta_\ell^2 e^{\frac{-2i+2-\a}{2}\cdot
\frac{\a }{2j+\a}t_\ell},\\ [2mm]
\, d_\ell^{-\iota+2-\b}[\mathfrak{D}^\iota \tilde A_{\ell,i,p,k}^\sharp]_{\b,\b/2,\tilde Q_{S},\tilde g_\ell(0)}\leq C \max( 1, (Sd_\ell)^{2j-\iota-\b})\delta_\ell^2 e^{\frac{-2i+2-\a}{2}\cdot
\frac{\a }{2j+\a}t_\ell},
\end{array}
\right.
\end{equation}
for all $0<S\leq Rd_\ell^{-1}$, $0\leq \iota\leq 2j$ and $0<\b<1$, c.f.  \cite[(4.135)--(4.136)]{HT3}.

\subsubsection{Estimates for $\tilde\eta_{\ell}^\circ$ and its potential}
First, we recall that
\begin{equation}
\hat \eta_\ell^\circ =\sum_{i=1}^j\sum_{p=1}^{N_{i,k}}\ddbar \hat{\mathfrak{G}}_{\hat{t},k}(\hat A_{\ell,i,p,k}^*,\hat G_{\ell,i,p,k}).
\end{equation}
Here $\hat{\mathfrak{G}}_{\hat{t},k}$ and $\hat A_{\ell,i,p,k}$ are $\hat t$-dependent while $\hat G_{\ell,i,p,k}$ is independent of time.

By applying \eqref{plop} for each fixed $\hat t$, we can write
\begin{equation}
 \hat{\mathfrak{G}}_{\hat{t},k}(\hat A_{\ell,i,p,k}^*,\hat G_{\ell,i,p,k})= \sum_{\iota=0}^{2k}\sum_{q=\lceil\frac{\iota}{2}\rceil}^{k} e^{-q\lambda_\ell^{-2}\hat t-(q-\frac{\iota}{2})t_\ell} \delta_\ell^\iota   \hat \Phi_{\iota,q}(\hat G_{\ell,i,p,k})\circledast \D^\iota \hat A_{\ell,i,p,k}^*,
\end{equation}
where $\hat \Phi_{\iota,q}(\hat G_{\ell,i,p,k})$ is independent of time.  Note also that $\hat J_\ell$ is independent of time and hence \cite[(4.141), (4.144)]{HT3} can be directly carried over, so that
\begin{equation}\label{inter-estimate-0}
\left\{
\begin{array}{ll}
\| \mathfrak{D}^\iota \hat J_\ell\|_{\infty, \hat Q_R,\hat g_\ell(0)} \leq C\delta_\ell^{-\iota},\\[2mm]
\, [  \mathfrak{D}^\iota \hat J_\ell]_{\a,\a/2, \hat Q_R,\hat g_\ell(0)} \leq C\delta_\ell^{-\iota-\a},\\[2mm]
\| \mathfrak{D}^\iota \hat \Phi_{\iota,q}\|_{\infty, \hat Q_R,\hat g_\ell(0)} \leq C\delta_\ell^{-\iota},\\[2mm]
[ \mathfrak{D}^\iota \hat \Phi_{\iota,q}]_{\a,\a/2, \hat Q_R,\hat g_\ell(0)} \leq C\delta_\ell^{-\iota-\a},
\end{array}
\right.
\end{equation}
for each $\iota\geq 0,\lceil\frac{\iota}{2}\rceil\leq q\leq 2k$ and fixed $R>0$.

Schematically we have
\begin{equation}\label{expression-G-0}
\begin{split}
&\quad \mathfrak{D}^{r}\hat{\mathfrak{G}}_{\hat{t},k}(\hat A_{\ell,i,p,k}^*,\hat G_{\ell,i,p,k})\\
&= \sum_{\iota=0}^{2k}\sum_{q=\lceil\frac{\iota}{2}\rceil}^{k} \sum_{\substack{d=0\\d\in 2\mathbb{N}}}^r \sum_{i_1+i_2=r-d}e^{-q\lambda_\ell^{-2}\hat t-(q-\frac{\iota}{2})t_\ell} \left(q\lambda_\ell^{-2}\right)^{\frac{d}{2}} \delta_\ell^\iota\mathfrak{D}^{i_1}\hat \Phi_{\iota,q}(\hat G_{\ell,i,p,k}) \circledast \mathfrak{D}^{i_2+\iota} \hat A_{\ell,i,p,k}^*.
\end{split}
\end{equation}

To estimate this, we will need  \eqref{hat-A-star-0}, \eqref{hat-A-star},  and \eqref{A-star-1}:
\begin{equation}\label{inter-estimate-1}
\left\{
\begin{array}{ll}
\|\mathfrak{D}^ \iota  \hat A^*_{\ell,i,p,k}\|_{\infty,\hat Q_{S\delta_\ell},\hat g_\ell(0)} \leq CS^\a\delta_\ell^{2j+2+\a- \iota }\max( 1, S^{2j-\iota}),\\[2mm]
\,  [\mathfrak{D}^ \iota  \hat A^*_{\ell,i,p,k}]_{\a,\a/2,\hat Q_{ S\delta_\ell},\hat g_\ell(0)} \leq C \delta_\ell^{2j+2- \iota } \max(1,S^{2j-\iota}),
\end{array}
\right.
\end{equation}
for all $0\leq \iota \leq 2k+2+2j$ and $S>1$ fixed.  Now using \eqref{inter-estimate-0} and \eqref{inter-estimate-1} in \eqref{expression-G-0}, and recalling that $\delta_\ell=\lambda_\ell e^{-\frac{t_\ell}{2}},$ yields
\begin{equation}\label{potential-eta-circ-estimate-1}
\begin{split}
 \|\mathfrak{D}^{r}\hat{\mathfrak{G}}_{\hat{t},k}(\hat A_{\ell,i,p,k}^*,\hat G_{\ell,i,p,k})\|_{\infty,\hat Q_{S\delta_\ell},\hat g_\ell(0)}
 &\leq CS^\a \sum_{\iota=0}^{2k} \sum_{\substack{d=0\\d\in 2\mathbb{N}}}^r \sum_{i_1+i_2=r-d}\max(1,S^{2j-\iota})\lambda_\ell^{-d}  \delta_\ell^{\iota-i_1+2j+2+\a-i_2-\iota}\\
 &\leq CS^{2j+\a}  \delta_\ell^{-r+2j+2+\a},
\end{split}
\end{equation}
for all $0\leq r\leq 2j+2$. Similarly,
\begin{equation}
\begin{split}
 [\mathfrak{D}^{r}\hat{\mathfrak{G}}_{\hat{t},k}(\hat A_{\ell,i,p,k}^*,\hat G_{\ell,i,p,k})]_{\a,\a/2,\hat Q_{S\delta_\ell},\hat g_\ell(0)}
 &\leq C S^{2j+\a} \delta_\ell^{-r+2j+2},
\end{split}
\end{equation}
for all $0\leq r\leq 2j+2$.  Transferring to the tilde picture gives,
\begin{equation}\label{estimate-potential-G}
\left\{
\begin{array}{ll}
d_\ell^{-r+2} \|\mathfrak{D}^{r}\tilde{\mathfrak{G}}_{\ti{t},k}(\tilde A_{\ell,i,p,k}^*,\tilde G_{\ell,i,p,k})\|_{\infty,\tilde Q_{S\e_\ell},\tilde g_\ell(0)}
 &\leq CS^{2j+\a}   \delta_\ell^{-r+2j+2+\a},\\[2mm]
 d_\ell^{-r-\a+2} [\mathfrak{D}^{r}\tilde{\mathfrak{G}}_{\ti{t},k}(\tilde A_{\ell,i,p,k}^*,\tilde G_{\ell,i,p,k})]_{\a,\a/2,\tilde Q_{S\e_\ell},\tilde g_\ell(0)}
 &\leq CS^{2j+\a}  \delta_\ell^{-r+2j+2},
\end{array}
\right.
\end{equation}
for all $0\leq r\leq 2j+2$ and $S>1$ fixed.

Next, we consider the case when $\ve_\ell\leq C$, and we take only derivatives and difference quotients in the base and time directions.  The argument is similar to \cite[(4.159)]{HT3}.  We start by noting that if a certain derivative $\mathfrak{D}^\iota$ contains precisely $u$ fiber derivatives ($0\leq u\leq \iota$), then we will denote it schematically by $\mathfrak{D}_{{\bf \# f}=u}^\iota$. We then have the easy bounds from \cite[(4.153)--(4.154)]{HT3}
\begin{equation}\label{inter-estimate-2}
\left\{
\begin{array}{ll}
\| \mathfrak{D}_{{\bf \# f}=u}^\iota \hat J_\ell\|_{\infty, \hat Q_R,\hat g_\ell(0)} \leq C\lambda_\ell^{-\iota+u}\delta_\ell^{-u},\\[2mm]
\, [  \mathfrak{D}_{{\bf \# f}=u}^\iota \hat J_\ell]_{\a,\a/2,\mathrm{base}, \hat Q_R,\hat g_\ell(0)} \leq C\lambda_\ell^{-\iota-\a+u}\delta_\ell^{-u},\\[2mm]
\| \mathfrak{D}^\iota_{{\bf \# f}=u} \hat \Phi_{\iota,q}\|_{\infty, \hat Q_R,\hat g_\ell(0)} \leq C\lambda_\ell^{-\iota+u}\delta_\ell^{-u},\\[2mm]
[ \mathfrak{D}^\iota_{{\bf \# f}=u} \hat \Phi_{\iota,q}]_{\a,\a/2,\mathrm{base}, \hat Q_R,\hat g_\ell(0)} \leq C\lambda_\ell^{-\iota-\a+u}\delta_\ell^{-u}.
\end{array}
\right.
\end{equation}
Using \eqref{hat-A-star-0}, \eqref{hat-A-star},  and \eqref{A-star-2-hat} to obtain
\begin{equation}\label{inter-estimate-3}
\left\{
\begin{array}{ll}
\|\mathfrak{D}^ \iota  \hat A^*_{\ell,i,p,k}\|_{\infty,\hat Q_{Sd_\ell},\hat g_\ell(0)} \leq C\delta_\ell^2 (Sd_\ell)^{2j+\a-\iota},\\[2mm]
\,  [\mathfrak{D}^ \iota  \hat A^*_{\ell,i,p,k}]_{\a,\a/2,\hat Q_{ Sd_\ell},\hat g_\ell(0)} \leq C\delta_\ell^2 (S d_\ell)^{2j-\iota},\\[2mm]
\|\mathfrak{D}^ {2j+\iota'}  \hat A^*_{\ell,i,p,k}\|_{\infty,\hat Q_{Sd_\ell},\hat g_\ell(0)} \leq  C\delta_\ell^{2-\iota'} (d_\ell S)^\a \e_\ell^\frac{\iota' \a}{\iota'+\a},\\[2mm]
[\mathfrak{D}^{\iota'}\hat A_{\ell,i,p,k}^*]_{\a,\a/2,\hat Q_{Sd_\ell},\hat g_\ell(0)}\leq C \delta_\ell^{2-\iota'},
\end{array}
\right.
\end{equation}
for all $0\leq \iota \leq 2j$, $1\leq \iota'\leq 2k+2$ and $S>1$ fixed.   Then \eqref{expression-G-0}, \eqref{inter-estimate-2} and \eqref{inter-estimate-3} imply
\begin{equation}
\begin{split}
 \|\mathfrak{D}_{\bf bt}^{r}\hat {\mathfrak{G}}_{\hat{t},k}(\hat A_{\ell,i,p,k}^*,\hat G_{\ell,i,p,k})\|_{\infty,\hat Q_{Sd_\ell},\hat g_\ell(0)}
 &\leq C\sum_{\iota=0}^{2k} \sum_{\substack{d=0\\d\in 2\mathbb{N}}}^r \sum_{i_1+i_2=r-d} \lambda_\ell^{-d-i_1}\delta_\ell^{\iota }\cdot
 \left\{
 \begin{array}{ll}
\delta_\ell^2 (Sd_\ell)^{2j+\a-i_2-\iota},\; \text{if}\;\; i_2+\iota \leq 2j,\\
\delta_\ell^{2+2j-i_2-\iota} (S d_\ell )^\a , \; \text{if}\;\; i_2+\iota > 2j,\\
 \end{array}
 \right.\\
 &\leq C d_\ell^{2j+2+\a-r}S^{2j+\a},
\end{split}
\end{equation}
for $0\leq r\leq 2j+2$, and fixed $S>1$.  Arguing similarly for the H\"older seminorms, and transferring to the tilde picture yields
\begin{equation}\label{estimate-potential-G-bt}
\left\{
\begin{array}{ll}
& d_\ell^{-r+2}\|\mathfrak{D}_{\bf bt}^{r}\tilde {\mathfrak{G}}_{\ti{t},k}(\tilde A_{\ell,i,p,k}^*,\tilde G_{\ell,i,p,k})\|_{\infty,\tilde Q_{S},\tilde g_\ell(0)}\leq C d_\ell^{2j+2+\a-r}S^{2j+\a}, \\
 &d_\ell^{-r-\a+2}[\mathfrak{D}_{\bf bt}^{r}\tilde {\mathfrak{G}}_{\ti{t},k}(\tilde A_{\ell,i,p,k}^*,\tilde G_{\ell,i,p,k})]_{\a,\a/2,\mathrm{base},\tilde Q_{S},\tilde g_\ell(0)} \leq C  d_\ell^{2j+2-r}S^{2j+\a}.
\end{array}
\right.
\end{equation}

Observe that when $r=2j+2$, the leading term arises when $d=i_1=q=\iota=0$ and $i_2=2j+2$, which in the tilde picture is given by
\begin{equation}\label{leading-G-A*}
\mathfrak{D}_{\bf bt}^{2j+2}\tilde {\mathfrak{G}}_{\ti{t},k}(\tilde A_{\ell,i,p,k}^*,\tilde G_{\ell,i,p,k})
=\sum_{i=1}^j \sum_{p=1}^{N_{i,k}} (\Delta_{\Theta_\ell^*\Psi_\ell^*\omega_{F}|_{ \{\cdot\}\times Y}})^{-1} \tilde G_{\ell,i,p,k} \cdot  \mathfrak{D}^{2j+2}\tilde A_{\ell,i,p,k}^* +o(d_\ell^{2j+\a}).
\end{equation}

We now bound $\tilde \eta_\ell^\circ$ and its derivatives.  Using  \cite[(4.139)]{HT3}, paying extra attention to the time derivatives, we have
\begin{equation}\label{expression-G-1}
\begin{split}
\mathfrak{D}^r \hat\eta_\ell^\circ =\sum e^{-q(t_\ell+\lambda_\ell^{-2}\hat t)+\frac{\iota}{2}t_\ell} \left(q\lambda_\ell^{-2}\right)^{\frac{d}{2}}\delta_\ell^\iota (\mathfrak{D}^{r-d+1-s} \hat J_\ell)\circledast \mathfrak{D}^{i_1}\hat \Phi_{\ell,\iota,q}(\hat G_{\ell,i,p,k}) \circledast \mathfrak{D}^{i_2+\iota} \hat A_{\ell,i,p,k}^*,
\end{split}
\end{equation}
where
\begin{equation}
\sum=  \sum_{i=1}^j \sum_{p=1}^{N_{i,k}} \sum_{\iota=0}^{2k}\sum_{q=\lceil\frac{\iota}{2}\rceil}^{k}\sum_{\substack{d=0\\d\in 2\mathbb{N}}}^r \sum_{s=0}^{d+1}\sum_{i_1+i_2=s+1}.
\end{equation}

Using \eqref{inter-estimate-0}  with \eqref{inter-estimate-1}, we can estimate $\mathfrak{D}^r \hat \eta_\ell^\circ$ by
\begin{equation}\label{f1}
\begin{split}
\|\mathfrak{D}^r \hat \eta_\ell^\circ\|_{\infty, \hat Q_{S\delta_\ell},\hat g_\ell(0)}&\leq CS^{2j+\a} \lambda_\ell^{-d} \delta_\ell^{\iota -r+d-1+s-i_1+2j+2+\a-i_2-\iota} \leq C S^{2j+\a} \delta_\ell^{ -r+2j+\a},
\end{split}
\end{equation}
where we have used the fact that $\delta_\ell= \lambda_\ell e^{-t_\ell/2}$. Similarly,
\begin{equation}\label{f2}
\begin{split}
[\mathfrak{D}^r  \hat \eta_\ell^\circ]_{\a,\a/2, \hat Q_{R\delta_\ell},\hat g_\ell(0)}&\leq  CS^{2j+\a} \delta_\ell^{ -r+2j}.
\end{split}
\end{equation}

In particular in the tilde picture,
\begin{equation}\label{circ-e-nonzero}
\left\{
\begin{array}{ll}
d_\ell^{-r}\|\mathfrak{D}^r\tilde  \eta_\ell^\circ\|_{\infty, \tilde Q_{S\e_\ell},\tilde g_\ell(0)}& \leq C S^{2j+\a} \delta_\ell^{ -r+2j+\a},\\[2mm]
\,  d_\ell^{-r-\a}[\mathfrak{D}^r \tilde\eta_\ell^\circ]_{\a,\a/2, \tilde Q_{S\e_\ell},\tilde g_\ell(0)}&\leq  CS^{2j+\a} \delta_\ell^{ -r+2j},
\end{array}
\right.
\end{equation}
for all $0\leq r\leq 2j$ (which is the analog of \cite[(4.151)]{HT3}).

These estimates are only useful when $\e_\ell\geq C^{-1}$.  In the case when $\e_\ell\to 0$,  we shall only take derivatives and difference quotients in the base and time directions.  We can follow the argument to derive \cite[(4.159)]{HT3}, using \eqref{hat-A-star-0}, \eqref{hat-A-star},\eqref{A-star--0} and \eqref{CS-bdd} instead of \cite[(4.123), (4.125), (4.130)]{HT3}, and using $\mathfrak{D}$ instead of $\D$, and taking also time derivatives of $e^{-q(t_\ell+\lambda_\ell^{-2}\hat t)}$ in \eqref{expression-G-1}, we obtain
\begin{equation}\label{estimate-circ-decay}
\left\{
\begin{array}{ll}
d_\ell^{-r}\|\mathfrak{D}_{\bf bt}^r\tilde\eta_\ell^\circ \|_{\infty, \tilde Q_S,\tilde g_\ell(0)} \leq C S^{2j+\a} d_\ell^{2j+\a-r},\\[2mm]
\, d_\ell^{-r-\a}[ \mathfrak{D}_{\bf bt}^r\tilde\eta_\ell^\circ] _{\a,\a/2,\mathrm{base}, \tilde Q_S,\tilde g_\ell(0)} \leq C S^{2j+\a} d_\ell^{2j-r},\\
\end{array}
\right.
\end{equation}
for all $0\leq r\leq 2j$ and $S$ fixed.
Also, it is important to note that in the  $L^\infty$ bound in \eqref{estimate-circ-decay} with $r=2j$, which nominally is $O(d_\ell^\a)$, the only term which is not actually $o(d_\ell^\a)$ comes from the terms in the sum in \eqref{expression-G-1} with $d=0$. To see this, we follow verbatim the discussion in \cite[(4.161)]{HT3}, which gives us that
\begin{equation}\label{expansion-eta-circ}
d_\ell^{-2j-\a} \mathfrak{D}_{\bf bt}^{2j} \tilde \eta_\ell^\circ = d_\ell^{-2j-\a} \sum_{i=1}^j \sum_{p=1}^{N_{i,k}}  \left(\ddbar (\Delta_{\Theta_\ell^*\Psi_\ell^*\omega_{F}|_{ \{\cdot\}\times Y}})^{-1} \ti G_{\ell,i,p,k} \right)_{\bf ff} \mathfrak{D}^{2j}\tilde A_{\ell,i,p,k}^* +o(1),
\end{equation}
locally uniformly.

Similarly,  following the derivation of \cite[(4.162)]{HT3}, in the case when $\ve_\ell\to 0$ we have
\begin{equation}\label{qorq}
\left\{
\begin{array}{ll}
d_\ell^{-r} \| \mathfrak{D}^r \tilde\eta_\ell^\circ\|_{\infty,\tilde Q_S,g_X}\leq  CS^{2j+\a} d_\ell^{2j+\a-r},\\[2mm]
\, d_\ell^{-r-\a} [ \mathfrak{D}^r \tilde\eta_\ell^\circ]_{\a,\a/2, \tilde Q_S,g_X}\leq  CS^{2j+\a} d_\ell^{2j-r},
\end{array}
\right.
\end{equation}
for all $0\leq r\leq 2j$, fixed $S>0$ and a fixed metric $g_X$.

\subsubsection{Estimates for $\tilde\eta_\ell^\dagger$ and its potential}

Recall that by definition we have
\begin{equation}
\begin{split}
\hat \eta_\ell^\dagger
&= \ddbar \sum_{i=1}^j \sum_{p=1}^{N_{i,k}} \sum_{\iota=0}^{2k}\sum_{q=\lceil\frac{\iota}{2}\rceil}^{k} e^{-q(t_\ell+\lambda_\ell^{-2}\hat t)+\frac{\iota}{2}t_\ell} \delta_\ell^\iota \left(  \hat \Phi_{\ell,\iota,q}(\hat G_{\ell,i,p,k})\circledast \D^\iota \hat A_{\ell,i,p,k}^\sharp \right),
\end{split}
\end{equation}
where we have applied \eqref{plop}.  From \eqref{tilde-A-sharp} we have
\begin{equation}\label{recall-tilde-A-sharp}
\left\{
\begin{array}{ll}
\|\mathfrak{D}^\iota \hat A_{\ell,i,p,k}^\sharp\|_{\infty,\hat Q_{S},\hat g_\ell(0)}\leq C \max( 1, S^{2j-\iota}) \delta_\ell^2 e^{\frac{-2i+2-\a}{2}\cdot
\frac{\a }{2j+\a}t_\ell},\\ [2mm]
\, [\mathfrak{D}^\iota \hat A_{\ell,i,p,k}^\sharp]_{\b,\b/2,\hat Q_{S},\hat g_\ell(0)}\leq C \max( 1, S^{2j-\iota-\b})\delta_\ell^2 e^{\frac{-2i+2-\a}{2}\cdot
\frac{\a }{2j+\a}t_\ell},
\end{array}
\right.
\end{equation}
for all $S>0$, $0\leq \iota\leq 2j$ and $0<\b<1$ while the derivatives of (parabolic) order $>2j$ vanishes since $\hat A_{\ell,i,p,k}^\sharp$ is a (parabolic) polynomial of degree at most $2j$. By applying $\mathfrak{D}^r$ to $\hat \eta_\ell^\dagger$, we have
\begin{equation}
\begin{split}
\mathfrak{D}^r \hat\eta_\ell^\dagger =\sum e^{-q(t_\ell+\lambda_\ell^{-2}\hat t)+\frac{\iota}{2}t_\ell} (q\lambda_\ell^{-2})^{\frac{d}{2}}\delta_\ell^\iota (\mathfrak{D}^{r-d+1-s} \hat J_\ell)\circledast \mathfrak{D}^{i_1}\hat \Phi_{\iota,q}(\hat G_{\ell,i,p,k}) \circledast \mathfrak{D}^{i_2+\iota} \hat A_{\ell,i,p,k}^\sharp,
\end{split}
\end{equation}
where
\begin{equation}
\sum=  \sum_{i=1}^j \sum_{p=1}^{N_{i,k}} \sum_{\iota=0}^{2k}\sum_{q=\lceil\frac{\iota}{2}\rceil}^{k}\sum_{\substack{d=0\\d\in 2\mathbb{N}}}^r \sum_{s=0}^{d+1}\sum_{i_1+i_2=s+1},
\end{equation}
so that \eqref{inter-estimate-0} and \eqref{recall-tilde-A-sharp} imply
\begin{equation}
\begin{split}
\|\mathfrak{D}^r \hat \eta_\ell^\dagger\|_{\infty,\hat Q_S,\hat g_\ell(0)}
&\leq C\sum \lambda_\ell^{-d} \delta_\ell^{\iota -r+d-1+s-i_1+2}e^{\frac{-\a}{2}\cdot
\frac{\a }{2j+\a}t_\ell}\leq   C  \delta_\ell^{-r}e^{\frac{-\a}{2}\cdot
\frac{\a }{2j+\a}t_\ell}.
\end{split}
\end{equation}
The H\"older seminorm is similar and hence,
\begin{equation}\label{dager-estimate}
\left\{
\begin{array}{ll}
d_\ell^{-r} \|\mathfrak{D}^r \tilde\eta_\ell^\dagger\|_{\infty,\tilde Q_S,\tilde g_\ell(0)}&\leq C  \delta_\ell^{-r}e^{\frac{-\a}{2}\cdot
\frac{\a }{2j+\a}t_\ell},\\
 d_\ell^{-r-\b} [\mathfrak{D}^r \tilde\eta_\ell^\dagger]_{\b,\b/2,\tilde Q_S,\tilde g_\ell(0)}&\leq C  \delta_\ell^{-r-\b} e^{\frac{-\a}{2}\cdot
\frac{\a }{2j+\a}t_\ell},
\end{array}
\right.
\end{equation}
for all $r\geq 0,\b\in (0,1)$ and $S\leq Rd_\ell^{-1}$ with $R$ fixed (as in \cite[(4.172)]{HT3}). Likewise, if we take at most 2 fiber derivatives landing on $\hat \Phi$ and $\hat J_\ell$, then we use \eqref{recall-tilde-A-sharp} and \eqref{inter-estimate-2} to get
\begin{equation}\label{4.173}
\left\{
\begin{array}{ll}
d_\ell^{-r} \|\mathfrak{D}^r_{\bf bt} \tilde\eta_\ell^\dagger\|_{\infty,\tilde Q_S,\tilde g_\ell(0)}&\leq C  e^{\frac{-\a}{2}\cdot
\frac{\a }{2j+\a}t_\ell},\\[2mm]
 d_\ell^{-r-\b} [\mathfrak{D}^r_{\bf bt} \tilde\eta_\ell^\dagger]_{\b,\b/2,\mathrm{base}
 ,\tilde Q_S,\tilde g_\ell(0)}&\leq C  e^{\frac{-\a}{2}\cdot
\frac{\a }{2j+\a}t_\ell}.
\end{array}
\right.
\end{equation}
for all $r\geq 0,\b\in (0,1)$ and $S\leq Rd_\ell^{-1}$ with $R$ fixed (as in \cite[(4.173)]{HT3}). In particular they converge to zero. If instead we use fixed metric $g_X$, then the derivatives of $\hat \Phi$ and $\hat J_\ell$ are bounded and thus
\begin{equation}\label{eta-dagger}
\left\{
\begin{array}{ll}
 \|\mathfrak{D}^r \hat\eta_\ell^\dagger\|_{\infty,\hat Q_S,g_X}&\leq C  \delta_\ell^{2}e^{\frac{-\a}{2}\cdot
\frac{\a }{2j+\a}t_\ell},\\[2mm]
\, [\mathfrak{D}^r \hat\eta_\ell^\dagger]_{\b,\b/2,\hat Q_S,g_X}&\leq C  \delta_\ell^{2} e^{\frac{-\a}{2}\cdot
\frac{\a }{2j+\a}t_\ell},
\end{array}
\right.
\end{equation}
for all $r\geq 0,\b\in (0,1)$ and fixed $S>0$ (as in \cite[(4.174)]{HT3}). Similarly to the discussion of \cite[(4.175)]{HT3}, we also have
\begin{equation}\label{qarq}
\left\{
\begin{array}{ll}
 \|\mathfrak{D}^r \tilde\eta_\ell^\dagger\|_{\infty,\tilde Q_S,g_X}&\leq C  \e_\ell^{2}e^{\frac{-\a}{2}\cdot
\frac{\a }{2j+\a}t_\ell},\\[2mm]
\, [\mathfrak{D}^r \tilde\eta_\ell^\dagger]_{\b,\b/2,\tilde Q_S,g_X}&\leq C  \e_\ell^{2} e^{\frac{-\a}{2}\cdot
\frac{\a }{2j+\a}t_\ell},
\end{array}
\right.
\end{equation}
for all $r\geq 0,\b\in (0,1)$ and fixed $S>0$ (as in \cite[(4.175)]{HT3}).

Next, we estimate the potential of $\tilde\eta_\ell^\dagger$ when $\e_\ell\leq C$.  Given $r\geq 0$, by applying $\mathfrak{D}^r_{\bf bt}$ to $ \hat{\mathfrak{G}}_{\hat{t},k}(\hat A_{\ell,i,p,k}^\sharp,\hat G_{\ell,i,p,k})$ with \eqref{plop}
\begin{equation}
\begin{split}
&\quad \mathfrak{D}^r_{\bf bt}\hat{\mathfrak{G}}_{\hat{t},k}(\hat A_{\ell,i,p,k}^\sharp,\hat G_{\ell,i,p,k})\\
&= \sum_{\iota=0}^{2k}\sum_{q=\lceil\frac{\iota}{2}\rceil}^{k} \sum_{\substack{d=0\\d\in 2\mathbb{N}}}^r \sum_{i_1+i_2=r-d}e^{-q\lambda_\ell^{-2}\hat t-(q-\frac{\iota}{2})t_\ell} \left(q\lambda_\ell^{-2}\right)^{\frac{d}{2}} \delta_\ell^\iota\mathfrak{D}_{\bf bt}^{i_1}\hat \Phi_{\iota,q}(\hat G_{\ell,i,p,k}) \circledast \mathfrak{D}^{i_2+\iota} \hat A_{\ell,i,p,k}^\sharp,
\end{split}
\end{equation}
which can be estimated using \eqref{recall-tilde-A-sharp} and \eqref{inter-estimate-2} to get
\begin{equation}
\begin{split}
[ \mathfrak{D}^r_{\bf bt}\hat{\mathfrak{G}}_{\hat{t},k}(\hat A_{\ell,i,p,k}^\sharp,\hat G_{\ell,i,p,k})]_{\a,\a/2,\mathrm{base},\hat Q_{S d_\ell},\hat g_\ell(0)}\leq C \sum_{\iota=0}^{2k} \sum_{\substack{d=0\\d\in 2\mathbb{N}}}^r \sum_{i_1+i_2=r-d}\lambda_\ell^{-d-i_1}\delta_\ell^{\iota+2} e^{\frac{-2i+2-\a}{2}\cdot
\frac{\a }{2j+\a}t_\ell}=o(1).
\end{split}
\end{equation}

The $L^\infty$ norm is similar and hence in the tilde picture we have
\begin{equation}\label{A-sharp-potential}
\left\{
\begin{array}{ll}
d_\ell^{-r+2-\a}[ \mathfrak{D}^{r}_{\bf bt}\tilde{\mathfrak{G}}_{\ti{t},k}(\tilde A_{\ell,i,p,k}^\sharp,\tilde G_{\ell,i,p,k})]_{\a,\a/2,\mathrm{base},\tilde Q_{S},\tilde g_\ell(0)}=o(1),\\[2mm]
d_\ell^{-r+2}\| \mathfrak{D}^{r}_{\bf bt}\tilde{\mathfrak{G}}_{\ti{t},k}(\tilde A_{\ell,i,p,k}^\sharp,\tilde G_{\ell,i,p,k})\|_{\infty,\tilde Q_{S},\tilde g_\ell(0)}=o(1),\\[2mm]
\end{array}
\right.
\end{equation}
for all $r\geq 0$ and $S$ fixed.

\subsubsection{Estimates for $\tilde\omega_{\ell}^\sharp$}

Recall that
\begin{equation}
\tilde\omega_{\ell}^\sharp(\tilde t)=(1-e^{-t_\ell-d_\ell^2\lambda_\ell^{-2}\tilde t})\tilde\omega_{\ell,\mathrm{can}}+\e_\ell^2 e^{-d_\ell^2\lambda_\ell^{-2}\tilde t}\Theta_\ell^*\Psi_\ell^*\omega_{F} +\tilde\eta_\ell^\dagger+\tilde\eta_\ell^\ddagger.
\end{equation}

We can follow exactly the same discussion in \cite[\S 4.9.7]{HT3} to conclude the following. Since $\tilde\omega_{\ell,\mathrm{can}}=d_\ell^{-2}\lambda_\ell^2\Theta_\ell^*\Psi_\ell^* \omega_{\mathrm{can}}$ and $(\Psi_\ell \circ \Theta_\ell)(\tilde z,\tilde y)=(d_\ell \lambda_\ell^{-1}z,y)$ where $d_\ell \lambda_\ell^{-1}\to 0$, the spatial stretching implies
\begin{equation}\label{estimate-can-tilde-infty-1}
\left\{
\begin{array}{ll}
\| \mathfrak{D}^{\iota} \left((1-e^{-t_\ell-d_\ell^2\lambda_\ell^{-2}\tilde t})\tilde\omega_{\ell,\mathrm{can}}\right)\|_{\infty,\tilde Q_{S},\tilde g_\ell(0)}\leq C  d_\ell^\iota \lambda_\ell^{-\iota},\\[2mm]
\,[ \mathfrak{D}^{\iota}\left((1-e^{-t_\ell-d_\ell^2\lambda_\ell^{-2}\tilde t})\tilde\omega_{\ell,\mathrm{can}}\right)]_{\b,\b/2,\tilde Q_{S},\tilde g_\ell(0)}\leq C  d_\ell^{\iota+\b} \lambda_\ell^{-\iota-\b},\\[2mm]
\| \mathfrak{D}^{\iota} (\e_\ell^2 e^{-d_\ell^2\lambda_\ell^{-2}\tilde t}\Theta_\ell^*\Psi_\ell^*\omega_F ) \|_{\infty,\tilde Q_{S},\tilde g_\ell(0)}\leq C d_\ell^\iota  \delta_\ell^{-\iota},\\[2mm]
\, [\mathfrak{D}^{\iota} (\e_\ell^2  e^{-d_\ell^2\lambda_\ell^{-2}\tilde t}\Theta_\ell^*\Psi_\ell^*\omega_F ) ]_{\b,\b/2,\tilde Q_{S},\tilde g_\ell(0)}\leq C d_\ell^{\iota+\b}  \delta_\ell^{-\iota-\b}.
\end{array}
\right.
\end{equation}
 for all $\iota\geq 0,\b\in (0,1)$ and $S\leq Rd_\ell^{-1}$ with $R$ fixed (which is the analog \cite[(4.181)--(4.182)]{HT3}). Therefore,
 \begin{equation}\label{sharp-metric-decay-0}
 \left\{
 \begin{array}{ll}
 d_\ell^{-\iota} \|\mathfrak{D}^\iota \tilde\omega_\ell^\sharp\|_{\infty,\tilde Q_S,\tilde g_\ell(0)} \leq C\delta_\ell^{-\iota},\\[2mm]
\,  d_\ell^{-\iota-\b} [\mathfrak{D}^\iota \tilde\omega_\ell^\sharp]_{\b,\b/2,\tilde Q_S,\tilde g_\ell(0)} \leq C\delta_\ell^{-\iota-\b},
 \end{array}
 \right.
 \end{equation}
 for all $\iota\geq 0,\b\in (0,1)$ and $S\leq Rd_\ell^{-1}$ with $R$ fixed. Likewise, if we only differentiate in the base and time directions, then we have the following improvement:
 \begin{equation}\label{sharp-metric-decay}
 \left\{
 \begin{array}{ll}
 d_\ell^{-\iota} \|\mathfrak{D}^\iota_{\bf bt} \tilde\omega_\ell^\sharp\|_{\infty,\tilde Q_S,\tilde g_\ell(0)}=o(1),\\[2mm]
\,  d_\ell^{-\iota-\b} [\mathfrak{D}^\iota_{\bf bt} \tilde\omega_\ell^\sharp]_{\b,\b/2,\mathrm{base},\tilde Q_S,\tilde g_\ell(0)} =o(1),
 \end{array}
 \right.
 \end{equation}
  for all $\iota\geq 0,\b\in (0,1)$ with $\iota+\b>0$ and $S\leq C d_\ell^{-1}$(which is the analog of \cite[(4.183)]{HT3}) as well as
   \begin{equation}\label{F-metric-decay}
 \left\{
 \begin{array}{ll}
 d_\ell^{-\iota} \|\mathfrak{D}^\iota_{\bf bt} (\e_\ell^2 \Theta_\ell^*\Psi_\ell^* \omega_F)\|_{\infty,\tilde Q_S,\tilde g_\ell(0)}\leq C \lambda_\ell^{-\iota},\\[2mm]
\,  d_\ell^{-\iota-\b} [\mathfrak{D}^\iota_{\bf bt} (\e_\ell^2 \Theta_\ell^*\Psi_\ell^* \omega_F)]_{\b,\b/2,\tilde Q_S,\tilde g_\ell(0)}\leq C \lambda_\ell^{-\iota-\b},
 \end{array}
 \right.
 \end{equation}
  for all $\iota\geq 0,\b\in (0,1)$ and $S\leq C d_\ell^{-1}$(which is the analog of \cite[(4.184)--(4.185)]{HT3})

\subsection{Expansion of the Monge-Amp\`ere flow}

We rewrite the complex Monge-Amp\`ere equation \eqref{PMA-tilde} as
    \begin{equation}\label{PMA-tilde-new-1}
\begin{split}
&\quad e^{-d_\ell^2\lambda_\ell^{-2}\tilde t} \partial_{\tilde t}\tilde \chi_\ell\\
&= \tr_{\tilde \omega_\ell^\sharp}\left(\tilde\eta_\ell^\circ+\tilde\eta_\ell^\diamond+\tilde\eta_{\ell,j,k}\right) \\
&\quad+\left( \log \frac{(\tilde \omega_\ell^\sharp +\tilde\eta_\ell^\circ+\tilde\eta_\ell^\diamond+\tilde\eta_{\ell,j,k})^{m+n}}{ (\tilde\omega_\ell^\sharp)^{m+n}}-\tr_{\tilde \omega_\ell^\sharp}\left(\tilde\eta_\ell^\circ+\tilde\eta_\ell^\diamond+\tilde\eta_{\ell,j,k}\right)\right) \\
&\quad +\log \frac{(\tilde\omega_\ell^\sharp)^{m+n}}{ \binom{m+n}{m}\ti \omega_{\ell,\mathrm{can}} ^m \wedge (\e_\ell^2 \Theta_\ell^*\Psi_\ell^* \omega_F)^n}+nd_\ell^2\lambda_\ell^{-2}\tilde t\\
&=:\tr_{\tilde \omega_\ell^\sharp}\left(\tilde\eta_\ell^\circ+\tilde\eta_\ell^\diamond+\tilde\eta_{\ell,j,k}\right) +\mathcal{E}_1+\mathcal{E}_2,
\end{split}
\end{equation}
where the terms $\mathcal{E}_1$ and $\mathcal{E}_2$ are given respectively by the third and second line from the bottom in \eqref{PMA-tilde-new-1}.
Recall also that
\begin{equation}
\begin{split}
\tilde \chi_\ell=e^{d_\ell^2\lambda_\ell^{-2}\tilde t} \left(\tilde \psi_{\ell,j,k}+ \sum_{i=1}^j\sum_{p=1}^{N_{i,k}} \tilde{\mathfrak{G}}_{\ti{t},k}( \tilde A_{\ell,i,p,k}^*,\tilde G_{\ell,i,p,k} )\right)+ e^{d_\ell^2\lambda_\ell^{-2}\tilde t} \sum_{i=1}^j\sum_{p=1}^{N_{i,k}} \tilde{\mathfrak{G}}_{\ti{t},k}( \tilde A_{\ell,i,p,k}^\sharp,\tilde G_{\ell,i,p,k} )+\underline{\tilde\chi_\ell^*}+\underline{\tilde\chi_\ell^\sharp}.
\end{split}
\end{equation}
so that if we define
\begin{equation}\label{rho}
\ti{\rho}_\ell:=e^{-d_\ell^2\lambda_\ell^{-2}\tilde t}\underline{\tilde\chi_\ell^*}+\sum_{i=1}^j\sum_{p=1}^{N_{i,k}} \tilde{\mathfrak{G}}_{\ti{t},k}( \tilde A_{\ell,i,p,k}^*,\tilde G_{\ell,i,p,k} )+\tilde \psi_{\ell,j,k},
\end{equation}
then by definition we have
\begin{equation}
\ddbar\ti{\rho}_\ell=\tilde\eta_\ell^\diamond+ \tilde\eta_\ell^\circ+\tilde\eta_{\ell,j,k},
\end{equation}
and we can further rewrite the \eqref{PMA-tilde-new-1} as
\begin{equation}\label{PMA-tilde-new}
\begin{split}
\partial_{\tilde t} \ti{\rho}_\ell+d_\ell^2\lambda_\ell^{-2}\ti{\rho}_\ell&=\tr_{\tilde \omega_\ell^\sharp}\left(\tilde\eta_\ell^\circ+\tilde\eta_{\ell,j,k}+\tilde\eta_\ell^\diamond\right) +\mathcal{E}_1+\mathcal{E}_2\\
&\quad - e^{-d_\ell^2\lambda_\ell^{-2}\tilde t}\partial_{\tilde t} \left( e^{d_\ell^2\lambda_\ell^{-2}\tilde t} \sum_{i=1}^j\sum_{p=1}^{N_{i,k}} \tilde{\mathfrak{G}}_{\ti{t},k}( \tilde A_{\ell,i,p,k}^\sharp,\tilde G_{\ell,i,p,k} )\right)-e^{-d_\ell^2\lambda_\ell^{-2}\tilde t}\partial_{\tilde t}\underline{\tilde\chi_\ell^\sharp}\\
&=:\tr_{\tilde \omega_\ell^\sharp}\left(\tilde\eta_\ell^\circ+\tilde\eta_{\ell,j,k}+\tilde\eta_\ell^\diamond\right) +\mathcal{E}_1+\mathcal{E}_2+\mathcal{E}_3,
\end{split}
\end{equation}
where we defined
\begin{equation}\label{E3}
\mathcal{E}_3:=- e^{-d_\ell^2\lambda_\ell^{-2}\tilde t}\left(\partial_{\tilde t} \left( e^{d_\ell^2\lambda_\ell^{-2}\tilde t} \sum_{i=1}^j\sum_{p=1}^{N_{i,k}} \tilde{\mathfrak{G}}_{\ti{t},k}( \tilde A_{\ell,i,p,k}^\sharp,\tilde G_{\ell,i,p,k} )\right)+\partial_{\tilde t}\underline{\tilde\chi_\ell^\sharp}\right).
\end{equation}

The next Proposition gives us control on the error terms $\mathcal{E}_i$:
\begin{prop}\label{prop:Estimates-RHS}
For any fixed $R>0$ and $i=1,2,3$, we have
\begin{equation}\label{RHS-error1}
d_\ell^{-2j-\a} \left[\mathfrak{D}^{2j}_{\bf bt} \mathcal{E}_i \right]_{\a,\a/2,\mathrm{base},\tilde Q_R,\tilde g_\ell(0)}=o(1).
\end{equation}

In particular,
\begin{equation}\label{RHS-error-total}
\begin{split}
d_\ell^{-2j-\a}&\Biggr[ \mathfrak{D}^{2j}_{\bf bt}\biggr(\partial_{\tilde t} \ti{\rho}_\ell+d_\ell^2\lambda_\ell^{-2}\ti{\rho}_\ell - \tr_{\tilde \omega_\ell^\sharp}\left(\tilde\eta_\ell^\diamond+ \tilde\eta_\ell^\circ+\tilde\eta_{\ell,j,k}\right)  \biggl)\Biggr]_{\a,\a/2,\mathrm{base},\tilde Q_R,\tilde g_\ell(0)}=o(1).
\end{split}
\end{equation}

Furthermore, if $\e_\ell\geq C^{-1}$, then for any fixed $R>1$ and $0\leq a\leq 2j$, we have
\begin{equation}\label{error1-bad-1}
d_\ell^{-2j-\a}\|\mathfrak{D}^a\mathcal{E}_1\|_{\infty,\tilde Q_{R\e_\ell},  \tilde g_\ell(0)}\leq C\delta_\ell^{2j+\a } \e_\ell^{2j+\a-a},
\end{equation}
\begin{equation}\label{error1-bad-2}
 d_\ell^{-2j-\a}[\mathfrak{D}^a\mathcal{E}_1]_{\a,\a/2,\tilde Q_{R\e_\ell},  \tilde g_\ell(0)}\leq C\delta_\ell^{2j+\a } \e_\ell^{2j-a},
\end{equation}

\begin{equation}\label{error2-bad-1}
d_\ell^{-2j-\a}\|\mathfrak{D}^a (\mathcal{E}_2+\mathcal{E}_3)\|_{\infty,\tilde Q_{R\e_\ell},  \tilde g_\ell(0)}\leq C\e_\ell^{2j+\a-a},
\end{equation}
\begin{equation}\label{error2-bad-2}
d_\ell^{-2j-\a}[ \mathfrak{D}^a(\mathcal{E}_2+\mathcal{E}_3)]_{\a,\a/2,\tilde Q_{R\e_\ell},  \tilde g_\ell(0)}\leq C\e_\ell^{2j-a}.
\end{equation}
\end{prop}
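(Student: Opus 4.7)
The plan is to treat each of the three error terms separately, using a common underlying principle: at each order of differentiation the leading contribution, which would by itself be dangerous, vanishes by a built-in cancellation, and what remains can be estimated term by term using the collected bounds on $\tilde\omega_\ell^\sharp$, $\tilde\eta_\ell^\dagger$, $\tilde\eta_\ell^\ddagger$, $\tilde\eta_\ell^\circ$, $\tilde\eta_\ell^\diamond$, $\tilde\eta_{\ell,j,k}$, $\underline{\tilde\chi_\ell^\sharp}$ and $\tilde A^\sharp_{\ell,i,p,k}$ derived in the preceding subsections. For $\mathcal{E}_1$, write $\mathcal{E}_1=\log(1+\mathcal{X})-\tr_{\tilde\omega_\ell^\sharp}\tilde\eta$ with $\tilde\eta:=\tilde\eta_\ell^\circ+\tilde\eta_\ell^\diamond+\tilde\eta_{\ell,j,k}$ and
\begin{equation*}
\mathcal{X}:=\sum_{i=1}^{m+n}\binom{m+n}{i}\frac{\tilde\eta^{\,i}\wedge(\tilde\omega_\ell^\sharp)^{m+n-i}}{(\tilde\omega_\ell^\sharp)^{m+n}}.
\end{equation*}
Taylor-expanding $\log(1+x)=x-x^2/2+\cdots$, the contribution $i=1$ in $\mathcal{X}$ combines with the linear Taylor term and cancels exactly $\tr_{\tilde\omega_\ell^\sharp}\tilde\eta$. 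What remains is a polynomial in $\tilde\eta$ of degree $\geq 2$, with coefficients depending on $(\tilde\omega_\ell^\sharp)^{-1}$. Using \eqref{estimate-psi-tilde-infty-1}, \eqref{gorgo}, \eqref{estimate-circ-decay}-\eqref{qorq} one has $\|\DD^r_{\bf bt}\tilde\eta\|_\infty, [\DD^r_{\bf bt}\tilde\eta]_{\a,\a/2,\mathrm{base}}=O(d_\ell^{2j+\a})$ in the appropriate scaled sense for $0\leq r\leq 2j$, while $\tilde\omega_\ell^\sharp$ and its base-time derivatives are controlled by \eqref{sharp-metric-decay}. Distributing $\DD^{2j}_{\bf bt}$ via the Leibniz rule over a quadratic-in-$\tilde\eta$ expression, every term contains at least two factors decaying like $d_\ell^{\bullet}$ summing to total order at least $4j+2\a>2j+\a$, giving the required $o(1)$ bound after multiplying by $d_\ell^{-2j-\a}$.

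For $\mathcal{E}_2$, decompose $\tilde\omega_\ell^\sharp=\tilde\omega_\ell^\square+\tilde\eta_\ell^\dagger+\tilde\eta_\ell^\ddagger$ with $\tilde\omega_\ell^\square:=(1-e^{-t_\ell-d_\ell^2\lambda_\ell^{-2}\tilde t})\tilde\omega_{\ell,\mathrm{can}}+\e_\ell^2 e^{-d_\ell^2\lambda_\ell^{-2}\tilde t}\Theta_\ell^*\Psi_\ell^*\omega_F$. An expansion of $(\tilde\omega_\ell^\square)^{m+n}$ in base-base vs base-fiber vs fiber-fiber parts of $\Theta_\ell^*\Psi_\ell^*\omega_F$ (as in \eqref{imprecise} and the computation following it) shows that $(\tilde\omega_\ell^\square)^{m+n}/(\binom{m+n}{m}\tilde\omega_{\ell,\mathrm{can}}^m\wedge(\e_\ell^2\Theta_\ell^*\Psi_\ell^*\omega_F)^n)$ equals $e^{-nd_\ell^2\lambda_\ell^{-2}\tilde t}(1-e^{-t_\ell-d_\ell^2\lambda_\ell^{-2}\tilde t})^m$ times a function pulled back from $B$, plus terms of order $e^{-t_\ell}=o(\delta_\ell^{2j+\a})$. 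After adding $\tilde\eta_\ell^\dagger+\tilde\eta_\ell^\ddagger$ and taking the logarithm, adding $nd_\ell^2\lambda_\ell^{-2}\tilde t$ cancels the explicit time exponential, leaving a function whose $\DD^{2j}_{\bf bt}$ base-H\"older seminorm is $o(d_\ell^{2j+\a})$ thanks to \eqref{4.173}, \eqref{qarq}, \eqref{eta-dagger} for $\tilde\eta_\ell^\dagger$ and \eqref{gorgol} for $\tilde\eta_\ell^\ddagger$ (those bounds produce a $\delta_\ell$ factor in $\tilde\eta_\ell^\dagger$ and an $o(1)$ factor in $\tilde\eta_\ell^\ddagger$ that suffices). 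For $\mathcal{E}_3$, distribute the time derivative: it becomes $-\sum\partial_{\tilde t}\tilde{\mathfrak{G}}_{\ti t,k}(\tilde A^\sharp_{\ell,i,p,k},\tilde G_{\ell,i,p,k}) - d_\ell^2\lambda_\ell^{-2}\sum\tilde{\mathfrak{G}}_{\ti t,k}(\tilde A^\sharp,\tilde G) - e^{-d_\ell^2\lambda_\ell^{-2}\tilde t}\partial_{\tilde t}\underline{\tilde\chi_\ell^\sharp}$; the first two pieces are controlled by \eqref{A-sharp-potential} (time differentiation of $\tilde{\mathfrak{G}}_{\ti t,k}$ through the expansion \eqref{plop} only lands on $e^{-r\lambda_\ell^{-2}\tilde t}$, picking up harmless $\lambda_\ell^{-2}$ factors, or on the polynomial $\tilde A^\sharp$) and the last piece by \eqref{bdd-phi-sharp-underline}, each yielding $o(d_\ell^{2j+\a})$.

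The bounds \eqref{error1-bad-1}-\eqref{error2-bad-2} in the case $\e_\ell\geq C^{-1}$ are then obtained by feeding into the same expansions the sharper \textit{isotropic} estimates \eqref{estimate-potential-G}, \eqref{circ-e-nonzero} on $\tilde\eta_\ell^\circ$ and its potential and \eqref{dager-estimate}, \eqref{sharp-metric-decay-0} on $\tilde\eta_\ell^\dagger$, $\tilde\omega_\ell^\sharp$, all valid on $\tilde Q_{R\e_\ell}$. The main obstacle is purely one of bookkeeping: four characteristic scales $d_\ell,\e_\ell,\delta_\ell,\lambda_\ell$ related by $\delta_\ell=d_\ell\e_\ell$ and $\delta_\ell=\lambda_\ell e^{-t_\ell/2}$ interact with the jet order $2j$, the approximation order $2k$, the H\"older exponent $\a$, and several stretchings, and one must verify that every single term produced by the Leibniz rule acquires a net decay strictly better than $d_\ell^{2j+\a}$ (or the analogous $\e_\ell$-decay in the sharp-ellipticity regime). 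This is where the quadratic cancellation in $\mathcal{E}_1$, the subtraction of $nd_\ell^2\lambda_\ell^{-2}\tilde t$ in $\mathcal{E}_2$, and the jet construction of $\tilde A^\sharp$ and $\underline{\tilde\chi_\ell^\sharp}$ in $\mathcal{E}_3$ each play an essential role.
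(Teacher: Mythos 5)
Your treatment of \eqref{RHS-error1} for $i=1,2,3$ and of \eqref{RHS-error-total}, as well as of \eqref{error1-bad-1}--\eqref{error1-bad-2}, follows essentially the same route as the paper: the quadratic cancellation in $\mathcal{E}_1$, the decomposition of $\omega_F$ into $\mathbf{bb}/\mathbf{bf}/\mathbf{ff}$ parts for $\mathcal{E}_2$, the differentiation of $\tilde{\mathfrak{G}}_{\ti t,k}$ through \eqref{plop} for $\mathcal{E}_3$, and for the sharp-ellipticity regime the isotropic estimates \eqref{estimate-psi-tilde-infty-1}, \eqref{gorgo}, \eqref{circ-e-nonzero}, \eqref{sharp-metric-decay-0}. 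Those parts are correct.

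However, there is a genuine gap in your proposed derivation of \eqref{error2-bad-1}--\eqref{error2-bad-2}. You claim that these follow by ``feeding into the same expansions the sharper isotropic estimates,'' i.e.\ by estimating $\mathcal{E}_2+\mathcal{E}_3$ directly from its constituents on $\tilde Q_{R\ve_\ell}$. This cannot work. The constraint to be proved is $\|\DD^a(\mathcal{E}_2+\mathcal{E}_3)\|_{\infty}\leq Cd_\ell^{2j+\a}\e_\ell^{2j+\a-a}=C\delta_\ell^{2j+\a}\e_\ell^{-a}$, so at $a=0$ one would need $\mathcal{E}_2+\mathcal{E}_3 = O(\delta_\ell^{2j+\a})$. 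But $\mathcal{E}_2$ contains the contribution of $\tilde\eta_\ell^\dagger$ inside the logarithm, and by \eqref{dager-estimate} this contribution is only of size $O\bigl(e^{-\frac{\a^2}{2(2j+\a)}t_\ell}\bigr)=O\bigl((\delta_\ell\lambda_\ell^{-1})^{\a^2/(2j+\a)}\bigr)$, which is in general much larger than $\delta_\ell^{2j+\a}$: since $\frac{\a^2}{2j+\a}<2j+\a$ and $\delta_\ell\to 0$ in Case 3, this is not $\ll\delta_\ell^{2j+\a}$ unless one assumes a growth relation between $\lambda_\ell$ and $e^{t_\ell/2}$ that is not available. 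In other words, $\mathcal{E}_2$ alone does \emph{not} satisfy \eqref{error2-bad-1}, and no $\mathbf{bb}/\mathbf{bf}/\mathbf{ff}$ bookkeeping can repair this. What the paper does instead is to use the identity \eqref{PMA-tilde-new} to write $\mathcal{E}_2+\mathcal{E}_3 = \bigl(\partial_{\tilde t}\ti\rho_\ell+d_\ell^2\lambda_\ell^{-2}\ti\rho_\ell-\tr_{\tilde\omega_\ell^\sharp}(\tilde\eta_\ell^\diamond+\tilde\eta_\ell^\circ+\tilde\eta_{\ell,j,k})\bigr)-\mathcal{E}_1$, and then estimate the bracketed quantity term-by-term via the isotropic bounds \eqref{estimate-psi-tilde-infty-1}, \eqref{bdd-phi-star-underline}, \eqref{estimate-potential-G}, \eqref{circ-e-nonzero}, \eqref{sharp-metric-decay-0} on the pieces of $\ti\rho_\ell$, which unlike $\tilde\eta_\ell^\dagger$ do carry the $d_\ell^{2j+\a}$ decay. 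The smallness of $\mathcal{E}_2+\mathcal{E}_3$ thus relies on a cancellation that is only visible through the Monge--Amp\`ere flow equation, not through direct expansion of $\mathcal{E}_2$.
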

\begin{proof}
The estimate \eqref{RHS-error1} for $\mathcal{E}_3$ follows easily from  \eqref{bdd-phi-sharp-underline} and \eqref{A-sharp-potential}.

As for $\mathcal{E}_2$, recall that
\begin{equation}
\begin{split}
\mathcal{E}_2&=\log \frac{(\tilde\omega_\ell^\sharp)^{m+n}}{ \binom{m+n}{m}\tilde\omega_{\ell,\mathrm{can}}^m\wedge (\e_\ell^2 \Theta_\ell^*\Psi_\ell^*\omega_{F})^n  }+nd_\ell^2\lambda_\ell^{-2}\tilde t.
\end{split}
\end{equation}
The term $nd_\ell^2\lambda_\ell^{-2}\tilde t$ is killed by $[\mathfrak{D}_{\bf bt}^{2j}\cdot ]_{\a,\a/2,\mathrm{base}}$ if $j>0$, while if $j=0$ we have
\begin{equation}
d_\ell^{-\alpha}[nd_\ell^2\lambda_\ell^{-2}\tilde t]_{\a,\a/2,\mathrm{base},\tilde Q_R,\tilde g_\ell(0)}\leq Cd_\ell^{2-\alpha}\lambda_\ell^{-2}=o(1).
\end{equation}
Estimate \eqref{RHS-error1} for $\mathcal{E}_2$ then follows from this together with \eqref{estimate-can-tilde-infty-1}, \eqref{sharp-metric-decay} and \eqref{F-metric-decay}.

For $\mathcal{E}_1$, we write
\begin{equation}
\begin{split}
&\quad  \log \frac{(\tilde \omega_\ell^\sharp +\tilde\eta_\ell^\circ+\tilde\eta_\ell^\diamond+\tilde\eta_{\ell,j,k})^{m+n}}{ (\tilde\omega_\ell^\sharp)^{m+n}}=:\log \left( 1+\tr_{\tilde \omega_\ell^\sharp}\left(\tilde\eta_\ell^\circ+\tilde\eta_\ell^\diamond+\tilde\eta_{\ell,j,k}\right) +\mathcal{E}_4\right),
\end{split}
\end{equation}
where we defined
\begin{equation}\label{E4}
\mathcal{E}_4:=\sum_{p=2}^{m+n}\binom{m+n}{p}\frac{\left(\tilde\eta_\ell^\circ+\tilde\eta_\ell^\diamond+\tilde\eta_{\ell,j,k}\right)^p\wedge(\tilde \omega_\ell^\sharp)^{m+n-p}}{(\tilde \omega_\ell^\sharp)^{m+n}}.
\end{equation}
Thanks to \eqref{estimate-psi-tilde-infty-1},  \eqref{gorgo}, \eqref{estimate-circ-decay}, and   \eqref{sharp-metric-decay}, for every $0\leq \iota\leq 2j$ and fixed $R>0$ we have
\begin{equation} \label{E3-trace-estimate}
\left\{
\begin{array}{ll}
d_\ell^{-\iota}\|\mathfrak{D}^{\iota}_{\bf bt}\mathcal{E}_4\|_{\infty,\tilde Q_R, \tilde g_\ell(0)}=o(1),\\[2mm]
d_\ell^{-2j-\a}[\mathfrak{D}^{2j}_{\bf bt}\mathcal{E}_4]_{\a,\a/2,\mathrm{base},\tilde Q_R, \tilde g_\ell(0)}=o(1),\\[2mm]
d_\ell^{-\iota}\|\mathfrak{D}_{\bf bt}^\iota \tr_{\tilde \omega_\ell^\sharp}\left(\tilde\eta_\ell^\circ+\tilde\eta_\ell^\diamond+\tilde\eta_{\ell,j,k}\right)\|_{\infty,\tilde Q_R,\tilde g_\ell(0)}=O(1),\\
 \,  d_\ell^{-2j-\a}\left [\mathfrak{D}^{2j}_{\bf bt}\tr_{\tilde \omega_\ell^\sharp}\left(\tilde\eta_\ell^\circ+\tilde\eta_\ell^\diamond+\tilde\eta_{\ell,j,k}\right)\right]_{\a,\a/2,\mathrm{base},\tilde Q_R,\tilde g_\ell(0)}=O(1).\\
\end{array}
\right.
\end{equation}
If we write schematically $\mathbf{A}=\tr_{\tilde \omega_\ell^\sharp}\left(\tilde\eta_\ell^\circ+\tilde\eta_\ell^\diamond+\tilde\eta_{\ell,j,k}\right), \mathbf{B}=\mathcal{E}_4$, which are both $o(1)$ locally uniformly, then we have $\mathcal{E}_1=\log(1+\mathbf{A}+\mathbf{B})-\mathbf{A}.$ For any $a\geq 0$ we can then write schematically
\begin{equation}\label{log-estimate}
\begin{split}
\DD^a\mathcal{E}_1&=\DD^a \left(\log (1+\mathbf{A}+\mathbf{B})-\mathbf{A} \right)
=-\DD^a\mathbf{A}\frac{\mathbf{A}+\mathbf{B}}{1+\mathbf{A}+\mathbf{B}}+\frac{\DD^a\mathbf{B}}{1+\mathbf{A}+\mathbf{B}}\\
&+\sum_{\substack{i_1+i_2=a-1\\ i_2>0}}\sum_{\iota=1}^{i_2}\sum_{j_1+\dots+j_{\iota}=i_2} \frac{\mathfrak{D}^{i_1+1}(1+\mathbf{A}+\mathbf{B})}{1+\mathbf{A}+\mathbf{B}}
\frac{\mathfrak{D}^{j_1}(1+\mathbf{A}+\mathbf{B})}{1+\mathbf{A}+\mathbf{B}}\cdots \frac{\mathfrak{D}^{j_\iota}(1+\mathbf{A}+\mathbf{B})}{1+\mathbf{A}+\mathbf{B}},
\end{split}
\end{equation}
and then \eqref{RHS-error1} for $\mathcal{E}_1$ follows from \eqref{E3-trace-estimate}.

Now that  \eqref{RHS-error1} is established,  \eqref{RHS-error-total} follows immediately from this and \eqref{PMA-tilde-new}.

On the other hand when $\e_\ell \geq C^{-1}$, estimates \eqref{error1-bad-1} and \eqref{error1-bad-2} follow from  \eqref{estimate-psi-tilde-infty-1},  \eqref{gorgo}, \eqref{circ-e-nonzero}, \eqref{sharp-metric-decay-0} together with \eqref{log-estimate}.  Lastly, to prove  \eqref{error2-bad-1} and \eqref{error2-bad-2},
using \eqref{PMA-tilde-new}, \eqref{error1-bad-1} and \eqref{error1-bad-2} it suffices to show that
\begin{equation}\begin{split}
d_\ell^{-2j-\a}\Bigg\|\DD^a\Bigg(&\partial_{\tilde t} \ti{\rho}_\ell+d_\ell^2\lambda_\ell^{-2}\ti{\rho}_\ell-\tr_{\tilde \omega_\ell^\sharp}\left(\tilde\eta_\ell^\circ+\tilde\eta_{\ell,j,k}+\tilde\eta_\ell^\diamond\right)\Bigg)\Bigg\|_{\infty,\tilde Q_{R\e_\ell},  \tilde g_\ell(0)}\leq C\e_\ell^{2j+\a-a},
\end{split}
\end{equation}
\begin{equation}\begin{split}
d_\ell^{-2j-\a}\Bigg[\DD^a\Bigg(&e\partial_{\tilde t} \ti{\rho}_\ell+d_\ell^2\lambda_\ell^{-2}\ti{\rho}_\ell-\tr_{\tilde \omega_\ell^\sharp}\left(\tilde\eta_\ell^\circ+\tilde\eta_{\ell,j,k}+\tilde\eta_\ell^\diamond\right)\Bigg)\Bigg]_{\a,\a/2,\tilde Q_{R\e_\ell},  \tilde g_\ell(0)}\leq C\e_\ell^{2j-a},
\end{split}
\end{equation}
for fixed $R>0$ and $0\leq a\leq 2j$, which is a direct consequence of  \eqref{estimate-psi-tilde-infty-1},  \eqref{bdd-phi-star-underline}, \eqref{estimate-potential-G},  \eqref{circ-e-nonzero} together with \eqref{sharp-metric-decay-0} and \eqref{log-estimate}.
\end{proof}

The goal is then to kill the RHS of \eqref{sup-realized-20} when $j=0$, and to kill the contributions of $d_\ell^{-2j-\a}\underline{\ti{\chi}_\ell^*}$, $d_\ell^{-2j-\a}\tilde\psi_{\ell,j,k}$ and $d_\ell^{-2j-\a}\tilde A_{\ell,i,p,k}^*$ to \eqref{sup-realized-3} when $j\geq 1$. We will split the discussion into three case (without loss of generality): {\bf Subcase A}: $\e_\ell\to +\infty$, {\bf Subcase B}: $\e_\ell\to \e_\infty>0$, {\bf Subcase C}: $\e_\ell\to 0$ as $\ell\to +\infty$ where $\e_\ell=d_\ell^{-1}\delta_\ell=d_\ell^{-1}\lambda_\ell e^{-t_\ell/2}$.
\subsection{Subcase A: $\e_\ell\to +\infty$}
In this subcase the background geometry is diverging in the fiber directions, and similarly to the analogous case in \cite[\S 4.10]{HT3} we will kill all contributions to \eqref{sup-realized-3} using \textit{parabolic} Schauder estimates for the linear heat equation. The Selection Theorem \ref{thm:Selection} will also be used crucially. The argument is quite long and involved because of the complexity of the quantitative estimates satisfied by all the pieces in the decomposition of the solution $\ti{\omega}^\bullet_\ell$. We start with the direct analog of the non-cancellation result in \cite[Proposition 4.7]{HT3}.

\begin{prop}\label{prop:noncancellation}
The following inequalities hold for all $0\leq \a<1$, $a\in \mathbb{N}$, $1\leq i\leq j$, $1\leq p\leq N_{i,k}$ and all $R>0$:
\begin{equation}\label{borgu}
[\DD^a \ddbar\underline{\hat{\vp}_\ell}]_{\a,\a/2,\hat Q_R }\leq C[\mathfrak{D}^{a}_{\bf bt}\ddbar \hat\varphi_\ell]_{\a,\a/2,\textrm{base},\hat Q_R,g_X}+C\left(\frac{R}{\lambda_\ell} \right)^{1-\a}\sum_{b=0}^{a} \lambda_\ell^{-\a} \|\mathfrak{D}^b\ddbar \hat\varphi_\ell\|_{\infty,\hat Q_R,g_X},
\end{equation}
\begin{equation}
\begin{split}
&[\mathfrak{D}^{a} \hat A_{\ell,i,p,k}]_{\a,\a/2,\hat Q_R }\leq C\delta_\ell^2 \Bigg( [\mathfrak{D}^{a}_{\bf bt}\ddbar \hat\varphi_\ell]_{\a,\a/2,\textrm{base},\hat Q_R,\hat g_\ell(0)}+\\
&+\left(\frac{R}{\lambda_\ell} \right)^{1-\a}\sum_{b=0}^{a} \lambda_\ell^{-\a}\Big( \|\mathfrak{D}^b \hat \eta_\ell^\ddagger\|_{\infty,\hat Q_R,\hat g_\ell(0)}+\|\mathfrak{D}^b(\hat\eta_\ell^\diamond+\hat\eta_\ell^\circ+\hat \eta_{\ell,j,k}) \|_{\infty,\hat Q_R,\hat g_\ell(0)}
+\delta_\ell^{-2}\|\mathfrak{D}^b\hat \eta_\ell^\dagger\|_{\infty,\hat Q_R,g_X}\Big)\Bigg)\\
&+C\sum_{\iota=1}^{i-1}\sum_{q=1}^{N_{i,k}}\left(\delta_\ell^{2k+2} [\mathfrak{D}^{a+2k+2}\hat A_{\ell,\iota,q,k}]_{\a,\a/2,\hat Q_R}+\sum_{b=0}^{a+2k+2} \left(\frac{R}{\lambda_\ell}\right)^{1-\a}e^{-(2k+2)\frac{t_\ell}{2}}\lambda_\ell^{b-a-\a}\|\mathfrak{D}^b \hat A_{\ell,\iota,q,k}\|_{\infty,\hat Q_R}\right).
\end{split}
\end{equation}
\end{prop}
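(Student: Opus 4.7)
Both inequalities are parabolic-in-time adaptations of the elliptic non-cancellation inequality \cite[(4.215)]{HT3}. The guiding principle is that fiber averaging $\underline{(\cdot)}$ annihilates fiber derivatives and commutes with horizontal/time derivatives up to lower-order errors arising from the variation of the complex structure $J$ and the fiber Ricci-flat metric $\omega_F|_{X_z}$ with $z$. The $\lambda_\ell^{-\alpha}$ and $(R/\lambda_\ell)^{1-\alpha}$ factors ultimately come from the fact that in the hat picture the base has been stretched by $\lambda_\ell$, so a ball $\hat B_R$ corresponds to $B_{R/\lambda_\ell}$ in the original picture, over which $J$ and $\omega_F$ vary.

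For the first inequality, I would split the parabolic H\"older difference quotient for $\DD^a\ddbar\underline{\hat\vp_\ell}$ according to whether $x,x'$ are horizontally or vertically joined. In the horizontal case (allowing arbitrary time difference $|s-s'|^{1/2}$), fiber averaging commutes with $\DD^a_{\mathbf{bt}}$ up to errors controlled by lower-order $L^\infty$ norms, so the quotient is directly dominated by $[\DD^a_{\mathbf{bt}}\ddbar\hat\vp_\ell]_{\alpha,\alpha/2,\text{base}}$ plus $L^\infty$ corrections. In the vertical case, $\underline{\hat\vp_\ell}$ is independent of the fiber variable, but $\ddbar\underline{\hat\vp_\ell}$ is not $\D$-parallel along fiber geodesics because $\ddbar$ involves $J$ and because $\D$ depends on $z$; its variation along a vertical geodesic of length $\leq 1$ is bounded linearly by $\sum_{b\leq a}\|\DD^b \ddbar \hat\vp_\ell\|_\infty$, with the linear coefficient measured by the $C^b$ oscillation of $J$ over a base ball of radius $R/\lambda_\ell$ (hence the factor $R/\lambda_\ell$). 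Interpolating this linear-in-distance bound with the trivial $L^\infty$ bound (using that fiber diameters stay bounded by $\Lambda^{1/2}$) produces the $(R/\lambda_\ell)^{1-\alpha}$ factor, and the extra $\lambda_\ell^{-\alpha}$ comes from reconciling the $g_X$-H\"older seminorm measured on fibers with the $\hat g_\ell(0)$ seminorm used on the left-hand side, as in \cite[(4.215)]{HT3}. Time-only differences are treated by freezing space and expanding $\partial_t^q$-derivatives via standard interpolation, which reduces them to the spatial case.

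For the second inequality, I would use the recursive definition
\begin{equation*}
\hat A_{\ell,i,p,k}=P_{t_\ell+\lambda_\ell^{-2}\hat t,G_{i,p,k}}(\hat\eta_{\ell,i-1,k}),\quad \hat\eta_{\ell,i-1,k}=\ddbar\hat\vp_\ell-\ddbar\underline{\hat\vp_\ell}-\sum_{\iota=1}^{i-1}\hat\gamma_{\ell,\iota,k},
\end{equation*}
together with the formula \eqref{defn:P}, to express $\hat A_{\ell,i,p,k}$ as a fiber integral (weighted by $G_{i,p,k}\,\omega_F^{n-1}\wedge\omega_F$ or $\omega_F^n$) of $\hat\eta_{\ell,i-1,k}$. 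The fiber integration together with the $e^{-t}$ (respectively $1$) prefactor in \eqref{defn:P} produces the overall $\delta_\ell^2 = \lambda_\ell^2 e^{-t_\ell}$ prefactor outside the parenthesis. The same non-cancellation principle as for the first inequality then bounds $[\DD^a\hat A_{\ell,i,p,k}]_{\alpha,\alpha/2}$ coming from the $\ddbar\hat\vp_\ell - \ddbar\underline{\hat\vp_\ell}$ piece, giving the first bracketed expression (where I decompose $\ddbar\hat\vp_\ell$ into $\hat\omega^\natural_\ell+\hat\eta^\dagger_\ell+\hat\eta^\ddagger_\ell+\hat\eta^\diamond_\ell+\hat\eta^\circ_\ell+\hat\eta_{\ell,j,k}$ using \eqref{decomposition-tilde-KRF}, with the $\hat\eta^\dagger_\ell$ term measured in the fixed metric $g_X$ to account for its better structural behaviour in \eqref{qarq}). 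The last line of the inequality then records the contribution of the $\sum_{\iota<i}\hat\gamma_{\ell,\iota,k}$ terms: using the quasi-explicit formula \eqref{plop}, the $r=k$ term in $\mathfrak{G}_{t,k}$ produces the prefactor $\delta_\ell^{2k+2}$ when paired with the $\Phi_{\iota,k}\circledast\D^\iota A$ structure, and the high-order derivative $\DD^{a+2k+2}\hat A_{\ell,\iota,q,k}$ appears because one has to differentiate both $\ddbar$ (yielding $2$ derivatives) and the explicit Green-like kernel (yielding up to $2k$ extra derivatives that could have landed on $A$).

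\textbf{Main obstacle.} The principal technical challenge is the same as in the elliptic case of \cite[(4.215)]{HT3}: making the commutator between fiber averaging and the operators $\ddbar$, $\D$, $\partial_t$ completely explicit and then optimally balancing the resulting lower-order error terms against the scale $R/\lambda_\ell$ to get the correct sharp factor $(R/\lambda_\ell)^{1-\alpha}$ rather than a worse power. In the parabolic setting, an additional bookkeeping difficulty is that time derivatives appearing inside $\DD^a$ must be expanded using the flow equation (or by passing through space derivatives via the Laplacian, as in the linearized heat operator) to produce bounds depending only on the quantities listed on the right-hand side; verifying that all of these reductions fit inside the claimed estimate, without generating any new terms, is where the computation requires the most care.
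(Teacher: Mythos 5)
Your overall strategy is the right one: both inequalities are parabolic adaptations of the non-cancellation inequality in \cite[Proposition 4.7]{HT3}, and the paper's proof does indeed proceed by expressing $\ddbar\underline{\hat\vp_\ell}$ and $\hat A_{\ell,i,p,k}$ as fiber pushforwards, commuting $\DD^a$ with the pushforward, and estimating the resulting H\"older difference quotients following the three Claims of \cite{HT3}. Your description of where the $\delta_\ell^2$, $\delta_\ell^{2k+2}$ and $\DD^{a+2k+2}\hat A_{\ell,\iota,q,k}$ factors come from in the second inequality is also essentially correct.

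However, your paragraph on the ``main obstacle'' contains a genuine misconception about the parabolic aspect. You claim that ``time derivatives appearing inside $\DD^a$ must be expanded using the flow equation (or by passing through space derivatives via the Laplacian, as in the linearized heat operator) to produce bounds depending only on the quantities listed on the right-hand side.'' This is not the case, and pursuing it would make the proof considerably harder than it needs to be. Both right-hand sides of the Proposition involve $\mathfrak{D}$, which already contains time derivatives, so no such reduction is required. The paper's key observation is precisely the opposite of what you anticipate: since fiber averaging is a purely spatial operation, both $\de_{\hat t}^q$ and time-only H\"older difference quotients commute \emph{trivially} with the pushforward, contributing no new error terms whatsoever. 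Thus $\DD^a = \sum\D^p\de_{\hat t}^q$ can be handled by splitting off the $\D^p$ part, which reproduces exactly the commutator analysis from \cite{HT3}; the $\de_{\hat t}^q$ part and time-only differences pass through for free. This is what makes the parabolic extension essentially painless. A related minor inaccuracy: your statement that time-only differences are ``treated by freezing space and expanding $\partial_t^q$-derivatives via standard interpolation, which reduces them to the spatial case'' is also off — there is no reduction needed, the fiber average of a time difference is literally the time difference of the fiber average.

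You also do not mention the one genuine modification the paper flags in adapting \cite[(4.204)]{HT3}: the factor $e^{-(2k+2)\frac{t}{2}}$ in the elliptic formula for $\hat A_{\ell,i,p,k}$ becomes $e^{-(2k+2)(t_\ell+\lambda_\ell^{-2}\hat t)/2}$ here, and the extra $\hat t$-dependence must be differentiated, producing cross terms in the analog of \cite[(4.247)]{HT3}. These cross terms are the only new parabolic bookkeeping and are absorbed into the same upper bound; this is the actual (manageable) extra work, rather than the phantom heat-operator reduction you describe.
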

\begin{proof}
The proof is very similar to that of \cite[Proposition 4.7]{HT3}, so we only highlight the differences. The starting point of the proof (Claim 1 in \cite[Proof of Proposition 4.7]{HT3}) is to express $\ddbar\underline{\hat{\vp}_\ell}$ and $\hat A_{\ell,i,p,k}$ as pushforwards of quantities on the total space. This step is essentially identical here, with the only difference being that in the formula for $\hat A_{\ell,i,p,k}$, the term  $e^{-(2k+2)\frac{t}{2}}$ in  \cite[(4.204)]{HT3} now becomes $e^{-(2k+2)(t_\ell+\lambda_\ell^{-2}\hat t)/2}.$ The extra time-dependent constant $e^{-(2k+2)\lambda_\ell^{-2}\frac{\hat t}{2}}$ will then also need to be differentiated in the analog of \cite[(4.234)]{HT3}, which gives us extra cross terms in the analog of \cite[(4.247)]{HT3}, but which can be estimated in a similar way resulting in the same upper bound as stated.

The next step is to try to commute the derivative $\DD^a$ with the pushforward, and make the commutation error terms explicit.  Recalling that $\DD^{a}$ is a sum of terms of the form $\D^p\de_{\hat{t}}^q$, we observe that $\de_{\hat{t}}^q$ trivially commutes with pushforwards (with no error terms), while the commutation of $\D^p$ gives exactly the same result as in Claim 2 \cite[Proof of Proposition 4.7]{HT3}, cf. \cite[(4.223), (4.240)]{HT3}.

The last step is to estimate the H\"older difference quotient of $\DD^a\ddbar\underline{\hat{\vp}_\ell}$ and $\DD^a\hat A_{\ell,i,p,k}$. This is now a space-time H\"older difference quotient, which we can split with the triangle inequality into a space-only difference quotient (which is estimated following the method of Claim 3 in \cite[Proof of Proposition 4.7]{HT3} verbatim), and a time-only difference quotient, which again commutes with pushforward and so can be estimated trivially without any further error terms. This completes the outline of the proof.
\end{proof}

For notation convenience, we will denote $\hat\eta_\ell=\ddbar\hat \varphi_\ell=\hat \eta_\ell^\dagger+\hat \eta_\ell^\ddagger+\hat\eta_\ell^\diamond+\hat\eta_\ell^\circ+\hat \eta_{\ell,j,k}$.

\subsubsection{The case $j=0$} Unlike \cite{HT3}, the case $j=0$ requires a separate treatment. This is due to the fact that the Monge-Amp\`ere equation \eqref{PMA-tilde-new} is naturally a parabolic PDE for the scalar potential $\ti{\rho}_\ell$, which however does not have a uniform bound on its $L^\infty$ norm, which is an issue when applying Schauder estimates. This is remedied in two different ways according to whether $j=0$ or $j>0$. In this subsection we treat the case $j=0$.

The first crucial claim is that for any fixed $R>0$ we have
\begin{equation}\label{crox}
d_\ell^{-\alpha}[(\mathcal{E}_1+\mathcal{E}_2+\mathcal{E}_3)]_{\a,\a/2,\ti{Q}_{R},\ti{g}_\ell(0)}=o(1).
\end{equation}
For the term $\mathcal{E}_1$, we have already proved an even strong result in \eqref{error1-bad-2}, so we consider $\mathcal{E}_2+\mathcal{E}_3$, which when $j=0$ equals
\begin{equation}
\log \frac{(\tilde\omega_\ell^\sharp)^{m+n}}{ \binom{m+n}{m}\ti \omega_{\ell,\mathrm{can}} ^m \wedge (\e_\ell^2 \Theta_\ell^*\Psi_\ell^* \omega_F)^n}+nd_\ell^2\lambda_\ell^{-2}\tilde t
-e^{-d_\ell^2\lambda_\ell^{-2}\ti{t}}\de_{\ti{t}}\underline{\ti{\chi}^\sharp_\ell},
\end{equation}
and since $\de_{\ti{t}}\underline{\ti{\chi}^\sharp_\ell}$ is a constant (in space and time) it is clear that
\begin{equation}
d_\ell^{-\alpha}[nd_\ell^2\lambda_\ell^{-2}\tilde t
-e^{-d_\ell^2\lambda_\ell^{-2}\ti{t}}\de_{\ti{t}}\underline{\ti{\chi}^\sharp_\ell}]_{\a,\a/2,\ti{Q}_{R},\ti{g}_\ell(0)}=o(1),
\end{equation}
and we are left with showing that
\begin{equation}\label{blrb}
d_\ell^{-\alpha}\left[\log \frac{(\tilde\omega_\ell^\sharp)^{m+n}}{ \binom{m+n}{m}\ti \omega_{\ell,\mathrm{can}} ^m \wedge (\e_\ell^2 \Theta_\ell^*\Psi_\ell^* \omega_F)^n}\right]_{\a,\a/2,\ti{Q}_{R},\ti{g}_\ell(0)}=o(1).
\end{equation}
For this, we pass to the check picture using the diffeomorphisms $\Pi_\ell$ in \eqref{pi} below, scaling geometric quantities by $\ve_\ell^{-2}$, so that the quantity in \eqref{blrb} equals
\begin{equation}\label{blrb2}
\delta_\ell^{-\alpha}\left[\log \frac{(\check\omega_\ell^\sharp)^{m+n}}{ \binom{m+n}{m}\check \omega_{\ell,\mathrm{can}} ^m \wedge (\Sigma_\ell^* \omega_F)^n}\right]_{\a,\a/2,\check{Q}_{R\ve_\ell^{-1}},\check{g}_\ell(0)},
\end{equation}
and using \eqref{checkko}, together with the facts that $[\check{\eta}^\ddagger_\ell]_{\a,\a/2,\check{Q}_{R\ve_\ell^{-1}},\check{g}_\ell(0)}=0,$ and
$\delta_\ell^{-\alpha}[\check{\omega}_{\ell,\mathrm{can}}]_{\a,\a/2,\check{Q}_{R\ve_\ell^{-1}},\check{g}_\ell(0)}\leq C\lambda_\ell^{-\alpha}=o(1),$
we see that \eqref{blrb} holds.

The next issue we face is that \eqref{estimate-psi-tilde-infty-10} does not provide us with uniform bounds on $d_\ell^{-\a}\|\DD^{2}\ti{\psi}_{\ell,0,k}\|_{\infty,\ti{Q}_R,\ti{g}_\ell(0)},$ for any fixed $R$, so we are unable to pass $d_\ell^{-\a}\de_{\ti{t}}\ti{\psi}_{\ell,0,k}$ or $d_\ell^{-\a}\ddbar\ti{\psi}_{\ell,0,k}$ to a limit. To fix this, we use the method of \cite[Subclaim 1.3]{HT2}, by replacing the whole fiber $Y$ with a coordinate chart and performing a jet subtraction to  $d_\ell^{-\a}\ti{\psi}_{\ell,0,k}$ so that the remainder is locally $C^2$ convergent.

To fix this, recall that $\tilde{g}_\ell(\ti{t}) = g_{\C^m} + \ve_\ell^2 e^{-d_\ell^2\lambda_\ell^{-2}\ti{t}} g_{Y,0}$.
Let $\mathbf{x}^{2m+1}, \ldots, \mathbf{x}^{2m+2n}$ be normal coordinates for $g_{Y,0}$ centered at $y_\ell$. Viewed as a map from $Y$ to $\R^{2n}$ these depend on $\ell$, but we prefer to instead pull back our setup to $\mathbb{R}^{2n}$ under the inverse map. In this sense we may then assume without loss that
\begin{align}
\left|\frac{\partial^\iota}{\partial\mathbf{x}^\iota}(g_{Y,0}(\mathbf{x})_{ab}-\delta_{ab})\right| \leq \frac{1}{100}|\mathbf{x}|^{2-\iota}\,\;{\rm for}\;\, |\mathbf{x}| \leq 2\;\,{\rm and}\;\,\iota=0,1.
\end{align}
This is possible thanks to the compactness of $Y$.
Define $\tilde{\mathbf{x}}^j = \ve_\ell \mathbf{x}^j$, so that $\tilde{\mathbf{x}}^{2m+1}, \ldots, \tilde{\mathbf{x}}^{2m+2n}$ are normal coordinates for $\ve_\ell^2 g_{Y,0}$ centered at $y_\ell$.
Formally also write $\tilde{\mathbf{x}}^1, \ldots, \tilde{\mathbf{x}}^{2m}$ for the standard real coordinates on $\C^m$. Then $\tilde{\mathbf{x}}^1, \ldots, \tilde{\mathbf{x}}^{2m+2n}$ are normal coordinates for $\tilde{g}_{\ell}(0)$ centered at $\tilde{x}_\ell$ with
\begin{align}\label{normk}
\left|\frac{\partial^\iota}{\partial\tilde{\mathbf{x}}^\iota}(\tilde{g}_{\ell}(0,\tilde{\mathbf{x}})_{ab}-\delta_{ab})\right| \leq \frac{\ve_\ell^{-2}}{100}|\tilde{\mathbf{x}}|^{2-\iota}\,\;{\rm for}\;\, |\tilde{\mathbf{x}}| \leq 2\ve_\ell\;\,{\rm and}\;\,\iota=0,1.
\end{align}
We then define a function $\ti{\psi}^\sharp_{\ell,0,k}$ on $\ti{Q}_{2\ve_\ell}$ to be the parabolic $2$nd order Taylor polynomial of $\ti{\psi}_{\ell,0,k}$ at the space-time origin $(0,0)$, using the spatial coordinates $\tilde{\mathbf{x}}^i,1\leq i\leq 2m+2n$, and we define also $\ti{\psi}^*_{\ell,0,k}:=\ti{\psi}_{\ell,0,k}-\ti{\psi}^\sharp_{\ell,0,k}.$

Since all Euclidean derivatives of $\ti{\psi}^*_{\ell,0,k}$ of order at most $2$ vanish at $(0,0)$, using the formula in \cite[Lemma 2.3]{HT3} relating $\D$-derivatives and ordinary derivatives, we see that $\DD^\iota\ti{\psi}^*_{\ell,0,k}\big|_{(0,0)}=0,0\leq\iota\leq 2$.

Next, we prove the following bounds for $d_\ell^{-\alpha}\ti{\psi}^\sharp_{\ell,0,k}$: for all $\iota\geq 0, 0<\beta<1$, and $0<R\leq \ve_\ell$,
\begin{equation}\label{jet1}
d_\ell^{-\alpha}[\DD^\iota\ti{\psi}^\sharp_{\ell,0,k}]_{\b,\b/2,\ti{Q}_{R},\ti{g}_\ell(0)}\leq \begin{cases}
C\ve_\ell^{1+\alpha-\iota}R^{1-\beta},\quad \text{if }\iota>0,\\
C\ve_\ell^{1+\alpha}R^{1-\beta}+C\ve_\ell^{\alpha}R^{2-\beta},\quad \text{if }\iota=0,
\end{cases}
\end{equation}
\begin{equation}\label{jet2}
d_\ell^{-\alpha}\|\DD^\iota\ti{\psi}^\sharp_{\ell,0,k}\|_{\infty,\ti{Q}_{R},\ti{g}_\ell(0)}\leq C\ve_\ell^{2+\alpha-\iota}.
\end{equation}

To prove this claim, note that for any $z\in B_1,$ the metric $g_{Y,z}$ is at bounded distance to $g_{Y,0}$ in $C^\infty(Y)$, thus
\begin{align}\label{!!!}
|\nabla^{\iota,\tilde{g}_\ell(0)}\tilde{g}_{\tilde{z}}(\ti{t})|_{\tilde{g}_\ell(0)} \leq C_\iota\ve_\ell^{-\iota} \;\,{\rm on}\;\,\ti{Q}_R,
\end{align}
for all $\iota\geq 1, R\leq\ve_\ell$.

Let us also note the following bounds for the Euclidean derivatives $\ti{\psi}_{\ell,0,k}$,
\begin{equation}\label{unnumb}
d_\ell^{-\alpha}|\de^{|\gamma|}\de_{\ti{t}}^q\ti{\psi}_{\ell,0,k}|(\ti{\mathbf{x}}_\ell,\ti{t}_\ell)\leq C\ve_\ell^{2+\alpha-|\gamma|-2q},
\end{equation}
for all multiindices $\gamma$ with $|\gamma|+2q\leq 2$. To see this, we first apply the diffeomorphism
$$\Lambda_\ell:\check{Q}_2\to \ti{Q}_{2\ve_\ell},\quad (\ti{\mathbf{x}}^1,\dots,\ti{\mathbf{x}}^{2m+2n},\ti{t})=\Lambda_\ell(\check{\mathbf{x}}^1,\dots,\check{\mathbf{x}}^{2m+2n},\check{t})
=(\ve_\ell\check{\mathbf{x}}^1,\dots,\ve_\ell\check{\mathbf{x}}^{2m+2n},\ve_\ell^2\check{t}),$$
and pull back metrics and $2$-forms, as well as $\ti{\psi}_{\ell,0,k}$, via $\Lambda_\ell$, multiply them by $\ve_\ell^{-2}$ and denote the resulting objects with a check, so for example the metrics $\check{g}_\ell(0)=\ve_\ell^{-2}\Lambda_\ell^*\ti{g}_\ell(0)$ on $\check{Q}_2$ are smoothly convergent to a fixed metric (smoothly comparable to Euclidean), and the pulled back complex structure is approaching the Euclidean one (without loss). The bounds \eqref{estimate-psi-tilde-infty-10} transform to
\[d_\ell^{-\alpha}\|\DD^\iota\check{\psi}_{\ell,0,k}\|_{\infty,\check{Q}_{2},\check{g}_\ell(0)}\leq C\ve_\ell^{2+\alpha},\quad d_\ell^{-\alpha}[\DD^\iota\check{\psi}_{\ell,0,k}]_{\a,\a/2,\check{Q}_{2},\check{g}_\ell(0)}\leq C\ve_\ell^{2+\alpha},
\]
for $0\leq \iota\leq 2$. Since $\check{g}_\ell(0)$ is approximately Euclidean, \cite[Lemma 2.6]{HT3} gives us that the Euclidean $C^{\alpha,\a/2}$ norm of $d_\ell^{-\alpha}\check{\psi}_{\ell,0,k}$ is also bounded by $C\ve_\ell^{2+\alpha}$, and translating these back to the tilde picture proves \eqref{unnumb}.

First we prove \eqref{jet1}. Given $(\ti{x},\ti{t}),(\ti{x}',\ti{t}')\in \ti{Q}_R$, call $d=d^{\ti{g}_\ell(0)}(\ti{x},\ti{x}')+|\ti{t}-\ti{t}'|^{\frac{1}{2}},$ and given $p,q\geq 0$ with $p+2q=\iota$, we can bound
\begin{equation}\label{mezzed}\begin{split}
&d_\ell^{-\alpha}|\D^p\de_{\ti{t}}^q\ti{\psi}^\sharp_{\ell,0,k}(\ti{x},\ti{t})-\P_{\ti{x}'\ti{x}}(\D^p\de_{\ti{t}}^q\ti{\psi}^\sharp_{\ell,0,k}(\ti{x}',\ti{t}'))|_{\ti{g}_\ell(0)}\\
&\leq d_\ell^{-\alpha}|\D^p\de_{\ti{t}}^q\ti{\psi}^\sharp_{\ell,0,k}(\ti{x},\ti{t})-\P_{\ti{x}'\ti{x}}(\D^p\de_{\ti{t}}^q\ti{\psi}^\sharp_{\ell,0,k}(\ti{x}',\ti{t}))|_{\ti{g}_\ell(0)}
+d_\ell^{-\alpha}|\D^p\de_{\ti{t}}^q\ti{\psi}^\sharp_{\ell,0,k}(\ti{x}',\ti{t})-\P_{\ti{x}'\ti{x}}(\D^p\de_{\ti{t}}^q\ti{\psi}^\sharp_{\ell,0,k}(\ti{x}',\ti{t}'))|_{\ti{g}_\ell(0)}\\
&\leq d^{\ti{g}_\ell(0)}(\ti{x},\ti{x}')d_\ell^{-\alpha}\|\D^{p+1}\de_{\ti{t}}^q\ti{\psi}^\sharp_{\ell,0,k}\|_{\infty,\ti{Q}_{2R},\ti{g}_\ell(0)}
+|\ti{t}-\ti{t}'|d_\ell^{-\alpha}\|\D^{p}\de_{\ti{t}}^{q+1}\ti{\psi}^\sharp_{\ell,0,k}\|_{\infty,\ti{Q}_{2R},\ti{g}_\ell(0)}\\
&\leq d\cdot d_\ell^{-\alpha}\|\D^{p+1}\de_{\ti{t}}^q\ti{\psi}^\sharp_{\ell,0,k}\|_{\infty,\ti{Q}_{2R},\ti{g}_\ell(0)}
+d^2\cdot d_\ell^{-\alpha}\|\D^{p}\de_{\ti{t}}^{q+1}\ti{\psi}^\sharp_{\ell,0,k}\|_{\infty,\ti{Q}_{2R},\ti{g}_\ell(0)}.
\end{split}
\end{equation}
Since $\ti{\psi}^\sharp_{\ell,0,k}$ is the sum of a polynomial of degree at most $2$ in the $\tilde{\mathbf{x}}$ variables (constant in time), and of a polynomial of degree at most $1$ in the $\ti{t}$ variable (constant in space), if follows that $\D^{p+1}\de_{\ti{t}}^q\ti{\psi}^\sharp_{\ell,0,k}\equiv 0$ unless $q=0$ (hence $\iota=p$), and that $\D^{p}\de_{\ti{t}}^{q+1}\ti{\psi}^\sharp_{\ell,0,k}\equiv 0$ unless $p=q=0$ (and hence $\iota=0$).

We consider these two cases separately, so we first bound the term with $\D^{\iota+1}\ti{\psi}^\sharp_{\ell,0,k}$ (which is equal to $\D^{\iota+1}$ applied to the spatial Taylor polynomial only), by converting $\D^{\iota+1}$ into $\nabla^{\iota+1}$ using \cite[Lemma 2.3]{HT3} (which involves a certain tensor $\mathbf{A}$), and estimating
\begin{align}
d_\ell^{-\alpha}\|\D^{\iota+1}\ti{\psi}^\sharp_{\ell,0,k}\|_{\infty,\ti{Q}_{2R},\ti{g}_\ell(0)} \\
\leq \biggl\|d_\ell^{-\alpha}&\left(\left(
\frac{\partial}{\partial\tilde{\mathbf{x}}} + \Gamma^{\tilde{g}_{\ti{z}}(\ti{t})}(\tilde{\mathbf{x}})\right)^{\iota+1}+\sum_{r=0}^{\iota-1}\nabla_{z,y,\ti{z}}^{\iota-1-r}\mathbf{A}\circledast\left(
\frac{\partial}{\partial\tilde{\mathbf{x}}} + \Gamma^{\tilde{g}_{\ti{z}}(\ti{t})}(\tilde{\mathbf{x}})\right)^{r}\right)\\
&\bigg(\sum_{\substack{\gamma\in\mathbb{N}^{2m+2n}\\ |\gamma|\leq 2}} \frac{1}{\gamma!} \frac{\partial^{|\gamma|}\ti{\psi}_{\ell,0,k}}{\partial\tilde{\mathbf{x}}^\gamma}(\tilde{\mathbf{x}}_\ell,\ti{t}_\ell)(\tilde{\mathbf{x}} - \tilde{\mathbf{x}}_\ell)^\gamma
\bigg)
\biggr\|_{\infty,\ti{Q}_{2R},\ti{g}_\ell(0)}
\end{align}
and estimating the big $L^\infty$ norm by $C\ve_\ell^{1+\alpha-\iota}$, as follows.

(1)
 We have $ \partial^b \Gamma$ $=$ $O( \ve_\ell^{-b-1})$ by \eqref{normk} and \eqref{!!!}.\hfill

(2) The $\mathbf{A}$-tensor in the tilde picture is bounded by $O(\ve_\ell^{-2})$, since it is schematically of the same type as $\de\Gamma$. By the same reason, $\nabla_{z,y,\ti{z}}^{r}\mathbf{A}$ is $O(\ve_\ell^{-r-2})$.

(3) Writing $d_\ell^{-\alpha}\ti{\psi}_{\ell,0,k}=\psi$, we can then estimate
\begin{equation}\label{agnusdei}\begin{split}
(\partial + \Gamma)^r((\partial^{|\gamma|}\psi)(\tilde{\mathbf{x}}_\ell,\ti{t}_\ell) (\tilde{\mathbf{x}}-\tilde{\mathbf{x}}_\ell)^\gamma)
&=(\partial^{|\gamma|}\psi)(\tilde{\mathbf{x}}_\ell,\ti{t}_\ell) \sum \partial^{a_1}\Gamma \cdots \partial^{a_\ell}\Gamma \cdot \partial^{b} (\tilde{\mathbf{x}}-\tilde{\mathbf{x}}_\ell)^\gamma,\end{split}
\end{equation}
where in the sum $a_1 + \cdots + a_\ell + \ell + b = r$  by counting the total number of $\partial$s and $\Gamma$s in each term, and $b\leq |\gamma|$. Now recall that $R\leq \ve_\ell$, so that
$\partial^b (\tilde{\mathbf{x}}-\tilde{\mathbf{x}}_\ell)^\gamma=O(\ve_\ell^{|\gamma|-b})$. Since $(\partial^{|\gamma|}\psi)(\tilde{\mathbf{x}}_\ell) = O(\ve_\ell^{2+\alpha-|\gamma|})$ by \eqref{unnumb} and using Step (1), the quantity in \eqref{agnusdei} can be estimated by $O(\ve_\ell^{2+\alpha-r})$.

(4) From Steps (2) and (3) we can bound
\begin{equation}\left(\nabla_{z,y,\ti{z}}^{\iota-1-r}\mathbf{A}\circledast(\de+\Gamma)^r\right)\sum_\gamma(\partial^{|\gamma|}\psi)(\tilde{\mathbf{x}}_\ell,\ti{t}_\ell) (\tilde{\mathbf{x}}-\tilde{\mathbf{x}}_\ell)^\gamma=O(\ve_\ell^{1+\alpha-\iota}),\end{equation}
and so using Step (3) again we obtain the desired bound of $C\ve_\ell^{1+\alpha-\iota}$ for the big $L^\infty$ norm. This gives us the desired bound
\begin{equation}\label{mezzedi}
d_\ell^{-\alpha}\|\D^{\iota+1}\ti{\psi}^\sharp_{\ell,0,k}\|_{\infty,\ti{Q}_{2R},\ti{g}_\ell(0)}\leq C\ve_\ell^{1+\alpha-\iota},
\end{equation}
for the first term in the last line of \eqref{mezzed}. As for the other term in that line, it is only nontrivial when $\iota=p=q=0$, and in that case we want to bound $d_\ell^{-\alpha}\|\de_{\ti{t}}\ti{\psi}^\sharp_{\ell,0,k}\|_{\infty,\ti{Q}_{2R},\ti{g}_\ell(0)}$. Since we have $\de_{\ti{t}}\ti{\psi}^\sharp_{\ell,0,k}=(\de_{\ti{t}}\ti{\psi}_{\ell,0,k})(\tilde{\mathbf{x}}_\ell,\ti{t}_\ell)$, we obtain from \eqref{unnumb}
\begin{equation}\label{mezzedim}
d_\ell^{-\alpha}\|\de_{\ti{t}}\ti{\psi}^\sharp_{\ell,0,k}\|_{\infty,\ti{Q}_{2R},\ti{g}_\ell(0)}\leq C\ve_\ell^\alpha,
\end{equation}
and combining \eqref{mezzed} with \eqref{mezzedi} and \eqref{mezzedim} proves \eqref{jet1}.

To prove \eqref{jet2}, given $p,q\geq 0$ with $p+2q=\iota$, to bound $d_\ell^{-\alpha}\D^p\de_{\ti{t}}^q\ti{\psi}^\sharp_{\ell,0,k}$ we again need to consider only two cases. The first case ($q=0,p=\iota$) is the one with only spatial derivatives that land on the spatial Taylor polynomial (writing again $\psi=d_\ell^{-\alpha}\ti{\psi}_{\ell,0,k}$)
\begin{equation}
\left[(\partial + \Gamma)^\iota+
\sum_{r=0}^{\iota-2}\nabla_{z,y,\ti{z}}^{\iota-2-r}\mathbf{A}\circledast(\de+\Gamma)^r\right]\sum_\gamma(\partial^{|\gamma|}\psi)(\tilde{\mathbf{x}}_\ell,\ti{t}_\ell) (\tilde{\mathbf{x}}-\tilde{\mathbf{x}}_\ell)^\gamma,
\end{equation}
whose $L^\infty$ norm on $\ti{Q}_{2R}$ is bounded by $C\ve_\ell^{2+\alpha-\iota}$ thanks to the estimates in Step (3) (with $r=\iota$) and Step (4) (with $\iota$ there replaced by $\iota-1$).
The second case only happens when $\iota=2$ and we have only 1 time derivative that lands on the $\ti{t}$-variable Taylor polynomial, which gives simply $d_\ell^{-\alpha}(\de_{\ti{t}}\ti{\psi}_{\ell,0,k})(\tilde{\mathbf{x}}_\ell,\ti{t}_\ell)$ and this is bounded by $C\ve_\ell^{\alpha}$ by \eqref{unnumb}. Putting these observations together proves \eqref{jet2}.

Combining \eqref{jet1} for $\iota=2$ with the bound $d_\ell^{-\alpha}[\DD^2\ti{\psi}_{\ell,0,k}]_{\a,\a/2,\ti{Q}_{R},\ti{g}_\ell(0)}\leq C$ from \eqref{estimate-psi-tilde-infty-10}, we see that
\begin{equation}\label{jet3}
d_\ell^{-\alpha}[\DD^2\ti{\psi}^*_{\ell,0,k}]_{\a,\a/2,\ti{Q}_{R},\ti{g}_\ell(0)}\leq C_R,
\end{equation}
and so we can apply Lemma \ref{lma:jet-interpolation} and get
\begin{equation}\label{jet4}
d_\ell^{-\alpha}\|\DD^\iota\ti{\psi}^*_{\ell,0,k}\|_{\infty,\ti{Q}_{R},\ti{g}_\ell(0)}\leq C_R,\quad
d_\ell^{-\alpha}[\DD^\iota\ti{\psi}^*_{\ell,0,k}]_{\a,\a/2,\ti{Q}_{R},\ti{g}_\ell(0)}\leq C_R,
\quad 0\leq\iota\leq 2.
\end{equation}
On the other hand, from \eqref{bdd-phi-star-underline} we also get uniform local parabolic $C^{\a,\a/2}$ bounds on $d_\ell^{-\alpha}e^{-d_\ell^2\lambda_\ell^{-2}\tilde t}\de_{\ti{t}}\underline{\tilde \chi_\ell^*}$ and $d_\ell^{-\alpha}\ddbar\left(e^{-d_\ell^2\lambda_\ell^{-2}\tilde t}\underline{\tilde \chi_\ell^* }\right)$, and so if we define
\begin{equation}
\ti{\rho}^*_\ell:=\ti{\psi}^*_{\ell,0,k}+e^{-d_\ell^2\lambda_\ell^{-2}\tilde t}\underline{\tilde \chi_\ell^* },
\end{equation}
then from \eqref{rho} we have $\ti{\rho}_\ell=\ti{\rho}^*_\ell+\ti{\psi}^\sharp_{\ell,0,k},$ and
\begin{equation}\label{krkl5}
d_\ell^{-\alpha}\left(\de_{\ti{t}}\ti{\rho}^*_\ell+d_\ell^2\lambda_\ell^{-2}\ti{\rho}^*_\ell\right)=d_\ell^{-\alpha}e^{-d_\ell^2\lambda_\ell^{-2}\tilde t}\de_{\ti{t}}\underline{\tilde \chi_\ell^*}
+d_\ell^{-\alpha}\left(\de_{\ti{t}}\ti{\psi}^*_{\ell,0,k}+d_\ell^2\lambda_\ell^{-2}\ti{\psi}^*_{\ell,0,k}\right),
\end{equation}
which by these estimates has  uniform local parabolic $C^{\a,\a/2}$ bounds. The same estimates also give us  uniform local parabolic $C^{\a,\a/2}$ bounds on $d_\ell^{-\alpha}\ddbar\ti{\rho}^*_\ell$, so passing to a subsequence, we have that
\begin{equation}
d_\ell^{-\alpha}\left(\de_{\ti{t}}\ti{\rho}^*_\ell+d_\ell^2\lambda_\ell^{-2}\ti{\rho}^*_\ell\right)\to u_\infty, \quad d_\ell^{-\alpha}\ddbar\ti{\rho}^*_\ell\to\eta_\infty,
\end{equation}
 in $C^{\gamma,\gamma/2}_{\rm loc}$ for all $0<\gamma<\a$, for a function $u_\infty\in C^{\a,\a/2}_{\rm loc}$ and a $(1,1)$-form $\eta_\infty\in C^{\a,\a/2}_{\rm loc}$  on $\C^{m+n}\times (-\infty,0]$. Moreover, thanks to \eqref{ainf} and \eqref{jet4}, we have
\begin{equation}
d_\ell^{-\a+2}\lambda_\ell^{-2}\|\ti{\rho}^*_\ell\|_{\infty, \ti{Q}_R}\leq C_Rd_\ell^2\lambda_\ell^{-2}+o(1)d_\ell^2=o(1),
\end{equation}
hence $d_\ell^{-\alpha}\de_{\ti{t}}\ti{\rho}^*_\ell\to u_\infty$ locally uniformly.

Next, we observe that by definition, for any fixed $R>0$ we have
\begin{equation}
d_\ell^{-\alpha}[e^{-d_\ell^2\lambda_\ell^{-2}\tilde t} \partial_{\tilde t}\tilde \chi_\ell ]_{\a,\a/2,\tilde Q_{R},\tilde g_\ell(0)}=
d_\ell^{-\alpha}[\partial_{\tilde t}\tilde \vp_\ell+d_\ell^2\lambda_\ell^{-2}\tilde \vp_\ell]_{\a,\a/2,\tilde Q_{R},\tilde g_\ell(0)}
=[\partial_{\hat{t}}\hat{\vp}_\ell+\lambda_\ell^{-2}\hat{\vp}_\ell]_{\a,\a/2,\hat{Q}_{R d_\ell},\hat{g}_\ell(0)}=o(1),
\end{equation}
thanks to \eqref{wompo}, and similarly from \eqref{wulpo},
\begin{equation}\label{krkl1}
d_\ell^{-\alpha}[e^{-d_\ell^2\lambda_\ell^{-2}\tilde t} \partial_{\tilde t}\underline{\tilde \chi_\ell } ]_{\a,\a/2,\tilde Q_{R},\tilde g_\ell(0)}
=[\partial_{\hat{t}}\underline{\hat{\vp}_\ell}+\lambda_\ell^{-2}\underline{\hat{\vp}_\ell}]_{\a,\a/2,\hat{Q}_{R d_\ell},\hat{g}_\ell(0)}=o(1),
\end{equation}
and since
\begin{equation}
e^{-d_\ell^2\lambda_\ell^{-2}\tilde t} \partial_{\tilde t}\tilde \chi_\ell-e^{-d_\ell^2\lambda_\ell^{-2}\tilde t} \partial_{\tilde t}\underline{\tilde \chi_\ell }=
\de_{\ti{t}}\ti{\psi}_{\ell,0,k}+d_\ell^2\lambda_\ell^{-2}\ti{\psi}_{\ell,0,k},
\end{equation}
we see that
\begin{equation}
d_\ell^{-\alpha}[\de_{\ti{t}}\ti{\psi}_{\ell,0,k}+d_\ell^2\lambda_\ell^{-2}\ti{\psi}_{\ell,0,k}]_{\a,\a/2,\tilde Q_{R},\tilde g_\ell(0)}=o(1),
\end{equation}
and using \eqref{estimate-psi-tilde-infty-10}, this gives
\begin{equation}\label{krkl3}
d_\ell^{-\alpha}[\de_{\ti{t}}\ti{\psi}_{\ell,0,k}]_{\a,\a/2,\tilde Q_{R},\tilde g_\ell(0)}=o(1),
\end{equation}
which combined with \eqref{jet1} implies
\begin{equation}\label{krkl3b}
d_\ell^{-\alpha}[\de_{\ti{t}}\ti{\psi}^*_{\ell,0,k}]_{\a,\a/2,\tilde Q_{R},\tilde g_\ell(0)}=o(1).
\end{equation}
Also, \eqref{jet4} implies that
\begin{equation}\label{krkl4}
d_\ell^{-\alpha}d_\ell^2\lambda_\ell^{-2}[\ti{\psi}^*_{\ell,0,k}]_{\a,\a/2,\tilde Q_{R},\tilde g_\ell(0)}=o(1).
\end{equation}
On the other hand, from \eqref{bdd-phi-sharp-underline} we see that
\begin{equation}
d_\ell^{-\alpha}[e^{-d_\ell^2\lambda_\ell^{-2}\tilde t} \partial_{\tilde t}\underline{\tilde \chi_\ell^\sharp } ]_{\a,\a/2,\tilde Q_{R},\tilde g_\ell(0)}=o(1),
\end{equation}
and which combined with \eqref{krkl1} gives
\begin{equation}\label{krkl2}
d_\ell^{-\alpha}[e^{-d_\ell^2\lambda_\ell^{-2}\tilde t} \partial_{\tilde t}\underline{\tilde \chi_\ell^* } ]_{\a,\a/2,\tilde Q_{R},\tilde g_\ell(0)}=o(1).
\end{equation}
Plugging \eqref{krkl3b}, \eqref{krkl4} and \eqref{krkl2} into  \eqref{krkl5} gives
\begin{equation}\label{krkl6}
d_\ell^{-\alpha}[\de_{\ti{t}}\ti{\rho}^*_\ell+d_\ell^2\lambda_\ell^{-2}\ti{\rho}^*_\ell]_{\a,\a/2,\tilde Q_{R},\tilde g_\ell(0)}=o(1),
\end{equation}
which implies that $u_\infty$ is constant in space-time and since its value at $(0,0)$ vanishes, we conclude that $u_\infty\equiv 0$ on $\mathbb{C}^{m+n}\times (-\infty,0]$. Thus, $d_\ell^{-\alpha}\de_{\ti{t}}\ti{\rho}^*_\ell\to 0$ locally uniformly.

On the other hand, given any $(\ti{x},\ti{t}),(\ti{x},\ti{t}')\in \ti{Q}_R$, we have
\begin{equation}\label{rokko}
d_\ell^{-\a}\ddbar\ti{\rho}^*_\ell(\ti{x},\ti{t})-d_\ell^{-\a}\ddbar\ti{\rho}^*_\ell(\ti{x},\ti{t}')=\ddbar \int_{\ti{t}'}^{\ti{t}}\left(d_\ell^{-\a}\de_{\ti{t}}\ti{\rho}^*_\ell\right)(\ti{x},\ti{s})d\ti{s},
\end{equation}
and since we know that $\int_{t'}^t\left(d_\ell^{-\a}\de_{\ti{t}}\ti{\rho}^*_\ell\right)(\ti{x},\ti{s})d\ti{s} \to 0$ locally uniformly (as a function of $\ti{x}$), it follows that the RHS of \eqref{rokko} converges to zero weakly as currents. Since the LHS of \eqref{rokko} converges in $C^\gamma_{\rm loc}$ to $\eta_\infty(\ti{x},\ti{t})-\eta_\infty(\ti{x},\ti{t}')$, we conclude that this H\"older continuous $(1,1)$-form (with $\ti{x}$ varying) is zero as a current, hence it is identically zero. This shows that $\eta_\infty$ is time-independent.

From \eqref{PMA-tilde-new} we have
\begin{equation}\label{crax}
\left(\de_{\ti{t}}-\Delta_{\ti{\omega}^\sharp_\ell}\right)\ti{\rho}^*_\ell + d_\ell^2\lambda_\ell^{-2}\ti{\rho}^*_\ell=\mathcal{E}_1+\mathcal{E}_2+\mathcal{E}_3-\left(\de_{\ti{t}}-\Delta_{\ti{\omega}^\sharp_\ell}\right)\ti{\psi}^\sharp_{\ell,0,k}-d_\ell^2\lambda_\ell^{-2}\ti{\psi}^\sharp_{\ell,0,k},
\end{equation}
and from \eqref{crox} and \eqref{jet1} we see that applying $d_\ell^{-\alpha}[\cdot]_{\a,\a/2,\ti{Q}_R,\ti{g}_\ell(0)}$ to the RHS of \eqref{crax} we get $o(1)$ as $\ell\to+\infty$. We can then multiply \eqref{crax} by $d_\ell^{-\alpha}$, and since the LHS converges in $C^{\gamma,\gamma/2}_{\rm loc}$ we can pass to the limit (recalling that $u_\infty\equiv 0$) and get
\begin{equation}\label{craxi}
\tr_{g_{\C^{m+n}}}\eta_\infty=c,
\end{equation}
on $\C^{m+n}\times (-\infty,0]$, for some time-independent constant $c$. Since the value of the LHS of \eqref{craxi} at $(0,0)$ is zero, this forces $c=0$, i.e.
\begin{equation}\label{craxi2}
\tr_{g_{\C^{m+n}}}\eta_\infty=0.
\end{equation}
Since the $C^{\gamma}_{\rm loc}$ form $\eta_\infty$ is time-independent and weakly closed, it can be written as $\eta_\infty=\ddbar v_\infty$ for some time-independent function $v_\infty\in C^{2+\gamma}_{\rm loc}(\mathbb{C}^{m+n})$. From \eqref{craxi2} we see that $\Delta_{g_{\mathbb{C}^{m+n}}}v_\infty=0$, so $\eta_\infty$ is smooth by elliptic regularity, and passing to the limit \eqref{bdd-phi-star-underline} and \eqref{jet3} we see that $|\ddbar v_\infty|=O(|z|^\a)$. Thus each component $\partial_\a \partial_{\bar\b}v_\infty$ of $\ddbar v_\infty$ satisfies $\Delta_{g_{\mathbb{C}^{m+n}}} \partial_\a \partial_{\bar\b}v_\infty=0$ and $|\partial_\a \partial_{\bar\b}v_\infty|=O(|z|^\a)$, so by the standard Liouville Theorem for harmonic functions we have that $\eta_\infty$ has constant coefficients, hence it vanishes identically since its value at $(0,0)$ is zero.

This implies that
\begin{equation}\label{fukoff}
d_\ell^{-\alpha}\ddbar\ti{\rho}^*_\ell\to 0,
\end{equation}
locally uniformly on $\C^{m+n}\times(-\infty,0]$ in the coordinates $(\ti{\mathbf{x}},\ti{t})$. Recall that, by definition, we have
\begin{equation}\label{deff}
\ti{\vp}_\ell=\ti{\rho}^*_\ell+\ti{\psi}^\sharp_{\ell,0,k}+e^{-d_\ell^2\lambda_\ell^{-2}\tilde t}\underline{\tilde \chi_\ell^\sharp},
\end{equation}
and that $\underline{\tilde \chi_\ell^\sharp}$ is a polynomial on $\C^m$ of degree at most $2$, while $\ti{\psi}^\sharp_{\ell,0,k}$ is a polynomial on $\C^{m+n}$ (in the $(\ti{\mathbf{x}},\ti{t})$ coordinates) of degree at most $2$. To convert $\ddbar$ into $\D$-derivatives, schematically we have $\ddbar=\ti{J}_\ell\circledast\D^2+(\D\ti{J}_\ell)\circledast \D$, with the bounds (cf. \cite[(4.304)]{HT3})
\begin{equation}\label{buk}
\|\D^\iota \ti{J}_\ell\|_{\infty,\ti{Q}_{Rd_\ell^{-1}},\ti{g}_\ell(0)}\leq C\ve_\ell^{-\iota},\quad [\D^\iota \ti{J}_\ell]_{\a,\a/2,\ti{Q}_{Rd_\ell^{-1}},\ti{g}_\ell(0)}\leq C\ve_\ell^{-\iota-\a}.
\end{equation}
Since $\ti{J}_\ell$ and $\D$ are independent of $\ti{t}$, it follows that
\begin{equation}
\de_{\ti{t}}\ddbar\ti{\psi}^\sharp_{\ell,0,k}=\ti{J}_\ell\circledast\D^2\de_{\ti{t}}\ti{\psi}^\sharp_{\ell,0,k}+(\D\ti{J}_\ell)\circledast \D\de_{\ti{t}}\ti{\psi}^\sharp_{\ell,0,k}=0,
\end{equation}
so, as in the proof of \eqref{mezzed} we can bound
\begin{equation}\label{qursq}
\begin{split}
d_\ell^{-\a}[\ddbar\ti{\psi}^\sharp_{\ell,0,k}]_{\a,\a/2,\ti{Q}_{R},\ti{g}_\ell(0)}&\leq CR^{1-\alpha}d_\ell^{-\a}\|\D\ddbar\ti{\psi}^\sharp_{\ell,0,k}\|_{\infty, \ti{Q}_{2R},\ti{g}_\ell(0)}\\
&\leq CR^{1-\alpha}d_\ell^{-\a}\big(\|\D^3\ti{\psi}^\sharp_{\ell,0,k}\|_{\infty, \ti{Q}_{2R},\ti{g}_\ell(0)}+\ve_\ell^{-1}\|\D^2\ti{\psi}^\sharp_{\ell,0,k}\|_{\infty, \ti{Q}_{2R},\ti{g}_\ell(0)}\\
&+\ve_\ell^{-2}\|\D\ti{\psi}^\sharp_{\ell,0,k}\|_{\infty, \ti{Q}_{2R},\ti{g}_\ell(0)}\big)\\
&\leq CR^{1-\alpha}\ve_\ell^{\a-2}=o(1),
\end{split}
\end{equation}
using \eqref{buk} for the second inequality and \eqref{jet2} for the third one. Also, from the bounds \eqref{bdd-phi-sharp-underline}, it follows easily that
\begin{equation}\label{qurq}
d_\ell^{-\a}\left[e^{-d_\ell^2\lambda_\ell^{-2}\tilde t}\ddbar\underline{\tilde \chi_\ell^\sharp}\right]_{\a,\a/2,\ti{Q}_{R},\ti{g}_\ell(0)}=o(1),
\end{equation}
and so using \eqref{fukoff}, \eqref{deff}, \eqref{qursq} and \eqref{qurq}, and recalling also \eqref{beaner}, we see that
\begin{equation}
d_\ell^{-\a}|\ddbar\ti{\vp}_\ell(\ti{x}_\ell,0)-\P_{\tilde x'_\ell\tilde x_\ell}\ddbar\ti{\vp}_\ell(\ti{x}_\ell',\ti{t}'_\ell)|_{\ti{g}_\ell(0)}=o(1),
\end{equation}
a contradiction to \eqref{sup-realized-20}.

\subsubsection{The case $j\geq 1$. Killing the contribution of $\tilde A_{\ell,i,p,k}$}
In the rest of this section, we will assume that $j\geq 1$.
The goal of this subsection is to prove a precise estimate on $\tilde A_{\ell,i,p,k}$:
for all $a\geq 2j$, $a\in 2\mathbb{N}$ and $\a\leq \b<1$, there is $C>0$ such that
\begin{equation}\label{improved-A-estimate-subcaseA}
d_\ell^{-2j-\a}\sum_{i=1}^j\sum_{p=1}^{N_{i,k}} \e_\ell^{a-2j-2}[\mathfrak{D}^a \tilde A^*_{\ell,i,p,k}]_{\b,\b/2,\tilde Q_{O(\e_\ell)},\tilde g_\ell(0)}\leq C \e_\ell^{\a-\b},
\end{equation}
where here and in the rest of this section we use the notation $O(\e_\ell)$ for a radius $R$ such that $\Lambda\e_\ell \leq R\leq \Lambda^2\e_\ell$ where $\Lambda>1$ is the fixed constant from \eqref{equiv-product-ref} (so that the $\ti{g}_\ell(0)$-geodesic ball centered at $\ti{x}_\ell$ with radius $R$ contains a Euclidean ball of radius $R/2$ times the whole $Y$ fiber). Note that since $j\geq 1$, we have $a\geq 2$.

Observe that once \eqref{improved-A-estimate-subcaseA} is established for all even $a\geq 2j$, the same estimate will also hold for all $a\geq 2j$ by interpolation: indeed if $a\geq 2j+1$ is odd, then
for any $0<\rho<R$,
\begin{equation}
\begin{split}
d_\ell^{-2j-\a}\e_\ell^{a-2j-2}[\mathfrak{D}^a \tilde A^*_{\ell,i,p,k}]_{\b,\b/2,\tilde Q_{\rho},\tilde g_\ell(0)}&\leq
C(R-\rho)d_\ell^{-2j-\a}\e_\ell^{a-2j-2}[\mathfrak{D}^{a+1} \tilde A^*_{\ell,i,p,k}]_{\b,\b/2,\tilde Q_{R},\tilde g_\ell(0)}\\
&+C(R-\rho)^{-1}d_\ell^{-2j-\a}\e_\ell^{a-2j-2}[\mathfrak{D}^{a-1} \tilde A^*_{\ell,i,p,k}]_{\b,\b/2,\tilde Q_{R},\tilde g_\ell(0)}\\
&\leq C(R-\rho)\ve_\ell^{\alpha-\beta-1}+C(R-\rho)^{-1}\ve_\ell^{\alpha-\beta+1},
\end{split}
\end{equation}
and taking $\rho,R=O(\ve_\ell)$ gives the claim.

Thus, once \eqref{improved-A-estimate-subcaseA} is established, taking $a=2j+2+\iota$ (with $-2\leq\iota\leq 2k$), and $\beta>\alpha$ gives an $o(1)$ bound for the $C^{\beta,\beta/2}$ seminorm on the cylinder centered at $(\ti{x}_\ell,\ti{t}_\ell)$ of radius $2$, which contains the other blowup point $(\ti{x}'_\ell,\ti{t}'_\ell)$ which lies at distance $1$, and hence an $o(1)$ bound for the $C^{\alpha,\alpha/2}$ seminorm on the same cylinder,  which kills the contribution of $\DD^{2j+2+\iota}\tilde A^*_{\ell,i,p,k}$ in \eqref{sup-realized-3}.

Apart from the fact that in our parabolic setting we only work with derivatives of even order (as was explained earlier),
the overall argument to prove \eqref{improved-A-estimate-subcaseA} will be similar to the one to prove \cite[(4.252)]{HT3},  replacing $\D$ by $\mathfrak{D}$, $j$ by $2j$ and $t$ by $t_\ell$.
As in \cite[(4.254)]{HT3}, we use \eqref{estimate-psi-tilde-infty-1}, \eqref{bdd-phi-star-underline}, \eqref{bdd-phi-sharp-underline}, \eqref{circ-e-nonzero}, \eqref{eta-dagger}  and Proposition \ref{prop:noncancellation} to conclude that for all $a\geq 2j$, $1\leq i\leq j, 1\leq p\leq N_{i,k}$, $a\in 2\mathbb{N}$ and  $\a\leq \b<1$, there is $C>0$ so that
\begin{equation}\label{main-goal-toBeKilled-1}
\begin{split}
&\quad d_\ell^{-2j-\a}\e_\ell^{a-2j-2}[\mathfrak{D}^a\tilde A_{\ell,i,p,k}]_{\b,\b/2,\tilde Q_{O(\e_\ell)},\tilde g_\ell(0)}\\
&\leq o(\e_\ell^{\a-\b})+ C \e_\ell^{a-2j}d_\ell^{-2j-\a} [\mathfrak{D}^a_{\bf bt} \tilde \eta_\ell]_{\b,\b/2,\mathrm{base},\tilde Q_{O(\e_\ell)},\tilde g_\ell(0)}\\
&\quad +Ce^{-(1-\b)\frac{t_\ell}{2}} \sum_{b=2j+1}^a \lambda_\ell^{-\b} \delta_\ell^{a-2j}d_\ell^{-b+\b-\a}\|\mathfrak{D}^b(\ti\eta_\ell^\circ+\ti\eta_\ell^\diamond+\ti\eta_{\ell,j,k}) \|_{\infty,\tilde Q_{O(\e_\ell)},\tilde g_\ell(0)}\\
&\quad +C\sum_{r=1}^{i-1}\sum_{q=1}^{N_{p,k}}\Big( d_\ell^{-2j-\a} \e_\ell^{2k+a-2j} [\mathfrak{D}^{a+2k+2}\tilde A_{\ell,r,q,k}]_{\b,\b/2,\tilde Q_{O(\e_\ell)},\tilde g_\ell(0)}\\
&\quad  +\sum_{b=0}^{a+2k+2} d_\ell^{-2j-\a} \e_\ell^{a-2j-2} d_\ell^{a-b+\b}e^{-(2k+2+1-\b)\frac{t_\ell}{2}} \lambda_\ell^{b-a-\b} \|\mathfrak{D}^b \tilde A_{\ell,r,q,k}\|_{\infty,\tilde Q_{O(\e_\ell)},\tilde g_\ell(0)} \Big).
\end{split}
\end{equation}

Our goal is to show that each terms on the right hand side of \eqref{main-goal-toBeKilled-1} is of $O(\e_\ell^{\a-\b})$ which implies \eqref{improved-A-estimate-subcaseA}.
\bigskip

We first treat the second term,  $[\mathfrak{D}^a_{\bf bt} \tilde \eta_\ell]_{\b,\b/2,\mathrm{base},\tilde Q_{O(\e_\ell)},\tilde g_\ell(0)}$.  We start with noting that we can interchange $\ti{\eta}_\ell$ with $\ti\eta_\ell^\circ+\ti\eta_\ell^\diamond+\ti\eta_{\ell,j,k}$ thanks to \eqref{gorgol} and \eqref{4.173}: for all $a\geq 2j$, $a\in 2\mathbb{N}$,
\begin{equation}\label{gorgu}
\e_\ell^{a-2j}d_\ell^{-2j-\a} [\mathfrak{D}^a_{\bf bt} \tilde\eta_\ell]_{\b,\b/2,\mathrm{base},\tilde Q_{O(\e_\ell)},\tilde g_\ell(0)}
\leq  \e_\ell^{a-2j}d_\ell^{-2j-\a} [\mathfrak{D}^a (\tilde \eta_{\ell,j,k}+\tilde \eta_\ell^\diamond+\tilde\eta_\ell^\circ)]_{\b,\b/2,\tilde Q_{O(\e_\ell)},\tilde g_\ell(0)}+o(\e_\ell^{\a-\b}).
\end{equation}
To bound the RHS of \eqref{gorgu} using the parabolic Schauder estimates in Proposition \ref{prop:schauder2}, we need to pass to the check picture via the diffeomorphism
\begin{equation}\label{pi}
\Pi_\ell:   B_{e^{\frac{t_\ell}{2}}}\times Y\times[-e^{t_\ell}t_\ell,0]  \to B_{d_\ell^{-1}\lambda_\ell}\times Y\times [-d_\ell^{-2}\lambda_\ell^2t_\ell,0], \;\, (\ti{z},\ti{y},\ti{t}) = \Pi_\ell(\check{z},\check{y},\check{t}) =(\ve_\ell\check{z},\check{y},\ve_\ell^2\check{t}),
\end{equation}
pulling back all geometric quantities and scaling $2$-forms
by $\ve_\ell^{-2}$. We can then apply Proposition \ref{prop:schauder2} to $\check\eta_{\ell,j,k}+\check \eta_\ell^\diamond+\check\eta_\ell^\circ$ and then transfer the result back to the tilde picture. This shows that given any radius $R=O(\ve_\ell)$ and $0<\rho<R$, and letting $\ti{R}=\rho+\frac{1}{2}(R-\rho)$, we have
\begin{equation}\label{luz}
\begin{split}
\e_\ell^{a-2j}d_\ell^{-2j-\a} [\mathfrak{D}^a (\tilde \eta_{\ell,j,k}+\tilde \eta_\ell^\diamond+\tilde\eta_\ell^\circ)]_{\b,\b/2,\tilde Q_{\rho},\tilde g_\ell(0)}
&\leq C\e_\ell^{a-2j}d_\ell^{-2j-\a} \left[\mathfrak{D}^{a-2}\left(\de_{\ti{t}}-\Delta_{\tilde\omega_\ell^\sharp}\right)(\tilde \eta_{\ell,j,k}+\tilde \eta_\ell^\diamond+\tilde\eta_\ell^\circ)\right]_{\b,\b/2,\tilde Q_{\ti{R}},\tilde g_\ell(0)}\\
&+C\e_\ell^{a-2j}d_\ell^{-2j-\a}(R-\rho)^{-a-\beta}\|\tilde \eta_{\ell,j,k}+\tilde \eta_\ell^\diamond+\tilde\eta_\ell^\circ\|_{\infty,\tilde Q_{\ti{R}},\tilde g_\ell(0)}\\
&\leq C\e_\ell^{a-2j}d_\ell^{-2j-\a} \left[\mathfrak{D}^{a-2}\left(\de_{\ti{t}}-\Delta_{\tilde\omega_\ell^\sharp}\right)(\tilde \eta_{\ell,j,k}+\tilde \eta_\ell^\diamond+\tilde\eta_\ell^\circ)\right]_{\b,\b/2,\tilde Q_{\ti{R}},\tilde g_\ell(0)}\\
&+C\ve_\ell^{a+\alpha}(R-\rho)^{-a-\beta},
\end{split}
\end{equation}
using \eqref{estimate-psi-tilde-infty-1}, \eqref{gorgo} and \eqref{circ-e-nonzero}, where here $\Delta_{\tilde\omega_\ell^\sharp}$ denotes the Hodge Laplacian acting on forms. This is the parabolic analog of \cite[(4.259)]{HT3}. Recalling that $\tilde \eta_{\ell,j,k}+\tilde \eta_\ell^\diamond+\tilde\eta_\ell^\circ=\ddbar\ti{\rho}_\ell,$
and using that the Hodge Laplacian of a K\"ahler metric commutes with $\ddbar$, we have
\begin{equation}
\begin{split}
&\left[\mathfrak{D}^{a-2}\left(\de_{\ti{t}}-\Delta_{\tilde\omega_\ell^\sharp}\right)(\tilde \eta_{\ell,j,k}+\tilde \eta_\ell^\diamond+\tilde\eta_\ell^\circ)\right]_{\b,\b/2,\tilde Q_{\ti{R}},\tilde g_\ell(0)}=\left[\mathfrak{D}^{a-2}\ddbar\left(\de_{\ti{t}}-\Delta_{\tilde\omega_\ell^\sharp}\right)\ti{\rho}_\ell\right]_{\b,\b/2,\tilde Q_{\ti{R}},\tilde g_\ell(0)},
\end{split}
\end{equation}
and using the PDE \eqref{PMA-tilde-new}, the triangle inequality, and the boundedness of $\P$-parallel transport, this can be bounded by
\begin{equation}
Cd_\ell^2\lambda_\ell^{-2}\left[\mathfrak{D}^{a-2}(\tilde \eta_{\ell,j,k}+\tilde \eta_\ell^\diamond+\tilde\eta_\ell^\circ)\right]_{\b,\b/2,\tilde Q_{\ti{R}},\tilde g_\ell(0)}
+C\left[\mathfrak{D}^{a-2}\ddbar(\mathcal{E}_1+\mathcal{E}_2+\mathcal{E}_3)\right]_{\b,\b/2,\tilde Q_{\ti{R}},\tilde g_\ell(0)},
\end{equation}
and inserting these into \eqref{luz} and interpolating again with Proposition \ref{prop:interpolation}, we get
\begin{equation}\label{luz2}
\begin{split}
&\e_\ell^{a-2j}d_\ell^{-2j-\a} [\mathfrak{D}^a (\tilde \eta_{\ell,j,k}+\tilde \eta_\ell^\diamond+\tilde\eta_\ell^\circ)]_{\b,\b/2,\tilde Q_{\rho},\tilde g_\ell(0)}
\leq Cd_\ell^2\lambda_\ell^{-2}\e_\ell^{a-2j}d_\ell^{-2j-\a} \left[\mathfrak{D}^{a}(\tilde \eta_{\ell,j,k}+\tilde \eta_\ell^\diamond+\tilde\eta_\ell^\circ)\right]_{\b,\b/2,\tilde Q_{R},\tilde g_\ell(0)}\\
&+C\e_\ell^{a-2j}d_\ell^{-2j-\a}\left[\mathfrak{D}^{a-2}\ddbar(\mathcal{E}_1+\mathcal{E}_2+\mathcal{E}_3)\right]_{\b,\b/2,\tilde Q_{\ti{R}},\tilde g_\ell(0)}+C\ve_\ell^{a+\alpha}(R-\rho)^{-a-\beta}\\
&\leq \frac{1}{4}\e_\ell^{a-2j}d_\ell^{-2j-\a} \left[\mathfrak{D}^{a}(\tilde \eta_{\ell,j,k}+\tilde \eta_\ell^\diamond+\tilde\eta_\ell^\circ)\right]_{\b,\b/2,\tilde Q_{R},\tilde g_\ell(0)}\\
&+C\e_\ell^{a-2j}d_\ell^{-2j-\a}\left[\mathfrak{D}^{a-2}\ddbar(\mathcal{E}_1+\mathcal{E}_2+\mathcal{E}_3)\right]_{\b,\b/2,\tilde Q_{\ti{R}},\tilde g_\ell(0)}+C\ve_\ell^{a+\alpha}(R-\rho)^{-a-\beta}.
\end{split}
\end{equation}
The main claim is then the following:
\begin{claim}\label{main-claim}For all $a\geq 2j$, $a\in 2\mathbb{N}$, $R=O(\ve_\ell),$ $0<\rho<R$, $\a\leq \b<1$, if we let $\ti{R}=\rho+\frac{1}{2}(R-\rho)$, then we have
\begin{equation}\label{claim52}
\begin{split}
C\e_\ell^{a-2j}d_\ell^{-2j-\a}\left[\mathfrak{D}^{a-2}\ddbar(\mathcal{E}_1+\mathcal{E}_2+\mathcal{E}_3)\right]_{\b,\b/2,\tilde Q_{\ti{R}},\tilde g_\ell(0)}&\leq
\frac{1}{4}\e_\ell^{a-2j}d_\ell^{-2j-\a} \left[\mathfrak{D}^{a}(\tilde \eta_{\ell,j,k}+\tilde \eta_\ell^\diamond+\tilde\eta_\ell^\circ)\right]_{\b,\b/2,\tilde Q_{R},\tilde g_\ell(0)}\\
&+
C\sum_{r=0}^a \e_\ell^{\a-\b+r}(R-\rho)^{-r}+C\sum_{r=0}^{a}\e_\ell^{\a+r} (R-\rho)^{-r-\beta}.
\end{split}
\end{equation}
\end{claim}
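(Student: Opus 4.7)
The plan is to prove Claim \ref{main-claim} by estimating each of $\mathcal{E}_1,\mathcal{E}_2,\mathcal{E}_3$ separately, using the fact that $\mathfrak{D}^{a-2}\ddbar$ is a specific component of $\mathfrak{D}^a$ (up to commutator errors of the form $\D\tilde J_\ell$ controlled by \eqref{inter-estimate-0}), so that $[\mathfrak{D}^{a-2}\ddbar\cdot]_{\b,\b/2}$ is dominated by $[\mathfrak{D}^a\cdot]_{\b,\b/2}$ plus lower-order terms that will only produce error contributions.

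First, I would dispense with $\mathcal{E}_3$: by \eqref{E3}, $\mathcal{E}_3$ is built from $\tilde A^\sharp_{\ell,i,p,k}$ and $\partial_{\tilde t}\underline{\tilde\chi_\ell^\sharp}$ only, with a time-dependent prefactor $e^{-d_\ell^2\lambda_\ell^{-2}\tilde t}$ whose time derivatives carry factors of $d_\ell^2\lambda_\ell^{-2}=\ve_\ell^2 e^{-t_\ell}$. Applying \eqref{A-sharp-potential} and \eqref{bdd-phi-sharp-underline} and a Leibniz expansion (taking care that the $(\partial_{\tilde t})^q$ landing on the exponential factor produce harmless extra powers of $d_\ell^2\lambda_\ell^{-2}$) immediately yields the bound $\e_\ell^{a-2j}d_\ell^{-2j-\a}[\mathfrak{D}^{a-2}\ddbar\mathcal{E}_3]_{\b,\b/2,\tilde Q_{\tilde R},\tilde g_\ell(0)}=o(\ve_\ell^{\a-\b})$, which is absorbed in the $r=0$ term on the RHS of \eqref{claim52}. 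For $\mathcal{E}_2$, recall it is the sum of $\log\frac{(\tilde\omega_\ell^\sharp)^{m+n}}{\binom{m+n}{n}\tilde\omega_{\ell,\mathrm{can}}^m\wedge(\ve_\ell^2\Theta_\ell^*\Psi_\ell^*\omega_F)^n}$ and the linear term $nd_\ell^2\lambda_\ell^{-2}\tilde t$. The latter vanishes under $\mathfrak{D}^{a-2}\ddbar$ whenever $a>2$, and otherwise is $o(1)$ thanks to the coefficient $d_\ell^2\lambda_\ell^{-2}=\ve_\ell^{-2}$. For the logarithmic piece, I would expand along the same lines as in the corresponding analysis in \cite[\S 4.9.7]{HT3}, writing $\tilde\omega_\ell^\sharp = \tilde\omega_\ell^\natural + \tilde\eta_\ell^\dagger + \tilde\eta_\ell^\ddagger$ and using \eqref{estimate-can-tilde-infty-1}, \eqref{sharp-metric-decay-0}, \eqref{F-metric-decay}, \eqref{gorgol}, \eqref{dager-estimate}. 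Since $\tilde\eta_\ell^\dagger$ decays as $e^{-\frac{\alpha^2}{2(2j+\alpha)}t_\ell}$ and $\tilde\eta_\ell^\ddagger=o(1)$ in the relevant norms, the resulting bound will again be absorbed into the $r=0$ terms.

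The core of the argument is the treatment of $\mathcal{E}_1 = \log(1+\mathbf{A}+\mathbf{B})-\mathbf{A}$, where $\mathbf{A}=\tr_{\tilde\omega_\ell^\sharp}(\tilde\eta_\ell^\circ+\tilde\eta_\ell^\diamond+\tilde\eta_{\ell,j,k})$ and $\mathbf{B}=\mathcal{E}_4$. I would apply the schematic derivative expansion \eqref{log-estimate} with $a$ replaced by our current order, and regroup the output as
\[
\mathfrak{D}^{a-2}\ddbar\mathcal{E}_1 \;=\; c_\ell \circledast \mathfrak{D}^a(\tilde\eta_\ell^\circ+\tilde\eta_\ell^\diamond+\tilde\eta_{\ell,j,k}) \;+\; (\text{strictly lower-order terms}),
\]
where $c_\ell$ is a rational expression built from $\mathbf{A},\mathbf{B},\tilde\omega_\ell^\sharp$ and their derivatives of order $<a$, which by \eqref{E3-trace-estimate} satisfies $\|c_\ell\|_{\infty,\tilde Q_R,\tilde g_\ell(0)}=o(1)$ locally uniformly. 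Applying the product rule
$[c_\ell \circledast \mathfrak{D}^a\tilde\eta]_{\b,\b/2}\leq \|c_\ell\|_\infty[\mathfrak{D}^a\tilde\eta]_{\b,\b/2}+[c_\ell]_{\b,\b/2}\|\mathfrak{D}^a\tilde\eta\|_\infty$, the first resulting piece yields, after multiplying by the prefactor $C\ve_\ell^{a-2j}d_\ell^{-2j-\a}$ and using $\|c_\ell\|_\infty=o(1)$, exactly the absorbable $\tfrac{1}{4}$-term on the RHS of \eqref{claim52}. The second piece, together with all ``strictly lower-order'' terms (involving $\mathfrak{D}^b\tilde\eta$ for $b<a$ paired with suitable H\"older factors of $c_\ell$), is estimated by interpolating via Proposition \ref{prop:interpolation} between the nested cylinders $\tilde Q_\rho\subset\tilde Q_{\tilde R}\subset \tilde Q_R$, producing the powers $(R-\rho)^{-r}$ and $(R-\rho)^{-r-\b}$ in the error terms of \eqref{claim52}; the powers $\ve_\ell^r$ and $\ve_\ell^{\a-\b+r}$ arise from the explicit decay rates in \eqref{estimate-psi-tilde-infty-1}, \eqref{gorgo}, \eqref{circ-e-nonzero}, and \eqref{sharp-metric-decay-0}, combined with the prefactor $\ve_\ell^{a-2j}d_\ell^{-2j-\a}$.

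The main obstacle, as in the elliptic counterpart \cite[Proof of (4.259)]{HT3}, is the bookkeeping: verifying that every appearance of the top-order derivative $\mathfrak{D}^a(\tilde\eta_\ell^\circ+\tilde\eta_\ell^\diamond+\tilde\eta_{\ell,j,k})$ is multiplied by a coefficient that is truly $o(1)$ (and not merely bounded), so that the absorption step is valid for $\ell$ large. A new parabolic subtlety is that time derivatives in $\mathfrak{D}^a$ may fall on the exponential prefactors $e^{-d_\ell^2\lambda_\ell^{-2}\tilde t}$ present throughout the expressions, but these always produce small factors of $d_\ell^2\lambda_\ell^{-2}=\ve_\ell^{-2}$, which only helps. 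Once the absorption is carried out and the iteration lemma \cite[Lemma 2.9]{HT3} is invoked to dispose of the $\tfrac{1}{4}$-term, the rest of the argument feeding into \eqref{luz2} goes through as in the elliptic setting, completing the proof of Claim \ref{main-claim}.
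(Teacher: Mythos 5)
The core gap is in your treatment of $\mathcal{E}_2$ and $\mathcal{E}_3$. You propose estimating these pieces separately by direct expansion; the paper instead treats $\mathcal{E}_2+\mathcal{E}_3$ \emph{together}, pulls it back to the check picture via $\Pi_\ell$, recognizes $-\delta_\ell^{-2j-\alpha}\Pi_\ell^*(\mathcal{E}_2+\mathcal{E}_3)$ as \emph{exactly} the quantity \eqref{stronzo} in the Selection Theorem \ref{thm:Selection}, and then invokes the Selection Theorem's conclusion that locally uniform convergence upgrades to locally smooth convergence. This is what produces the bounds \eqref{E_2-control} for \emph{all} derivative orders $b\geq 0$, not just $b\leq 2j$. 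Note that Proposition \ref{prop:Estimates-RHS} (via \eqref{error2-bad-1}--\eqref{error2-bad-2}) only handles derivatives up to order $2j$; for Claim \ref{main-claim} one needs $a\geq 2j$ arbitrary (even), and the Selection Theorem is precisely the device that supplies the missing higher-order information. The obstruction functions $G_{i,p,k}$ were selected earlier exactly so that this combination of the logarithm, the $\check{\mathfrak{G}}$-term and $\Sigma_\ell^*B^\sharp_\ell$ would have the required smoothing structure; splitting $\mathcal{E}_2$ from $\mathcal{E}_3$ discards that structure. A direct Taylor expansion of the logarithm in $\mathcal{E}_2$ at derivative order $a>2j$ would run into the $\ve_\ell^{-\iota}$ blow-up in the derivatives of $\ve_\ell^2\Theta_\ell^*\Psi_\ell^*\omega_F$ (see \eqref{estimate-can-tilde-infty-1}) and into the unfavorable derivative estimates on $\tilde\eta_\ell^\dagger$ for large order; the manifest cancellation in \eqref{checkko} is between the numerator and denominator of the logarithm and is not captured by naive term-by-term estimation. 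You should realize that the Selection Theorem is not optional here; it is the central ingredient, and indeed its statement (hypotheses \eqref{linftyyy0} on $\hat A^\sharp$ and local smooth convergence of $\hat B^\sharp_\ell$) has to be verified in the course of the proof, which the paper does carefully.

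Your treatment of $\mathcal{E}_1$ is roughly aligned with the paper's (expand via \eqref{log-estimate}, isolate the top-order term, absorb with a small coefficient, and sweep the rest into interpolation errors), but you do not address a parabolic subtlety that the paper has to resolve: the induction is only on \emph{even} orders $a\in 2\mathbb{N}$, so the inductive hypothesis \eqref{gurgu} only gives bounds for $\mathfrak{D}^b(\tilde\eta_\ell^\circ+\tilde\eta_\ell^\diamond+\tilde\eta_{\ell,j,k})$ at $b\leq a-2$, not at the ``in-between'' orders $a-1$ and $a$. The paper fills this gap with the ad hoc interpolation inequalities \eqref{induction-eta-tilde-interpolated2}--\eqref{induction-eta-tilde-interpolated3}, which trade a factor of $(R-\rho)^{-1}$ for the missing derivative control, feeding these back through the log expansion and exploiting that $\max(a-1,2j)\geq 2>1+\beta$ so only one factor in each product in \eqref{log-estimate} can be a high-order one. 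This bookkeeping is genuinely nontrivial in the parabolic setting, and your proposal glosses over it with ``strictly lower-order terms.'' Without both the Selection Theorem and the even-order interpolation trick, the proof does not close.
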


Before giving the proof of Claim \ref{main-claim}, we establish some of its consequences.  Suppose Claim \ref{main-claim} has been proved for some $a\geq 2j$,  then plugging it into \eqref{luz2} we get for all $R=O(\ve_\ell)$ and $0<\rho<R$,
\begin{equation}\label{kerku}
\begin{split}
\e_\ell^{a-2j}d_\ell^{-2j-\a} [\mathfrak{D}^a (\tilde \eta_{\ell,j,k}+\tilde \eta_\ell^\diamond+\tilde\eta_\ell^\circ)]_{\b,\b/2,\tilde Q_{\rho},\tilde g_\ell(0)}
&\leq\frac{1}{2}\e_\ell^{a-2j}d_\ell^{-2j-\a} \left[\mathfrak{D}^{a}(\tilde \eta_{\ell,j,k}+\tilde \eta_\ell^\diamond+\tilde\eta_\ell^\circ)\right]_{\b,\b/2,\tilde Q_{R},\tilde g_\ell(0)}\\
&+C\sum_{r=0}^a \e_\ell^{\a-\b+r}(R-\rho)^{-r}+C\sum_{r=0}^{a}\e_\ell^{\a+r} (R-\rho)^{-r-\beta},
\end{split}
\end{equation}
and then the iteration lemma in \cite[Lemma 2.9]{HT3} gives
\begin{equation}
\e_\ell^{a-2j}d_\ell^{-2j-\a} [\mathfrak{D}^a (\tilde \eta_{\ell,j,k}+\tilde \eta_\ell^\diamond+\tilde\eta_\ell^\circ)]_{\b,\b/2,\tilde Q_{\rho},\tilde g_\ell(0)}
\leq  C\sum_{r=0}^a \e_\ell^{\a-\b+r}(R-\rho)^{-r}+C\sum_{r=0}^{a}\e_\ell^{\a+r} (R-\rho)^{-r-\beta},
\end{equation}
and choosing now $\rho=O(\ve_\ell)$ gives
\begin{equation}\label{kukku}
\e_\ell^{a-2j}d_\ell^{-2j-\a} [\mathfrak{D}^a (\tilde \eta_{\ell,j,k}+\tilde \eta_\ell^\diamond+\tilde\eta_\ell^\circ)]_{\b,\b/2,\tilde Q_{O(\ve_\ell)},\tilde g_\ell(0)}
\leq  C\ve_\ell^{\a-\beta},
\end{equation}
and this can be inserted back into \eqref{gorgu} to finally give
\begin{equation}
\begin{split}
 \e_\ell^{a-2j}d_\ell^{-2j-\a} [\mathfrak{D}^a_{\bf bt}\tilde\eta_\ell]_{\b,\b/2,\mathrm{base},\tilde Q_{O(\e_\ell)},\tilde g_\ell(0)}
  &\leq C\e_\ell^{\a-\b},
\end{split}
\end{equation}
which would show that the second term on the RHS of \eqref{main-goal-toBeKilled-1} is $O(\e_\ell^{\a-\b})$.

Furthermore, we can interpolate between \eqref{kukku} and the $L^\infty$ norm bound for $\tilde \eta_{\ell,j,k}+\tilde \eta_\ell^\diamond+\tilde\eta_\ell^\circ$ that comes from \eqref{estimate-psi-tilde-infty-1}, \eqref{gorgo} and \eqref{circ-e-nonzero}, using Proposition \ref{prop:interpolation} on cylinders of radius $O(\ve_\ell)$ to see that
\begin{equation}\label{gurgu}
\|\mathfrak{D}^b (\tilde \eta_{\ell,j,k}+\tilde \eta_\ell^\diamond+\tilde\eta_\ell^\circ) \|_{\infty,\tilde Q_{O(\e_\ell)},\tilde g_\ell(0)}\leq C\e_\ell^{2j+\a-b}d_\ell^{2j+\a},
\quad [\mathfrak{D}^b (\tilde \eta_{\ell,j,k}+\tilde \eta_\ell^\diamond+\tilde\eta_\ell^\circ) ]_{\beta,\beta/2,\tilde Q_{O(\e_\ell)},\tilde g_\ell(0)}\leq C\e_\ell^{2j+\a-b-\beta}d_\ell^{2j+\a},
\end{equation}
for all $0\leq b\leq a$, and so following the discussion in \cite[(4.280)--(4.283)]{HT3} this implies that the third term on the RHS of \eqref{main-goal-toBeKilled-1} is of $O(\e_\ell^{\a-\b})$.

To summarize, we have shown that if Claim \ref{main-claim} holds, then for all $a\geq 2j$, $a\in 2\mathbb{N}$,  $\a\leq \b<1$,  $1\leq i\leq j$ and $1\leq p\leq N_{i,k}$, there is $C>0$ such that
\begin{equation}\label{induction-step-A}
\begin{split}
&\quad d_\ell^{-2j-\a}\e_\ell^{a-2j-2}[\mathfrak{D}^a\tilde A_{\ell,i,p,k}]_{\b,\b/2,\tilde Q_{O(\e_\ell)},\tilde g_\ell(0)}\\
&\leq C\e_\ell^{\a-\b} +C\sum_{r=1}^{i-1}\sum_{q=1}^{N_{p,k}}\Big( d_\ell^{-2j-\a} \e_\ell^{2k+a-2j} [\mathfrak{D}^{a+2k+2}\tilde A_{\ell,r,q,k}]_{\b,\b/2,\tilde Q_{O(\e_\ell)},\tilde g_\ell(0)}\\
&\quad  \quad +\sum_{b=0}^{a+2k+2} d_\ell^{-2j-\a} \e_\ell^{a-2j-2} d_\ell^{a-b+\b}e^{-(2k+2+1-\b)\frac{t_\ell}{2}} \lambda_\ell^{b-a-\b} \|\mathfrak{D}^b \tilde A_{\ell,r,q,k}\|_{\infty,\tilde Q_{O(\e_\ell)},\tilde g_\ell(0)} \Big),
\end{split}
\end{equation}
which is the parabolic analog of \cite[(4.283)]{HT3}.  The remaining argument follows closely that of \cite[\S 4.10.5]{HT3}.
We use induction on $1\leq i\leq j$ to show that for all $a\geq 2j$ and $a\in 2\mathbb{N}$, we have
\begin{equation}\label{goal-on-A}
d_\ell^{-2j-\a} \sum_{p=1}^{N_{i,k}} \e_\ell^{a-2j-2}[\mathfrak{D}^a \tilde A_{\ell,i,p,k}]_{\b,\b/2,\tilde Q_{O(\e_\ell)},\tilde g_\ell(0)}\leq C \e_\ell^{\a-\b}.
\end{equation}

The base case $i=1$ follows directly from \eqref{induction-step-A}. Suppose \eqref{goal-on-A} holds up to $i_0-1$ for some $i_0\geq 2$, then the first term inside the summation on \eqref{induction-step-A} can be estimated by $C\e_\ell^{\a-\b}$ since $a+2k+2\in 2\mathbb{N}$.  We now treat the second term inside the summation, namely the last
line of \eqref{induction-step-A}. First we treat the terms with $0\leq b\leq 2j$,  by using \eqref{lower-A-Linfty} and transferring it to the tilde picture which gives
\begin{equation}\label{to-goal-on-A-1}
d_\ell^{-b}\| \mathfrak{D}^b \tilde A_{\ell,i,p,k}\|_{\infty,\tilde Q_{O(\e_\ell)} , \tilde g_\ell(0)} =o(\e_\ell^{2}),
\end{equation}
and using these we can bound these terms by $o(\e_\ell^{\a-\b})$ exactly as in \cite[(4.286)]{HT3}. As for the terms with $2j<b\leq a+2k+2$,  we apply interpolation, i.e. Proposition \ref{prop:interpolation}, using \eqref{to-goal-on-A-1} and \eqref{goal-on-A} for $i\leq i_0-1$ from the induction hypothesis to shows that for $2j<b\leq a+2k+2$ and $i\leq i_0-1$,
\begin{equation}
\| \mathfrak{D}^b \tilde A_{\ell,i,p,k}\|_{\infty,\tilde Q_{O(\e_\ell)},\tilde g_\ell(0)}\leq  C\e_\ell^{2+2j+\a-b}d_\ell^{2j+\a}+C\e_\ell^{2+2j-b}d_\ell^{2j},
\end{equation}
so that the terms in the last line of \eqref{induction-step-A} with $2j<b\leq a+2k+2$ are also of $o(\e_\ell^{\a-\b})$ by the same argument as \cite[(4.290)]{HT3}. This completes the inductive proof of \eqref{goal-on-A}, and hence of \eqref{improved-A-estimate-subcaseA}, modulo the proof of Claim \ref{main-claim}, which we now turn to.

\subsubsection{Proof of Claim \ref{main-claim}}
The proof of Claim \ref{main-claim} goes along similar lines to \cite[(4.261)]{HT3}, but with some differences.  We will prove the claim by induction on $a\geq 2j$, following the discussion in \cite[\S 4.10.3--4.10.4]{HT3}.
Recall that the terms $\mathcal{E}_i, i=1,2,3,$ are defined in \eqref{PMA-tilde-new-1} and \eqref{E3}. First, we consider the term $\mathcal{E}_2+\mathcal{E}_3$, which by definition equals
\begin{equation}
\log \frac{(\tilde\omega_\ell^\sharp)^{m+n}}{ \binom{m+n}{m}\ti \omega_{\ell,\mathrm{can}} ^m \wedge (\e_\ell^2 \Theta_\ell^*\Psi_\ell^* \omega_F)^n}+nd_\ell^2\lambda_\ell^{-2}\tilde t
-  \sum_{i=1}^j\sum_{p=1}^{N_{i,k}} \tilde{\mathfrak{G}}_{\ti{t},k}\left(\partial_{\tilde t}\tilde A_{\ell,i,p,k}^\sharp
+d_\ell^2\lambda_\ell^{-2}\tilde A_{\ell,i,p,k}^\sharp,\tilde G_{\ell,i,p,k} \right)-e^{-d_\ell^2\lambda_\ell^{-2}\tilde t}\partial_{\tilde t}\underline{\tilde\chi_\ell^\sharp},
\end{equation}
and we claim that for all $b\geq 0$ and $\a\leq \beta<1$, and all fixed $R>1$, there is $C>0$ such that
\begin{equation}\label{E_2-control}
\left\{
\begin{array}{ll}
\e_\ell^{b-2j} d_\ell^{-2j-\a} \|\mathfrak{D}^b(\mathcal{E}_2+\mathcal{E}_3)\|_{\infty, \tilde Q_{R\e_\ell},\tilde g_\ell(0)}\leq C\e_\ell^\a,\\[2mm]
\e_\ell^{b-2j} d_\ell^{-2j-\a} [ \mathfrak{D}^b (\mathcal{E}_2+\mathcal{E}_3)]_{\b,\b/2, \tilde Q_{R\e_\ell},\tilde g_\ell(0)}\leq C\e_\ell^{\a-\b}.
\end{array}
\right.
\end{equation}
Observe that \eqref{E_2-control} for $a\leq 2j$ and $\b=\a$ is exactly given by \eqref{error2-bad-1} and \eqref{error2-bad-2}. To prove it for $a\geq 2j$ and $\b\geq\alpha$ we apply the diffeomorphism $\Pi_\ell$ in \eqref{pi} so that $\Pi_\ell^*(\mathcal{E}_2+\mathcal{E}_3)$ equals
\begin{equation}
\log \frac{(\check\omega_\ell^\sharp)^{m+n}}{ \binom{m+n}{m}\check\omega_{\ell,\mathrm{can}} ^m \wedge (\Sigma_\ell^* \omega_F)^n}+ne^{-t_\ell}\check t
-\sum_{i=1}^j\sum_{p=1}^{N_{i,k}} \check{\mathfrak{G}}_{\check{t},k}\left(\partial_{\check{t}}\check A_{\ell,i,p,k}^\sharp
+e^{-t_\ell}\check A_{\ell,i,p,k}^\sharp,\check G_{\ell,i,p,k} \right)-e^{-e^{-t_\ell}\check t}\partial_{\check t}\underline{\check\chi_\ell^\sharp},
\end{equation}
and for any $b\geq 0$ we have
\begin{equation}\label{luk1}
\ve_\ell^{-\alpha}\ve_\ell^{b-2j}d_\ell^{-2j-\alpha}\|\DD^b(\mathcal{E}_2+\mathcal{E}_3)\|_{\infty, \tilde Q_{R\e_\ell},\tilde g_\ell(0)}
=\delta_\ell^{-2j-\alpha}\|\DD^b(\Pi_\ell^*(\mathcal{E}_2+\mathcal{E}_3))\|_{\infty, \check Q_{R},\check g_\ell(0)},
\end{equation}
\begin{equation}\label{luk2}
\ve_\ell^{\beta-\alpha}\ve_\ell^{b-2j}d_\ell^{-2j-\alpha}[\DD^b(\mathcal{E}_2+\mathcal{E}_3)]_{\beta,\beta/2, \tilde Q_{R\e_\ell},\tilde g_\ell(0)}
=\delta_\ell^{-2j-\alpha}[\DD^b(\Pi_\ell^*(\mathcal{E}_2+\mathcal{E}_3))]_{\beta,\beta/2, \check Q_{R},\check g_\ell(0)},
\end{equation}
so thanks to \eqref{E_2-control} we see that $\delta_\ell^{-2j-\alpha}\Pi_\ell^*(\mathcal{E}_2+\mathcal{E}_3)$ is locally uniformly bounded in $C^{2j+\alpha,j+\a/2}$ with respect to the (essentially fixed) metric $\check{g}_\ell(0)$, so by Ascoli-Arzel\`a, up to passing to a subsequence, it converges locally uniformly on $\C^m\times Y\times (-\infty,0]$ to some limiting function $\mathcal{F}$. Since the quantity $-\delta_\ell^{-2j-\alpha}\Pi_\ell^*(\mathcal{E}_2+\mathcal{E}_3)$ is exactly \eqref{stronzo}, we would like to apply the Selection Theorem \ref{thm:Selection}, so we check that its hypotheses are satisfied. The functions $\hat{A}_{\ell,i,p,k}$ satisfy \eqref{linftyyy0} with $\alpha_0=\frac{\alpha^2}{2j+\alpha}$ thanks to \eqref{tilde-A-sharp}, while the function
$-e^{-\lambda_\ell^{-2}\hat t}\partial_{\hat t}\underline{\hat\chi_\ell^\sharp}-n\lambda_\ell^{-2}\hat{t}$ converges to $0$ locally smoothly thanks to \eqref{bdd-phi-sharp-underline}. We can thus apply the Selection Theorem and conclude that $\delta_\ell^{-2j-\alpha}\Pi_\ell^*(\mathcal{E}_2+\mathcal{E}_3)$ converges to $\mathcal{F}$ locally smoothly, hence its derivatives of all orders are uniformly bounded on $\check Q_{R}$. Thanks to \eqref{luk1}, \eqref{luk2}, this proves that \eqref{E_2-control} holds for all $b\geq 0$ and $\a\leq\beta<1$.
Recalling then that $\ddbar =\tilde J_\ell\circledast \D^2+\D \tilde J_\ell\circledast \D$, and using the bounds in \eqref{inter-estimate-0} for $\ti{J}_\ell$ and its derivatives, these imply directly that for all $a\geq 2, \a\leq \beta<1,$
\begin{equation}\label{benigno}
C\e_\ell^{a-2j}d_\ell^{-2j-\a}\left[\mathfrak{D}^{a-2}\ddbar(\mathcal{E}_2+\mathcal{E}_3)\right]_{\b,\b/2,\tilde Q_{R\ve_\ell},\tilde g_\ell(0)}\leq C\ve_\ell^{\alpha-\beta}.
\end{equation}
Next, we consider the term $\mathcal{E}_1$.
Recall that
\begin{equation}
\begin{split}
\mathcal{E}_1&= \log \frac{(\tilde \omega_\ell^\sharp +\tilde\eta_\ell^\circ+\tilde\eta_\ell^\diamond+\tilde\eta_{\ell,j,k})^{m+n}}{ (\tilde\omega_\ell^\sharp)^{m+n}}-\tr_{\tilde \omega_\ell^\sharp}\left(\tilde\eta_\ell^\circ+\tilde\eta_\ell^\diamond+\tilde\eta_{\ell,j,k}\right)\\
&=\log \left(1+\tr_{\tilde \omega_\ell^\sharp}(\ti\eta_\ell^\circ+\ti\eta_\ell^\diamond+\ti\eta_{\ell,j,k})+\mathcal{E}_4 \right)-\tr_{\tilde \omega_\ell^\sharp}\left(\ti\eta_\ell^\circ+\ti\eta_\ell^\diamond+\ti\eta_{\ell,j,k}\right),
\end{split}
\end{equation}
where $\mathcal{E}_4$ was defined in \eqref{E4}.
Taking $\mathfrak{D}^{a}$ derivatives, for $a\geq 2j\geq 2,a\in 2\mathbb{N}$, we again expand it schematically as in \eqref{log-estimate}.
The first step is to prove estimates for $\mathcal{E}_4$, and for this we observe that $\e_\ell^{a-2j}d_\ell^{-2j-\a}\mathcal{E}_4$ is identical to the term $\mathcal{N}$ in \cite[(4.268)]{HT3}, and we will bound it following the discussion there.
To do this, we need some basic estimates first. Using \eqref{sharp-metric-decay-0}, we have for all $\iota\geq 0$ that
\begin{equation}\label{omega-tilde-sharp}
[\mathfrak{D}^\iota \tilde\omega_\ell^\sharp]_{\b,\b/2,\tilde Q_{O(\e_\ell)},\tilde g_\ell(0)}\leq C\e_\ell^{-\b-\iota},\;\;\; \|\mathfrak{D}^\iota \tilde\omega_\ell^\sharp\|_{\infty,\tilde Q_{O(\e_\ell)},\tilde g_\ell(0)}\leq C\e_\ell^{-\iota}.
\end{equation}
Next, we observe that for all $\a\leq \b<1$ and $0\leq \iota<\max(a-1,2j)$, we have
\begin{equation}\label{induction-eta-tilde}
\|\mathfrak{D}^\iota(\ti\eta_\ell^\circ+\ti\eta_\ell^\diamond+\ti\eta_{\ell,j,k})\|_{\infty,\tilde Q_{O(\e_\ell)},\tilde g_\ell(0)}\leq C\delta_\ell^{2j+\a}\e_\ell^{-\iota},
\quad [\mathfrak{D}^\iota (\ti\eta_\ell^\circ+\ti\eta_\ell^\diamond+\ti\eta_{\ell,j,k})]_{\b,\b/2,\tilde Q_{O(\e_\ell)},\tilde g_\ell(0)}\leq C\delta_\ell^{2j+\a}\e_\ell^{-\iota-\b}.
\end{equation}
Indeed, these estimates are already known to hold for $\iota\leq 2j$ and $\beta=\a$ thanks to \eqref{estimate-psi-tilde-infty-1}, \eqref{bdd-phi-star-underline} and \eqref{estimate-potential-G}, hence using interpolation they also hold for $\iota<2j$ and $\alpha\leq\beta<1$,
while if $\iota<a-1, \a\leq\b<1$, these also hold thanks to the estimates \eqref{gurgu} which hold by the induction hypothesis. Recalling that in this section we have $j\geq 1$, it follows that $\max(a-1,2j)\geq 2$.

From \eqref{induction-eta-tilde} and the definition of $\mathcal{E}_4$ in \eqref{E4} it follows immediately that for $0\leq \iota<\max(a-1,2j)$ and $\a\leq\b<1$ we have
\begin{equation}\label{korku}
\ve_\ell^{a-2j}d_\ell^{-2j-\alpha}\|\DD^\iota\mathcal{E}_4\|_{\infty,\tilde Q_{O(\e_\ell)},\tilde g_\ell(0)}\leq C\ve_\ell^{a+\alpha-\iota} \delta_\ell^{2j+\alpha},
\quad\ve_\ell^{a-2j}d_\ell^{-2j-\alpha}[\DD^\iota\mathcal{E}_4]_{\b,\b/2,\tilde Q_{O(\e_\ell)},\tilde g_\ell(0)}\leq C\ve_\ell^{a+\alpha-\iota-\beta}\delta_\ell^{2j+\alpha}.
\end{equation}
However, we do not have the estimates \eqref{induction-eta-tilde} when $a-1\leq \iota\leq a$ and $\iota\geq 2j$, since our induction argument is only on {\em even} values of $a\geq 2j$, and this is different from the discussion in \cite[\S 4.10.3]{HT3}. To accommodate for the missing term with derivatives of order between $a-1$ and $a$, we apply Proposition \ref{prop:interpolation} with \eqref{induction-eta-tilde} so that
\begin{equation}\label{induction-eta-tilde-interpolated2}
\|\mathfrak{D}^{a-1}(\ti\eta_\ell^\circ+\ti\eta_\ell^\diamond+\ti\eta_{\ell,j,k})\|_{\infty,\tilde Q_{\tilde R},\tilde g_\ell(0)}\leq C\e_\ell^{1+\b}[\mathfrak{D}^{a}(\ti\eta_\ell^\circ+\ti\eta_\ell^\diamond+\ti\eta_{\ell,j,k})]_{\b,\b/2,\tilde Q_{R},\tilde g_\ell(0)}+C\delta_\ell^{2j+\a} (R-\rho)^{-1}\e_\ell^{-a+2},
\end{equation}
\begin{equation}\label{induction-eta-tilde-interpolated}
\begin{split}
[\mathfrak{D}^{a-1}(\ti\eta_\ell^\circ+\ti\eta_\ell^\diamond+\ti\eta_{\ell,j,k})]_{\b,\b/2,\tilde Q_{\tilde R},\tilde g_\ell(0)}&\leq C\e_\ell [\mathfrak{D}^{a}(\ti\eta_\ell^\circ+\ti\eta_\ell^\diamond+\ti\eta_{\ell,j,k})]_{\b,\b/2,\tilde Q_{R},\tilde g_\ell(0)}+C\delta_\ell^{2j+\a}  (R-\rho)^{-\b-1} \e_\ell^{-a+2},
\end{split}
\end{equation}
\begin{equation}\label{induction-eta-tilde-interpolated3}
\|\mathfrak{D}^{a}(\ti\eta_\ell^\circ+\ti\eta_\ell^\diamond+\ti\eta_{\ell,j,k})\|_{\infty,\tilde Q_{\tilde R},\tilde g_\ell(0)}\leq C\e_\ell^{\b}[\mathfrak{D}^{a}(\ti\eta_\ell^\circ+\ti\eta_\ell^\diamond+\ti\eta_{\ell,j,k})]_{\b,\b/2,\tilde Q_{R},\tilde g_\ell(0)}+C\delta_\ell^{2j+\a} (R-\rho)^{-2}\e_\ell^{-a+2}.
\end{equation}
Given these, we can use the same method as in \cite[(4.272)--(4.275)]{HT3} and see that for every $\iota$ with $\max(a-1,2j)\leq \iota\leq a$ and $\alpha\leq \beta<1$ we have
\begin{equation}\label{korku2}
\ve_\ell^{a-2j}d_\ell^{-2j-\alpha}\|\DD^\iota\mathcal{E}_4\|_{\infty,\tilde Q_{\tilde R},\tilde g_\ell(0)}\leq o(1)\ve_\ell^{a+\beta-\iota}\ve_\ell^{a-2j}d_\ell^{-2j-\alpha}[\mathfrak{D}^a (\ti\eta_\ell^\circ+\ti\eta_\ell^\diamond+\ti\eta_{\ell,j,k})]_{\b,\b/2,\tilde Q_R,\tilde g_\ell(0)}+C(R-\rho)^{-\iota}\delta_\ell^{2j+\alpha}\ve_\ell^{a+\alpha},
\end{equation}
and for $\max(a-1,2j)\leq \iota<a, \alpha\leq \beta<1$,
\begin{equation}\label{korku3}
\ve_\ell^{a-2j}d_\ell^{-2j-\alpha}[\DD^\iota\mathcal{E}_4]_{\b,\b/2,\tilde Q_{\tilde R},\tilde g_\ell(0)}\leq o(1)\ve_\ell^{a-\iota}\ve_\ell^{a-2j}d_\ell^{-2j-\alpha}[\mathfrak{D}^a (\ti\eta_\ell^\circ+\ti\eta_\ell^\diamond+\ti\eta_{\ell,j,k})]_{\b,\b/2,\tilde Q_R,\tilde g_\ell(0)}+C(R-\rho)^{-\iota-\beta}\delta_\ell^{2j+\alpha}\ve_\ell^{a+\alpha},
\end{equation}
while for $\iota=a$,
\begin{equation}\label{korku4}
\ve_\ell^{a-2j}d_\ell^{-2j-\alpha}[\DD^a\mathcal{E}_4]_{\b,\b/2,\tilde Q_{\tilde R},\tilde g_\ell(0)}\leq o(1)\ve_\ell^{a-2j}d_\ell^{-2j-\alpha}[\mathfrak{D}^a (\ti\eta_\ell^\circ+\ti\eta_\ell^\diamond+\ti\eta_{\ell,j,k})]_{\b,\b/2,\tilde Q_R,\tilde g_\ell(0)}.
\end{equation}
One important observation that we used here is that whenever we need to use \eqref{induction-eta-tilde-interpolated2}, \eqref{induction-eta-tilde-interpolated} or \eqref{induction-eta-tilde-interpolated3} for some term in $\mathcal{E}_4$, the remaining part of this summand in $\mathcal{E}_4$ is hit by at most $1+\beta$ derivatives, and since $1+\beta<2\leq \max(a-1,2j)$, for these other terms we are allowed to apply \eqref{induction-eta-tilde}.

Now that we have our estimates for $\mathcal{E}_4$, we need estimates on derivatives of $\tr_{\tilde\omega_\ell^\sharp}(\ti\eta_\ell^\circ+\ti\eta_\ell^\diamond+\ti\eta_{\ell,j,k})$. For this, from \eqref{omega-tilde-sharp} and \eqref{induction-eta-tilde} we see that for $0\leq\iota<\max(a-1,2j), \a\leq\b<1$,
\begin{equation}\label{korkuz0}
\ve_\ell^{a-2j}d_\ell^{-2j-\alpha}\|\DD^\iota\tr_{\tilde\omega_\ell^\sharp}(\ti\eta_\ell^\circ+\ti\eta_\ell^\diamond+\ti\eta_{\ell,j,k})\|_{\infty,\tilde Q_{\tilde R},\tilde g_\ell(0)}\leq
C\ve_\ell^{a+\alpha-\iota},
\end{equation}
\begin{equation}\label{korkuz1}
\ve_\ell^{a-2j}d_\ell^{-2j-\alpha}[\DD^\iota\tr_{\tilde\omega_\ell^\sharp}(\ti\eta_\ell^\circ+\ti\eta_\ell^\diamond+\ti\eta_{\ell,j,k})]_{\b,\b/2,\tilde Q_{\tilde R},\tilde g_\ell(0)}\leq
C\ve_\ell^{a+\alpha-\iota-\beta},
\end{equation}
while for derivatives of order $a-1\leq \iota\leq a$ and $\iota \geq 2j$, we can argue as above and estimate crudely
\begin{equation}\label{korkuz}
\begin{split}
\ve_\ell^{a-2j}d_\ell^{-2j-\alpha}\|\DD^\iota\tr_{\tilde\omega_\ell^\sharp}(\ti\eta_\ell^\circ+\ti\eta_\ell^\diamond+\ti\eta_{\ell,j,k})\|_{\infty,\tilde Q_{\tilde R},\tilde g_\ell(0)}&\leq C\ve_\ell^{a+\beta-\iota}\ve_\ell^{a-2j}d_\ell^{-2j-\alpha}[\mathfrak{D}^a (\ti\eta_\ell^\circ+\ti\eta_\ell^\diamond+\ti\eta_{\ell,j,k})]_{\b,\b/2,\tilde Q_R,\tilde g_\ell(0)}\\
&+C(R-\rho)^{-\iota}\ve_\ell^{a+\alpha},
\end{split}
\end{equation}
\begin{equation}\label{korkuz2}
\begin{split}
\ve_\ell^{a-2j}d_\ell^{-2j-\alpha}[\DD^\iota\tr_{\tilde\omega_\ell^\sharp}(\ti\eta_\ell^\circ+\ti\eta_\ell^\diamond+\ti\eta_{\ell,j,k})]_{\b,\b/2,\tilde Q_{\tilde R},\tilde g_\ell(0)}&\leq C\ve_\ell^{a-\iota}\ve_\ell^{a-2j}d_\ell^{-2j-\alpha}[\mathfrak{D}^a (\ti\eta_\ell^\circ+\ti\eta_\ell^\diamond+\ti\eta_{\ell,j,k})]_{\b,\b/2,\tilde Q_R,\tilde g_\ell(0)}\\
&+C(R-\rho)^{-\iota-\beta}\ve_\ell^{a+\alpha}.
\end{split}
\end{equation}
Equipped with \eqref{korku}, \eqref{korku2}, \eqref{korku3}, \eqref{korku4}, \eqref{korkuz0}, \eqref{korkuz1}, \eqref{korkuz} and \eqref{korkuz2}, we proceed to estimate derivatives of $\mathcal{E}_1$. We first consider the first line of \eqref{log-estimate}, which we can write as
\begin{equation}\label{merd}
-\mathfrak{D}^a \tr_{\tilde\omega_\ell^\sharp}(\ti\eta_\ell^\circ+\ti\eta_\ell^\diamond+\ti\eta_{\ell,j,k}) \frac{\tr_{\tilde\omega_\ell^\sharp}(\ti\eta_\ell^\circ+\ti\eta_\ell^\diamond+\ti\eta_{\ell,j,k})+\mathcal{E}_4}{1+\tr_{\tilde\omega_\ell^\sharp}(\ti\eta_\ell^\circ+\ti\eta_\ell^\diamond+\ti\eta_{\ell,j,k})+\mathcal{E}_4}
+\frac{\mathfrak{D}^a \mathcal{E}_4}{1+\tr_{\tilde\omega_\ell^\sharp}(\ti\eta_\ell^\circ+\ti\eta_\ell^\diamond+\ti\eta_{\ell,j,k})+\mathcal{E}_4},
\end{equation}
and we take $\ve_\ell^{a-2j}d_\ell^{-2j-\alpha}[\cdot]_{\b,\b/2,\tilde Q_{\tilde R},\tilde g_\ell(0)}$ of this. Since $|\tr_{\tilde\omega_\ell^\sharp}(\ti\eta_\ell^\circ+\ti\eta_\ell^\diamond+\ti\eta_{\ell,j,k})+\mathcal{E}_4|=o(1)$, when the difference quotient lands on $\mathfrak{D}^a \mathrm{tr}$ we can estimate this by $o(1)$ times \eqref{korkuz2} (with $\iota=a$). Similarly, $[\tr_{\tilde\omega_\ell^\sharp}(\ti\eta_\ell^\circ+\ti\eta_\ell^\diamond+\ti\eta_{\ell,j,k})+\mathcal{E}_4]_{C^\beta}=o(\ve_\ell^{-\beta})$, so when the difference quotient lands on $\frac{\mathrm{tr}+\mathcal{E}_4}{1+\mathrm{tr}+\mathcal{E}_4}$ we can estimate this by $o(\ve_\ell^{-\beta})$ times \eqref{korkuz} (with $\iota=a$). And when the difference quotient lands on $\frac{\DD^a\mathcal{E}_4}{1+\mathrm{tr}+\mathcal{E}_4}$ we argue similarly with \eqref{korku2} and \eqref{korku4}. So all together when we apply $\ve_\ell^{a-2j}d_\ell^{-2j-\alpha}[\cdot]_{\b,\b/2,\tilde Q_{\tilde R},\tilde g_\ell(0)}$ to \eqref{merd} we can bound it by
\begin{equation}
o(1)\ve_\ell^{a-2j}d_\ell^{-2j-\alpha}[\mathfrak{D}^a (\ti\eta_\ell^\circ+\ti\eta_\ell^\diamond+\ti\eta_{\ell,j,k})]_{\b,\b/2,\tilde Q_R,\tilde g_\ell(0)}+
C(R-\rho)^{-a-\beta}\ve_\ell^{a+\alpha}+C(R-\rho)^{-a}\ve_\ell^{a+\alpha-\beta}.
\end{equation}

Lastly we need to consider what happens when we take $\ve_\ell^{a-2j}d_\ell^{-2j-\alpha}[\cdot]_{\b,\b/2,\tilde Q_{\tilde R},\tilde g_\ell(0)}$  of the large sum in the second line of \eqref{log-estimate}. If all the derivatives that appear there are of order $<\max(a-1,2j)$, then this is bounded by $o(\ve_\ell^{\alpha-\beta})$, while if there is at least one derivative of order at least $\max(a-1,2j)$ (to which we apply \eqref{korku2}--\eqref{korku4}, \eqref{korkuz}--\eqref{korkuz2}), then all other derivatives in total are of order at most $1$ (and to these we can instead apply \eqref{korku}, \eqref{korkuz0}--\eqref{korkuz1}).  Putting all these together proves that
\begin{equation}\label{E_1-control}
\begin{split}
\e_\ell^{a-2j}d_\ell^{-2j-\a} [\mathfrak{D}^a\mathcal{E}_1]_{\b,\b/2,\tilde Q_{\tilde R},\tilde g_\ell(0)}&\leq \frac{1}{4}\e_\ell^{a-2j} d_\ell^{-2j-\a} [\mathfrak{D}^a (\ti\eta_\ell^\circ+\ti\eta_\ell^\diamond+\ti\eta_{\ell,j,k})]_{\b,\b/2,\tilde Q_R,\tilde g_\ell(0)}\\
&\quad  +C\sum_{r=0}^a \e_\ell^{\a-\b+r}(R-\rho)^{-r}+C\sum_{r=0}^{a}\e_\ell^{\a+r} (R-\rho)^{-r-\beta},
\end{split}
\end{equation}
and combining this with \eqref{benigno} completes the proof of Claim \ref{main-claim}.

For later use, observe also that the same argument gives an analogous bound for the $L^\infty$ norm of derivatives of $\mathcal{E}_1$, namely
\begin{equation}\label{E_1-control2}
\begin{split}
\e_\ell^{a-2j-\b}d_\ell^{-2j-\a} \|\mathfrak{D}^a\mathcal{E}_1\|_{\infty,\tilde Q_{\tilde R},\tilde g_\ell(0)}&\leq \frac{1}{4}\e_\ell^{a-2j} d_\ell^{-2j-\a} [\mathfrak{D}^a (\ti\eta_\ell^\circ+\ti\eta_\ell^\diamond+\ti\eta_{\ell,j,k})]_{\b,\b/2,\tilde Q_R,\tilde g_\ell(0)}\\
&\quad  +C\sum_{r=0}^a \e_\ell^{\a-\b+r}(R-\rho)^{-r}+C\sum_{r=0}^{a}\e_\ell^{\a+r} (R-\rho)^{-r-\beta}.
\end{split}
\end{equation}

\subsubsection{Killing the contributions from $\underline{\ti{\chi}^*_\ell}$}
The starting point is \eqref{borgu} with $a=2j$ and radius $C\delta_\ell$. For $0\leq b\leq 2j$, we first bound the term
\begin{equation}
\|\mathfrak{D}^b\ddbar \hat\varphi_\ell\|_{\infty,\hat Q_{C\delta_\ell},g_X}\leq C\|\mathfrak{D}^b(\hat{\eta}^\diamond_\ell+\hat{\eta}^\ddagger_\ell+\hat{\eta}^\circ_\ell+\hat{\eta}_{\ell,j,k})\|_{\infty,\hat Q_{C\delta_\ell},\hat{g}_\ell(0)}+\|\DD^b\hat{\eta}^\dagger_\ell\|_{\infty,\hat Q_{C\delta_\ell},g_X}\leq C,
\end{equation}
using \eqref{estimate-psi-tilde-infty-1}, \eqref{gorgo}, \eqref{gorgol}, \eqref{f1}, \eqref{f2} and \eqref{eta-dagger}. Using this, we transfer \eqref{borgu} to the tilde picture and multiply it by $d_\ell^{\beta-\alpha}$ and get
\begin{equation}\label{kokku}
\begin{split}
d_\ell^{-2j-\alpha}[\DD^{2j}\ti{\eta}^\diamond_\ell]_{\b,\b/2,\ti{Q}_{O(\ve_\ell)},\tilde g_\ell(0)}&=
d_\ell^{-2j-\alpha}[\DD^{2j}\ddbar\underline{\ti\varphi_\ell}]_{\b,\b/2,\ti{Q}_{O(\ve_\ell)},\tilde g_\ell(0)}\\
&\leq Cd_\ell^{-2j-\alpha}[\DD^{2j}_{\mathbf{bt}}\ti{\eta}_\ell]_{\b,\b/2,\textrm{base},\ti{Q}_{O(\ve_\ell)},\tilde g_\ell(0)}+Cd_\ell^{\b-\a}e^{-(1-\beta)\frac{t_\ell}{2}}\lambda_\ell^{-\beta}\\
&\leq Cd_\ell^{-2j-\alpha}[\mathfrak{D}^{2j} (\tilde \eta_{\ell,j,k}+\tilde \eta_\ell^\diamond+\tilde\eta_\ell^\circ)]_{\b,\b/2,\tilde Q_{O(\e_\ell)},\tilde g_\ell(0)}+o(\ve_\ell^{\a-\b})\\
&\leq C\ve_\ell^{\a-\b},
\end{split}
\end{equation}
where we used \eqref{gorgu} and \eqref{kukku}. Taking $\beta>\alpha$ gives us an $o(1)$ bound for the parabolic $C^{\beta,\beta/2}$ seminorm of $d_\ell^{-2j-\alpha}\DD^{2j}\ti{\eta}^\diamond_\ell$ on the cylinder of radius $2$ centered at $\ti{x}_\ell$ (which contains the other blowup point $\ti{x}'_\ell$), and hence an $o(1)$ bound for the parabolic $C^{\alpha,\alpha/2}$ seminorm on this same cylinder. Thanks to the bounds \eqref{gorgo}, the same conclusion holds for the  parabolic $C^{\alpha,\alpha/2}$ seminorm of
\begin{equation}
d_\ell^{-2j-\alpha}\DD^{2j}\left(e^{d_\ell^2\lambda_\ell^{-2}\ti{t}}\ti{\eta}^\diamond_\ell\right)=d_\ell^{-2j-\alpha}\DD^{2j}\ddbar\underline{\ti{\chi}^*_\ell},
\end{equation}
on the same cylinder, which kills one contribution of $\underline{\ti{\chi}^*_\ell}$ to \eqref{sup-realized-3}.

Next, using \eqref{gurgu} and the bounds \eqref{omega-tilde-sharp} for $\ti{\omega}^\sharp_\ell$, we see that for any $a\geq 0$ there is $C>0$ such that
\begin{equation}\label{korz}
d_\ell^{-2j-\a}\Biggr\| \mathfrak{D}^{a}\tr_{\tilde \omega_\ell^\sharp}\left(\tilde\eta_\ell^\diamond+ \tilde\eta_\ell^\circ+\tilde\eta_{\ell,j,k}\right)\Biggr\|_{\infty,\tilde Q_{O(\ve_\ell)},\tilde g_\ell(0)}\leq C\ve_\ell^{2j+\a-a},
\end{equation}
\begin{equation}\label{korz2}
d_\ell^{-2j-\a}\Biggr[ \mathfrak{D}^{a}\tr_{\tilde \omega_\ell^\sharp}\left(\tilde\eta_\ell^\diamond+ \tilde\eta_\ell^\circ+\tilde\eta_{\ell,j,k}\right)\Biggr]_{\b,\b/2,\tilde Q_{O(\ve_\ell)},\tilde g_\ell(0)}\leq C\ve_\ell^{2j+\a-a-\b},
\end{equation}
while from  \eqref{E_2-control}, \eqref{E_1-control}, \eqref{E_1-control2} and \eqref{kukku} we have
\begin{equation}\label{korz3}
d_\ell^{-2j-\a}\left\| \mathfrak{D}^{a}(\mathcal{E}_1+\mathcal{E}_2+\mathcal{E}_3)\right\|_{\infty,\tilde Q_{O(\ve_\ell)},\tilde g_\ell(0)}\leq C\ve_\ell^{2j+\a-a},
\end{equation}
\begin{equation}\label{korz4}
d_\ell^{-2j-\a}\left[ \mathfrak{D}^{a}(\mathcal{E}_1+\mathcal{E}_2+\mathcal{E}_3)\right]_{\b,\b/2,\tilde Q_{O(\ve_\ell)},\tilde g_\ell(0)}\leq C\ve_\ell^{2j+\a-a-\b},
\end{equation}
and so using these in the PDE \eqref{PMA-tilde-new} we get
\begin{equation}\label{korz5}
d_\ell^{-2j-\a}\left\| \mathfrak{D}^{a}(\partial_{\tilde t} \ti{\rho}_\ell+d_\ell^2\lambda_\ell^{-2}\ti{\rho}_\ell)\right\|_{\infty,\tilde Q_{O(\ve_\ell)},\tilde g_\ell(0)}\leq C\ve_\ell^{2j+\a-a},
\end{equation}
\begin{equation}\label{korz6}
d_\ell^{-2j-\a}\left[ \mathfrak{D}^{a}(\partial_{\tilde t} \ti{\rho}_\ell+d_\ell^2\lambda_\ell^{-2}\ti{\rho}_\ell)\right]_{\b,\b/2,\tilde Q_{O(\ve_\ell)},\tilde g_\ell(0)}\leq C\ve_\ell^{2j+\a-a-\b}.
\end{equation}
At this point we want to deduce from this bounds for the fiber average of $\partial_{\tilde t} \ti{\rho}_\ell+d_\ell^2\lambda_\ell^{-2}\ti{\rho}_\ell$, and this can be done using the following ``non-cancellation'' estimate, stated in the hat picture, for a smooth time-dependent function $f$ on $\hat{Q}_R$ (where the fiber average is $\underline{f}=(\mathrm{pr}_B)_*(f\Psi_\ell^*\omega_F^n)$) which states
\begin{equation}\label{nonk}
\begin{split}
[\DD^a\underline{f}]_{\b,\b/2,\hat{Q}_R}&\leq [\DD^a_{\mathbf{bt}}f]_{\b,\b/2,\textrm{base},\hat{Q}_R, g_X}
+C\left(\frac{R}{\lambda_\ell}\right)^{1-\beta}\lambda_\ell^{-\beta}\sum_{b=0}^a\|\DD^b f\|_{\infty,\hat{Q}_R,g_X}\\
&\leq [\DD^a f]_{\b,\b/2,\hat{Q}_R, \hat{g}_\ell(0)}
+C\left(\frac{R}{\lambda_\ell}\right)^{1-\beta}\lambda_\ell^{-\beta}\sum_{b=0}^a\|\DD^b f\|_{\infty,\hat{Q}_R,\hat{g}_\ell(0)},
\end{split}
\end{equation}
and which is proved exactly as \cite[(4.199)]{HT3}, using Claims 2 and 3 there. We apply this in the tilde picture with $a=2j$ to the function $\partial_{\tilde t} \ti{\rho}_\ell+d_\ell^2\lambda_\ell^{-2}\ti{\rho}_\ell$, whose fiber average is by definition $e^{-d_\ell^2\lambda_\ell^{-2}\ti{t}}\partial_{\ti{t}}\underline{\ti{\chi}^*_\ell}$, and we get
\begin{equation}\label{pork}
\begin{split}
d_\ell^{-2j-\a}\left[\DD^{2j}\left(e^{-d_\ell^2\lambda_\ell^{-2}\ti{t}}\partial_{\ti{t}}\underline{\ti{\chi}^*_\ell}\right)\right]_{\b,\b/2,\tilde Q_{O(\ve_\ell)}}
&\leq d_\ell^{-2j-\a}\left[ \mathfrak{D}^{2j}(\partial_{\tilde t} \ti{\rho}_\ell+d_\ell^2\lambda_\ell^{-2}\ti{\rho}_\ell)\right]_{\b,\b/2,\tilde Q_{O(\ve_\ell)},\ti{g}_\ell(0)}\\
&+Cd_\ell^{\b-\a}e^{-(1-\beta)\frac{t_\ell}{2}}\lambda_\ell^{-\beta}\sum_{b=0}^{2j} d_\ell^{-b}\|\DD^b(\partial_{\tilde t} \ti{\rho}_\ell+d_\ell^2\lambda_\ell^{-2}\ti{\rho}_\ell)\|_{\infty,\ti{Q}_{O(\ve_\ell)},\ti{g}_\ell(0)}\\
&\leq C\ve_\ell^{\a-\b}+Cd_\ell^{\b-\a}e^{-(1-\beta)\frac{t_\ell}{2}}\lambda_\ell^{-\beta}\sum_{b=0}^{2j}\delta_\ell^{2j+\alpha-b}\\
&\leq C\ve_\ell^{\a-\b},
\end{split}
\end{equation}
using \eqref{korz5} and \eqref{korz6}. Thus, if we take $\b>\a$ then the LHS of \eqref{pork} is $o(1)$, and so we get an $o(1)$ bound for the same $C^{\beta,\beta/2}$ seminorm on $\ti{Q}_R$ ($R>1$ fixed), and hence
\begin{equation}
d_\ell^{-2j-\a}\left[ \mathfrak{D}^{2j}\left(e^{-d_\ell^2\lambda_\ell^{-2}\ti{t}}\partial_{\ti{t}}\underline{\ti{\chi}^*_\ell}\right)\right]_{\a,\a/2,\tilde Q_R,\tilde g_\ell(0)}=o(1),
\end{equation}
which thanks to the bounds in \eqref{bdd-phi-star-underline} implies that
\begin{equation}
d_\ell^{-2j-\a}[ \mathfrak{D}^{2j}\partial_{\ti{t}}\underline{\ti{\chi}^*_\ell}]_{\a,\a/2,\tilde Q_R,\tilde g_\ell(0)}=o(1),
\end{equation}
which kills the other contribution of $\underline{\ti{\chi}^*_\ell}$ to \eqref{sup-realized-3}.

\subsubsection{Killing the contribution of $\tilde \psi_{\ell,j,k}$}
It remains to kill the contribution from $\tilde\psi_{\ell,j,k}$ to \eqref{sup-realized-3}. In contrast with \cite[\S 4.10.7]{HT3} where they had to kill the contribution from $d_\ell^{-2j-\alpha}\DD^{2j}\ddbar\ti{\psi}_{\ell,j,k}$, here we need to kill $d_\ell^{-2j-\alpha}\DD^{2j+2}\ti{\psi}_{\ell,j,k}$.

From the definition of $\ti{\rho}_\ell$ in \eqref{rho}, we have
\begin{equation}
\ti{\rho}_\ell-\underline{\ti{\rho}_\ell}=\ti{\psi}_{\ell,j,k}+\sum_{i=1}^j\sum_{p=1}^{N_{i,k}}\tilde{\mathfrak{G}}_{\ti{t},k}( \tilde A_{\ell,i,p,k}^*,\tilde G_{\ell,i,p,k} ),
\end{equation}
and so using \eqref{korz6} and \eqref{pork}, together with the triangle inequality and the boundedness of $\P$-parallel transport, gives
\begin{equation}\label{pakko}
\begin{split}
d_\ell^{-2j-\a}\Bigg[&\DD^{2j}\Bigg(\partial_{\ti{t}}\left(\ti{\psi}_{\ell,j,k}+\sum_{i=1}^j\sum_{p=1}^{N_{i,k}}\tilde{\mathfrak{G}}_{\ti{t},k}( \tilde A_{\ell,i,p,k}^*,\tilde G_{\ell,i,p,k} )\right)\\
&+d_\ell^2\lambda_\ell^{-2}\left(\ti{\psi}_{\ell,j,k}+\sum_{i=1}^j\sum_{p=1}^{N_{i,k}}\tilde{\mathfrak{G}}_{\ti{t},k}( \tilde A_{\ell,i,p,k}^*,\tilde G_{\ell,i,p,k} )\right)\Bigg)\Bigg]_{\beta,\beta/2,\ti{Q}_{O(\ve_\ell)},\ti{g}_\ell(0)}\leq C\ve_\ell^{\a-\b}.
\end{split}
\end{equation}
Now thanks to \eqref{estimate-psi-tilde-infty-1} and \eqref{estimate-potential-G} we can bound
\begin{equation}
d_\ell^2\lambda_\ell^{-2}d_\ell^{-2j-\a}\left[\DD^{2j}\left(\ti{\psi}_{\ell,j,k}+\sum_{i=1}^j\sum_{p=1}^{N_{i,k}}\tilde{\mathfrak{G}}_{\ti{t},k}( \tilde A_{\ell,i,p,k}^*,\tilde G_{\ell,i,p,k} )\right)
\right]_{\beta,\beta/2,\ti{Q}_{O(\ve_\ell)},\ti{g}_\ell(0)}\leq Cd_\ell^2\lambda_\ell^{-2}\ve_\ell^{2+\alpha-\beta}=o(\ve_\ell^{\a-\b}),
\end{equation}
and so \eqref{pakko} implies
\begin{equation}\label{pokko}
\begin{split}
d_\ell^{-2j-\a}\Bigg[&\DD^{2j}\partial_{\ti{t}}\left(\ti{\psi}_{\ell,j,k}+\sum_{i=1}^j\sum_{p=1}^{N_{i,k}}\tilde{\mathfrak{G}}_{\ti{t},k}( \tilde A_{\ell,i,p,k}^*,\tilde G_{\ell,i,p,k} )\right)\Bigg]_{\beta,\beta/2,\ti{Q}_{O(\ve_\ell)},\ti{g}_\ell(0)}\leq C\ve_\ell^{\a-\b}.
\end{split}
\end{equation}
Next, from the bound \eqref{kokku}, as well as the analogous bounds for lower derivatives of $\ti{\eta}^\diamond_\ell$ which come from \eqref{gorgo}, and the bounds
 \eqref{omega-tilde-sharp} for $\ti{\omega}^\sharp_\ell$, we see that
\begin{equation}
d_\ell^{-2j-\a}[ \mathfrak{D}^{2j}\tr_{\tilde \omega_\ell^\sharp}\tilde\eta_\ell^\diamond]_{\b,\b/2,\tilde Q_{O(\ve_\ell)},\tilde g_\ell(0)}\leq C\ve_\ell^{\a-\b},
\end{equation}
and this together with \eqref{korz2} gives
\begin{equation}
d_\ell^{-2j-\a}[ \mathfrak{D}^{2j}\tr_{\tilde \omega_\ell^\sharp}\left(\tilde\eta_\ell^\circ+\tilde\eta_{\ell,j,k}\right)]_{\b,\b/2,\tilde Q_{O(\ve_\ell)},\tilde g_\ell(0)}\leq C\ve_\ell^{\a-\b},
\end{equation}
which together with \eqref{pokko} gives
\begin{equation}\label{pekko}
\begin{split}
d_\ell^{-2j-\a}\Bigg[&\DD^{2j}\left(\partial_{\ti{t}}-\Delta_{\ti{\omega}^\sharp_\ell}\right)\left(\ti{\psi}_{\ell,j,k}+\sum_{i=1}^j\sum_{p=1}^{N_{i,k}}\tilde{\mathfrak{G}}_{\ti{t},k}( \tilde A_{\ell,i,p,k}^*,\tilde G_{\ell,i,p,k} )\right)\Bigg]_{\beta,\beta/2,\ti{Q}_{O(\ve_\ell)},\ti{g}_\ell(0)}\leq C\ve_\ell^{\a-\b}.
\end{split}
\end{equation}
This estimate can be inserted in the Schauder estimates in Proposition \ref{prop:schauder} (as usual by first going to the check picture, and then changing the result back to the tilde picture), with radii both $O(\ve_\ell)$,
\begin{equation}
\begin{split}
d_\ell^{-2j-\a}\Bigg[&\DD^{2j+2}\left(\ti{\psi}_{\ell,j,k}+\sum_{i=1}^j\sum_{p=1}^{N_{i,k}}\tilde{\mathfrak{G}}_{\ti{t},k}( \tilde A_{\ell,i,p,k}^*,\tilde G_{\ell,i,p,k} )\right)\Bigg]_{\beta,\beta/2,\ti{Q}_{O(\ve_\ell)},\ti{g}_\ell(0)}\\
&\leq
C\ve_\ell^{\a-\b}+C\ve_\ell^{-2j-2-\beta}d_\ell^{-2j-\a}\left\|\ti{\psi}_{\ell,j,k}
+\sum_{i=1}^j\sum_{p=1}^{N_{i,k}}\tilde{\mathfrak{G}}_{\ti{t},k}( \tilde A_{\ell,i,p,k}^*,\tilde G_{\ell,i,p,k} )\right\|_{\infty,\ti{Q}_{O(\ve_\ell)},\ti{g}_\ell(0)}\\
&\leq C\ve_\ell^{\a-\b},
\end{split}
\end{equation}
where we used the bounds \eqref{estimate-psi-tilde-infty-1} and \eqref{estimate-potential-G} for the $L^\infty$ norm. As usual, taking $\beta>\alpha$ this implies an $o(1)$ bound for the $C^{\b,\b/2}$ seminorm on $\ti{Q}_{O(\ve_\ell)}$, hence on $\ti{Q}_{2}$, and hence also an $o(1)$ bound for the $C^{\a,\a/2}$ seminorm on $\ti{Q}_{2}$, i.e.
\begin{equation}\label{pikko}
d_\ell^{-2j-\a}\Bigg[\DD^{2j+2}\left(\ti{\psi}_{\ell,j,k}+\sum_{i=1}^j\sum_{p=1}^{N_{i,k}}\tilde{\mathfrak{G}}_{\ti{t},k}( \tilde A_{\ell,i,p,k}^*,\tilde G_{\ell,i,p,k} )\right)\Bigg]_{\a,\a/2,\ti{Q}_{2},\ti{g}_\ell(0)}=o(1).
\end{equation}

Next we claim that
\begin{equation}\label{error-from-total-psi}
\displaystyle
\sum_{i=1}^j\sum_{p=1}^{N_{i,k}}d_\ell^{-2j-\a}  \left[\mathfrak{D}^{2j+2} \tilde{\mathfrak{G}}_{\ti{t},k}( \tilde A_{\ell,i,p,k}^*,\tilde G_{\ell,i,p,k} )\right]_{\a,\a/2,\tilde Q_2,\tilde g_\ell(0)}=o(1).
\end{equation}
The argument is identical to the deduction of  \eqref{estimate-potential-G} except we now use the improved parabolic H\"older seminorm from \eqref{improved-A-estimate-subcaseA} instead of that from blowup argument.  By applying \eqref{plop}, we have
\begin{equation}\label{Gtilde}
\tilde{\mathfrak{G}}_{\ti{t},k}( \tilde A_{\ell,i,p,k}^*,\tilde G_{\ell,i,p,k} )=\sum_{\iota=0}^{2k}\sum_{q=\lceil\frac{\iota}{2}\rceil}^ke^{-qd_\ell^2\lambda_\ell^{-2}\tilde t-(q-\frac{\iota}{2})t_\ell} \e_\ell^{\iota} \cdot \tilde \Phi_{\iota,q}(\tilde G_{i,p,k})\circledast \D^\iota \tilde A_{\ell,i,p,k}^*.
\end{equation}
so that
\begin{equation}\label{killing-contr-psi-improved-3.8}
\begin{split}
&\quad \mathfrak{D}^{2j+2}\tilde{\mathfrak{G}}_{\ti{t},k}( \tilde A_{\ell,i,p,k}^*,\tilde G_{\ell,i,p,k} )\\
&= \sum_{\iota=0}^{2k}\sum_{q=\lceil\frac{\iota}{2}\rceil}^{k} \sum_{\substack{d=0\\d\in 2\mathbb{N}}}^{2j+2} \sum_{i_1+i_2=2j+2-d} e^{-qd_\ell^2\lambda_\ell^{-2}\tilde t-(q-\frac{\iota}{2})t_\ell} (qd_\ell^2\lambda_\ell^{-2})^\frac{d}{2}\e_\ell^{\iota} \cdot \mathfrak{D}^{i_1}\tilde \Phi_{\iota,q}(\tilde G_{i,p,k})\circledast \mathfrak{D}^{i_2+\iota} \tilde A_{\ell,i,p,k}^*.
\end{split}
\end{equation}

To estimate it,  we need the following simple bounds from \cite[(4.302)--(4.303)]{HT3}
\begin{equation}
\left\{
\begin{array}{ll}
\|\mathfrak{D}^\iota \tilde \Phi_{\iota,q}(\tilde G_{i,p,k})\|_{\infty, \tilde Q_{O(\e_\ell)},\tilde g_\ell(0)}\leq C\e_\ell^{-\iota},\\[2mm]
[\mathfrak{D}^\iota \tilde \Phi_{\iota,q}(\tilde G_{i,p,k})]_{\a,\a/2, \tilde Q_{S},\tilde g_\ell(0)}\leq CS^{1-\a}\ve_\ell^{-\iota-1}=o (\e_\ell^{-\iota-\a}),
\end{array}
\right.
\end{equation}
for all $\iota\geq 0$, $\a\in (0,1)$ and  fixed $S>1$.

On the other hand, transferring  \eqref{inter-estimate-1}  to the tilde picture,
\begin{equation}
d_\ell^{-\iota+2}\| \mathfrak{D}^\iota \tilde A_{\ell,i,p,k}^*\|_{\infty,\tilde Q_{O(\e_\ell)},\tilde g_\ell(0)}\leq C\delta_\ell^{2+2j-\iota+\a}=o(\delta_\ell^{2+2j-\iota}),
\end{equation}
for all $0\leq \iota \leq 2k+2+2j$.  Putting $\b>\a$ in \eqref{improved-A-estimate-subcaseA} yields
\begin{equation}
d_\ell^{-\iota+2-\a}[\mathfrak{D}^\iota \tilde A_{\ell,i,p,k}^*]_{\b,\b/2,\tilde Q_{O(\e_\ell)},\tilde g_\ell(0)}\leq C \e_\ell^{\a-\b} \delta_\ell^{2j-\iota+2}=o(\delta_\ell^{2+2j-\iota}),
\end{equation}
for all $\iota\geq 2j$ and thus implies
\begin{equation}
d_\ell^{-\iota+2-\a}[\mathfrak{D}^\iota \tilde A_{\ell,i,p,k}^*]_{\a,\a/2,\tilde Q_{2},\tilde g_\ell(0)}\leq C \e_\ell^{\a-\b} \delta^{2+2j-\iota}_\ell =o(\delta_\ell^{2+2j-\iota}).
\end{equation}
for all $2j\leq \iota\leq 2j+2+2k$ while for $0\leq \iota\leq 2j$, we have from \eqref{A-star-0} that
\begin{equation}
d_\ell^{2-\iota-\a} [\mathfrak{D}^\iota \tilde A_{\ell,i,p,k}^*]_{\a,\a/2,\tilde Q_2,\tilde g_\ell(0)}\leq C\delta_\ell^2 d_\ell^{2j-\iota}=o(\delta_\ell^{2+2j-\iota}),
\end{equation}
since $\e_\ell\to +\infty$.

We now estimate \eqref{killing-contr-psi-improved-3.8} using the above estimates of each terms:
\begin{equation}
\begin{split}
&\quad d_\ell^{-2j-\a}\left[\mathfrak{D}^{2j+2}\tilde{\mathfrak{G}}_{\ti{t},k}( \tilde A_{\ell,i,p,k}^*,\tilde G_{\ell,i,p,k})\right]_{\a,\a/2,\tilde Q_2,\tilde g_\ell(0)}\\
&\leq o(1)\sum_{\iota,i_1,i_2,d} \Big(  (d_\ell\lambda_\ell^{-1})^{d+\a}\e_\ell^{\iota-i_1} \delta_\ell^{2+2j-\iota-i_2} d_\ell^{i_2+\iota-2} +(d_\ell\lambda_\ell^{-1})^{d}\e_\ell^{\iota-i_1-\a} \delta_\ell^{2+2j-\iota-i_2}d_\ell^{i_2+\iota-2}\\
&\quad \quad +(d_\ell\lambda_\ell^{-1})^{d}\e_\ell^{\iota-i_1} \delta_\ell^{2+2j-\iota-i_2}    d_\ell^{i_2+\iota+\a-2} \Big)\\
&=o(1),
\end{split}
\end{equation}
which proves \eqref{error-from-total-psi}. Combining it with \eqref{pikko}, and using the triangle inequality gives
\begin{equation}\label{kakku2}
d_\ell^{-2j-\alpha}[\DD^{2j+2}\ti{\psi}_{\ell,j,k}]_{\a,\a/2,\ti{Q}_{2},\tilde g_\ell(0)}=o(1),
\end{equation}
which kills the contribution of $\tilde\psi_{\ell,j,k}$ in \eqref{sup-realized-3}.  In conclusion,  all terms that appear on the right hand side of \eqref{sup-realized-3} converge to zero, which gives a contradiction and finally concludes {\bf Subcase A}.

\subsection{Subcase B: $\e_\ell\to \e_\infty>0$}
Without loss of generality, we will assume that $\e_\infty=1$.
By  \eqref{estimate-psi-tilde-infty-10}, \eqref{estimate-psi-tilde-infty-1} and \cite[Lemma 2.6]{HT3}, we see that  $d_\ell^{-2j-\a}\tilde\psi_{\ell,j,k}$  converges in $C^{2j+2+\b,j+1+\b/2}_{\rm loc}$ to a function $\tilde\psi_{\infty,j,k}\in C^{2j+2+\a,j+1+\a/2}_{\rm loc}$ defined on $\mathbb{C}^m\times Y\times (-\infty,0]$, for all $0<\b<\a$, while \eqref{bdd-phi-star-underline} gives us that
$d_\ell^{-2j-\a}\partial_{\tilde t}\underline{\tilde\chi_\ell^*}$ and $d_\ell^{-2j-\a}\ddbar\underline{\tilde\chi_\ell^*}$ converge  in $C_{\rm loc}^{2j+\b,j+\b/2}$ for all $0<\b<\a$ to a function $u_\infty\in C^{2j+\a,j+\a/2}_{\rm loc}$ and to a $(1,1)$ form $\eta_\infty\in C^{2j+\a,j+\a/2}_{\rm loc}$ respectively on $\mathbb{C}^m\times (-\infty,0]$. Using again \eqref{bdd-phi-star-underline}, we also have that $d_\ell^{-2j-\a}e^{-d_\ell^2\lambda_\ell^{-2}\tilde t}\partial_{\tilde t}\underline{\tilde \chi_\ell^* }\to u_\infty$ in $C_{\rm loc}^{2j+\b,j+\b/2}$ and
$d_\ell^{-2j-\a}e^{-d_\ell^2\lambda_\ell^{-2}\tilde t}\ddbar\underline{\tilde \chi_\ell^* }\to \eta_\infty$ in $C_{\rm loc}^{2j+\b,j+\b/2}$. Moreover, when $j\geq 1$ we have
\begin{equation}\label{trivv1}
\ddbar u_\infty=\partial_{\tilde t}\eta_\infty.
\end{equation}
From \eqref{estimate-psi-tilde-infty-10}, \eqref{estimate-psi-tilde-infty-1}, \eqref{bdd-phi-star-underline}, \eqref{circ-e-nonzero}, we see that
$\ti{\psi}_{\infty,j,k}=O(r^{2j+2+\a}),$ $u_\infty=O(r^{2j+\a}),$ $\eta_\infty=O(r^{2j+\a}),$ and $\tilde\eta_\infty^\circ =O(r^{2j+\a})$ where $r=|z|+\sqrt{|t|}$.

By \eqref{hat-A-star}, \eqref{A-star-0} and \eqref{A-star-1}, the functions $d_\ell^{-2j-\a}\tilde A_{\ell,i,p,k}^*$ converge to limiting functions $\tilde A_{\infty,i,p,k}^*$ from the base $\mathbb{C}^m$ in $C_{\rm loc}^{2j+2+2k+\b,j+1+2k+\b/2}$ for any $0<\b<\a$ while $\tilde G_{i,p,k}$ converge locally uniformly smoothly to functions $\tilde G_{\infty,i,p,k}$ pulled back from $Y$.
Using \eqref{estimate-potential-G}  we see that the functions $d_\ell^{-2j-\a}\ti{\mathfrak{G}}_{\ti{t},k}(\ti{A}_{\ell,i,p,k}^*,\ti{G}_{\ell,i,p,k})$ converge in $C^{2j+2+\b,j+1+\b/2}_{\rm loc}$ to a function which, thanks to \cite[(4.310)]{HT3}, is given by
\begin{equation}\label{futilis}
\ti{\mathfrak{G}}_{\infty,k}(\tilde A_{\infty,i,p,k}^*,\tilde G_{\infty,i,p,k})=\sum_{\ell=0}^{k}(-1)^\ell (\Delta^{\C^m})^\ell A_{\infty,i,p,k}^* (\Delta^Y)^{-\ell-1}\tilde G_{\infty,i,p,k}.
\end{equation}
By \eqref{bdd-phi-sharp-underline} and \eqref{dager-estimate}, the metrics $\tilde g^\sharp_\ell(t)\to g_P=g_{\mathbb{C}^m}+g_{Y,z_\infty=0}$ locally smoothly where $g_{\mathbb{C}^m}$ equals $g_{\mathrm{can}}|_{z=z_\infty=0}$.

Recall from \eqref{PMA-tilde-new} that
\begin{equation}\label{karpa}
d_\ell^{-2j-\a}\left(\left(\partial_{\tilde t}- \Delta_{\ti{\omega}^\sharp_\ell}\right)\ti{\rho}_\ell+d_\ell^2\lambda_\ell^{-2}\ti{\rho}_\ell\right)=d_\ell^{-2j-\a}(\mathcal{E}_1+\mathcal{E}_2+\mathcal{E}_3),
\end{equation}
where $\ti{\rho}_\ell$ is given by \eqref{rho}. From \eqref{estimate-psi-tilde-infty-10}, \eqref{estimate-psi-tilde-infty-1}, \eqref{ainf} and  \eqref{potential-eta-circ-estimate-1}, it follows that
\begin{equation}
d_\ell^{-2j-\a}d_\ell^2\lambda_\ell^{-2}\ti{\rho}_\ell\to 0,
\end{equation}
in $C^0_{\rm loc}$, so as $\ell\to+\infty$ the LHS of \eqref{karpa} converges to
\begin{equation}\label{eatalot1}
\left(\partial_{t}- \Delta_{\omega_P}\right)\left(\tilde\psi_{\infty,j,k}+\sum_{i=1}^j\sum_{p=1}^{N_{i,k}} \tilde{\mathfrak{G}}_{\infty,k}\left( \tilde A_{\infty,i,p,k}^*,\tilde G_{\infty,i,p,k} \right) \right)+u_\infty-\tr_{\omega_{\mathbb{C}^m}}\eta_\infty,
\end{equation}
while at the same time, thanks to \eqref{RHS-error-total}, the LHS of \eqref{karpa} is forced to be a polynomial $\mathcal{F}$ in the $(z,t)$ variables, of (spacetime) degree at most $2j$, with coefficients that are functions on $Y$, namely
\begin{equation}\label{eatalot2}
\mathcal{F}(z,y,t)=\sum_{|p|+2q\leq 2j} H_{p,q}(y) z^p t^q,
\end{equation}
for some smooth functions $H_{p,q}$ pullback from the fiber $Y$, where here again we treat $z$ as real variables. Plugging \eqref{eatalot2} into \eqref{eatalot1}, and taking the fiberwise average, gives
\begin{equation}
u_\infty-\tr_{\omega_{\C^m}}\eta_\infty=Q,
\end{equation}
where $Q$ is a polynomial in the $(z,t)$ variables of degree at most $2j$. But thanks to the jet subtractions, $u_\infty$ and $\eta_\infty$ have vanishing $2j$ parabolic jet at $(z,t)=(0,0)$, so
\begin{equation}\label{trivv2}
u_\infty-\tr_{\omega_{\mathbb{C}^m}}\eta_\infty=0,
\end{equation}
on $\mathbb{C}^m\times (-\infty,0]$.

We now claim that $u_\infty$ and $\eta_\infty$ are both identically zero. First, we assume that $j=0$. In this case, we can use \eqref{krkl2} and see that $u_\infty$ is a constant in space-time, hence identically zero. At this point we can argue similarly as in subcase A to show that $\eta_\infty$ also vanishes identically: first we show that $\eta_\infty$ is time-independent, by arguing similarly to \eqref{rokko}, noting that for any $\ti{z}\in\C^m$ and $\ti{t},\ti{t}'\leq 0$,
\begin{equation}\label{rokko2}
d_\ell^{-\a}\ddbar\underline{\ti{\chi}^*_\ell}(\ti{z},\ti{t})-d_\ell^{-\a}\ddbar\underline{\ti{\chi}^*_\ell}(\ti{z},\ti{t}')=\ddbar \int_{\ti{t}'}^{\ti{t}}\left(d_\ell^{-\a}\de_{\ti{t}}\underline{\ti{\chi}^*_\ell}\right)(\ti{z},\ti{s})d\ti{s},
\end{equation}
and since $\int_{\ti{t}'}^{\ti{t}}\left(d_\ell^{-\a}\de_{\ti{t}}\underline{\ti{\chi}^*_\ell}\right)(\ti{z},\ti{s})d\ti{s}\to 0$ locally uniformly (from $u_\infty\equiv 0$), it follows that the RHS of \eqref{rokko2} goes to $0$ weakly, and since the LHS converges in $C^\beta_{\rm loc}$ to $\eta_\infty(\ti{z},\ti{t})-\eta_\infty(\ti{z},\ti{t}')$, this must be identically zero as claimed.

We can then write $\eta_\infty=\ddbar v_\infty$, where $v_\infty\in C^{2+\a}_{\rm loc}(\mathbb{C}^m)$ is time-independent and $\Delta_{g_{\C^m}}v_\infty=0$, so $v_\infty$ is smooth and $|\ddbar v_\infty|=O(|z|^\a)$. The Liouville Theorem applied to each component of $\ddbar v_\infty$ then implies that $\eta_\infty$ has constant coefficients, hence it vanishes identically since its value at $(0,0)$ vanishes.

Next, we assume $j\geq 1$. We can then differentiate \eqref{trivv2} and use \eqref{trivv1} to see that
\begin{equation}
\begin{split}
\partial_{t}u_\infty &=\tr_{\mathbb{C}^m}\partial_{t} \eta_\infty=\tr_{\mathbb{C}^m}\ddbar u_\infty=\Delta_{\mathbb{C}^m}u_\infty,
\end{split}
\end{equation}
so $u_\infty$ solves the heat equation on $(-\infty,0]\times\mathbb{C}^m$, and
$|\mathfrak{D}^{\iota}u_\infty|=O(r^{2j+\a-\iota})$ where $r=|x|+\sqrt{t}$ for $0\leq \iota\leq 2j$ by \eqref{bdd-phi-star-underline}. By applying Liouville Theorem for ancient heat equation to $\mathfrak{D}^{2j}u_\infty$, we see that $u_\infty$ must be a space-time polynomial of degree at most $2j$, and hence  $u_\infty\equiv 0$ since its parabolic $2j$-jet at $(0,0)$ vanishes.
Going back to \eqref{trivv1} it then follows that $\eta_\infty$ is time-independent, it is clearly of the form $\eta_\infty=\ddbar v_\infty$ for some time-independent function $v_\infty\in C^{2j+2+\a}_{\rm loc}(\mathbb{C}^m)$, and from \eqref{trivv2} we see that $v_\infty$ is harmonic. Since $|\D^{2j}\eta_\infty|=O(|z|^\alpha)$,  the Liouville Theorem in \cite[Proposition 3.12]{HT2} then shows that the coefficients of $\eta_\infty$ are polynomials of degree at most $2j$, and since these coefficients have vanishing $2j$-jet at the origin, this implies that $\eta_\infty$ vanishes identically.

Now that we know that $u_\infty\equiv 0, \eta_\infty\equiv 0$, we can return to \eqref{eatalot1}, \eqref{eatalot2} and get
\begin{equation}\label{subcaseB-Box}
\begin{split}
&\quad \left(\de_{t}-\Delta_{\omega_P}\right)\left(\tilde\psi_{\infty,j,k}+\sum_{i=1}^j\sum_{p=1}^{N_{i,k}} \tilde{\mathfrak{G}}_{\infty,k}\left( \tilde A_{\infty,i,p,k}^*,\tilde G_{\infty,i,p,k} \right) \right)=\mathcal{F}.
\end{split}
\end{equation}
Recall from \eqref{karpa} and the discussion after it that $d_\ell^{-2j-\a}(\mathcal{E}_1+\mathcal{E}_2+\mathcal{E}_3)\to \mathcal{F}$ locally uniformly. From \eqref{error1-bad-1} we see that $d_\ell^{-2j-\a}\mathcal{E}_1\to 0$ locally uniformly, while the term $-d_\ell^{-2j-\a}(\mathcal{E}_2+\mathcal{E}_3)$ is exactly equal to \eqref{stronzo} (in this subcase, the check picture equal the tilde picture since $\ve_\ell \to 1$), so the Selection Theorem \ref{thm:Selection} shows that $\mathcal{F}$ is also equal to the limit of
\begin{equation}
d_\ell^{-2j-\a}\Sigma_\ell^*\left( f_{\ell,0}+\sum_{i=1}^j \sum_{p=1}^{N_{i,k}}f_{\ell,i,p} G_{i,p,k}\right),
\end{equation}
where $\Sigma_\ell$ was defined in \eqref{sigmat}, and where $f_{\ell,0}, f_{\ell,i,p}$ are time-dependent functions pulled back from the base, such that $\hat{f}_{\ell, 0}=\Psi_\ell^*f_{\ell,0},\hat{f}_{\ell,i,p}=\Psi_\ell^*f_{\ell,i,p}$  converge locally smoothly to zero. This implies that for any function $G$ on $\C^m\times Y$ which is fiberwise $L^2$ orthogonal to the span of the functions $\{\ti{G}_{\infty,i,p,k}\}_{1\leq i\leq j,1\leq p\leq N_{i,k}}$ together with the constants, and for any $z\in\C^m$ and $t\in (-\infty,0]$ we have
\begin{equation}
\int_{\{z\}\times Y}\mathcal{F}(z,y,t) G(z,y) \omega_{Y}^n(y)=0,
\end{equation}
which implies that we can write
\begin{equation}\label{qabrah}
\mathcal{F}(z,y,t)=g_{0}(z,t)+\sum_{i=1}^j\sum_{p=1}^{N_{i,k}}g_{i,p,k}(z,t)\ti{G}_{\infty,i,p,k}(y),
\end{equation}
for some functions $g_0,g_{i,p,k}$ on $\C^m\times (-\infty,0]$.
Since $\mathcal{F}$ is a polynomial in $(z,t)$ of degree at most $2j$, by fiberwise $L^2$ projecting $\mathcal{F}$ onto each $\ti{G}_{\infty,i,p,k}$ and onto the constants we see that the coefficients $g_0(z,t),g_{i,p,k}(z,t)$ are also polynomials of degree at most $2j$. This shows that $\mathcal{F}$ is a linear combination of the functions $\ti{G}_{\infty,i,p,k}$ together with the constant $1$, with coefficients that are polynomials in $(z,t)$ of degree at most $2j$, i.e.
\begin{equation}\label{qquamolim3}
\mathcal{F}(z,y,t)=K_0(z,t)+\sum_q K_q(z,t)H_q(y),\end{equation}
where $K_0(z,t),K_q(z,t)$ are polynomials of degree at most $2j$, and $H_q(y)$ are functions pulled back from the fiber $Y$ that lie in $\mathcal{H}$, the fiberwise span of the functions $\ti{G}_{\infty,i,p,k}, 1\leq i\leq j, 1\leq p\leq N_{i,k}$.

Following the argument in deriving \cite[(4.330)]{HT3} (which holds verbatim here), for each fixed $t\leq 0$  we obtain
 \begin{equation}\label{subcaseB-A-star}
 \begin{split}
 \tilde A^*_{\infty,i,p,k}(t)&=\int_{\{z\}\times Y} \tilde G_{\infty,i,p,k} \Delta_{\omega_P} \left(\tilde \psi_{\infty,j,k}+\sum_{i=1}^j\sum_{p=1}^{N_{i,k}} \tilde{\mathfrak{G}}_{\infty,k}\left(\tilde A_{\infty,i,p,k}^*,\tilde G_{\infty,i,p,k} \right) \right) \omega_Y^n\\
 &\quad -\sum_{q=1}^{i-1}\sum_{p=1}^{N_{q,k}}  \tilde \Phi_{2k+2,i,p,k}(\tilde G_{\infty,q,p,k}) \circledast \,  \D^{2k+2}\tilde A_{\infty,q,p,k}^*,
 \end{split}
 \end{equation}
where each quantity is evaluated at $t\leq 0$.  For notational convenience,  we denote
\begin{equation}\label{udef}
u(z,y,t)=\tilde \psi_{\infty,j,k}+\sum_{i=1}^j\sum_{p=1}^{N_{i,k}} \tilde{\mathfrak{G}}_{\infty,k}\left( \tilde A_{\infty,i,p,k}^*,\tilde G_{\infty,i,p,k} \right),
\end{equation}
which thanks to \eqref{subcaseB-Box} satisfies $ \left(\de_{t}-\Delta_{\omega_P}\right)u=\mathcal{F}$, and so given any $p,q\geq 0$ with $p+2q=2j+2$, and given $v_1,\dots,v_p$ tangent vectors to $\C^m$, from \eqref{qabrah} we see that
\begin{equation}
\left(\de_{t}-\Delta_{\omega_P}\right) \D^p_{v_1\cdots v_p}\de_t^q u =0,
\end{equation}
while, thanks to \eqref{estimate-psi-tilde-infty-10}, \eqref{estimate-psi-tilde-infty-1} and \eqref{estimate-potential-G-bt}, we also have that $\D^p_{v_1\cdots v_p}\de_t^q  u=O(r^\a)$, where $r=|z|+\sqrt{|t|}$.
Since $\omega_P$ is of Ricci-flat, we can apply the Liouville Theorem in \cite[Proposition 2.1]{FL} for ancient solutions of the heat equation, and conclude that
$\D^p_{v_1\cdots v_p}\de_t^q u$ is a constant in space and time. Since this is true for arbitrary $p,q$ with $p+2q=2j+2$, and for arbitrary $v_1,\dots,v_p$, this means that for every given $y\in Y$, the function $u(z,y,t)$ is a (parabolic) polynomial in $(z,t)$ of degree at most $2j+2$. Thus, $(\de_t u)(z,y,t)$ is a polynomial in $(z,t)$ of degree at most $2j$, hence the fiber integration
\begin{equation}
\int_{\{z\}\times Y} \tilde G_{\infty,i,p,k} (\de_t u)(z,y,t) \omega_Y^n(y),
\end{equation}
is also polynomial of degree at most $2j$. Thus, if we insert \eqref{subcaseB-Box} into \eqref{subcaseB-A-star}, we obtain
\begin{equation}\label{induction-A}
\begin{split}
\tilde A^*_{\infty,i,p,k}(\tilde t)&=-\sum_{q=1}^{i-1}\sum_{p=1}^{N_{q,k}}  \tilde \Phi_{2k+2,i,p,k}(\tilde G_{\infty,q,p,k}) \circledast \,  \D^{2k+2}\tilde A_{\infty,q,p,k}^*+Q_{i,p},
\end{split}
\end{equation}
where $Q_{i,p}$ is a (parabolic) polynomial on $\mathbb{C}^m\times (-\infty,0]$ of degree at most $2j$. We can then use this to show by induction on $0\leq i\leq j$ that $\ti{A}^*_{\infty,i,p,k}=0$ for all $i,p$. Indeed, in the base case of the induction $i=1$, the last term in \eqref{induction-A} is not present, and so $\ti{A}^*_{\infty,1,p,k}$ is a polynomial of degree at most $2j$, but since it also has vanishing $2j$-jet at $(0,0)$, it must be identically zero. The induction step is then exactly the same.

Next, since we have shown that $\tilde A_{\infty,i,p,k}^*\equiv 0$ for all $i,p$,  from \eqref{futilis} and \eqref{udef} we see that $u=\tilde\psi_{\infty,j,k}$, which by \eqref{subcaseB-A-star} satisfies
\begin{equation}
\int_{\{z\}\times Y} \tilde G_{\infty,i,p,k} \Delta_{\omega_P}u\omega_Y^n=0,
\end{equation}
for all $z,i,p$, i.e. we have
\begin{equation}\label{quip2}
\Delta_{\omega_P} u \in \mathcal{H}^\perp,
\end{equation}
the fiberwise $L^2(\omega_Y)$-orthogonal space to $\mathcal{H}$. Thanks to \eqref{subcaseB-Box} and \eqref{qabrah} we also have
\begin{equation}\label{quip1}
\left(\de_{t}-\Delta_{\omega_P}\right)u=\mathcal{F}\in \mathcal{H}.
\end{equation}
We then claim that we have $u\equiv 0$. To show this, we apply a trick from \cite[Claim 3.2]{FL}. We define a function $v$ on $\C^m\times Y\times (-\infty,0]$ as the fiberwise $L^2(\omega_Y)$-orthogonal projection of $u$ onto $\mathcal{H}^\perp$. Recalling that $|u|=O(r^{2j+2+\a})$, where $r=|z|+\sqrt{|t|}$, we claim that $v$ satisfies the same growth bound $|v|=O(r^{2j+2+\a})$. To see this, it suffices to prove this bound for the fiberwise $L^2(\omega_Y)$-orthogonal projection of $u$ onto $\mathcal{H}$, which equals
\begin{equation}
\sum_{i=1}^j\sum_{p=1}^{N_{i,k}}\left(\int_{\{z\}\times Y} u(z,\cdot,t)\ti{G}_{\infty,i,p,k}\omega_Y^n\right)\ti{G}_{\infty,i,p,k}(y),
\end{equation}
and whose supremum on $B_r(0)\times (-r^2,0]$ is clearly bounded by $C\sup_{B_r(0)\times (-r^2,0]}|u|\leq Cr^{2j+2+\a},$ as claimed.

Projecting \eqref{quip1} onto $\mathcal{H}^\perp$, and using \eqref{quip2} we see that
\begin{equation}\label{quip3}
\de_t v-\Delta_{\omega_P}u=0.
\end{equation}
Given $R>1$, which later will be taken sufficiently large, we define a function on $\mathbb{C}^m\times Y\times (-\infty,0]$ by
\begin{equation}
\phi_R(z,y,t):=e^{-\frac{|z|}{R}+\frac{t}{R}},
\end{equation}
and consider the weighted $L^2$ energy
\begin{equation}
E_R(t):=\int_{\mathbb{C}^m\times Y} v^2 \phi_R\omega_P^{m+n},
\end{equation}
which is finite since $|v|=O(r^{2j+2+\a})$ and $\phi_R$ decays fast at spatial and time infinity. Differentiating $E_R$ in time and using \eqref{quip3} we get
\begin{equation}
\begin{split}
E_R'(t)&=\int_{\mathbb{C}^m\times Y} \left( 2v\phi_R\Delta_{\omega_P} u +\frac1R v^2 \phi_R \right)\omega_P^{m+n}.
\end{split}
\end{equation}
Since $v$ is a fiberwise $L^2$-projection of $u$, we have $\int_Y v^2\omega_Y^n\leq \int_Yu^2\omega_Y^n$. Using this, together with \eqref{quip2}, we can estimate
\begin{equation}
\begin{split}
E'_R(t)&\leq (m+n)!\int_{\mathbb{C}^m}\phi_R \left( \int_{\{z\}\times Y}\left(2 v \Delta_{\omega_P}u+\frac{1}{R^2}u^2\right)\omega_Y^n \right)\omega_{\mathbb{C}^m}^m\\
&=(m+n)!\int_{\mathbb{C}^m}\phi_R \left( \int_{\{z\}\times Y}\left(2 u \Delta_{\omega_P}u+\frac{1}{R^2}u^2\right)\omega_Y^n \right)\omega_{\mathbb{C}^m}^m\\
&=\int_{\mathbb{C}^m\times Y} \left(2u\phi_R\Delta_{\omega_P} u  +\frac{1}{R^2}u^2\phi_R\right)\omega_{P}^{m+n}\\
&\leq  \int_{\mathbb{C}^m\times Y} \left( -2 |\nabla u|^2_{g_P} \phi_R +2u|\nabla u|_{g_P}|\nabla\phi_R|_{g_P}+\frac{1}{R^2} u^2 \phi_R \right)\omega_{P}^{m+n}\\
&\leq \int_{\mathbb{C}^m\times Y} \left( - |\nabla u|^2_{g_P} \phi_R +\frac{C}{R^2} u^2 \phi_R \right)\omega_{P}^{m+n},
 \end{split}
\end{equation}
where the integration by part is justified by the growth bound of $u$ and the fast decay of $\phi_R$. Since $u$ has fiberwise average zero, the Poincar\'e inequality on $(Y,\omega_Y)$ implies
\begin{equation}\begin{split}
\int_{\mathbb{C}^m\times Y} u^2 \phi_R \omega_P^{m+n}&=(m+n)!\int_{\mathbb{C}^m}\phi_R  \left( \int_{\{z\}\times Y}u^2\omega_Y^n \right)\omega_{\mathbb{C}^m}^m\\
&\leq C\int_{\mathbb{C}^m}\phi_R  \left( \int_{\{z\}\times Y}|\nabla^Y u|^2_{g_Y}\omega_Y^n \right)\omega_{\mathbb{C}^m}^m
\leq C\int_{\mathbb{C}^m\times Y} |\nabla u|^2_{g_P} \phi_R \omega_P^{m+n},
\end{split}\end{equation}
where the constant $C$ is independent of $R$, hence for all $t\leq 0$ we have
\begin{equation}
E'_R(t)\leq \int_{\mathbb{C}^m\times Y} \left( -\left(1-\frac{C}{R}\right) |\nabla u|^2_{g_P} \phi_R\right)\omega_{P}^{m+n}\leq 0,
\end{equation}
provided we choose $R$ sufficiently large. Thus, for any $s<t\leq 0$ we have $E_R(t)\leq E_R(s)$, but if we let $s\to-\infty$ then since $u$ grows at most polynomially while $\phi_R$ has exponential decay, we see that $\lim_{s\to -\infty} E_R(s)=0$, and so $E_R(t)\equiv 0$ for all $t\leq 0$, which implies that $v\equiv 0$. From \eqref{quip1} we then see that $\Delta_{\omega_P}u\equiv 0$, so $u$ is harmonic on $\C^m\times Y$ and $|u|=O(r^{2j+2+\a})$, so \cite[Proposition 3.12]{HT2} implies that for any fixed $t\leq 0$, the function $u(\cdot, t)$ is the pullback of a polynomial on $\C^m$ of degree at most $2j+2$, and since it also has fiberwise average zero, it vanishes identically.

To summarize, we have thus proved that $d_\ell^{-2j-\a}\ti{\psi}_{\ell,j,k},$ $d_\ell^{-2j-\a}\partial_{\tilde t}\underline{\tilde\chi_\ell^*},$ $d_\ell^{-2j-\a}\ddbar\underline{\tilde\chi_\ell^*},$
$d_\ell^{-2j-\a}\ti{A}^*_{\ell,i,p,k}$ all go to zero locally uniformly in the appropriate topologies, which when $j\geq 1$ shows that the RHS of \eqref{sup-realized-3} goes to zero, and gives a contradiction.  As for the case $j=0$, by definition we have
\begin{equation}\label{porp}
\ddbar\ti{\vp}_\ell=\ddbar\ti{\psi}_{\ell,0,k}+e^{-d_\ell^2\lambda_\ell^{-2}\ti{t}}\ddbar\underline{\tilde\chi_\ell^*}+e^{-d_\ell^2\lambda_\ell^{-2}\ti{t}}\ddbar\underline{\tilde\chi_\ell^\sharp},
\end{equation}
and we have just shown that $d_\ell^{-\a}\ddbar\ti{\psi}_{\ell,0,k}$ and $d_\ell^{-\a}e^{-d_\ell^2\lambda_\ell^{-2}\ti{t}}\ddbar\underline{\tilde\chi_\ell^*}$ both go to zero locally uniformly, so using also \eqref{qurq} we see that the RHS of \eqref{sup-realized-20} also converges to zero, contradiction.  This concludes {\bf Subcase B}.

\subsection{Subcase C: $\e_\ell\to 0$}
In this subcase, using repeatedly \cite[Lemma 2.6]{HT3}, thanks to \eqref{estimate-psi-tilde-infty-10} and \eqref{estimate-psi-tilde-infty-1} we have that $d_\ell^{-2j-\a}\tilde\psi_{\ell,j,k}$ converges to zero in $C^{2j+2+\b,j+1+\b/2}_{\rm loc}$.  Also, using \eqref{qorq}, we see that $d_\ell^{-2j-\a}\ti{\eta}^\circ_{\ell,j,k}$ converges in $C^{2j+\b,j+\b/2}_{\rm loc}$ to a $(1,1)$-form $\ti{\eta}^\circ_{\infty,j,k}$ on $\mathbb{C}^m\times Y\times (-\infty,0]$ which is weakly closed. The bounds in \eqref{bdd-phi-star-underline} give us that
$d_\ell^{-2j-\a}\partial_{\tilde t}\underline{\tilde\chi_\ell^*}$ and $d_\ell^{-2j-\a}\ddbar\underline{\tilde\chi_\ell^*}$ converge  in $C_{\rm loc}^{2j+\b,j+\b/2}$ for all $0<\b<\a$ to a function $u_\infty\in C^{2j+\a,j+\a/2}_{\rm loc}$ and to a $(1,1)$ form $\eta_\infty\in C^{2j+\a,j+\a/2}_{\rm loc}$ respectively on $\mathbb{C}^m\times (-\infty,0]$, and again $\eta_\infty$ is weakly closed, and these satisfy \eqref{trivv1} when $j\geq 1$. Using again \eqref{bdd-phi-star-underline}, we also have that $d_\ell^{-2j-\a}e^{-d_\ell^2\lambda_\ell^{-2}\tilde t}\partial_{\tilde t}\underline{\tilde \chi_\ell^* }\to u_\infty$ in $C_{\rm loc}^{2j+\b,j+\b/2}$ and
$d_\ell^{-2j-\a}e^{-d_\ell^2\lambda_\ell^{-2}\tilde t}\ddbar\underline{\tilde \chi_\ell^* }=d_\ell^{-2j-\a}\ti{\eta}^\diamond_\ell\to \eta_\infty$ in $C_{\rm loc}^{2j+\b,j+\b/2}$. From \eqref{gorgol} and \eqref{qarq} we see that $\ti{\omega}^\sharp_\ell\to \omega_{\C^m}$ locally smoothly, and in particular (as in \cite[(4.345)]{HT3}) this implies that
\begin{equation}\label{bizz}
d_\ell^{-2j-\a}\DD^{2j}_{\mathbf{bt}}\Delta_{\ti{\omega}^\sharp_\ell}\underline{\tilde \chi_\ell^*}=d_\ell^{-2j-\a}\DD^{2j}_{\mathbf{bt}}\Delta_{\omega_{\C^m}}\underline{\tilde \chi_\ell^*}+o(1),
\end{equation}
locally uniformly, and so
\begin{equation}\label{bizz2}
d_\ell^{-2j-\a}e^{-d_\ell^2\lambda_\ell^{-2}\tilde t}\DD^{2j}_{\mathbf{bt}}\Delta_{\ti{\omega}^\sharp_\ell}\underline{\tilde \chi_\ell^*}\to \tr_{\omega_{\C^m}}\eta_\infty,
\end{equation}
locally uniformly. Thanks to \eqref{A-star-2}, we also have that for all $-1\leq \iota\leq 2k$,
\begin{equation}
\ve_\ell^{\iota}d_\ell^{-2j-\a}\|\DD^{2j+2+\iota}\ti{A}^*_{\ell, i,p,k}\|_{\infty, \ti{Q}_R,\ti{g}_\ell(0)}\to 0,
\end{equation}
so when $j=0$, from \eqref{sup-realized-20} (using also \eqref{gorgol}) we see that
\begin{equation}\label{sup-realized-y}\begin{split}
1&\leq Cd_\ell^{-\alpha}|\ddbar \tilde\psi_{\ell,0,k}(\tilde x_\ell,0)- \P_{\tilde x'_\ell\tilde x_\ell}\ddbar\tilde\psi_{\ell,0,k}(\tilde x'_\ell,\tilde t_\ell')|_{\tilde g_\ell(0)}
+Cd_\ell^{-\alpha}|\ti{\eta}^\diamond_\ell(\tilde x_\ell,0)-\P_{\tilde x'_\ell\tilde x_\ell}\ti{\eta}^\diamond_\ell(\tilde x'_\ell,\tilde t_\ell')|_{\tilde g_\ell(0)}\\
&+Cd_\ell^{-\alpha}|\ti{\eta}^\ddagger_\ell(\tilde x_\ell,0)-\P_{\tilde x'_\ell\tilde x_\ell}\ti{\eta}^\ddagger_\ell(\tilde x'_\ell,\tilde t_\ell')|_{\tilde g_\ell(0)}\\
&\leq Cd_\ell^{-\alpha}|\ti{\eta}^\diamond_\ell(\tilde x_\ell,0)-\P_{\tilde x'_\ell\tilde x_\ell}\ti{\eta}^\diamond_\ell(\tilde x'_\ell,\tilde t_\ell')|_{\tilde g_\ell(0)}+o(1),
\end{split}
\end{equation}
while when $j\geq 1$, from \eqref{sup-realized-3} we see that
\begin{equation}\label{sup-realized-x}
\begin{split}
& (1+o(1))=d_\ell^{-2j-\a}|\mathfrak{D}^{2j} \ddbar \underline{\tilde\chi_\ell^*}(\tilde x_\ell,0)- \P_{\tilde x'_\ell\tilde x_\ell}\mathfrak{D}^{2j} \ddbar \underline{\tilde\chi_\ell^*}(\tilde x'_\ell,\tilde t_\ell')|_{\tilde g_\ell(0)}\\
&+d_\ell^{-2j-\a}|\mathfrak{D}^{2j}  \partial_{\tilde t} \underline{\tilde\chi_\ell^*}(\tilde x_\ell,0)- \P_{\tilde x'_\ell\tilde x_\ell}\mathfrak{D}^{2j} \partial_{\tilde t} \underline{\tilde\chi_\ell^*}(\tilde x'_\ell,\tilde t_\ell')|_{\tilde g_\ell(0)}\\
& +d_\ell^{-2j-\a}\sum_{i=1}^j \sum_{p=1}^{N_{i,k}}\e_\ell^{-2}\Bigg({|\mathfrak{D}^{2j} \tilde A^*_{\ell,i,p,k}(\tilde x_\ell,0)- \P_{\tilde x_\ell'\tilde x_\ell}(\mathfrak{D}^{2j}\tilde A^*_{\ell,i,p,k}(\tilde x_\ell',\tilde t_\ell'))|_{\tilde g_\ell(0)}}\Bigg).
\end{split}
\end{equation}
We can then invoke \eqref{RHS-error-total}, and use again the estimates \eqref{estimate-psi-tilde-infty-10} and \eqref{estimate-psi-tilde-infty-1}, and see that
\begin{equation}\label{qerq}
d_\ell^{-2j-\a}\Biggr[ \mathfrak{D}^{2j}_{\bf bt}\biggr(\left(\de_{\ti{t}}-\Delta_{\tilde \omega_\ell^\sharp}\right)\ti{\sigma}_\ell+d_\ell^2\lambda_\ell^{-2}\ti{\sigma}_\ell  \biggl)\Biggr]_{\a,\a/2,\mathrm{base},\tilde Q_R,\tilde g_\ell(0)}=o(1).
\end{equation}
where we have set
\begin{equation}
\ti{\sigma}_\ell:=e^{-d_\ell^2\lambda_\ell^{-2}\tilde t}\underline{\tilde\chi_\ell^*}+\sum_{i=1}^j\sum_{p=1}^{N_{i,k}} \tilde{\mathfrak{G}}_{\ti{t},k}( \tilde A_{\ell,i,p,k}^*,\tilde G_{\ell,i,p,k}).
\end{equation}
First, we dispose of the case $j=0$. Again in this case from \eqref{krkl2} we see that $u_\infty$ is a constant in space-time, hence identically zero, and as in subcase B we see that $\eta_\infty$ is time-independent (using the argument in \eqref{rokko2}). Then from \eqref{qerq}, using also \eqref{ainf}, we see that the quantity
\begin{equation}
d_\ell^{-\a}\left(\de_{\ti{t}}-\Delta_{\tilde \omega_\ell^\sharp}\right)\ti{\sigma}_\ell,
\end{equation}
is becoming asymptotically independent of the base and time directions, and so from \eqref{bizz2} we conclude that
\begin{equation}
\tr_{\omega_{\C^m}}\eta_\infty=({\rm const.}),
\end{equation}
and since $\eta_\infty$ vanishes at $(0,0)$, we actually have $\tr_{\omega_{\C^m}}\eta_\infty=0$.
As in Subcase B, we can write $\eta_\infty=\ddbar v_\infty$, where $v_\infty\in C^{2+\a}_{\rm loc}(\mathbb{C}^m)$ is time-independent and $\Delta_{\omega_{\C^m}}v_\infty=0$, so $v_\infty$ is smooth and $|\ddbar v_\infty|=O(|z|^\a)$. The Liouville Theorem applied to each component of $\ddbar v_\infty$ then implies that $\eta_\infty$ has constant coefficients, hence it vanishes identically since its value at $(0,0)$ vanishes. This means exactly that the RHS of \eqref{sup-realized-y} converges to zero, which gives us a contradiction when $j=0$.

In the remainder of this section we thus assume that $j\geq 1$.  Given any $p,q\geq 0$ with $p+2q=2j+2$, and given $v_1,\dots,v_p$ tangent vectors to $\C^m$, from \eqref{qerq}, recalling \eqref{ainf} and \eqref{estimate-potential-G}, we see that the quantity
\begin{equation}\label{rocco}
d_\ell^{-2j-\a}\D^p_{v_1\cdots v_p}\de_t^q\left(\de_{\ti{t}}-\Delta_{\tilde \omega_\ell^\sharp}\right)\left(e^{-d_\ell^2\lambda_\ell^{-2}\tilde t}\underline{\tilde\chi_\ell^*}+\sum_{i=1}^j\sum_{r=1}^{N_{i,k}} \tilde{\mathfrak{G}}_{\ti{t},k}( \tilde A_{\ell,i,r,k}^*,\tilde G_{\ell,i,r,k})\right),
\end{equation}
is asymptotically independent of the base and time directions.
Recall then that from \eqref{expansion-eta-circ} we have
\begin{equation}\label{rozzo1}
\begin{split}
&\sum_{i=1}^j\sum_{r=1}^{N_{i,k}}d_\ell^{-2j-\a} \D^p_{v_1\cdots v_p}\de_t^q \Delta_{\tilde \omega_\ell^\sharp}\tilde{\mathfrak{G}}_{\ti{t},k}( \tilde A_{\ell,i,r,k}^*,\tilde G_{\ell,i,r,k})\\
& = d_\ell^{-2j-\a} \sum_{i=1}^j \sum_{r=1}^{N_{i,k}}  \tr_{\ti{\omega}^\sharp_\ell}\left(\ddbar (\Delta_{\Theta_\ell^*\Psi_\ell^*\omega_{F}|_{ \{\cdot\}\times Y}})^{-1} \ti G_{\ell,i,r,k} \right)_{\bf ff}\D^p_{v_1\cdots v_p}\de_t^q\tilde A_{\ell,i,r,k}^* +o(1)\\
&=d_\ell^{-2j-\a} \sum_{i=1}^j \sum_{r=1}^{N_{i,k}}  \tr_{\ve_\ell^2\Theta_\ell^*\Psi_\ell^*\omega_{F}}\left(\ddbar (\Delta_{\Theta_\ell^*\Psi_\ell^*\omega_{F}|_{ \{\cdot\}\times Y}})^{-1} \ti G_{\ell,i,r,k} \right)_{\bf ff}\D^p_{v_1\cdots v_p}\de_t^q\tilde A_{\ell,i,r,k}^* +o(1)\\
&=\ve_\ell^{-2}d_\ell^{-2j-\a} \sum_{i=1}^j \sum_{r=1}^{N_{i,k}}   \ti G_{\ell,i,r,k} \D^p_{v_1\cdots v_p}\de_t^q\tilde A_{\ell,i,r,k}^* +o(1),
\end{split}
\end{equation}
where in the second equality we could exchange $\tr_{\ti{\omega}^\sharp_\ell}(\ddbar)_{\mathbf{ff}}$ with $\tr_{\ve_\ell^2\Theta_\ell^*\Psi_\ell^*\omega_{F}}(\ddbar)_{\mathbf{ff}}$ with only a $o(1)$ error since $d_\ell^{-2j-\a}\|\DD^{2j}\tilde A_{\ell,i,p,k}^*\|_{\infty,\ti{Q}_R,\ti{g}_\ell(0)}\leq C\ve_\ell^2$ by \eqref{A-star-2} and $\|\ti{\eta}^\dagger_\ell\|_{\infty,\ti{Q}_R,\ti{g}_\ell(0)}=o(1)$ by \eqref{dager-estimate}.
Similarly,  from \eqref{leading-G-A*} we see that
\begin{equation}\label{rozzo2}
\begin{split}
&\quad
d_\ell^{-2j-\a}\D^p_{v_1\cdots v_p}\de_t^{q+1} \left( \sum_{i=1}^j\sum_{r=1}^{N_{i,k}} \tilde{\mathfrak{G}}_{\ti{t},k}\left( \tilde A_{\ell,i,r,k}^*,\tilde G_{\ell,i,r,k} \right)\right)\\
& =d_\ell^{-2j-\a}\sum_{i=1}^j\sum_{r=1}^{N_{i,k}}  (\Delta_{\Theta_\ell^*\Psi_\ell^*\omega_{F}|_{ \{\cdot\}\times Y}})^{-1}  \tilde G_{\ell,i,r,k}\D^p_{v_1\cdots v_p}\de_t^{q+1}  \tilde A_{\ell,i,r,k}^*+o(1),
\end{split}
\end{equation}
while \eqref{bizz} implies that
\begin{equation}\label{rozzo3}
d_\ell^{-2j-\a}\D^p_{v_1\cdots v_p}\de_t^q\Delta_{\tilde \omega_\ell^\sharp}\left(e^{-d_\ell^2\lambda_\ell^{-2}\tilde t}\underline{\tilde\chi_\ell^*}\right)
=d_\ell^{-2j-\a}\D^p_{v_1\cdots v_p}\de_t^q\Delta_{\omega_{\C^m}}\left(e^{-d_\ell^2\lambda_\ell^{-2}\tilde t}\underline{\tilde\chi_\ell^*}\right)+o(1).
\end{equation}
and plugging \eqref{rozzo1}, \eqref{rozzo2} and \eqref{rozzo3} into \eqref{rocco}, we conclude that
\begin{equation}\label{optimus}
\begin{split}
&d_\ell^{-2j-\a}\left(\de_{\ti{t}}-\Delta_{\omega_{\C^m}}\right)\D^p_{v_1\cdots v_p}\de_t^q\left(e^{-d_\ell^2\lambda_\ell^{-2}\tilde t}\underline{\tilde\chi_\ell^*}\right)\\
&+d_\ell^{-2j-\a}\sum_{i=1}^j\sum_{r=1}^{N_{i,k}} \left( (\Delta_{\Theta_\ell^*\Psi_\ell^*\omega_{F}|_{ \{\cdot\}\times Y}})^{-1}  \tilde G_{\ell,i,r,k}\D^p_{v_1\cdots v_p}\de_t^{q+1}  \tilde A_{\ell,i,r,k}^*-\ve_\ell^{-2}\ti G_{\ell,i,r,k} \D^p_{v_1\cdots v_p}\de_t^q\tilde A_{\ell,i,r,k}^*\right),
\end{split}
\end{equation}
is asymptotically independent of the base and time directions. Observe that the first line in \eqref{optimus} is a function pulled back from $\C^m$, while the second line has fiberwise average zero. Thus, taking the fiberwise average of \eqref{optimus}, we see that
\begin{equation}\label{razzo}
d_\ell^{-2j-\a}\D^p_{v_1\cdots v_p}\de_t^q\left(\de_{\ti{t}}-\Delta_{\omega_{\C^m}}\right)\left(e^{-d_\ell^2\lambda_\ell^{-2}\tilde t}\underline{\tilde\chi_\ell^*}\right)
\end{equation}
is approaching a (time-independent) constant locally uniformly on $\C^m\times (-\infty,0]$. Recalling that $d_\ell^{-2j-\a}\de_{\ti{t}}\left(e^{-d_\ell^2\lambda_\ell^{-2}\tilde t}\underline{\tilde\chi_\ell^*}\right)\to u_\infty$ in $C^{2j+\b,j+\b/2}_{\rm loc}$  and $d_\ell^{-2j-\a}\Delta_{\omega_{\C^m}}\left(e^{-d_\ell^2\lambda_\ell^{-2}\tilde t}\underline{\tilde\chi_\ell^*}\right)\to \tr_{\omega_{\C^m}}{\eta_\infty}$ in $C^{2j+\b,j+\b/2}_{\rm loc}$, passing to the limit in \eqref{razzo} shows that $u_\infty-\tr_{\omega_{\C^m}}{\eta_\infty}$ is a parabolic polynomial on $\C^m\times (-\infty,0]$ of degree at most $2j$, and since by jet subtraction the parabolic $2j$-jets of $u_\infty$ and $\eta_\infty$ vanish at $(0,0)$,  we conclude that
\begin{equation}\label{triw2}
u_\infty-\tr_{\omega_{\mathbb{C}^m}}\eta_\infty=0,
\end{equation}
on $\mathbb{C}^m\times (-\infty,0]$. Since $j\geq 1$, we can differentiate \eqref{triw2} with respect to $t$, and use \eqref{trivv1} to see that
\begin{equation}
\begin{split}
\partial_{\tilde t}u_\infty &=\tr_{\mathbb{C}^m}\partial_{\tilde t} \eta_\infty=\tr_{\mathbb{C}^m}\ddbar u_\infty=\Delta_{\mathbb{C}^m}u_\infty,
\end{split}
\end{equation}
so $u_\infty$ solves the heat equation on $(-\infty,0]\times\mathbb{C}^m$, and
$|\mathfrak{D}^{\iota}u_\infty|=O(r^{2j+\a-\iota})$ for $0\leq \iota\leq 2j$ by \eqref{bdd-phi-star-underline}. By applying Liouville Theorem for ancient heat equation to $\mathfrak{D}^{2j}u_\infty$, we see that $u_\infty$ must be a space-time polynomial of degree at most $2j$, and hence  $u_\infty\equiv 0$ since its parabolic $2j$-jet at $(0,0)$ vanishes.  Going back to \eqref{trivv1} it then follows that $\eta_\infty$ is time-independent, it is clearly of the form $\eta_\infty=\ddbar v_\infty$ for some time-independent function $v_\infty\in C^{2j+2+\b}_{\rm loc}(\mathbb{C}^m)$, and from \eqref{triw2} we see that $v_\infty$ is harmonic. Since $|\D^{2j}\eta_\infty|=O(|z|^\alpha)$,  the Liouville Theorem in \cite[Proposition 3.12]{HT2} then shows that the coefficients of $\eta_\infty$ are polynomials of degree at most $2j$, and since these coefficients have vanishing $2j$-jet at the origin, this implies that $\eta_\infty$ vanishes identically. This kills the first two terms on the RHS of \eqref{sup-realized-x}.

At this point we return to \eqref{optimus}, and subtracting its fiber average we see that
\begin{equation}\label{subcaseC-Box}
d_\ell^{-2j-\a}\sum_{i=1}^j\sum_{r=1}^{N_{i,k}} \left( (\Delta_{\Theta_\ell^*\Psi_\ell^*\omega_{F}|_{ \{\cdot\}\times Y}})^{-1}  \tilde G_{\ell,i,r,k}\D^p_{v_1\cdots v_p}\de_t^{q+1}  \tilde A_{\ell,i,r,k}^*-\ve_\ell^{-2}\ti G_{\ell,i,r,k} \D^p_{v_1\cdots v_p}\de_t^q\tilde A_{\ell,i,r,k}^*\right)
\end{equation}
is asymptotically independent of the base and time directions. Let us then define a function $u_\ell$ on $\ti{Q}_{Rd_\ell^{-1}}$ by
\begin{equation}\label{deko1}
u_\ell(\tilde z,\tilde y,\tilde t):=d_\ell^{-2j-\a}\sum_{i=1}^j\sum_{r=1}^{N_{i,k}}  (\Delta_{\Theta_\ell^*\Psi_\ell^*\omega_{F}|_{ \{\tilde z\}\times Y}})^{-1}  \tilde G_{\ell,i,r,k} \D^p_{v_1\cdots v_p}\de_t^q\tilde A_{\ell,i,r,k}^*,
\end{equation}
so that \eqref{subcaseC-Box} is equivalent to the statement that
\begin{equation}
(\partial_{\tilde t}-\Delta_{\e_\ell^2\Theta_\ell^*\Psi_\ell^* \omega_{F}|_{ \{\cdot\}\times Y}}) u_\ell
\end{equation}
is asymptotically independent of the base and time directions.

Our next goal is to show that $u_\ell$ itself is asymptotically constant in the base and time directions.  Fix any two points $z,z'\in \mathbb{C}^m$ and times $t,t'\in (-\infty,0]$ and consider
\begin{equation}\label{deko2}
v_\ell(\tilde y,\tilde t)=u_\ell( z,\tilde y,t+\tilde t)-u_\ell( z',\tilde y,t'+\tilde t),
\end{equation}
so that in $L^\infty_{\rm loc}(Y)$ we have
\begin{equation}\label{ruzzo}
\begin{split}
&\quad \left((\partial_{\tilde t}-\Delta_{\e_\ell^2\Theta_\ell^*\Psi_\ell^* \omega_{F}|_{ \{z\}\times Y}}) v_\ell\right)(\ti{y},0)\\
&=\left((\partial_{\tilde t}-\Delta_{\e_\ell^2\Theta_\ell^*\Psi_\ell^* \omega_{F}|_{ \{z\}\times Y}})u_\ell\right)( z,\tilde y,t)-\left((\partial_{\tilde t}-\Delta_{\e_\ell^2\Theta_\ell^*\Psi_\ell^* \omega_{F}|_{ \{z\}\times Y}})  u_\ell\right)( z',\tilde y,t')\\
&=\left( \Delta_{\e_\ell^2\Theta_\ell^*\Psi_\ell^*\omega_F|_{ \{z\}\times Y}}-\Delta_{\e_\ell^2\Theta_\ell^*\Psi_\ell^*\omega_F|_{ \{z'\}\times Y}}\right) u_\ell( z',\tilde y,t')+o(1),
\end{split}
\end{equation}
and we can schematically estimate the difference of Laplacians by
\begin{equation}\label{rezzo}
\ve_\ell^{-2}\left(g_F^{\mathbf{ff}}(d_\ell\lambda_\ell^{-1}z,\ti{y})-g_F^{\mathbf{ff}}(d_\ell\lambda_\ell^{-1}z',\ti{y})\right)(\ddbar u_\ell)_{\mathbf{ff}}(z',\ti{y},t'),
\end{equation}
and since
\begin{equation}
(\ddbar u_\ell)_{\mathbf{ff}}(z',\ti{y},t')=d_\ell^{-2j-\a}\sum_{i=1}^j\sum_{r=1}^{N_{i,k}}  \left(\ddbar(\Delta_{\Theta_\ell^*\Psi_\ell^*\omega_{F}|_{ \{z'\}\times Y}})^{-1}  \tilde G_{\ell,i,r,k}\right)_{\mathbf{ff}} \D^p_{v_1\cdots v_p}\de_t^q\tilde A_{\ell,i,r,k}^*,
\end{equation}
we can use \eqref{A-star-0} with $\iota=2j$ to estimate $|(\ddbar u_\ell)_{\mathbf{ff}}|\leq C\ve_\ell^2$, and so \eqref{rezzo} can be estimated by
\begin{equation}
C\left|g_F^{\mathbf{ff}}(d_\ell\lambda_\ell^{-1}z,\ti{y})-g_F^{\mathbf{ff}}(d_\ell\lambda_\ell^{-1}z',\ti{y})\right|\leq Cd_\ell\lambda_\ell^{-1}|z-z'|=o(1),
\end{equation}
so that \eqref{ruzzo} shows that
\begin{equation}\label{mordell}
(\partial_{\tilde t}-\Delta_{\e_\ell^2\Theta_\ell^*\Psi_\ell^*\omega_F|_{ \{z\}\times Y}}) v_\ell=o(1),
\end{equation}
locally uniformly.
So the functions $v_\ell$ are approximate solutions of a fiberwise heat equation, with time parameter $\ti{t}\in(-\infty,0]$. We then employ another energy argument on the given fiber $\{z\}\times Y$. First, observe that  \eqref{A-star-0} with $\iota=2j$ implies
\begin{equation}\label{rattero}
|\e_\ell^{-2}v_\ell|\leq C.
\end{equation}
For notational convenience, we denote by $\omega_Y:=\Theta_\ell^*\Psi_\ell^*\omega_F|_{\{z\}\times Y}$ and consider then the energy
\begin{equation}
E_\ell(\ti{t})=\e_\ell^{-4}\int_Y v_\ell^2\, \omega_Y^n,
\end{equation}
which satisfies $E_\ell(\ti{t})\leq C$ for all $\ti{t}\leq 0$,
and denote by
\begin{equation}
\sigma_\ell:=\left\|\left(\de_{\ti{t}}-\Delta_{\e_\ell^2\omega_Y}\right) v_\ell\right\|_{\infty, \tilde Q_{2R}},
\end{equation}
where $R>0$ is fixed so that $(z,t),(z',t')\in Q_{2R}$. Thanks to \eqref{mordell} we have $\sigma_\ell\to 0$. We can then compute, using the Poincar\'e inequality on $(Y,\omega_Y)$ (recall that $v_\ell$ has fiberwise average zero),
\begin{equation}
\begin{split}
\frac{d}{d\ti{t}} E_\ell&=-2\e_\ell^{-6}\int_Y |\nabla^Y v_\ell|^2_{g_Y}\, \omega_Y^n+2\e_\ell^{-4}\int_Y v_\ell  \left(\de_t-\Delta_{\e_\ell^2\omega_Y}\right) v_\ell   \, \omega_Y^n\\\
&\leq -2C^{-1} \e_\ell^{-2} E_\ell(\ti{t})+2\sigma_\ell\ve_\ell^{-4}\int_Y v_\ell \omega_Y^n\\
&\leq -2C^{-1} \e_\ell^{-2} E_\ell(\ti{t})+C\sigma_\ell \e_\ell^{-2} E_\ell(\ti{t})^{\frac{1}{2}},
\end{split}
\end{equation}
and using the Young inequality $\sigma_\ell E_\ell(\ti{t})^{\frac{1}{2}}\leq C^{-1}E_\ell(\ti{t})+C\sigma_\ell^2,$ we can bound
\begin{equation}
\frac{d}{d\ti{t}} E_\ell\leq  -C^{-1} \e_\ell^{-2} E_\ell+C\sigma_\ell^2 \e_\ell^{-2}.
\end{equation}
We will compare $E_\ell$ with the real-variable function $F_\ell$ which solves the ODE
\begin{equation}
F'_\ell=-C^{-1} \e_\ell^{-2} F_\ell+C\sigma_\ell^2 \e_\ell^{-2},\quad F'_\ell(-R^2)=A,
\end{equation}
which is given explicitly by
\begin{equation}
F_\ell(\ti{t})=Ae^{-C^{-1}\ve_\ell^{-2}(R^2+\ti{t})}+C^2\sigma_\ell^2\left(1-e^{-C^{-1}\ve_\ell^{-2}(R^2+\ti{t})}\right),
\end{equation}
and if we choose $A$ large enough so that $E_\ell(\ti{t})\leq A$ for all $\ti{t}\leq 0$ (which is possible, as shown above), then we conclude that for all $\ti{t}\in [-R^2,0]$ we have
\begin{equation}
E_\ell(\ti{t})\leq F_\ell(\ti{t})=Ae^{-C^{-1}\ve_\ell^{-2}(R^2+\ti{t})}+C^2\sigma_\ell^2\left(1-e^{-C^{-1}\ve_\ell^{-2}(R^2+\ti{t})}\right),
\end{equation}
hence in particular $E_\ell(0)\to 0$ as $\ell\to+\infty$. This means that the functions $\ve_\ell^{-2}v_\ell(\cdot, 0)$, which are defined on $Y$ and are uniformly bounded by \eqref{rattero}, converge to $0$ in $L^2(Y,\omega_Y^n)$. Recalling the definitions \eqref{deko1}, \eqref{deko2}, this means that the function of $\ti{y}$ given by
\begin{equation}\label{rzzo}
\begin{split}
&d_\ell^{-2j-\a}\ve_\ell^{-2}\sum_{i=1}^j\sum_{r=1}^{N_{i,k}}  ((\Delta_{\Theta_\ell^*\Psi_\ell^*\omega_{F}|_{ \{z\}\times Y}})^{-1}  \tilde G_{\ell,i,r,k})(z,\ti{y}) (\D^p_{v_1\cdots v_p}\de_t^q\tilde A_{\ell,i,r,k}^*)(z,t)\\
&-d_\ell^{-2j-\a}\ve_\ell^{-2}\sum_{i=1}^j\sum_{r=1}^{N_{i,k}}  ((\Delta_{\Theta_\ell^*\Psi_\ell^*\omega_{F}|_{ \{z'\}\times Y}})^{-1}  \tilde G_{\ell,i,r,k})(z',\ti{y}) (\D^p_{v_1\cdots v_p}\de_t^q\tilde A_{\ell,i,r,k}^*)(z',t'),
\end{split}
\end{equation}
converges to zero in $L^2(Y,\omega_Y^n)$. However, we have
\begin{equation}\label{qorq2}
\begin{split}
((\Delta_{\Theta_\ell^*\Psi_\ell^*\omega_{F}|_{ \{z'\}\times Y}})^{-1}  \tilde G_{\ell,i,r,k})(z',\ti{y})&=
((\Delta_{\omega_{F}|_{ \{d_\ell\lambda_\ell^{-1}z'\}\times Y}})^{-1}  G_{\ell,i,r,k})(d_\ell\lambda_\ell^{-1}z',\ti{y})\\
&=((\Delta_{\omega_{F}|_{ \{d_\ell\lambda_\ell^{-1}z\}\times Y}})^{-1}  G_{\ell,i,r,k})(d_\ell\lambda_\ell^{-1}z,\ti{y})+O(d_\ell\lambda_\ell^{-1})\\
&=((\Delta_{\Theta_\ell^*\Psi_\ell^*\omega_{F}|_{ \{z\}\times Y}})^{-1}  \tilde G_{\ell,i,r,k})(z,\ti{y})+O(d_\ell\lambda_\ell^{-1}),
\end{split}
\end{equation}
where in the second equality we used that, since $G_{\ell,i,r,k}$ is a smooth function on the total space with fiberwise average zero, the function $(\Delta_{\omega_{F}|_{ \{\cdot\}\times Y}})^{-1}G_{\ell,i,r,k}$ is also smooth on the total space (by standard Schauder theory fiber-by-fiber, with continuous dependence on the base variables).
Using \eqref{qorq2} together with the bound $d_\ell^{-2j-\a}\ve_\ell^{-2}|\D^p_{v_1\cdots v_p}\de_t^q\tilde A_{\ell,i,r,k}^*|\leq C$, which comes from \eqref{A-star-0} with $\iota=2j$, we see that
\begin{equation}
\begin{split}
&d_\ell^{-2j-\a}\ve_\ell^{-2}\sum_{i=1}^j\sum_{r=1}^{N_{i,k}}  ((\Delta_{\Theta_\ell^*\Psi_\ell^*\omega_{F}|_{ \{z'\}\times Y}})^{-1}  \tilde G_{\ell,i,r,k})(z',\ti{y}) (\D^p_{v_1\cdots v_p}\de_t^q\tilde A_{\ell,i,r,k}^*)(z',t')\\
&=d_\ell^{-2j-\a}\ve_\ell^{-2}\sum_{i=1}^j\sum_{r=1}^{N_{i,k}}  ((\Delta_{\Theta_\ell^*\Psi_\ell^*\omega_{F}|_{ \{z\}\times Y}})^{-1}  \tilde G_{\ell,i,r,k})(z,\ti{y}) (\D^p_{v_1\cdots v_p}\de_t^q\tilde A_{\ell,i,r,k}^*)(z',t')+o(1),
\end{split}
\end{equation}
so from \eqref{rzzo} we see that
\begin{equation}\label{rxzzo}
d_\ell^{-2j-\a}\ve_\ell^{-2}\sum_{i=1}^j\sum_{r=1}^{N_{i,k}}  ((\Delta_{\Theta_\ell^*\Psi_\ell^*\omega_{F}|_{ \{z\}\times Y}})^{-1}  \tilde G_{\ell,i,r,k})(z,\ti{y}) \left((\D^p_{v_1\cdots v_p}\de_t^q\tilde A_{\ell,i,r,k}^*)(z,t)-(\D^p_{v_1\cdots v_p}\de_t^q\tilde A_{\ell,i,r,k}^*)(z',t')\right)
\end{equation}
also converges to zero in $L^2(Y,\omega_Y^n)$.
Now, the functions $\{(\Delta_{\Theta_\ell^*\Psi_\ell^*\omega_{F}|_{ \{z\}\times Y}})^{-1}  \tilde G_{\ell,i,r,k}\}_{i,r}$ are fiberwise linearly independent (since so are $\{\tilde G_{\ell,i,r,k}\}_{i,r}$), and the function in \eqref{rxzzo} along the fiber $\{z\}\times Y$ is expressed as a linear combination of these functions with coefficients (which are constants on $Y$) given by
\begin{equation}
d_\ell^{-2j-\a}\ve_\ell^{-2}\left((\D^p_{v_1\cdots v_p}\de_t^q\tilde A_{\ell,i,r,k}^*)(z,t)-(\D^p_{v_1\cdots v_p}\de_t^q\tilde A_{\ell,i,r,k}^*)(z',t')\right).
\end{equation}
Since the $L^2$ norm of \eqref{rxzzo} is going to zero, these coefficients must be going to zero too, which means that the functions
\begin{equation}
d_\ell^{-2j-\a}\ve_\ell^{-2}\D^p_{v_1\cdots v_p}\de_t^q\tilde A_{\ell,i,r,k}^*,
\end{equation}
are approximately constant (in space and time) as $\ell\to+\infty$. This kills the last term on the RHS of \eqref{sup-realized-x}, and gives the final contradiction, thus completing the proof of {\bf Subcase C} and of Theorem \ref{thm:decomposition}.

\section{Proof of the main theorem}\label{s4}
In this final section we give the proof of our main Theorem \ref{mainthm}, namely we prove Conjectures \ref{c1} and \ref{c2}. The asymptotic expansion in Theorem \ref{thm:decomposition} will play a crucial role.

\subsection{Higher order estimates}
To prove the higher order estimates in Conjecture \ref{c1} from the expansion in Theorem \ref{thm:decomposition} we follow the arguments in \cite[Proof of Theorem A]{HT3}, but since our estimate \eqref{infty3} is weaker than the corresponding \cite[(4.12)]{HT3}, we will have to deal with some new difficulties. As explained in the Introduction, in this section we work locally on the base (away from the image of the singular fibers), and the K\"ahler-Ricci flow that we analyze thus lives on $B\times Y\times [0,+\infty)$ (with a non-product complex structure) for some Euclidean ball $B\subset\mathbb{C}^m$. For brevity, in this section all norms and seminorms will be tacitly taken on $B\times Y\times[t-1,t]$ (or $B\times[t-1,t]$ for objects that live on the base), without making this explicit in the notation. The ball $B$ and the interval $[t-1,t]$ will also be shrunk slightly every time we use interpolation.

Given an even integer $k\geq 2$, we want to show that $\omega^\bullet(t)$ is uniformly bounded in $C^k(g_X)$. Applying Theorem \ref{thm:decomposition} with $j:=\frac{k}{2}$, up to shrinking $B$ we can write
\begin{equation}
\omega^\bullet(t)=\omega^\natural(t)+\gamma_0(t)+\gamma_{1,k}(t)+\cdots+\gamma_{\frac{k}{2},k}(t)+\eta_{\frac{k}{2},k}(t),
\end{equation}
and in this decomposition $\omega^\natural(t)$ is clearly bounded in $C^k(g_X)$, $\gamma_0(t)$ has a similar bound by \eqref{infty2}, $\eta_{\frac{k}{2},k}(t)$ is bounded in the shrinking $C^k$ norm by \eqref{infty1} (hence in the regular $C^k(g_X)$ norm by \cite[Lemma 2.6]{HT3}), so we are left with dealing with the terms $\gamma_{i,k}(t),1\leq i\leq \frac{k}{2}$. By definition and using \eqref{plop} we have
\begin{equation}\label{coot}
\gamma_{i,k}(t)=\ddbar\sum_{p=1}^{N_{i,k}}\sum_{\iota=0}^{2k}\sum_{r=\lceil\frac{\iota}{2}\rceil}^{k}e^{-r t}\Phi_{\iota,r}(G_{i,p,k})\circledast\D^\iota A_{i,p,k}(t),
\end{equation}
and so, as in \cite[(5.10)]{HT3}, for $0\leq q\leq k$,
\begin{equation}\label{juz}
\DD^q\gamma_{i,k}=\sum_{p=1}^{N_{i,k}}\sum_{\iota=0}^{2k}\sum_{r=\lceil\frac{\iota}{2}\rceil}^{k}\sum_{s=0}^{q+1}\sum_{i_1+i_2=s+1}e^{-r t}(\D^{q+1-s}J)\circledast\D^{i_1}\Phi_{\iota,r}(G_{i,p,k})\circledast\DD^{i_2+\iota} A_{i,p,k},
\end{equation}
and using the fixed metric $g_X$ we can estimate $|\D^{i_1}\Phi_{\iota,r}(G_{i,p,k})|_{g_X}\leq C$ and $|(\D^{q+1-s}J)|_{g_X}\leq C$, while from \eqref{infty3} we see that
$|\DD^{i_2+\iota} A_{i,p,k}|=o(1)$ when $i_2+\iota\leq k+2$ and from \eqref{infty4} that
$|\DD^{i_2+\iota} A_{i,p,k}|=o(e^{(i_2+\iota-k-2)\frac{t}{2}})$ when $k+2<i_2+\iota\leq k+2+2k$, and so
\begin{equation}\label{qpietrogamba}\begin{split}
|\DD^q\gamma_{i,k}|_{g_X}&\leq o(1)+ o(1)\sum_{i_1+i_2\leq q+2}\sum_{\iota=k+3-i_2}^{2k}\sum_{r=\lceil\frac{\iota}{2}\rceil}^{k}e^{-r t}e^{(i_2+\iota-k-2)\frac{t}{2}}\\
&\leq o(1)+o(e^{(q-k)\frac{t}{2}})\sum_{\iota=0}^{2k}\sum_{r=\lceil\frac{\iota}{2}\rceil}^{k}e^{-\left(r-\frac{\iota}{2}\right) t}\leq o(1)+o(e^{(q-k)\frac{t}{2}})=o(1),
\end{split}\end{equation}
since $i_2\leq q+2$ and $q\leq k$. This completes the proof of Conjecture \ref{c1}.

\subsection{Ricci curvature bounds}
Next, we prove Conjecture \ref{c2}, namely the Ricci curvature bound for $\omega^\bullet(t)$ on compact subsets of $X\backslash f(S)$, which in our setting translates to
\begin{equation}\label{riccib}
\sup_{B\times Y}|\Ric(\omega^\bullet(t))|_{g^\bullet(t)}\leq C.
\end{equation}
The argument is similar to \cite[Proof of Theorem B]{HT3}, but there are some crucial differences coming from the time evolution in the Monge-Amp\`ere equation, and from the fact that the bounds in \eqref{infty3} are worse than those in \cite[(4.12)]{HT3}.

We will use the expansion \eqref{expa} with $j=1$ and $k\geq 4$ (arbitrary), and with $\alpha$ close to $1$, and our first task is to improve the estimates \eqref{infty3}, \eqref{infty4}. These give us
\begin{equation}\label{qquack}
|\DD^iA_{1,p,k}|\leq \begin{cases} Ce^{-(2+\alpha)\left(1-\frac{i}{4+\alpha}\right)\frac{t}{2}},\quad 0\leq i\leq 4,\\
o(e^{-(4-i)\frac{t}{2}}),\quad 5\leq i\leq 4+2k,
\end{cases}
\end{equation}
and we can interpolate between $|A_{1,p,k}|\leq e^{-(2+\alpha)\frac{t}{2}}$ and $[\DD^2A_{1,p,k}]_{C^\alpha}\leq Ce^{-t}$ from \eqref{holder3} and get
\begin{equation}\label{qsquot}
|\DD^i A_{1,p,k}|\leq Ce^{-t}\left(e^{-\frac{\alpha}{2}t}\right)^{1-\frac{i}{2+\alpha}}, \quad 0\leq i\leq 2,
\end{equation}
so in particular we have $|\dot{A}_{1,p,k}|=o(e^{-t}).$
The next claim is that
\begin{equation}\label{qbrutus}
|\gamma_{1,k}|_{g_X}=o(e^{-t}).
\end{equation}
Indeed, using the decomposition in \eqref{juz}
\begin{equation}\label{qcornelius}
|\gamma_{1,k}|_{g_X}\leq C\sum_{i_2=0}^2\sum_{p=1}^{N_{1,k}}\sum_{\iota=0}^{2k}\sum_{r=\lceil\frac{\iota}{2}\rceil}^{2}e^{-r t}|\D^{i_2+\iota} A_{1,p,k}|,
\end{equation}
and we bound the RHS of \eqref{qcornelius} by $o(e^{-t})$ by considering the possible values of $i_2+\iota\in\{0,\dots,2k+2\}$: if $i_2+\iota=0,1,2$ then \eqref{qsquot} in particular gives $|\D^{i_2+\iota} A_{1,p,k}|=o(e^{-t})$, which is acceptable. If $i_2+\iota=3,4$ then necessarily $\iota\geq 1$ and so $r\geq 1$, while \eqref{qquack} in particular gives $|\D^{i_2+\iota} A_{1,p,k}|=o(1)$ so the RHS of \eqref{qcornelius} is again $o(e^{-t})$. And if $i_2+\iota\geq 5$ then we use \eqref{qquack} exactly as in \eqref{qpietrogamba} to bound the RHS of \eqref{qcornelius} by
\begin{equation}\label{korn0}
o(e^{-t})+o(1)\sum_{i_2=0}^2\sum_{p=1}^{N_{1,k}}\sum_{\iota=5-i_2}^{2k}\sum_{r=\lceil\frac{\iota}{2}\rceil}^{k}e^{-r t}e^{(i_2+\iota-4)\frac{t}{2}}
\leq o(e^{-t})+o(e^{-t})\sum_{\iota=3}^{2k}\sum_{r=\lceil\frac{\iota}{2}\rceil}^{k}e^{-\left(r-\frac{\iota}{2}\right) t}=o(e^{-t}),
\end{equation}
since $i_2\leq 2$, which concludes the proof of \eqref{qbrutus}. Next, we want to show that
\begin{equation}\label{qsatanacchio}
(\gamma_{1,k})_{\mathbf{ff}}=\sum_{p=1}^{N_{1,k}}A_{1,p,k}\de_{\mathbf{f}}\db_{\mathbf{f}}(\Delta^{\omega_F|_{\{\cdot\}\times Y}})^{-1}G_{1,p,k}+ o(e^{-2t}),
\end{equation}
where the $o(e^{-2t})$ is in $L^\infty_{\rm loc}(g_X)$. Indeed from \eqref{coot} we can write
\begin{equation}
(\gamma_{1,k})_{\mathbf{ff}}=\sum_{p=1}^{N_{1,k}}\sum_{\iota=0}^{2k}\sum_{r=\lceil\frac{\iota}{2}\rceil}^{k}e^{-r t}\de_{\mathbf{f}}\db_{\mathbf{f}}\Phi_{\iota,r}(G_{1,p,k})\circledast\D^\iota A_{1,p,k},
\end{equation}
and we can estimate each term as follows. For $\iota\geq 4$ we have $|\D^\iota A_{1,p,k}|=o(e^{-(4-\iota)\frac{t}{2}})$ from \eqref{qquack}, and so
\begin{equation}
e^{-r t}|\de_{\mathbf{f}}\db_{\mathbf{f}}\Phi_{\iota,r}(G_{1,p,k})\circledast\D^\iota A_{1,p,k}|_{g_X}\leq o(1)e^{-2t}e^{-\left(r-\frac{\iota}{2}\right)t}=o(e^{-2t}),
\end{equation}
since $r\geq \frac{\iota}{2}$. For $\iota=3$, we have $r\geq 2$ and $|\D^\iota A_{1,p,k}|=o(1)$ from \eqref{qquack}, so the term is again $o(e^{-2t})$. For $\iota=1,2$, we have $r\geq 1$ and $|\D^\iota A_{1,p,k}|=o(e^{-t})$ from \eqref{qsquot}, so the term is again $o(e^{-2t})$. And for $\iota=0$, let us first look at the terms with $r\geq 1$. For these, we have $|A_{1,p,k}|=O(e^{-(2+\alpha)\frac{t}{2}})$, and so when multiplied by $e^{-r t},r\geq 1$, these terms are indeed $o(e^{-2t})$. So we are only left with the terms where $\iota=r=0$ which equal
\begin{equation}
\sum_{p=1}^{N_{1,k}}A_{1,p,k}\de_{\mathbf{f}}\db_{\mathbf{f}}(\Delta^{\omega_F|_{\{\cdot\}\times Y}})^{-1}G_{1,p,k},
\end{equation}
since $\Phi_{0,0}(G)=(\Delta^{\omega_F|_{\{\cdot\}\times Y}})^{-1}G$ by \eqref{qsangennaro2}, thus proving \eqref{qsatanacchio}. In particular, using the bound \eqref{qquack} in \eqref{qsatanacchio} gives
\begin{equation}\label{qbrutus_better}
|(\gamma_{1,k})_{\mathbf{ff}}|_{g_X}\leq Ce^{-\frac{2+\alpha}{2}t},
\end{equation}
while tracing \eqref{qsatanacchio} fiberwise gives
\begin{equation}\label{qsatanacchio2}
\mathrm{tr}_{\omega_F|_{\{\cdot\}\times Y}}(\gamma_{1,k})_{\mathbf{ff}}=\sum_{p=1}^{N_{1,k}}A_{1,p,k}G_{1,p,k} + o(e^{-2t}).
\end{equation}

Before we continue with the proof of \eqref{riccib}, recall that from \cite[p.110]{FZ} (see also \cite[Lemma 5.13]{To}) we know that
\begin{equation}
\sup_{B\times Y}|\vp(t)-\underline{\vp}(t)|\leq Ce^{-t},
\end{equation}
for all $t\geq 0$. The argument in \cite[Lemma 3.1 (iv)]{TWY} then allows one to deduce from this that
\begin{equation}
\sup_{B\times Y} |\dot{\vp}(t)-\underline{\dot{\vp}}(t)|\leq Ce^{-\frac{t}{2}}.
\end{equation}
As a consequence of our asymptotic expansion, we can now improve both of these:
\begin{prop}\label{trew}
On $B\times Y$ we have
\begin{equation}\label{suxx}
|\vp(t)-\underline{\vp}(t)|=o(e^{-t}), \quad |\dot{\vp}(t)-\underline{\dot{\vp}}(t)|=o(e^{-t}).
\end{equation}
\end{prop}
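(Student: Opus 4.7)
The plan is to apply Theorem \ref{thm:decomposition} with $j = 1$, $k \geq 2$ fixed, and $\alpha \in (0,1)$ chosen close to $1$, producing the decomposition
\[
\vp - \underline\vp = \psi_{1,k} + \sum_{p=1}^{N_{1,k}} \mathfrak{G}_{t,k}(A_{1,p,k},G_{1,p,k}),
\]
in which both summands have fiberwise average zero. To prove the first estimate, I would bound each summand separately by $o(e^{-t})$ via a fiber-fiber extraction followed by fiberwise Poisson regularity. Specifically, \eqref{infty1} with $\iota = 0$ gives $|\eta_{1,k}|_{g(t)} \leq Ce^{-(2+\alpha)t/2}$, and since $g(t)$ shrinks the fiber directions by $e^{-t/2}$, expanding the tensor norm yields $|(\eta_{1,k})_{\mathbf{ff}}|_{g_X} \leq Ce^{-(4+\alpha)t/2}$. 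Because $\psi_{1,k}$ has fiberwise average zero and its fiberwise Laplacian equals the fiberwise trace of $(\eta_{1,k})_{\mathbf{ff}}$, the fiberwise Poincar\'e inequality (applied fiber by fiber using \eqref{equiv-product-ref}) gives $|\psi_{1,k}| = o(e^{-t})$. For the second summand $S_{1,k}$, I would use \eqref{qsangennaro2} and \eqref{qsatanacchio2} to identify its fiberwise Laplacian with $\sum_p A_{1,p,k} G_{1,p,k} + o(e^{-2t})$, which is $O(e^{-(2+\alpha)t/2})$ by \eqref{infty3}; fiberwise Poincar\'e again gives $|S_{1,k}| = o(e^{-t})$.

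For the second estimate, differentiating the decomposition in time yields
\[
\dot\vp - \underline{\dot\vp} = \dot\psi_{1,k} + \sum_p \de_t \mathfrak{G}_{t,k}(A_{1,p,k},G_{1,p,k}).
\]
Since $\ddbar \dot\psi_{1,k} = \de_t \eta_{1,k}$ and \eqref{infty1} with $\iota = 2$ controls $|\de_t \eta_{1,k}|_{g(t)} \leq Ce^{-\alpha t/2}$, the same fiber-fiber extraction and fiberwise Poincar\'e argument yields $|\dot\psi_{1,k}| = o(e^{-t})$. For the second summand I would differentiate the explicit formula \eqref{plop}, which produces two groups of terms: one with an extra factor $-r$ (structurally identical to $S_{1,k}$ itself, hence $o(e^{-t})$ by the estimate just proved, with an additional factor of $e^{-t}$ from $r \geq 1$), and one where each $A_{1,p,k}$ is replaced by $\dot A_{1,p,k}$. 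The leading $\iota = r = 0$ contribution of the second group is $(\Delta^{\omega_F|_{\{\cdot\}\times Y}})^{-1}G_{1,p,k} \cdot \dot A_{1,p,k}$, which is already $o(e^{-t})$ thanks to the sharpened interpolation bound \eqref{qsquot} that gives $|\dot A_{1,p,k}| = o(e^{-t})$.

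The main obstacle is that \eqref{infty3}--\eqref{infty4} do not directly state bounds on the mixed derivatives $\D^\iota \dot A_{1,p,k}$ needed for the remaining $\iota \geq 1$ terms in the expansion of $\de_t \mathfrak{G}_{t,k}$. The key observation is that $\D^\iota \dot A_{1,p,k} = \D^\iota \de_t A_{1,p,k}$ has parabolic order $\iota + 2$ and is therefore part of $\mathfrak{D}^{\iota+2} A_{1,p,k}$, so \eqref{infty3} applies when $\iota + 2 \leq 4$ and \eqref{infty4} applies when $\iota + 2 \geq 5$. One then combines this with the factor $e^{-rt}$ where $r \geq \max(1, \lceil \iota/2 \rceil)$: since $r$ grows at least like $\iota/2$ while the exponent in \eqref{infty4} grows only like $\iota^2/(\iota + \alpha) \sim \iota$, a direct term-by-term check shows that, for $\alpha$ sufficiently close to $1$, the resulting exponent in the product $e^{-rt}|\D^\iota \dot A_{1,p,k}|$ is strictly less than $-1$. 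Summing the finitely many $(\iota, r)$ contributions yields $|\de_t S_{1,k}| = o(e^{-t})$, and combining this with the bound on $\dot\psi_{1,k}$ completes the proof of \eqref{suxx}.
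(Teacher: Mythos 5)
Your proof is essentially correct and closely parallels the paper's argument, but with one genuinely different ingredient and one point of imprecision worth flagging.

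\textbf{Different route for the second summand.} For $S_{1,k}=\sum_p\mathfrak{G}_{t,k}(A_{1,p,k},G_{1,p,k})$, the paper bounds the $L^\infty$ norm directly from the explicit expansion \eqref{plop} together with the pointwise estimates \eqref{qquack}, \eqref{qsquot}, as in the derivation of \eqref{qbrutus}; it does \emph{not} pass through the fiberwise Laplacian. You instead use the identity $\mathrm{tr}_{\omega_F|_{\{\cdot\}\times Y}}(\gamma_{1,k})_{\mathbf{ff}}=\Delta^{\omega_F|_{\{\cdot\}\times Y}}S_{1,k}$, apply \eqref{qsatanacchio2}, and then invoke fiberwise regularity. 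Both routes are valid (\eqref{qsatanacchio} and \eqref{qsatanacchio2} are indeed established before Proposition~\ref{trew}, so there is no circularity); yours is arguably more structural, though it hides the combinatorics of \eqref{plop} inside \eqref{qsatanacchio}, which was itself derived by precisely the kind of term-by-term analysis the paper uses here. For $\de_t S_{1,k}$ you differentiate \eqref{plop} directly, exactly as the paper does for $\mathfrak{G}_{t,k}(\dot A_{1,p,k},G_{1,p,k})$ in \eqref{korn4}, and your bookkeeping — treating $\D^\iota\de_t A$ as part of $\mathfrak{D}^{\iota+2}A$ and combining \eqref{infty4} with the factor $e^{-rt}$, $r\geq\lceil\iota/2\rceil$ — matches \eqref{qquack}. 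One small quibble: the qualifier ``for $\alpha$ sufficiently close to $1$'' is not actually needed for these exponent inequalities; the numerator $\iota'^2$ in \eqref{infty4} is strictly less than $\iota'(\iota'+\alpha)$ for any $\alpha\in(0,1)$, so the exponents fall strictly below what is needed for every $\alpha$. (Choosing $\alpha$ close to $1$ is used later in the Ricci bound, not here.)

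\textbf{Terminology.} You repeatedly invoke ``fiberwise Poincar\'e inequality'' to pass from an $L^\infty$ bound on the fiberwise Laplacian (plus zero average) to an $L^\infty$ bound on the function. Poincar\'e only controls the $L^2$ norm; what is needed, and what the paper actually uses, is fiberwise Moser iteration (equivalently, elliptic regularity for the Poisson equation with zero mean). The input data are the same — bounded fiberwise Laplacian, zero fiberwise average, uniform geometry of $(Y,g_{Y,z})$ from \eqref{equiv-product-ref} — so the conclusion is correct, but the name of the tool should be changed. Your earlier phrase ``fiberwise Poisson regularity'' is closer to the mark.

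Everything else — the decomposition $\vp-\underline\vp=\psi_{1,k}+S_{1,k}$, the restriction of \eqref{infty1} to fibers, the treatment of $\dot\psi_{1,k}$ via $\iota=2$ in \eqref{infty1}, and the two-group splitting of $\de_t\mathfrak{G}_{t,k}$ (in which you are in fact slightly more explicit than the paper about the $-re^{-rt}$ term from differentiating the $t$-dependent coefficient) — is sound and in line with the paper's proof.
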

\begin{proof}
Recall that by definition we can write
\begin{equation}\label{triw}
\vp-\underline{\vp}=\psi_{1,k}+\sum_{p=1}^{N_{1,k}}\mathfrak{G}_{t,k}(A_{1,p,k},G_{1,p,k}),
\end{equation}
and we want to bound the $L^\infty$ norm of the two terms on the RHS of \eqref{triw} and their time derivatives. For $\psi_{1,k}$, from \eqref{infty1} we have
\begin{equation}
|\ddbar\psi_{1,k}|_{g(t)}\leq e^{-(1+\frac{\alpha}{2})t},
\end{equation}
so restricting this to a fiber $\{z\}\times Y$ gives
\begin{equation}
|\ddbar\psi_{1,k}|_{\{z\}\times Y}|_{g_Y}\leq e^{-(2+\frac{\alpha}{2})t},
\end{equation}
and since $\psi_{1,k}$ has fiberwise average zero, applying Moser iteration on $\{z\}\times Y$ (with constant that does not depend on $z\in B$) gives
\begin{equation}\label{trow}
|\psi_{1,k}|\leq Ce^{-(2+\frac{\alpha}{2})t}.
\end{equation}
Arguing similarly for $\dot{\psi}$, which by \eqref{infty1} satisfies (since $k\geq 4$)
\begin{equation}
|\ddbar\dot{\psi}_{1,k}|_{g(t)}\leq e^{-\frac{\alpha}{2}t},
\end{equation}
we get
\begin{equation}\label{traw}
|\dot{\psi}_{1,k}|\leq Ce^{-(1+\frac{\alpha}{2})t}.
\end{equation}
As for the term $\mathfrak{G}_{t,k}(A_{1,p,k},G_{1,p,k})$, using again \eqref{coot} we can write it as
\begin{equation}
\sum_{\iota=0}^{2k}\sum_{r=\lceil\frac{\iota}{2}\rceil}^{k}e^{-r t}\Phi_{\iota,r}(G_{1,p,k})\circledast\D^\iota A_{1,p,k},
\end{equation}
and using \eqref{qquack} and \eqref{qsquot} we can argue similarly to the proof of \eqref{qbrutus} and bound
\begin{equation}\label{korn}
|\mathfrak{G}_{t,k}(A_{1,p,k},G_{1,p,k})|\leq C \sum_{\iota=0}^{2k}\sum_{r=\lceil\frac{\iota}{2}\rceil}^{k}e^{-r t}|\D^\iota A_{1,p,k}|,
\end{equation}
by $o(e^{-t})$ by considering the possible values of $\iota\in\{0,\dots,2k\}$: if $\iota=0$ then \eqref{qquack} in particular gives $|A_{1,p,k}|=o(e^{-t})$, which is acceptable. If $1\leq\iota\leq 4$ then necessarily $r\geq 1$, while \eqref{qquack} in particular gives $|\D^{\iota} A_{1,p,k}|=o(1)$ so the RHS of \eqref{korn} is again $o(e^{-t})$. And if $\iota\geq 5$ then we use \eqref{qquack} to bound the RHS of \eqref{korn} by
\begin{equation}
o(e^{-t})+o(1)\sum_{p=1}^{N_{1,k}}\sum_{\iota=5}^{2k}\sum_{r=\lceil\frac{\iota}{2}\rceil}^{k}e^{-r t}e^{(\iota-4)\frac{t}{2}}
\leq o(e^{-t})+o(e^{-2t})\sum_{\iota=5}^{2k}\sum_{r=\lceil\frac{\iota}{2}\rceil}^{k}e^{-\left(r-\frac{\iota}{2}\right) t}=o(e^{-t})+o(e^{-2t}),
\end{equation}
and so
\begin{equation}\label{korn3}
\mathfrak{G}_{t,k}(A_{1,p,k},G_{1,p,k})=o(e^{-t}).
\end{equation}
Similarly,
\begin{equation}\label{korn2}
\mathfrak{G}_{t,k}(\dot{A}_{1,p,k},G_{1,p,k})=\sum_{\iota=0}^{2k}\sum_{r=\lceil\frac{\iota}{2}\rceil}^{k}e^{-r t}\Phi_{\iota,r}(G_{1,p,k})\circledast\D^\iota \dot{A}_{1,p,k},
\end{equation}
and we argue as above to show that this is $o(e^{-t})$: if $\iota=0$ then \eqref{qsquot} in particular gives $|\dot{A}_{1,p,k}|=o(e^{-t})$, which is acceptable. If $1\leq \iota\leq 2$ then
$r\geq 1$, while \eqref{qquack} in particular gives $|\D^{\iota} \dot{A}_{1,p,k}|=o(1)$ so the RHS of \eqref{korn2} is again $o(e^{-t})$. And if $\iota\geq 3$ then we use \eqref{qquack} to bound the RHS of \eqref{korn2} by
\begin{equation}
o(e^{-t})+o(1)\sum_{p=1}^{N_{1,k}}\sum_{\iota=3}^{2k}\sum_{r=\lceil\frac{\iota}{2}\rceil}^{k}e^{-r t}e^{(\iota-2)\frac{t}{2}}
\leq o(e^{-t})\sum_{\iota=3}^{2k}\sum_{r=\lceil\frac{\iota}{2}\rceil}^{k}e^{-\left(r-\frac{\iota}{2}\right) t}=o(e^{-t}),
\end{equation}
and so
\begin{equation}\label{korn4}
\mathfrak{G}_{t,k}(\dot{A}_{1,p,k},G_{1,p,k})=o(e^{-t}).
\end{equation}
Combining \eqref{triw} with \eqref{trow}, \eqref{traw}, \eqref{korn3} and \eqref{korn4} we see that \eqref{suxx} holds.
\end{proof}
\begin{rmk}
If we use the stronger bounds \eqref{kurz}, \eqref{kurz2} for derivatives of $A_{1,p,k}$, which will be established below, we can then repeat the proof of Proposition \ref{trew} and we get the better bounds
\begin{equation}\label{suxx2}
|\vp(t)-\underline{\vp}(t)|\leq Ce^{-2t}, \quad |\dot{\vp}(t)-\underline{\dot{\vp}}(t)|\leq Ce^{-(1+\frac{\gamma}{2})t},
\end{equation}
where $\gamma=\frac{\alpha}{4+\alpha}>0.$
\end{rmk}

Now that \eqref{suxx} is established, we can use it to prove the next claim, which is the analog of \cite[(5.26)]{HT3}:
\begin{prop}
We have
\begin{equation}\label{qfilimi}
|A_{1,p,k}|\leq Ce^{-2t}.
\end{equation}
\end{prop}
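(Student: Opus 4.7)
The plan is to extract $A_{1,p,k}$ from the Monge--Amp\`ere equation \eqref{ma2} by isolating its fiber-dependent part, and then combine this with Proposition~\ref{trew}, the identity \eqref{qsatanacchio2}, and a short bootstrap. Writing \eqref{ma2} in logarithmic form and subtracting the fiberwise average of both sides, the right-hand side becomes $(\vp+\dot\vp)-\underline{\vp+\dot\vp}$, which is $o(e^{-t})$ by Proposition~\ref{trew}. On the left, insert $\omega^\bullet=\omega^\natural+\gamma_0+\gamma_{1,k}+\eta_{1,k}$ into the multinomial expansion of $(\omega^\bullet)^{m+n}$ and use $\omega_{\rm can}^{m+1}=0$ together with the block decomposition $\omega_F=\omega_{F,\mathbf{bb}}+\omega_{F,\mathbf{bf}}+\omega_{F,\mathbf{ff}}$. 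Cross-terms involving $\omega_{F,\mathbf{bb}}$, $\omega_{F,\mathbf{bf}}$ or the base-pulled-back form $\gamma_0$ either vanish upon subtracting the fiber average or carry extra $e^{-t}$ or $e^{-t/2}$ factors; using $|\gamma_{1,k}|_{g_X}=o(e^{-t})$ from \eqref{qbrutus} and $|(\eta_{1,k})_{\mathbf{ff}}|_{g_Y}\leq Ce^{-(2+\alpha/2)t}$, obtained from \eqref{infty1} at $j=1$ via $g(t)|_{\mathbf{ff}}=e^{-t}g_Y$, their contribution to the fiber-dependent part is $o(e^{-t})$ pointwise.

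The dominant surviving piece is the fiberwise determinant
\[
\log\frac{(e^{-t}\omega_{F,\mathbf{ff}}+(\gamma_{1,k}+\eta_{1,k})_{\mathbf{ff}})^n}{e^{-nt}\omega_{F,\mathbf{ff}}^n}=e^t\tr_{\omega_F|_{\{z\}\times Y}}\bigl((\gamma_{1,k}+\eta_{1,k})_{\mathbf{ff}}\bigr)+\mathcal{Q},
\]
where $\mathcal{Q}$ is the quadratic-and-higher $\log\det$ remainder in $\epsilon:=e^t(\gamma_{1,k}+\eta_{1,k})_{\mathbf{ff}}\,\omega_{F,\mathbf{ff}}^{-1}$. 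Substituting \eqref{qsatanacchio2} for $\tr_{\omega_F}(\gamma_{1,k})_{\mathbf{ff}}$ and pairing the resulting identity in $L^2(\{z\}\times Y,\omega_F^n)$ against $G_{1,q,k}$, the fiberwise orthonormality of $\{G_{1,p,k}\}_p$ produces $e^tA_{1,q,k}$ on the left, modulo projections of the $\eta_{1,k}$-contribution, of $\mathcal{Q}$, and of the $o(e^{-t})$ error from Proposition~\ref{trew}. Dividing by $e^t$ yields
\[
|A_{1,q,k}|\leq o(e^{-2t})+Ce^{-(2+\alpha/2)t}+Ce^{-t}\|\mathcal{Q}\|_{L^\infty},
\]
so the proof reduces to controlling $\|\mathcal{Q}\|_{L^\infty}$.

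For this we bootstrap on the exponent. If $|A_{1,p,k}|\leq Ce^{-\beta t}$ for some $\beta>1$, then by \eqref{qsatanacchio} $|(\gamma_{1,k})_{\mathbf{ff}}|_{g_Y}\leq Ce^{-\beta t}$, hence $|\epsilon|\leq Ce^{-(\beta-1)t}$ and $\|\mathcal{Q}\|_{L^\infty}\leq C|\epsilon|^2\leq Ce^{-2(\beta-1)t}$, which inserted into the previous display gives the improved bound $|A_{1,q,k}|\leq\max(Ce^{-2t},Ce^{-(2\beta-1)t})$. Starting from the \emph{a priori} exponent $\beta_0=1+\alpha/2>1$ furnished by \eqref{qquack}, the recursion $\beta\mapsto\min(2,2\beta-1)$ strictly increases $\beta$ as long as $\beta<2$ (since $\beta>1$ is preserved), and saturates at $2$ after finitely many steps, yielding \eqref{qfilimi}.

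The main obstacle is the quadratic remainder $\mathcal{Q}$: naively $\|\mathcal{Q}\|_{L^\infty}=O(e^{-\alpha t})$, which for $\alpha<1$ exceeds $e^{-t}$, so a single pass through the Monge--Amp\`ere equation does not close the estimate; one has to exploit the bilinear dependence of $\mathcal{Q}$ on the $A_{1,p,k}$'s (inherited from \eqref{qsatanacchio}) to argue that improved bounds on $A_{1,p,k}$ are automatically transferred to $\mathcal{Q}$, so that the bootstrap really does gain at each iteration. A secondary technical point is to verify that the contributions from the $\mathbf{bb}$, $\mathbf{bf}$ mixed components of $\omega_F$, from $\gamma_0$, and from higher powers in the multinomial expansion of $(\omega^\bullet)^{m+n}$ are genuinely $o(e^{-t})$ in the fiber-dependent part.
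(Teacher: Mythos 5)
Your proposal is correct and follows essentially the same strategy as the paper's proof: expand the Monge--Amp\`ere equation around $\omega^\natural+\gamma_0$, subtract the fiberwise average, appeal to Proposition~\ref{trew} for the right-hand side and to \eqref{qsatanacchio2} for the linear trace term, then bootstrap on the quadratic remainder in $(\gamma_{1,k})_{\mathbf{ff}}$. The paper works directly with the ratio of volume forms (cf.\ \eqref{dd1}--\eqref{dd2}) rather than taking a logarithm, and runs the bootstrap in two explicit passes with $\alpha$ chosen close to $1$, while you phrase it as the recursion $\beta\mapsto\min(2,2\beta-1)$; these are cosmetic variants of the same argument.

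One point worth correcting: you assert that, after subtracting the fiber average, the contributions from the $\mathbf{bb}$, $\mathbf{bf}$ components of $\omega_F$ (and more generally from the $\omega_{\rm can}^{m-1}\wedge\omega_F^{n+1}$ piece of the multinomial expansion) are ``genuinely $o(e^{-t})$''. That is not quite right in your normalization (before dividing by $e^t$): as the paper's \eqref{semmes} and \eqref{dd2} show, the $\omega_F^{n+1}$ term contributes $e^{-t}(\mathcal{S}-\underline{\mathcal{S}})$, which is $O(e^{-t})$, not $o(e^{-t})$. This does not endanger your conclusion, since after pairing with $G_{1,q,k}$ and dividing by $e^t$ it yields an $O(e^{-2t})$ contribution to $A_{1,q,k}$, and the target bound is $Ce^{-2t}$ rather than $o(e^{-2t})$; but your displayed estimate for $|A_{1,q,k}|$ should read $O(e^{-2t})+Ce^{-t}\|\mathcal{Q}\|_{L^\infty}$ rather than $o(e^{-2t})+Ce^{-(2+\alpha/2)t}+Ce^{-t}\|\mathcal{Q}\|_{L^\infty}$. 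In fact this $\mathcal{S}$-term is what ultimately produces the leading nontrivial coefficient in $A_{1,p,k}$ (cf.\ the discussion following \eqref{semmes} and the analogous computation in \cite[Theorem B]{HT3}), so it is worth being precise about it.
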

\begin{proof}
First, observe that thanks to \eqref{infty1} we have
\begin{equation}
|\eta_{1,k}|_{g(t)}\leq e^{-(2+\alpha)\frac{t}{2}},
\end{equation}
which implies that
\begin{equation}\label{zatter}
|\eta_{1,k}|_{g_X}=o(e^{-t}),\quad |(\eta_{1,k})_{\mathbf{ff}}|_{g_X}=o(e^{-2t}),
\end{equation}
and combining this with \eqref{qbrutus} and \eqref{qbrutus_better} we get
\begin{equation}\label{zutter}
|e^t\gamma_{1,k}+e^t\eta_{1,k}|_{g_X}=o(1),\quad |(e^t\gamma_{1,k}+e^t\eta_{1,k})_{\mathbf{ff}}|_{g_X}=O(e^{-\alpha\frac{t}{2}}).
\end{equation}
The Monge-Amp\`ere equation \eqref{ma2} that describes the flow can be written as
\begin{equation}\label{q3sat}
e^{\vp(t)+\dot{\vp}(t)}e^{-nt}\binom{m+n}{n}\omega_{\rm can}^m\wedge\omega_F^n=(\omega_{\rm can}+e^{-t}\omega_F+\ddbar \underline{\vp}(t)+\gamma_{1,k}(t)+\eta_{1,k}(t))^{m+n},
\end{equation}
and multiplying this by $\frac{e^{nt}}{\binom{m+n}{n}}$, expanding it out, and using that $\ddbar \underline{\vp}(t)$ is small in $C^2$ by \eqref{infty2}, as well as \eqref{zatter}, \eqref{zutter}, we see that
\begin{equation}\label{q3sata}\begin{split}
e^{\vp+\dot{\vp}}\omega_{\rm can}^m\wedge\omega_F^n&=(\omega_{\rm can}+\ddbar\underline{\vp})^m\wedge(\omega_F+e^t\gamma_{1,k}+e^t\eta_{1,k})_{\mathbf{ff}}^n\\
&+\frac{m}{n+1}e^{-t}
(\omega_{\rm can}+\ddbar\underline{\vp})^{m-1}\wedge(\omega_F+e^t\gamma_{1,k}+e^t\eta_{1,k})^{n+1}+O(e^{-2t})\\
&=(\omega_{\rm can}+\ddbar\underline{\vp})^m\wedge\omega_F^n+
n(\omega_{\rm can}+\ddbar\underline{\vp})^m(\omega_F)_{\mathbf{ff}}^{n-1}(e^t\gamma_{1,k})_{\mathbf{ff}}\\
&+\binom{n}{2}(\omega_{\rm can}+\ddbar\underline{\vp})^m(\omega_F)_{\mathbf{ff}}^{n-2}(e^t\gamma_{1,k})_{\mathbf{ff}}^2+\frac{m}{n+1}e^{-t}
\omega_{\rm can}^{m-1}\wedge\omega_F^{n+1}+o(e^{-t}),
\end{split}\end{equation}
where the error terms $o(e^{-t})$ are in $L^\infty_{\rm loc}(g_X)$. We used here that $\alpha>\frac{2}{3}$, so that $|(e^t\gamma_{1,k})_{\mathbf{ff}}^p|=o(e^{-t})$ for $p\geq 3$ by \eqref{qbrutus_better}.
As in \cite[(5.29)]{HT3} we define a function $\mathcal{S}$ by
\begin{equation}\label{semmes}
\frac{m}{n+1}\omega_{\rm can}^{m-1}\wedge\omega_F^{n+1}=\mathcal{S}\omega_{\rm can}^{m}\wedge\omega_F^{n},
\end{equation}
so that dividing \eqref{q3sata} by the volume form $\omega_{\rm can}^m\wedge\omega_F^n$ and multiplying by $e^{-t}$ gives
\begin{equation}\label{dd1}
e^{-t}e^{\vp+\dot{\vp}}=\left(e^{-t}+\tr_{\omega_F|_{\{\cdot\}\times Y}}(\gamma_{1,k})_{\mathbf{ff}}+e^{-t}\frac{\binom{n}{2}(\omega_F)_{\mathbf{ff}}^{n-2}(e^t\gamma_{1,k})_{\mathbf{ff}}^2}{(\omega_F)_{\mathbf{ff}}^{n}}+e^{-2t}\mathcal{S}\right)(1+o(1)_{\rm base})+o(e^{-2t}),
\end{equation}
and subtracting from \eqref{dd1} its fiber average (the two terms with $\gamma_{1,k}$ have fiberwise average zero since $\gamma_{1,k}$ is $\de\db$-exact) and using \eqref{qsatanacchio2} gives
\begin{equation}\label{dd2}
e^{-t}(e^{\vp+\dot{\vp}}-\underline{e^{\vp+\dot{\vp}}})=\left(\sum_{p=1}^{N_{1,k}}A_{1,p,k}G_{1,p,k}
+e^{-t}\frac{\binom{n}{2}(\omega_F)_{\mathbf{ff}}^{n-2}(e^t\gamma_{1,k})_{\mathbf{ff}}^2}{(\omega_F)_{\mathbf{ff}}^{n}}+e^{-2t}(\mathcal{S}-\underline{\mathcal{S}})\right)(1+o(1)_{\rm base})+o(e^{-2t}).
\end{equation}
To bound the LHS of \eqref{dd2}, use the Taylor expansion of the exponential, together with Lemma \ref{lma:earlier work} (ii) and \eqref{suxx} to bound
\begin{equation}
e^{-t}|e^{\vp+\dot{\vp}}-\underline{e^{\vp+\dot{\vp}}}|\leq Ce^{-t}|\vp+\dot{\vp}-\underline{\vp+\dot{\vp}}|=o(e^{-2t}),
\end{equation}
while on the RHS of \eqref{dd2} we can bound
\begin{equation}
e^{-t}\left|\frac{\binom{n}{2}(\omega_F)_{\mathbf{ff}}^{n-2}(e^t\gamma_{1,k})_{\mathbf{ff}}^2}{(\omega_F)_{\mathbf{ff}}^{n}}\right|\leq Ce^{-t}|(e^t\gamma_{1,k})_{\mathbf{ff}}^2|\leq Ce^{-(1+\alpha)t},
\end{equation}
by \eqref{qbrutus_better}. Thus, going back to \eqref{dd2}, we learn that
\begin{equation}
\sum_{p=1}^{N_{1,k}}A_{1,p,k}G_{1,p,k}=O(e^{-(1+\alpha)t}),
\end{equation}
and taking the fiberwise $L^2$ inner product with each $G_{1,p,k}$ we see that
\begin{equation}
A_{1,p,k}=O(e^{-(1+\alpha)t}),
\end{equation}
which improves over the bound $A_{1,p,k}=O(e^{-(1+\frac{\alpha}{2})t})$ from \eqref{qquack}. We can then go back to \eqref{qsatanacchio} and see that
\begin{equation}
|(\gamma_{1,k})_{\mathbf{ff}}|_{g_X}\leq Ce^{-(1+\alpha)t},
\end{equation}
which allows us to improve \eqref{zutter} to
\begin{equation}\label{zutter2}
|(e^t\gamma_{1,k}+e^t\eta_{1,k})_{\mathbf{ff}}|_{g_X}=O(e^{-\alpha t}),
\end{equation}
and so as long as we chose $\alpha>\frac{1}{2}$ we see that
\begin{equation}
|(e^t\gamma_{1,k})_{\mathbf{ff}}^2|_{g_X}=o(e^{-t}),
\end{equation}
and so in \eqref{dd2} we have
\begin{equation}
e^{-t}\left|\frac{\binom{n}{2}(\omega_F)_{\mathbf{ff}}^{n-2}(e^t\gamma_{1,k})_{\mathbf{ff}}^2}{(\omega_F)_{\mathbf{ff}}^{n}}\right|\leq Ce^{-t}|(e^t\gamma_{1,k})_{\mathbf{ff}}^2|=o(e^{-2t}),
\end{equation}
and returning to \eqref{dd2} again we see that
\begin{equation}
\sum_{p=1}^{N_{1,k}}A_{1,p,k}G_{1,p,k}=O(e^{-2t}),
\end{equation}
and again taking the fiberwise inner product with each $G_{1,p,k}$ concludes the proof of \eqref{qfilimi}.
\end{proof}

Next, observe that from \eqref{infty1} we have in particular
\begin{equation}\label{1}
|\DD^i\eta_{1,k}|_{g(t)}\leq Ce^{\frac{i-2-\alpha}{2}t},\quad 0\leq i\leq 2.
\end{equation}
We want to show a similar estimate for $\gamma_{1,k}$ which is only slightly worse:
\begin{prop}\label{worse}
We have
\begin{equation}\label{2}
|\DD^i\gamma_{1,k}|_{g(t)}\leq Ce^{\frac{i-2}{2}t},\quad 0\leq i\leq 2,
\end{equation}
and also
\begin{equation}\label{2a}
|\dot{\gamma}_{1,k}|_{g(t)}\leq C e^{-\gamma\frac{t}{2}},
\end{equation}
where $\gamma=\frac{\alpha}{4+\alpha}>0$.
\end{prop}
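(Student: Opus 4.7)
The plan is to combine the improved $L^\infty$ bound \eqref{qfilimi} with interpolation and the schematic formula \eqref{juz}, carefully tracking the number of fiber indices in each term.

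First I would upgrade the derivative bounds on $A_{1,p,k}$ by interpolating between \eqref{qfilimi}, which gives $|A_{1,p,k}|\leq Ce^{-2t}$, and the H\"older seminorm bound $[\DD^4 A_{1,p,k}]_{\alpha,\alpha/2}\leq C$ from \eqref{holder3} with $j=1$. A standard Proposition~\ref{prop:interpolation} argument (with $R-\rho\approx e^{-2t/(4+\alpha)}$) yields
\begin{equation*}
|\DD^i A_{1,p,k}|\leq Ce^{-2(1-i/(4+\alpha))t},\qquad 0\leq i\leq 4,
\end{equation*}
so in particular $|\dot A_{1,p,k}|\leq Ce^{-(1+\gamma)t}$ and $|\D^4 A_{1,p,k}|\leq Ce^{-2\gamma t}$ with $\gamma=\alpha/(4+\alpha)$. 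These will be the decay estimates that drive the proof.

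Next I would expand $\DD^q\gamma_{1,k}$ via \eqref{juz} as a finite sum of terms $e^{-rt}(\D^{q+1-s}J)\circledast\D^{i_1}\Phi_{\iota,r}(G_{1,p,k})\circledast\DD^{i_2+\iota}A_{1,p,k}$ and decompose each one according to how many fiber indices $a$ appear among its $q+2$ covariant slots. Since $A_{1,p,k}$ is pulled back from the base, the factor $\DD^{i_2+\iota}A_{1,p,k}$ carries no fiber slots, so all fiber indices come from $\ddbar$ on $\Phi$, or from $\D^{q+1-s}J$ and $\D^{i_1}\Phi$. Converting the $g_X$-norm to the shrinking $g(t)$-norm costs a factor $e^{at/2}$, with $a\leq \min(q+2,\iota+\text{(number of fiber derivatives in }\D^{q+1-s}J\text{ and }\D^{i_1}\Phi\text{)})$. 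To prove \eqref{2} I will then split the sum into three types: (i) the "main" $\iota=r=0$ contribution, where decay must come entirely from the derivative bounds on $A_{1,p,k}$ above; (ii) the intermediate $\iota\geq 1$, $r\geq\lceil\iota/2\rceil$ contributions, whose $e^{-rt}$ factor absorbs the worst fiber weight $e^{\iota t/2}$ (giving a residual $e^{-(r-\iota/2)t}\leq 1$) leaving $e^{-rt}\cdot |\DD^{i_2+\iota}A_{1,p,k}|\cdot e^{at/2}$ which one checks is $\leq Ce^{(i-2)t/2}$ using the derivative bounds; and (iii) high-order $\iota\geq 5$ terms, bounded exactly as in the proof of \eqref{qbrutus}. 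The case $q=0$ also needs \eqref{qsatanacchio} combined with \eqref{qfilimi} to extract the crucial $e^{-2t}$ decay from the $\mathbf{ff}$-component.

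For \eqref{2a}, I would write $\dot\gamma_{1,k}=\sum_p\ddbar\partial_t\mathfrak{G}_{t,k}(A_{1,p,k},G_{1,p,k})$ and expand via \eqref{plop}. The time derivative either lands on $e^{-rt}$ (producing an extra $-r$ but no decay change) or on $A$ (replacing $A$ by $\dot A$). Using the same fiber-index bookkeeping as above, the critical contribution comes from the $\iota=r=0$ term differentiated in $A$, namely $\sum_p \ddbar\Phi_{0,0}(G_{1,p,k})\cdot\dot A_{1,p,k}$, whose $\mathbf{ff}$-component has $g(t)$-norm bounded by $e^t\cdot|\dot A_{1,p,k}|\leq Ce^{-\gamma t}\leq Ce^{-\gamma t/2}$. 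Every other summand carries at least one factor $e^{-t}$ or an extra derivative on $A$, and is easily shown to be $o(e^{-\gamma t/2})$ using the interpolated derivative bounds.

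The main obstacle will be the bookkeeping in Step 2–3: verifying case-by-case that each of the many summands in \eqref{juz} satisfies the $g(t)$-norm bound $Ce^{(i-2)t/2}$. The critical and most delicate case is the $\iota=r=0$ summand, where no $e^{-rt}$ factor is available to absorb fiber-index weights; here the whole decay must come from the improved estimate \eqref{qfilimi} together with its interpolated derivatives, and one must observe that in this summand the fiber indices enter only through $\ddbar$ applied to a function of both variables, so the relevant weight is exactly $e^{at/2}\cdot e^{-2(1-i/(4+\alpha))t}$ which is acceptable for $0\leq i\leq 2$ once $\alpha$ is close enough to $1$.
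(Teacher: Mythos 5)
Your proposal follows the same overall skeleton as the paper: interpolate the improved $L^\infty$ bound \eqref{qfilimi} against the H\"older seminorm \eqref{holder3} to get decay for $\DD^iA_{1,p,k}$, then expand $\DD^q\gamma_{1,k}$ via \eqref{juz} and bound each factor. The starting interpolation inequality $|\DD^i A_{1,p,k}|\leq Ce^{-2t(1-i/(4+\alpha))}$ agrees with the paper's \eqref{kurz}, and your derived consequence $|\dot A_{1,p,k}|\leq Ce^{-(1+\gamma)t}$ is correct.

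Where you diverge is in the second step. You propose a fiber-index counting scheme — tracking the number $a$ of fiber slots in each summand and converting from $g_X$ to $g(t)$ by a factor $e^{at/2}$ with an ad hoc bound on $a$ — and you single out the $\iota=r=0$ term and the $q=0$ case as delicate, claiming the former requires $\alpha$ close to $1$ and the latter requires \eqref{qsatanacchio}. Neither refinement is necessary, and the first claim is actually misleading. The paper works exclusively with bounds already expressed in the shrinking $g(t)$-norm, $|\D^{i_1}\Phi_{\iota,r}(G_{1,p,k})|_{g(t)}\leq Ce^{i_1 t/2}$ and $|\D^{p}J|_{g(t)}\leq Ce^{pt/2}$; plugging these and \eqref{kurz}--\eqref{kurz2} directly into \eqref{juz} yields a total exponent (in units of $t/2$) of $-2r+(i+1-s)+i_1+(i_2+\iota-4)=(i-2)-2(r-\iota/2)\leq i-2$ using $i_1+i_2=s+1$ and $r\geq\iota/2$, for every summand simultaneously and for all $\alpha\in(0,1)$. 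No $\mathbf{ff}$-component isolation, \eqref{qsatanacchio}, or restriction on $\alpha$ is needed at this stage (the constraint $\alpha>\tfrac23$ is only used earlier to establish \eqref{qfilimi}, which you correctly treat as a given input). For \eqref{2a} the same algebra gives exponent $-2(r-\iota/2)-\gamma\leq-\gamma$ once one uses the tighter bounds retaining the $\gamma$. So your route would likely reach the same conclusion, but the bookkeeping layer you introduce is extra work, and the caveats you attach to it suggest a gap that the paper's cleaner computation shows does not in fact arise.
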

\begin{proof}
The first step is to use \eqref{qfilimi} to improve the estimates in \eqref{qquack}, by interpolating it with $[\DD^4 A_{1,p,k}]_{C^\alpha}\leq C$ from \eqref{holder3} and get
\begin{equation}\label{kurz}
|\DD^iA_{1,p,k}|\leq Ce^{-(4-i+i\frac{\alpha}{4+\alpha})\frac{t}{2}}\leq\begin{cases} Ce^{-2t},\quad i=0,\\
Ce^{-\frac{4-i+\gamma}{2}t}, \quad 1\leq i\leq 4,
\end{cases}
\end{equation}
where $\gamma=\frac{\alpha}{4+\alpha}>0$. As for higher order derivatives of $A_{1,p,k}$, given $1\leq\iota\leq 2k$, we interpolate between $|\DD^4A_{1,p,k}|\leq Ce^{-\frac{\gamma}{2}t}$ from \eqref{kurz} and $[\DD^{4+\iota}A_{1,p,k}]_{C^\alpha}\leq Ce^{\frac{\iota}{2}t}$ from \eqref{holder3} and letting $i=4+\iota$ we get
\begin{equation}\label{kurz2}
|\DD^i  A_{1,p,k}|\leq Ce^{-\frac{4-i+\gamma}{2}t}, \quad 5\leq i\leq 4+2k.
\end{equation}
Finally, using \eqref{kurz} and \eqref{kurz2} we can bound $|\DD^i\gamma_{1,k}|_{g(t)}, 0\leq i\leq 2,$ by going back to \eqref{juz} and bounding
\begin{equation}
|\DD^i\gamma_{1,k}|_{g(t)}\leq \sum_{p=1}^{N_{1,k}}\sum_{\iota=0}^{2k}\sum_{r=\lceil\frac{\iota}{2}\rceil}^{k}\sum_{s=0}^{i+1}\sum_{i_1+i_2=s+1}e^{-r t}|(\D^{i+1-s}J)|_{g(t)} |\D^{i_1}\Phi_{\iota,r}(G_{1,p,k})|_{g(t)} |\DD^{i_2+\iota} A_{1,p,k}|_{g(t)},
\end{equation}
and using also $|\D^{i_1}\Phi_{\iota,r}(G_{1,p,k})|_{g(t)}\leq Ce^{i_1\frac{t}{2}},$ $|\D^\iota J|_{g(t)}\leq Ce^{\iota\frac{t}{2}},$ we get
\begin{equation}
|\DD^i\gamma_{1,k}|_{g(t)}\leq C\sum_{\iota=0}^{2k}\sum_{r=\lceil\frac{\iota}{2}\rceil}^{k}\sum_{s=0}^{i+1}\sum_{i_1+i_2=s+1}e^{-r t}e^{(i_1+i_2+i+1-s+\iota-4)\frac{t}{2}}
\leq Ce^{(i-2)\frac{t}{2}}\sum_{\iota=0}^{2k}\sum_{r=\lceil\frac{\iota}{2}\rceil}^{k}e^{-\left(r-\frac{\iota}{2}\right)t}\leq Ce^{(i-2)\frac{t}{2}},
\end{equation}
for $0\leq i\leq 2$, which proves \eqref{2}. As for \eqref{2a}, using \eqref{juz} again we can bound
\begin{equation}\begin{split}
|\dot{\gamma}_{1,k}|_{g(t)}&\leq \sum_{p=1}^{N_{1,k}}\sum_{\iota=0}^{2k}\sum_{r=\lceil\frac{\iota}{2}\rceil}^{k}\sum_{s=0}^{1}\sum_{i_1+i_2=s+1}e^{-r t}|(\D^{1-s}J)|_{g(t)} |\D^{i_1}\Phi_{\iota,r}(G_{1,p,k})|_{g(t)} |\DD^{i_2+\iota} \dot{A}_{1,p,k}|_{g(t)}\\
&\leq C\sum_{\iota=0}^{2k}\sum_{r=\lceil\frac{\iota}{2}\rceil}^{k}\sum_{s=0}^{1}\sum_{i_1+i_2=s+1}e^{-r t}e^{(i_1+i_2+1-s+\iota-2-\gamma)\frac{t}{2}}
\leq Ce^{-\gamma\frac{t}{2}}\sum_{\iota=0}^{2k}\sum_{r=\lceil\frac{\iota}{2}\rceil}^{k}e^{-\left(r-\frac{\iota}{2}\right)t}\leq Ce^{-\gamma\frac{t}{2}}.
\end{split}
\end{equation}
\end{proof}

After these preparations, we can finally give the proof of the Ricci curvature bound \eqref{riccib}. For this, we take $\ddbar\log$ of the Monge-Amp\`ere equation \eqref{ma2}, and get
\begin{equation}\label{cocco}\begin{split}
\Ric(\omega^\bullet(t))&=-\ddbar\log\det g^\bullet(t)\\
&=-\ddbar\log(\omega_{\rm can}^m\wedge\omega_F^n)-\ddbar(\vp(t)+\dot{\vp}(t))\\
&=-\omega_{\rm can}-\ddbar(\vp(t)+\dot{\vp}(t))\\
&=-\omega^\bullet(t)+e^{-t}\omega_F-\ddbar\dot{\vp}(t),
\end{split}
\end{equation}
where we used the known relation $-\ddbar\log(\omega_{\rm can}^m\wedge\omega_F^n)=-\omega_{\rm can}$ (see e.g. \cite[Proposition 5.9]{To}). Since $\omega^\bullet(t)$ is uniformly equivalent to $\omega^\natural(t)$ (by Lemma \ref{lma:earlier work} (i)), it suffices to bound $|\Ric(\omega^\bullet(t))|_{g^\natural(t)}$ on compact subsets of $X\backslash S$. We can then work on $B\times Y$ as before, where we may assume that we have the expansion \eqref{expa} with $j=1$ and $k\geq 4$,
\begin{equation}\label{k3}\omega^\bullet(t)=\omega^\natural(t)+\gamma_{0}(t)+\gamma_{1,k}(t)+\eta_{1,k}(t).\end{equation}
Using \eqref{infty2}, \eqref{1} and \eqref{2}, we thus see that
\begin{equation}\label{k4}
|\omega^\bullet(t)-\omega^\natural(t)|_{g^\natural(t)}=o(1).
\end{equation}
Taking a time derivative of \eqref{k3}, we obtain
\begin{equation}
\ddbar\dot{\vp}=\ddbar\underline{\dot{\vp}}+\dot{\gamma}_{1,k}+\dot{\eta}_{1,k},
\end{equation}
and using \eqref{infty2}, \eqref{1}, \eqref{2a} we see that
\begin{equation}\label{k1}
|\ddbar\dot{\vp}|_{g^\natural(t)}=o(1).
\end{equation}
Thus, going back to \eqref{cocco}, we can write
\begin{equation}\label{cocco2}\begin{split}
\Ric(\omega^\bullet(t))&=-\omega^\bullet(t)+e^{-t}\omega_F-\ddbar\dot{\vp}(t)\\
&=-\omega_{\rm can}+\left(-\omega^\bullet(t)+\omega^\natural(t)-\ddbar\dot{\vp}(t)\right),
\end{split}
\end{equation}
where thanks to \eqref{k4} and \eqref{k1}, we have
\begin{equation}\label{k5}
\left|-\omega^\bullet(t)+\omega^\natural(t)-\ddbar\dot{\vp}(t)\right|_{g^\natural(t)}=o(1),
\end{equation}
and since we clearly have $|\omega_{\rm can}|_{g^\natural(t)}\leq C,$ the Ricci bound $|\Ric(\omega^\bullet(t))|_{g^\natural(t)}\leq C$ follows.

Observe that \eqref{cocco2} and \eqref{k5} give us very detailed information about the Ricci curvature of $\omega^\bullet(t)$ (on compact subsets of $X\backslash S$), showing that it is asymptotic to $-\omega_{\rm can}$ in a strong sense.

\begin{rmk}
Since  $\omega_{\rm can}$ is pulled back from the base, and since $\omega^\bullet(t)\to \omega_{\rm can}$ locally uniformly (from Lemma \ref{lma:earlier work} (i), (iii)), it follows that
\begin{equation}
\tr_{\omega^\bullet(t)}\omega_{\rm can}\to \tr_{\omega_{\rm can}}\omega_{\rm can}=m,
\end{equation}
locally uniformly on $X\backslash S$. Thus, taking the trace of \eqref{cocco2} with respect to $\omega^\bullet(t)$, we see that
\begin{equation}
R(\omega^\bullet(t))+m\to 0,
\end{equation}
in $C^0_{\rm loc}(X\backslash S)$, which recovers the main theorem of \cite{Ji}.
\end{rmk}

\begin{rmk}
Continuing the above arguments along the lines of \cite[(5.37)--(5.86)]{HT3}, one can also identify the first nontrivial term in the expansion \eqref{expa} of $\omega^\bullet(t)$, whose shape is identical to the one in the elliptic setting, see \cite[Theorem B]{HT3}. For the sake of brevity, we leave the straightforward proof to the interested reader.
\end{rmk}


\begin{thebibliography}{99}
\bibitem{CHP} F. Campana, A. H\"oring, T. Peternell, {\em Abundance for K\"ahler threefolds}, Ann. Sci. \'Ecole Norm. Sup. (4) {\bf 49} (2016), no. 4, 971--1025; Erratum and addendum at \url{http://math.unice.fr/~hoering/articles/erratum-addendum-abundance.pdf}
\bibitem{CGPT} J. Cao, H. Guenancia, M. P\u{a}un, {\em Variation of singular K\"ahler-Einstein metrics: Kodaira dimension zero. With an appendix by Valentino Tosatti}, J. Eur. Math. Soc. (JEMS) {\bf 25} (2023), no. 2, 633--679.
\bibitem{CaL} P. Cascini, G. La Nave, {\em K\"ahler-Ricci flow and the Minimal Model Program for projective varieties}, preprint, arXiv:math/0603064.
\bibitem{CL} J. Chu, M.C. Lee, {\em On the H\"older estimate of K\"ahler-Ricci flow},  Int. Math. Res. Not. IMRN 2023, no. 6, 4932--4951.
\bibitem{DH} O. Das, C. Hacon, {\em Transcendental minimal model program for projective varieties}, preprint, arXiv:2412.07650.
\bibitem{DO} O. Das, W. Ou, {\em On the log abundance for compact K\"ahler threefolds}, Manuscripta Math. {\bf 173} (2024), no. 1-2, 341--404.
\bibitem{DO2} O. Das, W. Ou, {\em On the log abundance for compact K\"ahler threefolds II}, preprint, arXiv:2306.00671.
\bibitem{DPa} J.-P. Demailly, N. Pali, {\em Degenerate complex Monge-Amp\`ere equations over compact K\"ahler manifolds}, Internat. J. Math. {\bf 21} (2010), no. 3, 357--405.
\bibitem{EGZ2} P. Eyssidieux, V. Guedj, A. Zeriahi, {\em A priori $L^\infty$-estimates for degenerate complex Monge-Amp\`ere equations}, Int. Math. Res. Not. IMRN 2008, Art. ID rnn 070, 8 pp.
\bibitem{EGZ} P. Eyssidieux, V. Guedj, A. Zeriahi, {\em Convergence of weak K\"ahler-Ricci flows on minimal models of positive Kodaira dimension}, Comm. Math. Phys. {\bf 357} (2018), no. 3, 1179--1214.
\bibitem{FL} F.T.-H. Fong, M.C. Lee, {\em Higher-order estimates of long-time solutions to the K\"ahler-Ricci flow}, J. Funct. Anal. {\bf 281} (2021), no. 11, Paper No. 109235, 34 pp.
\bibitem{FZ} F.T.-H. Fong, Z. Zhang, \emph{The collapsing rate of the K\"ahler-Ricci flow with regular infinite time singularity}, J. Reine Angew. Math. {\bf 703} (2015), 95--113.
\bibitem{Gi} M. Gill, {\em Collapsing of products along the K\"ahler-Ricci flow}, Trans. Amer. Math. Soc. {\bf 366} (2014), no. 7, 3907--3924.
\bibitem{GTZ} M. Gross, V. Tosatti, Y. Zhang, \emph{Collapsing of abelian fibered Calabi-Yau manifolds}, Duke Math. J. {\bf 162} (2013), no. 3, 517--551.
\bibitem{GW} M. Gross, P.M.H. Wilson, \emph{Large complex structure limits of $K3$ surfaces}, J. Differential Geom. {\bf 55} (2000), no. 3, 475--546.
\bibitem{GPSS} B. Guo, D.H. Phong, J. Song, J. Sturm, {\em Diameter estimates in K\"ahler geometry}, Comm. Pure Appl. Math. {\bf 77} (2024), no. 8, 3520--3556.
\bibitem{GPT} B. Guo, D.H. Phong, F. Tong, {\em On $L^\infty$ estimates for complex Monge-Amp\`ere equations}, Ann. of Math. (2) {\bf 198} (2023), no. 1, 393--418.
\bibitem{He}  H.-J. Hein, \emph{A Liouville theorem for the complex Monge-Amp\`ere equation on product manifolds}, Comm. Pure Appl. Math. {\bf 72} (2019), no. 1, 122--135.
\bibitem{HT} H.-J. Hein, V. Tosatti, \emph{Remarks on the collapsing of torus fibered Calabi-Yau manifolds},  Bull. Lond. Math. Soc. {\bf 47} (2015), no. 6, 1021--1027.
\bibitem{HT2} H.-J. Hein, V. Tosatti, \emph{Higher-order estimates for collapsing Calabi-Yau metrics}, Camb. J. Math. {\bf 8} (2020), no. 4, 683--773.
\bibitem{HT3} H.-J. Hein, V. Tosatti, \emph{ Smooth asymptotics for collapsing Calabi-Yau metrics}, Comm. Pure Appl. Math. {\bf 78} (2025), no. 2, 382--499.
\bibitem{Ji} W. Jian, {\em Convergence of scalar curvature of K\"ahler-Ricci flow on manifolds of positive Kodaira dimension}, Adv. Math. {\bf 371} (2020), 107253, 27 pp.
\bibitem{JS} W. Jian, J. Song, \emph{Diameter estimates for long-time solutions of the K\"ahler–Ricci flow}, Geom. Funct. Anal. {\bf 32} (2022), no. 6, 1335--1356.
\bibitem{LSU} O.A. Lady\v{z}enskaja, V.A. Solonnikov, N. Ural'ceva, {\em Linear and quasilinear equations of parabolic type}, Translations of Mathematical Monographs, Vol. 23. American Mathematical Society, Providence, RI, 1968.
\bibitem{Laz} R. Lazarsfeld, {\em Positivity in algebraic geometry I \& II}, Springer-Verlag, Berlin, 2004.
\bibitem{LLZ} C. Li, J. Li, X. Zhang, \emph{A mean value formula and a Liouville theorem for the complex Monge-Amp\`ere equation},  Int. Math. Res. Not. IMRN 2020, no. 3, 853--867.
\bibitem{Li} Y. Li, \emph{A gluing construction of collapsing Calabi-Yau metrics on $K3$ fibred $3$-folds}, Geom. Funct. Anal. {\bf 29} (2019), no. 4, 1002--1047.
\bibitem{LT} Y. Li, V. Tosatti, {\em On the collapsing of Calabi-Yau manifolds and K\"ahler-Ricci flows}, J. Reine Angew. Math. {\bf 800} (2023), 155--192.
\bibitem{Lie} G.M. Lieberman, {\em Second order parabolic differential equations}, World Scientific Publishing Co., Inc., River Edge, NJ, 1996.
\bibitem{SW} M. Sherman, B. Weinkove, {\em Interior derivative estimates for the K\"ahler-Ricci flow}, Pacific J. Math. {\bf 257} (2012), no. 2, 491--501.
\bibitem{ST} J. Song, G. Tian, {\em The K\"ahler-Ricci flow on surfaces of positive Kodaira dimension}, Invent. Math. {\bf 170} (2007), no. 3, 609--653.
\bibitem{ST2} J. Song, G. Tian, \emph{Canonical measures and K\"ahler-Ricci flow}, J. Amer. Math. Soc. {\bf 25} (2012), no. 2, 303--353.
\bibitem{ST3} J. Song, G. Tian, \emph{Bounding scalar curvature for global solutions of the K\"ahler-Ricci flow}, Amer. J. Math. {\bf 138} (2016), no. 3, 683--695.
\bibitem{STZ} J. Song, G. Tian, Z. Zhang, {\em Collapsing behavior of Ricci-flat K\"ahler metrics and long time solutions of the K\"ahler-Ricci flow}, preprint, arXiv:1904.08345.
\bibitem{Ti} G. Tian, {\em Notes on K\"ahler-Ricci flow}, in {\em Ricci flow and geometric applications}, 105--136, Lecture Notes in Math., 2166, Springer, 2016.
\bibitem{Ti2} G. Tian, {\em Some progresses on K\"ahler-Ricci flow}, Boll. Unione Mat. Ital. {\bf 12} (2019), no. 1-2, 251--263.
\bibitem{TiZ} G. Tian, Z. Zhang, {\em On the K\"ahler-Ricci flow on projective manifolds of general type}, Chinese Ann. Math. Ser. B {\bf 27} (2006), no. 2, 179--192.
\bibitem{To4} V. Tosatti, {\em Adiabatic limits of Ricci-flat K\"ahler metrics}, J. Differential Geom. {\bf 84} (2010), no.2, 427--453.
\bibitem{To} V. Tosatti, {\em KAWA lecture notes on the K\"ahler-Ricci flow},  Ann. Fac. Sci. Toulouse Math. {\bf 27} (2018), no. 2, 285--376.
\bibitem{To3} V. Tosatti, {\em Collapsing Calabi-Yau manifolds}, Surveys in Differential Geometry {\bf 23} (2018), 305--337, International Press, 2020.
\bibitem{To5} V. Tosatti, {\em Semipositive line bundles and $(1,1)$-classes}, to appear in Acta Math. Sin. (Engl. Ser.)
\bibitem{To2} V. Tosatti, {\em Immortal solutions of the K\"ahler-Ricci flow}, to appear in Contemp. Math.
\bibitem{TWY} V. Tosatti, B. Weinkove, X. Yang, \emph{The K\"ahler-Ricci flow, Ricci-flat metrics and collapsing limits}, Amer. J. Math. {\bf 140} (2018), no. 3, 653--698.
\bibitem{TZ} V. Tosatti, Y. Zhang, {\em Infinite time singularities of the K\"ahler-Ricci flow},  Geom. Topol. {\bf 19} (2015), no. 5, 2925--2948.
\bibitem{Ts} H. Tsuji, \emph{Existence and degeneration of K\"ahler-Einstein metrics on minimal algebraic varieties of general type}, Math. Ann. {\bf 281} (1988), no. 1, 123--133.
\bibitem{Ya}  S.-T. Yau, {\em On the Ricci curvature of a compact K\"ahler manifold and the complex Monge-Amp\`ere equation, I}, Comm. Pure Appl. Math. {\bf 31} (1978), 339--411.
\bibitem{ZhY} Y. Zhang, {\em  Collapsing limits of the K\"ahler-Ricci flow and the continuity method}, Math. Ann. {\bf 374} (2019), no. 1-2, 331--360.
\end{thebibliography}
\end{document}